\def\l@subsection{\@tocline{2}{0pt}{2.5pc}{5pc}{}}
\renewcommand\tocchapter[3]{%
  \indentlabel{\@ifnotempty{#2}{\ignorespaces#2.\quad}}#3%
}
\newcommand\@dotsep{4.5}
\def\@tocline#1#2#3#4#5#6#7{\relax
  \ifnum #1>\c@tocdepth % then omit
  \else
    \par \addpenalty\@secpenalty\addvspace{#2}%
    \begingroup \hyphenpenalty\@M
    \@ifempty{#4}{%
      \@tempdima\csname r@tocindent\number#1\endcsname\relax
    }{%
      \@tempdima#4\relax
    }%
    \parindent\z@ \leftskip#3\relax \advance\leftskip\@tempdima\relax
    \rightskip\@pnumwidth plus1em \parfillskip-\@pnumwidth
    #5\leavevmode\hskip-\@tempdima{#6}\nobreak
    \leaders\hbox{$\m@th\mkern \@dotsep mu\hbox{.}\mkern \@dotsep mu$}\hfill
    \nobreak
    \hbox to\@pnumwidth{\@tocpagenum{#7}}\par
    \nobreak
    \endgroup
  \fi}
\renewcommand\csname r@tocindent0\endcsname{0pt}
\def\l@subsection{\@tocline{2}{0pt}{2.5pc}{5pc}{}}
\newtheorem{thm}{Theorem}[section]
\newtheorem{lemma}[thm]{Lemma}
\newtheorem{proposition}[thm]{Proposition}
\newtheorem{definition}[thm]{Definition}
\newtheorem{corollary}[thm]{Corollary}
\newtheorem{notation}[thm]{Notation}
\newtheorem*{maintheorem*}{Main Theorem}
\newtheorem*{theorem*}{Theorem}
\newtheorem*{theorem1*}{Theorem 1}
\newtheorem*{theorem2*}{Theorem 2}
\newtheorem*{theorem3*}{Theorem 3}
\newtheorem*{corollary*}{Corollary}
\newcommand{\p}{\mathbb{P}}
\newcommand{\q}{\mathbb{Q}}
\newcommand{\cof}{\mathrm{cof}}
\newcommand{\dom}{\mathrm{dom}}
\newcommand{\col}{\mathrm{Col}}
\newcommand{\res}{\upharpoonright}
\newcommand{\cl}{\text{cl}}
\newcommand{\sk}{\mathrm{Sk}}
\long\def\comment#1{}
\begin{document}

\title[A Strongly Non-Saturated Aronszajn Tree Without Weak Kurepa Trees]{A Strongly Non-Saturated Aronszajn Tree \\ Without Weak Kurepa Trees}

\author{John Krueger and \v{S}\'{a}rka Stejskalov\'{a}}

\address{John Krueger, Department of Mathematics, 
	University of North Texas,
	1155 Union Circle \#311430,
	Denton, TX 76203, USA}
\email{john.krueger@unt.edu}

\address{\v{S}\'{a}rka Stejskalov\'{a}, Department of Logic, 
Charles University, Celetn\'{a} 20, Prague 1, 
116 42, Czech Republic}
\email{sarka.stejskalova@ff.cuni.cz}

\subjclass{03E05, 03E35, 03E40}

\keywords{Strongly non-saturated Aronszajn tree, weak Kurepa tree, side conditions, Y-proper}

\date{June 26, 2025}

\begin{abstract}
Assuming the negation of Chang's conjecture, there is a c.c.c.\ forcing which 
adds a strongly non-saturated Aronszajn tree. 
Using a Mahlo cardinal, we construct a model in which there 
exists a strongly non-saturated Aronszajn tree and the 
negation of the Kurepa hypothesis is c.c.c.\ indestructible.
For any inaccessible cardinal $\kappa$, 
there exists a forcing poset which is Y-proper and $\kappa$-c.c., 
collapses $\kappa$ to become $\omega_2$, and adds a 
strongly non-saturated Aronszajn tree. 
The quotients of this forcing in intermediate extensions are indestructibly 
Y-proper on a stationary set with respect to any Y-proper forcing extension. 
As a consequence, we prove from an inaccessible cardinal 
that the existence of a strongly 
non-saturated Aronszajn tree is consistent with the non-existence of a 
weak Kurepa tree. 
Finally, we prove from a supercompact cardinal that the existence of a 
strongly non-saturated Aronszajn tree is consistent with 
two-cardinal tree properties such as the indestructible guessing model principle.
\end{abstract}

\maketitle

\tableofcontents

\section{Introduction}

An Aronszajn tree is \emph{saturated} if every family of uncountable 
downwards closed subtrees of it which is almost disjoint 
has cardinality less than $\omega_2$. 
By almost disjoint we mean that 
any two members of the family have countable intersection. 
This idea was introduced by 
K\"onig, Larson, Moore, and Veli\v{c}kovi\'{c} in the context of 
a study of the consistency strength of Moore's five element basis theorem 
(\cite{moorebounding}, \cite{moorefive}). 
A classic example of an Aronszajn tree which is non-saturated is due to 
Todor\v{c}evi\'{c}, who showed that if $U$ is a special Aronszajn tree and 
$T$ is a Kurepa tree, then the tree product $U \otimes T$ is a special Aronszajn 
tree which has no base of subtrees of cardinality less than $\omega_2$ (\cite{baumgartnerbase}). 
Moore asked whether the existence of a non-saturated 
Aronszajn tree implies the existence of a Kurepa tree (\cite{moorestructural}). 
This problem was solved by the authors by showing that 
if $\kappa$ is an inaccessible cardinal and $T$ is a free Suslin tree, then there is a generic 
extension in which $\kappa = \omega_2$, the square $T \otimes T$ 
is a non-saturated Aronszajn tree, 
and there does not exist a Kurepa tree (\cite{KS}).

Martinez Mendoza and the author introduced a stronger form of non-saturation 
for an Aronszajn tree $T$. 
Given subtrees $U$ and $W$ of $T$, we say that $U$ and $W$ are \emph{strongly almost disjoint} 
if $U \cap W$ is a finite union of countable chains 
(equivalently, $U \cap W$ does not contain an infinite antichain). 
A stronger property that 
we will use is that $U \cap W$ is contained in the downward closure of a 
finite subset of $T$, which we refer to as $U \cap W$ being \emph{finitely generated}. 
The tree $T$ is \emph{strongly non-saturated} if there exists a family of $\omega_2$-many 
strongly almost disjoint uncountable downwards closed subtrees of $T$ (\cite{KM}). 
Note that if there exists such a tree, then \textsf{CH} is false.

The idea of a strongly non-saturated Aronszajn tree 
is a relative of that of a 
collection of uncountable subsets of $\omega_1$ with size $\omega_2$ 
such that the intersection of any two members of the collection is finite. 
The existence of such a family was proven 
to be consistent by Baumgartner \cite{almostdisjoint}. 
Indeed, if there is a strongly non-saturated Aronszajn tree, then there 
is such a collection.\footnote{Without loss of generality, assume that the 
strongly non-saturated Aronszajn tree $T$ is Hausdorff. 
Let $\{ U_\alpha : \alpha < \omega_2 \}$ be a strongly 
almost disjoint family of uncountable downwards closed subtrees of $T$. 
For each $\alpha < \omega_2$, define $W_\alpha$ to be the collection of all 
sets $\{ x, y, z \} \subseteq U_\alpha$ 
such that $y$ and $z$ are distinct immediate successors of $x$ in $U_\alpha$. 
Then for all $\alpha < \beta < \omega_2$, the fact that $U_\alpha \cap U_\beta$ 
contains no infinite antichain implies by a straightforward argument 
that $W_\alpha \cap W_\beta$ is finite. 
So $\{ W_\alpha : \alpha < \omega_2 \}$ is a strongly almost disjoint 
family of uncountable subsets of $[T]^3$.} 
The consistency of a strongly non-saturated Aronszajn tree 
was proven by Martinez Mendoza and the author by showing, 
using the $\rho$-function of Todor\v{c}evi\'{c} \cite{todorpartition} 
under the assumption of $\Box_{\omega_1}$, 
that there exists a c.c.c.\ forcing which adds an almost Kurepa 
Suslin tree whose square is strongly non-saturated (\cite{KM})

With this stronger version of non-saturation for Aronszajn trees 
at hand, it is natural to ask whether 
the existence of a strongly non-saturated Aronszajn tree 
is consistent with the non-existence of a Kurepa tree, or even 
with the non-existence of a weak Kurepa tree. 
The latter question was asked in \cite{KM}. 
The connection with the weak Kurepa hypothesis is that the existence of 
a weak Kurepa tree follows from \textsf{CH} and thus holds 
in the model of \cite{KS}. 
Both questions can be thought of 
as more ambitious versions of Moore's problem stated above 
which are suitable in the context of the negation of \textsf{CH}.

In order to solve these problems, we introduce two new forcing posets for 
adding a strongly non-saturated Aronszajn tree, both with finite conditions. 
The first forcing poset is c.c.c.\ assuming the negation of Chang's conjecture. 
For this forcing, the main idea is to use a weak kind of $\rho$-function to bound 
the intersection of the subtrees appearing in a condition. 
The existence of such a function follows from the negation of Chang's conjecture 
(\cite{todorchang}). 
Using this forcing poset in combination with a result of Jensen and Schlechta 
\cite{jensenschlechta}, 
we have the following theorem which provides a strong answer to the first question.

\begin{theorem1*}
	Assume that $\kappa$ is a Mahlo cardinal. 
	Then there is a generic extension of $L$ in which there exists a 
	strongly non-saturated Aronszajn tree and the negation of the 
	Kurepa hypothesis is c.c.c.\ indestructible.
\end{theorem1*}

For the second problem, for any inaccessible cardinal $\kappa$ 
we prove that there exists a forcing poset with finite conditions which adds a 
strongly non-saturated Aronszajn tree, collapses $\kappa$ to become $\omega_2$, 
and has quotients in intermediate extensions which do not add new cofinal 
branches to trees of height $\omega_1$. 
The conditions in this forcing consist of two parts: a working part which is a finite 
approximation of the generic tree and its subtrees, and a side condition which is a 
finite set of countable models. 
The interaction between the working part and the side condition is 
that whenever the indices of two of the subtrees from the working part 
are members of a model in the side condition, 
then the intersection of the subtrees is a subset of the model as well. 
Using this forcing together with a standard Silver factor analysis (\cite{silver}), 
we have the following theorem.

\begin{theorem2*}
	Assume that $\kappa$ is an inaccessible cardinal. 
	Then there exists a forcing poset which is proper, 
	collapses $\kappa$ to become $\omega_2$, adds a strongly 
	non-saturated Aronszajn tree, and forces the non-existence of 
	a weak Kurepa tree.
\end{theorem2*}

In order to prove that the quotient forcings do not add new cofinal branches, 
we use the concept of Y-properness which was recently introduced by 
Chodounsk\'{y} and Zapletal \cite{CZ}. 
This property implies not only not adding new cofinal branches to trees of 
height $\omega_1$, but also the stronger 
$\omega_1$-approximation property. 
As a result, we are able to prove much more than the negation of 
the weak Kurepa hypothesis. 
In fact, not only are the quotients of our forcing poset Y-proper, but they 
remain Y-proper on a stationary set after any further Y-proper forcing. 
This enables us to do additional forcing after the main forcing while still 
allowing for the usual factor analysis. 
In particular, we have the following theorem related to two-cardinal tree principles.

\begin{theorem3*}
	Assume that $\kappa$ is a supercompact cardinal. 
	Then there is a generic extension in which $\kappa = \omega_2$, there exists 
	a strongly non-saturated Aronszajn tree, and the indestructible 
	guessing model principle \textsf{IGMP} holds.
\end{theorem3*}

We note that \textsf{IGMP} 
follows from \textsf{PFA}, whereas \textsf{PFA} implies that 
all Aronszajn trees are saturated (\cite{coxkrueger2}, \cite{moorebounding}).

\bigskip

We assume that the reader has a background in $\omega_1$-trees and forcing. 
We refer the reader to \cite[Section 1]{KS} for basic definitions and terminology 
concerning trees. 
An \emph{$\omega_1$-tree} is a tree with height $\omega_1$ and countable levels. 
An \emph{Aronszajn tree} is an $\omega_1$-tree with no cofinal branch. 
A \emph{Kurepa tree} is an $\omega_1$-tree with at least $\omega_2$-many 
cofinal branches, and a \emph{weak Kurepa tree} is a tree with height 
and size $\omega_1$ which has at least $\omega_2$-many cofinal branches. 
The \emph{Kurepa hypothesis} is the statement that there exists a Kurepa tree, 
and the \emph{weak Kurepa hypothesis} is the statement that there exists 
a weak Kurepa tree.

We make use of the following fact about Aronszajn trees, 
which follows from a classic theorem of Baumgartner-Malitz-Reinhardt \cite{baumgartner}
together with the Dushnik-Miller theorem $\omega_1 \to (\omega_1,\omega)^2$. 
Suppose that $T$ is a tree with no uncountable chains and 
$\langle x_\alpha : \alpha < \omega_1 \rangle$ is a sequence of 
disjoint finite subsets of $T$. 
Then there exists a countably infinite set $Y \subseteq \omega_1$ such that for all 
distinct $\alpha$ and $\beta$ in $Y$, for all $x \in x_\alpha$ and for all 
$y \in x_\beta$, $x$ and $y$ are incomparable in $T$.

\section{Finite Trees and Subtrees}

For the remainder of the article, fix a regular cardinal $\kappa \ge \omega_2$. 
In Section 4, we assume that $\kappa$ is equal to $\omega_2$, 
and in Section 5 and for the rest of the 
article we assume that $\kappa$ is an inaccessible cardinal. 
In Sections 3 and 6 we define two forcing posets for adding a strongly non-saturated 
Aronszajn tree. 
The forcing introduced in Section 3 is c.c.c., and the forcing introduced 
in Section 6 is proper and involves countable models as side conditions. 
These two forcing posets will have a part in common which adds the Aronszajn tree. 
In this section, we work out the details of this common part.

Define $h : \omega_1 \to \omega_1$ by letting $h(\alpha)$ be the unique 
ordinal $\gamma$ such that $\omega \cdot \gamma \le \alpha < \omega \cdot (\gamma+1)$. 
Let $C_h$ be the club set of ordinals in $\omega_1$ which are closed under the function 
which maps any $\gamma < \omega_1$ to $\omega \cdot (\gamma+1)$. 
Note that if $\delta \in C_h$, then $\alpha < \delta$ iff $h(\alpha) < \delta$. 
The function $h$ will coincide with the height function of the generic trees  
of Sections 3 and 6.

\begin{definition}
	A \emph{standard finite tree} is an ordered pair $(T,<_T)$ satisfying:
	\begin{enumerate}
	\item $T$ is a finite subset of $\{ 0 \} \cup (\omega_1 \setminus \omega)$;
	\item $<_T$ is a strict partial ordering of $T$ such that 
	for any $x \in T$, the set $\{ y \in T : y <_T x \}$ is linearly ordered by $<_T$;
	\item if $x <_T y$, then $h(x) < h(y)$;
	\item if $T$ is non-empty, then $0 \in T$ and 
	$0 <_T x$ for all non-zero $x \in T$.
	\end{enumerate}
	If $(T,<_T)$ and $(U,<_U)$ are standard finite trees, 
	$(U,<_U)$ is an \emph{end-extension} of $(T,<_T)$ if 
	$T \subseteq U$ and $<_U \cap \ T^2 = \ <_T$.
\end{definition}

We will abbreviate a standard finite tree $(T,<_T)$ as just $T$. 
If $T$ is a standard finite tree and $\delta < \omega_1$, let 
$T \res \delta$ be equal to $(T \cap \delta,<_T \cap \ \delta^2)$. 
Note that $T \res \delta$ is also a standard finite tree. 
Observe that if $T$ is a standard finite tree, then for any incomparable  
elements $x$ and $y$ of $T$, there exists a $<_T$-largest $z \in T$ such that 
$z <_T x, y$. 
We use the notation $h[T]$ for $\{ h(x) : x \in T \}$.

\begin{definition}
	A standard finite tree $T$ is \emph{downwards closed} if whenever $x \in T$ 
	and $\alpha \in h[T] \cap h(x)$, 
	then there exists some $z \in T$ with $z <_T x$ and $h(z) = \alpha$.
\end{definition}

\begin{definition}
	A standard finite tree $T$ has \emph{minimal splits} if 
	whenever $x$ and $y$ are incomparable elements of $T$ and $z$ is the largest 
	element of $T$ below both $x$ and $y$, then 
	there exist distinct $x_0$ and $y_0$ such that 
	$z <_T x_0 \le_T x$, $z <_T y_0 \le_T y$, and $h(x_0) = h(y_0) = h(z) + 1$.
\end{definition}

\begin{definition}
	Let $T$ be a standard finite tree. 
	A \emph{subtree} of $T$ is an ordered pair $(W,<_W)$, where $W \subseteq T$ and 
	$<_W \ = \ <_T \cap \ W^2$.
\end{definition}

Any subset $W$ of $T$ can be considered as a subtree of $T$ 
with the induced tree order $<_W \ = \ <_T \cap \ W^2$. 
We will abbreviate a subtree $(W,<_W)$ as just $W$.

\begin{definition}
	Let $T$ be a standard finite tree and let $W$ be a subtree of $T$. 
	We say that $W$ is \emph{downwards closed in $T$} if 
	whenever $x \in W$ and $y <_T x$, then $y \in W$.
	If $W$ is a subtree of $T$, then the \emph{downward closure of $W$ in $T$} is the 
	set of $y \in T$ such that for some $x \in W$, $y \le_T x$.
\end{definition}

\begin{definition}
	Let $T$ be a standard finite tree. 
	A \emph{subtree function on $T$} is a function $W$ whose domain is a 
	finite subset of $\kappa$ such that for all $\eta \in \dom(W)$, 
	$W(\eta)$ is a downwards closed subtree of $T$.
\end{definition}

If $W$ is a subtree function on $T$ and $\eta \in \kappa$, 
we will occasionally write $W(\eta)$ even if we do not know whether or not $\eta \in \dom(W)$; 
in the case that it is not, $W(\eta)$ should be taken to mean the empty-set.

\begin{definition}
	Let $1 < d < \omega$ and suppose that 
	$T_0,\ldots,T_{d-1}$ are standard finite trees. 
	Define $T_0 \oplus \cdots \oplus T_{d-1}$ to be the ordered pair 
	$$
	(T_0 \cup \cdots \cup T_{d-1},<_{T_0} \cup \cdots \cup <_{T_{d-1}}).
	$$
\end{definition}

\begin{definition}
	Let $1 < d < \omega$. 
	Suppose that for all $i < d$, $T_i$ is a standard finite tree 
	and $W_i$ is a subtree function on $T_i$. 
	Define $W_0 \oplus \cdots \oplus W_{d-1}$ to be the function with domain equal to 
	$\dom(W_0) \cup \cdots \cup \dom(W_{d-1})$ such that for all 
	$\eta \in \dom(W_0 \oplus \cdots \oplus W_{d-1})$, 
	$$
	(W_0 \oplus \cdots \oplus W_{d-1})(\eta) = 
	W_0(\eta) \cup \cdots \cup W_{d-1}(\eta).
	$$
\end{definition}

In general, $T_0 \oplus \cdots \oplus T_{d-1}$ is not necessarily a 
standard finite tree, and even if it is, then 
$W_0 \oplus \cdots \oplus W_{d-1}$ is not necessarily a subtree function on it.

We define an auxiliary forcing $\p^*$ which will assist with our main forcings 
$\p'$ and $\p$ introduced in Sections 3 and 6. 
We will never force with $\p^*$ itself.

\begin{definition}
	Let $\p^*$ be the forcing poset consisting of conditions 
	which are triples $(T,W,D)$ satisfying:
	\begin{enumerate}
	\item $T$ is a standard finite tree;
	\item $W$ is subtree function on $T$;
	\item $D \subseteq [\dom(W)]^2$.
	\end{enumerate}
	Let $(U,Y,E) \le (T,W,D)$ if:
	\begin{enumerate}
	\item[(a)] $U$ end-extends $T$;
	\item[(b)] $\dom(W) \subseteq \dom(Y)$ and 
	for all $\eta \in \dom(W)$, $W(\eta) \subseteq Y(\eta)$;
	\item[(c)] $D \subseteq E$;
	\item[(d)] if $\{ \eta, \xi \} \in D$ and 
	$x$ is in $Y(\eta) \cap Y(\xi)$, then there exists some 
	$z \in W(\eta) \cap W(\xi)$ such that $x \le_U z$.
	\end{enumerate}
\end{definition}

\begin{notation}
	For any $p \in \p^*$, we write $(T_p,W_p,D_p)$ for $p$, and we write 
	$<_p$ for $<_{T_p}$
\end{notation}

\begin{lemma}
	For any $p \in \p^*$, there exists $q \le p$ 
	in $\p^*$ such that $T_q$ is downwards closed and has minimal splits, 
	$D_q = D_p$, $\dom(W_q) = \dom(W_p)$, 
	and for all $\eta \in \dom(W_q)$, $W_q(\eta)$ is the downward closure 
	of $W_p(\eta)$ in $T_q$. 
	Moreover, for all distinct $\eta, \xi \in \dom(W_p)$, if 
	$x \in W_q(\eta) \cap W_q(\xi)$, then there exists some $z \in W_p(\eta) \cap W_p(\xi)$ 
	such that $x \le_{q} z$.
\end{lemma}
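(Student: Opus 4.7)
The plan is to construct $T_q$ from $T_p$ by inserting fresh ordinals in two stages, then to define $W_q$ by $\dom(W_q) = \dom(W_p)$ with $W_q(\eta)$ equal to the downward closure of $W_p(\eta)$ in $T_q$, and to set $D_q = D_p$. In the first stage I achieve downward closure at heights in $h[T_p]$: for each $\alpha \in h[T_p]$, I define the equivalence relation $\sim_\alpha$ on $\{y \in T_p : h(y) > \alpha\}$ by declaring $y \sim_\alpha y'$ iff some $z \in T_p$ with $h(z) > \alpha$ satisfies $z \le_{T_p} y, y'$; transitivity holds because the $T_p$-ancestors of any single node are linearly ordered. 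A short argument shows that either every member of a $\sim_\alpha$-class has a common $T_p$-predecessor at height $\alpha$ or none does, and for each class of the latter type I introduce exactly one new ordinal of height $\alpha$ in $\omega_1 \setminus \omega$, placed below all members of that class. This yields an intermediate, downwards closed tree $T_q'$ end-extending $T_p$. In the second stage I add minimal-split nodes: for each $z \in T_q'$ with at least two distinct branches above it in $T_q'$ and each such branch $b$ whose lowest node $x_b$ has height exceeding $h(z)+1$, I introduce one new ordinal of height $h(z)+1$ between $z$ and $x_b$. The resulting $T_q$ is a standard finite tree, end-extends $T_p$, is downwards closed, and has minimal splits.

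The core technical claim driving the ``moreover'' clause is that for every $x \in T_q$ the set $A(x) := \{y \in T_p : x \le_{T_q} y\}$ is closed under $T_p$-meets. For $x \in T_p$ this is immediate from maximality of the meet. For $x$ introduced at the first stage at height $\alpha$, $A(x)$ is by construction exactly a single $\sim_\alpha$-class $[y]_\alpha$, so any two of its members share a $T_p$-ancestor of height strictly greater than $\alpha$; hence their $T_p$-meet has height greater than $\alpha$ and must lie above $x$ in $T_q$. For $x$ introduced at the second stage as the minimal split node of a branch $b$ above $z \in T_q'$, one checks that $A(x)$ coincides with $A(x_b)$, reducing closure for $A(x)$ to one of the two previous cases.

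Given the key claim, the ``moreover'' statement is immediate: for $x \in W_q(\eta) \cap W_q(\xi)$, pick witnesses $a \in W_p(\eta)$ and $b \in W_p(\xi)$ with $x \le_{T_q} a, b$; the $T_p$-meet $c$ of $a$ and $b$ lies in $A(x)$ by the claim, so $x \le_{T_q} c$, and $c \in W_p(\eta) \cap W_p(\xi)$ by downward closure of both subtrees in $T_p$. Clause (d) of $q \le p$ is then the special case $\{\eta, \xi\} \in D_p = D_q$, while clauses (a)--(c) are transparent from the construction. I expect the main obstacle to be the key claim itself: one must confirm at each stage that the placement of the new nodes does not introduce a spurious common upper bound of two $T_p$-nodes whose $T_p$-meet fails to dominate it. The delicate points are the transitivity of $\sim_\alpha$ and the identification $A(x) = A(x_b)$ for minimal-split nodes, which together ensure that meets computed in $T_p$ remain meets relative to $T_q$.
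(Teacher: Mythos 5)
Your overall strategy is sound, and the ``key claim'' at the heart of the moreover-clause --- that $A(x) = \{ y \in T_p : x \le_{T_q} y \}$ is closed under $T_p$-meets --- is exactly the right invariant; your verification of it for nodes of $T_p$, for stage-1 nodes via the $\sim_\alpha$ classes, and for stage-2 nodes via $A(x) = A(x_b)$ is correct. The definition of $W_q$ and $D_q$ and the deduction of clauses (a)--(d) of $q \le p$ from the key claim are also fine. (The paper leaves this lemma as an exercise, so there is no official proof to compare against.)

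The gap is in the tree construction itself: running stage~1 (downward closure) and then stage~2 (minimal splits) once does not produce a downwards closed tree, contrary to your last sentence. Stage~2 introduces nodes at the new heights $h(z)+1$, and a node $y \in T_q'$ with $h(y) > h(z)+1$ whose branch splits off from $z$'s branch strictly below $z$ receives no ancestor at height $h(z)+1$. Concretely, take $T_p$ with root $0$, elements $a,d$ at height $3$, $e$ at height $5$, $b,c$ at height $10$, with $0 <_{T_p} a <_{T_p} e$, $0 <_{T_p} a <_{T_p} b$ ($e,b$ incomparable), and $0 <_{T_p} d <_{T_p} c$. Stage~1 adds $\nu_b,\nu_c$ at height $5$ with $a <_{} \nu_b <_{} b$ and $d <_{} \nu_c <_{} c$. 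Stage~2 then adds nodes at height $1$ below $a$ and $d$, and at height $4$ below $e$ and $\nu_b$ (because $a$ is a splitting node). Now $4 \in h[T_q]$, but $\nu_c$ has height $5$ and its ancestors lie at heights $0,1,3$ only --- downward closure fails. The fix is to determine the target height set in advance, e.g.\ $H = h[T_p] \cup \{ h(z)+1 : z \text{ is the meet of an incomparable pair in } T_p \}$, and perform a single stage-1-type pass that makes every chain of the tree meet every level of $H$ below its top (or, equivalently, iterate your two stages until stable, noting that the set of split heights stabilizes after one round). Once the tree is downwards closed with $h[T_q] = H$, minimal splits follows automatically: for a meet $z$ of incomparable $x,y$, downward closure gives ancestors $x_0 \le_{T_q} x$ and $y_0 \le_{T_q} y$ at height $h(z)+1$, which necessarily lie strictly above $z$ and are distinct since $z$ is a largest common lower bound. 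With this repaired, the rest of your argument --- in particular the verification of the key claim --- goes through unchanged.
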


\begin{proof}
	The proof is straightforward and we leave it as an exercise 
	for the interested reader.
\end{proof}

\begin{definition}
	Let $1 < d < \omega$. 
	Let $p_0,\ldots,p_{d-1}$ be in $\p^*$. 
	Define $p_0 \oplus \cdots \oplus p_{d-1}$ to be the triple $(T,W,D)$ satisfying:
	\begin{enumerate}
	\item $T = T_0 \oplus \cdots \oplus T_{d-1}$;
	\item $W = W_0 \oplus \cdots \oplus W_{d-1}$;
	\item $D = D_0 \cup \cdots \cup D_{d-1}$.
\end{enumerate}
\end{definition}

In general, $p_0 \oplus \cdots \oplus p_{d-1}$ is not necessarily 
a condition in $\p^*$, and even if it is, it is not necessarily an extension of 
$p_0,\ldots,p_{d-1}$.

\begin{definition}
	Let $p$ and $q$ be in $\p^*$ and let $\delta_p < \delta_q$ be in $C_h$. 
	We say that the ordered pair $(p,q)$ is \emph{$(\delta_p,\delta_q)$-split} if 
	the following are satisfied:
	\begin{enumerate}
		\item $T_p \res \delta_p = T_q \res \delta_q$;
		\item $T_p \subseteq \delta_q$;
		\item for all $\eta \in \dom(W_p) \cap \dom(W_q)$, 
		$W_p(\eta) \cap \delta_p = W_q(\eta) \cap \delta_q$;
		\item for all distinct $\eta, \xi \in \dom(W_p) \cap \dom(W_q)$, 
		$W_p(\eta) \cap W_p(\xi) \subseteq \delta_p$ and  
		$W_q(\eta) \cap W_q(\xi) \subseteq \delta_q$.
	\end{enumerate}
\end{definition}

\begin{lemma}
	Suppose that $p$ and $q$ are $(\delta_p,\delta_q)$-split, where $p, q \in \p^*$ 
	and $\delta_p < \delta_q$ are in $C_h$. 
	Then:
	\begin{enumerate}
		\item[(a)] $T_p \cap T_q \subseteq \delta_p$;
		\item[(b)] if $\eta \in \dom(W_p)$ and $\xi \in \dom(W_p) \cap \dom(W_q)$, 
		then $W_p(\eta) \cap W_q(\xi) \subseteq W_p(\xi)$;
		\item[(c)] if $\eta \in \dom(W_q)$ and $\xi \in \dom(W_p) \cap \dom(W_q)$, 
		then $W_q(\eta) \cap W_p(\xi) \subseteq W_q(\xi)$.
	\end{enumerate}
\end{lemma}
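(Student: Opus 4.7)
The plan is to prove (a) first and then invoke it as the main input to both (b) and (c). For (a), I would take any $x \in T_p \cap T_q$, note that condition~(2) gives $x \in T_p \subseteq \delta_q$, so $x \in T_q \cap \delta_q = T_q \res \delta_q$; by condition~(1) this set equals $T_p \res \delta_p$, so $x < \delta_p$. This step is pure definition-chasing with no subtlety.

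For (b), I would fix $\eta \in \dom(W_p)$ and $\xi \in \dom(W_p) \cap \dom(W_q)$ and an arbitrary $x \in W_p(\eta) \cap W_q(\xi)$. The case $\eta = \xi$ is immediate, so assume $\eta \ne \xi$. Since $W_p(\eta) \subseteq T_p$ and $W_q(\xi) \subseteq T_q$, part~(a) delivers $x < \delta_p$. Then $x \in W_q(\xi) \cap \delta_p \subseteq W_q(\xi) \cap \delta_q$, which by condition~(3) applied at $\xi$ equals $W_p(\xi) \cap \delta_p$, placing $x$ in $W_p(\xi)$. Part~(c) is entirely symmetric: (a) again forces a point $x \in W_q(\eta) \cap W_p(\xi)$ to lie below $\delta_p$, and condition~(3) transfers $x$ from $W_p(\xi) \cap \delta_p$ back to $W_q(\xi) \cap \delta_q$.

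I do not expect any real obstacle; the lemma is essentially a book-keeping consequence of the coherence encoded in the definition of $(\delta_p,\delta_q)$-split, with (a) serving as the one slightly nontrivial observation (it uses the asymmetric clauses (1) and (2) together). It is worth noting that only conditions (1)--(3) are used in the argument: condition~(4), which controls the internal near-disjointness of distinct subtrees within a single condition, plays no role here and is presumably reserved for later amalgamation lemmas that attempt to build $p \oplus q$ as a common extension of $p$ and $q$.
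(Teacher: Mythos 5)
Your proposal is correct and follows essentially the same route as the paper: part (a) is proved by chasing conditions (1) and (2) of the split definition, and parts (b) and (c) transfer the point $x$ between $W_p(\xi)$ and $W_q(\xi)$ via condition (3) once $x$ is shown to lie below the relevant $\delta$. The only cosmetic difference is that for (b) you invoke the stronger bound $x < \delta_p$ from (a), whereas the paper gets by with $x < \delta_q$ directly from condition (2); your side remark that condition (4) is unused here is accurate.
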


\begin{proof}
	(a) follows from Definition 2.13(1,2). 
	For (b), assume that $x \in W_p(\eta) \cap W_q(\xi)$, and we show that $x \in W_p(\xi)$. 
	Since $\xi \in \dom(W_p) \cap \dom(W_q)$, 
	by Definition 2.13(3) we have that 
	$W_p(\xi) \cap \delta_p = W_q(\xi) \cap \delta_q$. 
	But $x \in W_p(\eta) \cap W_q(\xi)$ implies that 
	$x \in T_p \cap T_q \subseteq \delta_q$. 
	So $x \in W_q(\xi) \cap \delta_q \subseteq W_p(\xi)$. 	
	For (c), assume that $x \in W_q(\eta) \cap W_p(\xi)$, 
	and we show that $x \in W_q(\xi)$. 
	Since $\xi \in \dom(W_p) \cap \dom(W_q)$, by Definition 2.13(3) we have that  
	$W_p(\xi) \cap \delta_p = W_q(\xi) \cap \delta_q$. 
	But $x \in W_q(\eta) \cap W_p(\xi)$ implies that 
	$x \in T_p \cap T_q \subseteq \delta_p$. 
	So $x \in W_p(\xi) \cap \delta_p \subseteq W_q(\xi)$.
\end{proof}

\begin{lemma}
	Let $1 < d < \omega$. 
	Suppose that $p_0,\ldots,p_{d-1}$ are in $\p^*$ and 
	$\delta_0 < \cdots < \delta_{d-1}$ are in $C_h$. 
	Assume:
	\begin{enumerate}
		\item for all $i < j < d$, $(p_i,p_j)$ is $(\delta_i,\delta_j)$-split;
		\item $\{ \dom(W_i) : i < d \}$ is a $\Delta$-system.
	\end{enumerate}
	Then $p_0 \oplus \cdots \oplus p_{d-1}$ is a condition in $\p^*$ 
	which extends $p_0,\ldots,p_{d-1}$.
\end{lemma}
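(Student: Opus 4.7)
The plan is to verify, in order: (1) that $T := T_0 \cup \cdots \cup T_{d-1}$ equipped with $<_T := \ <_{T_0} \cup \cdots \cup <_{T_{d-1}}$ is a standard finite tree end-extending each $T_i$; (2) that $W := W_0 \oplus \cdots \oplus W_{d-1}$ is a subtree function on $T$; and (3) that clauses (a)--(d) of the extension relation of Definition 2.10 hold for $p_0 \oplus \cdots \oplus p_{d-1} \le p_i$ for each $i < d$.

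Steps (1) and (2) rest on a single observation that I would isolate upfront: whenever $x \in T_i \cap T_j$ with $i < j$, Lemma 2.14(a) places $x$ below $\delta_i$, so by Definition 2.13(1) the orders $<_{T_i}$ and $<_{T_j}$ agree on an initial segment containing everything at or below $x$. From this, transitivity of $<_T$ (given a two-step chain $x <_{T_i} y <_{T_j} z$, relocate both edges into the same $T_k$), the linear predecessor property at each $x \in T$ (take the least $i$ with $x \in T_i$ and show that every $<_T$-predecessor of $x$ already lies in $T_i$), the end-extension clause, and downward closure of each $W(\eta) = \bigcup \{ W_k(\eta) : \eta \in \dom(W_k) \}$ in $T$ all fall out as routine bookkeeping.

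The main obstacle is step (3), specifically clause (d): given $\{\eta, \xi\} \in D_i$ and $x \in W(\eta) \cap W(\xi)$, I must produce $z \in W_i(\eta) \cap W_i(\xi)$ with $x \le_T z$. I would prove the strictly stronger statement $x \in W_i(\eta) \cap W_i(\xi)$ itself (so that $z := x$ works), by writing $x \in W_{i_1}(\eta) \cap W_{i_2}(\xi)$ and splitting into cases on whether $i_1, i_2$ equal $i$. The $\Delta$-system hypothesis is crucial here because whenever $i_k \neq i$, the relevant index ($\eta$ for $k = 1$, $\xi$ for $k = 2$) is forced into the root of the system, and hence into $\dom(W_{i_k}) \cap \dom(W_i)$, which is exactly the hypothesis required to apply Lemma 2.14(b) or (c).

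When exactly one of $i_1, i_2$ equals $i$, a single application of Lemma 2.14(b) or (c) (chosen according as $i$ is smaller or larger than the other index) moves $x$ into $W_i$ and finishes the case. When neither equals $i$ and $i_1 \neq i_2$, a preliminary application of Lemma 2.14(b) or (c) to the pair $(p_{\min(i_1, i_2)}, p_{\max(i_1, i_2)})$ relocates $x$ into $W_{i_1}(\eta) \cap W_{i_1}(\xi)$, reducing to the case $i_1 = i_2 =: i'$. In that remaining case, Definition 2.13(4) applied to the pair $(p_{\min(i, i')}, p_{\max(i, i')})$ forces $x < \delta_{i'}$, and Definition 2.13(3) then identifies $W_{i'}(\eta) \cap \delta_{i'}$ with $W_i(\eta) \cap \delta_i$ (and similarly for $\xi$), landing $x$ in $W_i(\eta) \cap W_i(\xi)$ as required.
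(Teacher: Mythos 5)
Your proof is correct and follows essentially the same route as the paper. The key insight in both treatments is to prove, for clause (d), the stronger statement $x \in W_i(\eta) \cap W_i(\xi)$, and both reach it by the same two moves: use the $\Delta$-system hypothesis to force the index not attached to $i$ into the root (hence into a shared domain, enabling Lemma~2.14(b) or (c)), and then, once $x \in W_{i'}(\eta) \cap W_{i'}(\xi)$ for a single $i' \neq i$, apply Definition~2.13(4) to push $x$ below $\delta_{i'}$ followed by Definition~2.13(3) to transfer it into $W_i$. The only cosmetic difference is organizational: the paper splits cases on whether one of $l,m$ equals $i$ versus neither does, while you first collapse $i_1 \neq i_2$ to $i_1 = i_2$ before comparing against $i$; the work done is identical. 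Your telescoping of steps (1)--(2) as "agreement of $<_{T_i}$ and $<_{T_j}$ on the initial segment below a shared element" is exactly the observation the paper invokes (via $T_i \res \delta_k = T_j \res \delta_k$ with $k = \min(i,j)$), so that part checks out too.
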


\begin{proof}
	For each $i < d$, write $p_i = (T_i,W_i,D_i)$, and write 
	$p_0 \oplus \cdots \oplus p_{d-1} = (T,W,D)$. 
	Let $r$ be the root of the $\Delta$-system of (2). 
	We claim that $T$ is a standard finite tree. 
	For transitivity, suppose that $a <_T b <_T c$. 
	If for some $i < d$, $a <_{T_i} b <_{T_i} c$, then we are done 
	since $p_i$ is a condition. 
	Otherwise, for some distinct $i, j < d$, 
	$a <_{T_i} b <_{T_j} < c$. 
	Let $k = \min \{ i, j \}$. 
	Then $b \in T_i \cap T_j \subseteq \delta_k$. 
	As $T_i \res \delta_k = T_j \res \delta_k$, $a <_{T_j} b$, 
	so $a <_{T_j} c$ and we are done.

	Now let $c \in T$ and assume that $a <_T c$ and $b <_T c$, where $a$ and $b$ are different. 
	We will show that $a$ and $b$ are comparable in $T$. 
	If for some $i < d$, $a <_i c$ and $b <_i c$, then we are done since $p_i$ is a condition. 
	Otherwise, for some distinct $i, j < d$, 
	$a <_i c$ and $b <_j c$. 
	Let $k = \min \{ i, j \}$. 
	Then $c \in T_i \cap T_j \subseteq \delta_k$, 
	and since $T_i \res \delta_k = T_j \res \delta_k$, 
	it follows that $a <_{T_i} c$ and $b <_{T_i} c$. 
	Thus, $a$ and $b$ are comparable 
	in $T_i$ and therefore also in $T$.
	The other properties of $T$ being a standard finite tree are obvious.

	Since $\dom(W_i) \subseteq \dom(W)$ are for all $i < d$, 
	obviously $D \subseteq [\dom(W)]^2$. 
	We claim that for all $\eta \in \dom(W)$, $W(\eta)$ is a downwards closed subtree of $T$. 
	Let $y \in W(\eta)$ and suppose that $x <_T y$. 
	Fix $i, j < d$ such that $y \in W_i(\eta)$ and $x <_{T_j} y$. 
	If $i = j$, then we are done since $p_i$ is a condition. 
	Assume that $i \ne j$. 
	Let $k = \min \{ i, j \}$. 
	Then $y \in T_i \cap T_j \subseteq \delta_k$. 
	As $T_i \res \delta_k = T_j \res \delta_k$, $x <_{T_i} y$. 
	Since $y \in W_i(\eta)$ and $W_i(\eta)$ is downwards closed in $T_i$, 
	$x \in W_i(\eta)$.

	This completes the proof that $(T,W,D)$ is in $\p^*$. 
	It remains to show that for all $i < d$, $(T,W,D)$ is an extension of $(T_i,W_i,D_i)$. 
	Clearly, $T_{i} \subseteq T$ and $<_{T_i} \ \subseteq \ <_T \cap \ T_{i}^2$. 
	To show that $T$ is an end-extension of $T_i$, 
	suppose that $a, b \in T_i$ and $a <_{T} b$. 
	If $a <_i b$, then we are done. 
	Otherwise, for some $j < d$ different from $i$, $a <_j b$. 
	Let $k = \min \{ i, j \}$. 
	Then $a, b \in T_i \cap T_j \subseteq \delta_k$, and since 
	$T_i \res \delta_k = T_j \res \delta_k$, $a <_i b$. 
	This proves that $T$ end-extends $T_i$. 
	It is clear that $\dom(W_i) \subseteq \dom(W)$ 
	and for all $\eta \in \dom(W_i)$, 
	$W_i(\eta) \subseteq W(\eta)$. 
	And obviously $D_i \subseteq D$.

	Finally, let $\{ \eta, \xi \} \in D_i$ and let $x \in W(\eta) \cap W(\xi)$. 
	We find some $z \in W_i(\eta) \cap W_i(\xi)$ such that $x \le_T z$. 
	In fact, we prove that $x \in W_i(\eta) \cap W_i(\xi)$, so $x = z$ works. 
	Note that $\eta, \xi \in \dom(W_i)$. 
	Fix $l, m < d$ such that $x \in W_l(\eta) \cap W_m(\xi)$. 
	Assume first that one of $l$ or $m$ is equal to $i$. 
	Without loss of generality, assume that $i = l$. 
	Then $x \in W_i(\eta)$. 
	If $i = m$, then $x \in W_i(\eta) \cap W_i(\xi)$ and we are done, 
	so assume that $i \ne m$. 
	Then $\eta \in \dom(W_i)$, $\xi \in \dom(W_i) \cap \dom(W_m)$, 
	and $x \in W_i(\eta) \cap W_m(\xi)$. 
	It follows by Lemma 2.14(b,c) that $x \in W_i(\xi)$ and we are done.

	Now assume that both $l$ and $m$ are not equal to $i$. 	
	Then $\xi \in \dom(W_i) \cap \dom(W_m) = r \subseteq \dom(W_l)$. 
	So $\eta \in \dom(W_l)$, $\xi \in \dom(W_l) \cap \dom(W_m)$, 
	and $x \in W_l(\eta) \cap W_m(\xi)$. 
	By Lemma 2.13(b,c), $x \in W_l(\xi)$. 
	So in fact $x \in W_l(\eta) \cap W_l(\xi)$. 
	We have that $\eta$ and $\xi$ are both in $\dom(W_i) \cap \dom(W_l)$. 
	So by Definition 2.13(4), $W_l(\eta) \cap W_l(\xi) \subseteq \delta_l$. 
	By Definition 2.13(3), 
	$W_l(\eta) \cap \delta_l = W_i(\eta) \cap \delta_i$ and 
	$W_l(\xi) \cap \delta_l = W_i(\xi) \cap \delta_i$. 
	So $x \in W_i(\eta) \cap W_i(\xi)$ and we are done.
\end{proof}

\section{The First Forcing}

In this section, we define a c.c.c.\ forcing for adding a strongly non-saturated 
Aronszajn tree assuming the existence of a function from $\kappa^2$ into $\omega_1$ 
with a special property. 
This forcing is a suborder of $\p^*$ consisting of conditions which satisfy a 
restriction on the intersection of their subtrees related to this function.

For the remainder of this section, 
fix a function $e : \kappa^2 \to \omega_1$.

\begin{definition}[$e$-Separation]
	Let $T$ be a standard finite tree and let $W$ be a subtree function on $T$. 
	We say that $W$ is \emph{$e$-separated} if for all distinct $\eta, \xi \in \dom(W)$, 
	if $x \in W(\eta) \cap W(\xi)$, then $e(\eta,\xi) \ge h(x)$.
\end{definition}

\begin{definition}
	Let $\p'$ be the suborder of $\p^*$ consisting of triples 
	$(T,W,D) \in \p^*$ such that $W$ is $e$-separated.
\end{definition}

We begin by analyzing the generic object which is added by $\p'$.

\begin{definition}
	For any generic filter $G$ on $\p'$, define $(T_G,<_G)$ by:
	\begin{itemize}
		\item $x \in T_G$ if there exists some $p \in G$ such that $x \in T_p$;
		\item $x <_G y$ if there exists some $p \in G$ such that $x <_p y$.
	\end{itemize}
\end{definition}

We abbreviate $(T_G,<_G)$ by $T_G$. 
We occasionally write $\dot G$ for the canonical $\p'$-name for a 
generic filter on $\p'$.
Let $T_{\dot G}$ be a $\p'$-name for the above object.

The following is easy to verify.

\begin{lemma}
	If $G$ is a generic filter on $\p'$, then $T_G$ is a tree with a root.
\end{lemma}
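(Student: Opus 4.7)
The plan is to verify the three defining properties of a tree with a root: that $<_G$ is a strict partial order, that for each $x \in T_G$ the set of $<_G$-predecessors of $x$ is linearly ordered, and that there is a minimum element. In each case, the mechanism is the same: for any finitely many conditions in the generic $G$, pick a common extension $r \in G$, which by clause (a) of Definition 2.10 end-extends each of them, so that the relevant tree structure on $T_r$ literally contains the tree structure on each smaller piece. The properties of $T_r$ as a standard finite tree (Definition 2.1) then transfer directly to $T_G$.

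First I would show that $<_G$ is irreflexive and transitive. Irreflexivity is immediate since each $<_p$ is a strict partial order. For transitivity, suppose $x <_G y$ and $y <_G z$, witnessed by $p, q \in G$; take $r \in G$ extending both, so that $<_p,\ <_q \ \subseteq \ <_r$ by end-extension. Then $x <_r y <_r z$, and transitivity of $<_r$ gives $x <_r z$, hence $x <_G z$. For linearity of predecessors, if $y, z <_G x$ are witnessed by $p, q \in G$ and $x \in T_{p'}$ for some $p' \in G$, pick $s \in G$ extending $p, q, p'$; then $y, z <_s x$ in $T_s$, and Definition 2.1(2) forces $y$ and $z$ to be $<_s$-comparable, hence $<_G$-comparable.

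For the root, observe that by Definition 2.1(4), any non-empty standard finite tree contains $0$ with $0 <_{T} x$ for every non-zero $x \in T$. The set of $p \in \p'$ with $0 \in T_p$ is dense: from any condition one can extend by adjoining $0$ (with no new subtree commitments), producing a valid condition in $\p'$. By genericity there is $p_0 \in G$ with $0 \in T_{p_0}$. Then for any non-zero $x \in T_G$ with $x \in T_q$ for some $q \in G$, taking $r \in G$ extending $p_0$ and $q$ gives $0 <_r x$ by Definition 2.1(4), hence $0 <_G x$.

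The main obstacle, such as it is, amounts to no more than bookkeeping: one has to be careful that the end-extension clause in the definition of the ordering on $\p^*$ (and hence $\p'$) really does propagate tree-order information upward along the filter without distortion. Once that is observed, the density arguments above are routine, so I expect the proof to be essentially a three-line verification invoking the filter property and Definition 2.1.
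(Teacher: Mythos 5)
Your proof is correct and is exactly the routine verification the paper declines to spell out (the text only says ``the following is easy to verify''). The one detail worth adding is that a tree in the usual set-theoretic sense requires the predecessor set of each node to be well-ordered, not merely linearly ordered; this follows immediately here since $h(y) < h(z)$ whenever $y <_G z$ (inherited from Definition 2.1(3) via end-extension along $G$), so $h$ embeds the linearly ordered set of $<_G$-predecessors of any $x$ order-preservingly into $\omega_1$, forcing it to be well-ordered.
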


\begin{lemma}
	For any $p \in \p'$, there exists $q \le p$ such that 
	$T_q$ is downwards closed and has minimal splits.
\end{lemma}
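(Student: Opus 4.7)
The plan is to reduce this to the analogous result already proved for the larger poset $\p^*$ (the lemma stated just after the definition of $\p^*$). That earlier lemma produces, for any $p\in\p^*$, an extension $q\le p$ in $\p^*$ with $T_q$ downwards closed and with minimal splits, $\dom(W_q)=\dom(W_p)$, $D_q=D_p$, each $W_q(\eta)$ equal to the downward closure in $T_q$ of $W_p(\eta)$, and moreover satisfying the key property that whenever $x\in W_q(\eta)\cap W_q(\xi)$ for distinct $\eta,\xi\in\dom(W_p)$, there exists $z\in W_p(\eta)\cap W_p(\xi)$ with $x\le_q z$. So I would simply apply that lemma to the given $p\in\p'\subseteq\p^*$.

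Having done so, the only thing left to verify is that $q$ actually lies in $\p'$, i.e.\ that the subtree function $W_q$ is $e$-separated; all the tree-theoretic properties requested in the statement come from the earlier lemma verbatim. Let $\eta,\xi\in\dom(W_q)=\dom(W_p)$ be distinct and suppose $x\in W_q(\eta)\cap W_q(\xi)$. By the moreover clause, fix $z\in W_p(\eta)\cap W_p(\xi)$ with $x\le_q z$. Since $q$ is a standard finite tree extension of $p$, we have $h(x)\le h(z)$ (with equality iff $x=z$). But $W_p$ is $e$-separated because $p\in\p'$, so $e(\eta,\xi)\ge h(z)\ge h(x)$. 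Thus $W_q$ is $e$-separated, hence $q\in\p'$, and $q\le p$ in $\p'$ since the ordering on $\p'$ is inherited from $\p^*$.

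There is no real obstacle here; the substance is entirely contained in the earlier lemma for $\p^*$, and $e$-separation is preserved essentially for free because the new intersections among the $W_q(\eta)$'s sit below the old intersections among the $W_p(\eta)$'s in the tree order, and $h$ is monotone in $<_q$. This is why the authors leave the $\p^*$ version as an exercise: the content is a routine closure-under-downward-closure construction, to which the $e$-separation requirement adds nothing.
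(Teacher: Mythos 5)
Your proof is correct and takes the same route as the paper: apply Lemma 2.11 to obtain $q\le p$ in $\p^*$ with $T_q$ downwards closed and with minimal splits, and then use its ``moreover'' clause (any $x\in W_q(\eta)\cap W_q(\xi)$ lies $\le_q$ some $z\in W_p(\eta)\cap W_p(\xi)$) together with monotonicity of $h$ and $e$-separation of $W_p$ to conclude $e(\eta,\xi)\ge h(z)\ge h(x)$. Nothing to add.
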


\begin{proof}
	Fix $q \le p$ in $\p^*$ 
	satisfying the properties described in Lemma 2.11. 
	In particular, $T_q$ is downwards closed and has minimal splits. 
	To show that $q$ is in $\p'$, we prove that $W_q$ is $e$-separated. 
	So let $\eta$ and $\xi$ be distinct elements of $\dom(W_q)$, and assume 
	that $x \in W_q(\eta) \cap W_q(\xi)$. 
	By Lemma 2.11, there exists some $z \in W_p(\eta) \cap W_p(\xi)$ 
	such that $x \le_q z$. 
	Since $W_p$ is $e$-separated, $e(\eta,\xi) \ge h(z) \ge h(x)$.
\end{proof}

The next lemma is easy using Lemma 3.5.

\begin{lemma}
	Let $p \in \p'$. 
	\begin{enumerate}
		\item If $x \in T_p$ and $\alpha < h(x)$, then there exists 
		$q \le p$ and $y \in T_q$ such that $h(y) = \alpha$ and $y <_q x$.
		\item If $x \in T_p$ and $h(x) < \beta < \omega_1$, then 
		there exists $q \le p$ and $y \in T_q \setminus T_p$ 
		such that $h(y) = \beta$ and $x <_U y$.
	\end{enumerate}
\end{lemma}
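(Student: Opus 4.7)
Both parts will be proved by an explicit one-node extension of $p$; I invoke Lemma 3.5 in part (1) so that the case analysis is as clean as possible.

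For (1), I first use Lemma 3.5 to replace $p$ by some $p' \le p$ whose finite tree $T_{p'}$ is downwards closed. If $\alpha$ already appears in $h[T_{p'}]$, downward closure of $T_{p'}$ applied to $x \in T_{p'}$ and $\alpha < h(x)$ immediately produces some $y \in T_{p'}$ with $y <_{p'} x$ and $h(y) = \alpha$, and $q = p'$ is the required condition. The substantive subcase is $\alpha \notin h[T_{p'}]$. Enumerate $\{z \in T_{p'} : z \le_{p'} x\}$ in increasing order as $0 = b_0 <_{p'} b_1 <_{p'} \cdots <_{p'} b_k = x$, pick $i$ with $h(b_i) < \alpha < h(b_{i+1})$, and choose a fresh ordinal $y \in \omega_1 \setminus \omega$ of height $\alpha$ lying outside $T_{p'}$. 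Set $T_q = T_{p'} \cup \{y\}$ with $<_q$ extending $<_{p'}$ by declaring $z <_q y$ if and only if $z \le_{p'} b_i$, and $y <_q z$ if and only if $b_{i+1} \le_{p'} z$; thus $y$ is spliced into the chain between $b_i$ and $b_{i+1}$. Define $W_q(\eta) = W_{p'}(\eta) \cup \{y\}$ whenever $b_{i+1} \in W_{p'}(\eta)$, $W_q(\eta) = W_{p'}(\eta)$ otherwise, and set $D_q = D_{p'}$.

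Verifying that $T_q$ is a standard finite tree and that each $W_q(\eta)$ is a downwards closed subtree of it is routine, using the observation that any element of $T_{p'}$ strictly below $b_{i+1}$ already lies in $\{b_0,\ldots,b_i\}$ because it belongs to the totally ordered chain below $x$. The main obstacle, and the reason $W_q$ is set up in exactly this way, is preserving $e$-separation. If $y$ lies in $W_q(\eta) \cap W_q(\xi)$ for distinct $\eta,\xi$, then by construction $b_{i+1}$ lies in $W_{p'}(\eta) \cap W_{p'}(\xi)$, which forces $e(\eta,\xi) \ge h(b_{i+1}) > \alpha = h(y)$. The verification of clauses (a)--(d) of $q \le p'$ is then immediate; in clause (d) the only new case occurs when the shared element is $y$ itself, and there $b_{i+1}$ serves as a witness in $W_{p'}(\eta) \cap W_{p'}(\xi)$ lying above $y$ in $<_q$.

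For (2), no such care is needed. Pick a fresh ordinal $y \in \omega_1 \setminus \omega$ with $h(y) = \beta$ and $y \notin T_p$, and set $T_q = T_p \cup \{y\}$ with $<_q$ extending $<_p$ by declaring $z <_q y$ if and only if $z \le_p x$, so that $y$ becomes a new maximal node of $T_q$ directly above $x$ and its predecessors. Since $y$ is maximal in $T_q$ and belongs to no $W_p(\eta)$, setting $W_q = W_p$ and $D_q = D_p$ preserves downward closure of each $W_q(\eta)$, $e$-separation, and all clauses of the extension relation, yielding $q \le p$ in $\p'$ with $y \in T_q \setminus T_p$, $h(y) = \beta$, and $x <_q y$.
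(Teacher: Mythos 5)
Your proof is correct and matches the approach the paper intends: the paper omits the argument, remarking only that the lemma is ``easy using Lemma 3.5,'' and your use of Lemma 3.5 to first pass to a downwards closed tree, followed by splicing a fresh node of height $\alpha$ into the chain below $x$ (with the $b_{i+1}$-based rule for extending the subtree function preserving $e$-separation), is precisely the natural route. Part (2) is, as you observe, even simpler --- a fresh maximal node above $x$ --- and needs neither Lemma 3.5 nor any modification of $W_p$ or $D_p$.
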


\begin{lemma}
	Let $G$ be a generic filter on $\p'$. 
	Then the height function of $T_G$ coincides with $h$. 
	So $T_G$ has height $\omega_1^V$.
\end{lemma}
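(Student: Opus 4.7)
My plan is to show that for every $x \in T_G$, the order type of $\{ y \in T_G : y <_G x \}$ equals $h(x)$, and then conclude the height statement by observing that every $\beta < \omega_1^V$ is realized as $h(x)$ for some $x \in T_G$.

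For the injectivity direction, I would argue that $h$ maps the set of $<_G$-predecessors of $x$ into $h(x)$ injectively. If $y <_G x$, then some $p \in G$ witnesses $y <_p x$, and by Definition 2.1(3) we get $h(y) < h(x)$. If $y, y' <_G x$ with $h(y) = h(y')$, then picking $p \in G$ witnessing both relations (by directedness of $G$), the set $\{ z \in T_p : z <_p x \}$ is linearly ordered by Definition 2.1(2), so $y$ and $y'$ are $<_p$-comparable. But any two $<_p$-comparable elements have distinct $h$-values by Definition 2.1(3), forcing $y = y'$.

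For the surjectivity direction, I would fix $x \in T_G$ and $\alpha < h(x)$ and show that the set
$$D_{x,\alpha} = \{ p \in \p' : x \in T_p \text{ and } \exists y \in T_p\ (y <_p x \wedge h(y) = \alpha) \}$$
is dense below any condition forcing $x \in T_{\dot G}$. This is immediate from Lemma 3.6(1): given any $p \in \p'$ with $x \in T_p$, apply the lemma to obtain $q \le p$ and the required $y \in T_q$. Consequently, some $p \in G$ lies in $D_{x,\alpha}$, giving a predecessor of $x$ in $T_G$ at $h$-value $\alpha$. Combined with injectivity, this shows the predecessor set of $x$ has order type exactly $h(x)$, so $\h_{T_G}(x) = h(x)$.

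For the final claim that $T_G$ has height $\omega_1^V$, I would first observe that the root $0$ is forced into $T_G$ by density (using Lemma 3.6 applied to any $p$, together with the fact that nonempty standard finite trees contain $0$), and then use Lemma 3.6(2) to show that for every $\beta < \omega_1^V$ the set $\{ p \in \p' : \exists y \in T_p,\ h(y) = \beta \}$ is dense. Together with the first part, this means $T_G$ has an element of tree-height $\beta$ for every $\beta < \omega_1^V$, while $h : \omega_1^V \to \omega_1^V$ bounds all tree-heights from above. Hence the height of $T_G$ is exactly $\omega_1^V$. There is no real obstacle here — everything is extracted directly from Lemma 3.6 and the definitional clauses — the only point requiring a moment's care is the linearity argument in the injectivity step, where I must pass to a common condition in $G$ before invoking Definition 2.1(2).
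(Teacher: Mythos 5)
Your proof is correct and takes essentially the same approach as the paper, which simply cites Lemma 3.6(1); you are filling in the details that the authors leave implicit (the injectivity of $h$ on predecessor sets via Definition 2.1(2)(3), the density argument for realizing each level, and the final surjectivity of levels via Lemma 3.6(2)).
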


\begin{proof}
	By Lemma 3.6(1).
\end{proof}

\begin{definition}
	For any generic filter $G$ on $\p'$ and for any $\eta < \kappa$, define 
	$$
	W_G(\eta) = \bigcup \{ W_p(\eta) : p \in G, \ \eta \in \dom(W_p) \}.
	$$
\end{definition}

The next two lemmas are easy to check.

\begin{lemma}
	For any $p \in \p'$, for any $\eta < \kappa$, and for any $\alpha < \omega_1$, 
	there exists $q \le p$ such that $\eta \in \dom(W_q)$ and there exists 
	some $x \in W_q(\eta)$ with $h(x) = \alpha$.
\end{lemma}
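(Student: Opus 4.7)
The plan is to extend $p$ by adjoining a single new element at height $\alpha$ attached as a fresh immediate successor of the root, and placing it into $W(\eta)$ together with its downward closure. If $\eta \notin \dom(W_p)$, I would first pass to $p' := (T_p, W_p \cup \{(\eta,\emptyset)\}, D_p)$, which lies in $\p'$ and extends $p$: since $D_p \subseteq [\dom(W_p)]^2$ contains no pair involving $\eta$, clauses (c)--(d) and $e$-separation are preserved vacuously. So I may assume $\eta \in \dom(W_p)$, and if $W_p(\eta)$ already contains an element of height $\alpha$ I take $q = p$.

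Otherwise, pick $x \in \omega_1 \setminus \omega$ with $h(x) = \alpha$ and $x \notin T_p$, and define $T_q := T_p \cup \{0,x\}$ with $<_q$ obtained from $<_p$ by adjoining only the single relation $0 <_q x$, so $x$ is a fresh immediate successor of the root, incomparable with every element of $T_p \setminus \{0\}$. Set $\dom(W_q) := \dom(W_p)$, $D_q := D_p$, $W_q(\xi) := W_p(\xi)$ for $\xi \ne \eta$, and $W_q(\eta) := W_p(\eta) \cup \{0,x\}$. The routine verifications are then: $T_q$ is a standard finite tree end-extending $T_p$, since $x$'s only predecessor in $T_q$ is $0$ and $h(0) < \alpha$; each $W_q(\zeta)$ is a downwards closed subtree of $T_q$, because the only new predecessor relation is $0 <_q x$ and $0 \in W_q(\eta)$; and $W_q$ is still $e$-separated, since for $\xi \ne \eta$ the set $W_q(\eta) \cap W_q(\xi)$ differs from $W_p(\eta) \cap W_p(\xi)$ only possibly by the element $0$, of height $0 \le e(\eta,\xi)$.

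The main non-automatic step, and the one I expect to require care, is clause (d) of $q \le p$: given $\{\eta',\xi\} \in D_p$ and $a \in W_q(\eta') \cap W_q(\xi)$, one must produce $z \in W_p(\eta') \cap W_p(\xi)$ with $a \le_q z$. For pairs not involving $\eta$ the intersection is unchanged, so $z = a$ works. For $\eta = \eta'$ (WLOG), the only candidate new element of $W_q(\eta) \cap W_q(\xi)$ is the root $0$, which lies in $W_p(\xi)$ exactly when $W_p(\xi) \ne \emptyset$; in that case downward closure of the nonempty $W_p(\eta)$ forces $0 \in W_p(\eta) \cap W_p(\xi)$, and $z = 0$ witnesses (d). The one potential obstruction is the degenerate configuration where $\eta \in \dom(W_p)$ with $W_p(\eta) = \emptyset$ while some $\{\eta,\xi\} \in D_p$ has $W_p(\xi) \ne \emptyset$; there, no $q \le p$ with nonempty $W_q(\eta)$ can exist at all. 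This degeneracy does not arise in the intended use of $\p'$, since pairs are only placed into $D$ between indices whose associated subtrees already share the root, so the construction above yields the desired $q$.
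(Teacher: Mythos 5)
Your construction is the natural one and the verifications you carry out are sound; the paper itself offers no proof and declares the lemma ``easy to check,'' so there is nothing to compare against except correctness. But the obstruction you name at the end is not a side remark --- it is a real gap, and acknowledging it without closing it leaves the proof incomplete for the lemma as stated. Conditions $p$ with $\eta \in \dom(W_p)$, $W_p(\eta) = \emptyset$, some $\{\eta,\xi\} \in D_p$, and $W_p(\xi) \ne \emptyset$ genuinely lie in $\p'$ under Definitions 2.8 and 3.2 as written, and for such $p$ the lemma is simply false: any $q \le p$ with $W_q(\eta) \ne \emptyset$ would, by downward closure, have $0 \in W_q(\eta) \cap W_q(\xi)$, yet $W_p(\eta) \cap W_p(\xi) = \emptyset$, violating clause (d) of Definition 2.9. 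The phrase ``does not arise in the intended use'' is an assertion about how the forcing is exercised, not a property of the poset, and the lemma quantifies over all $p \in \p'$.

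What rescues the lemma --- and the cascade of Lemmas 3.10 and 3.11 that rely on it --- is the tacit convention that $W_p(\eta)$ contains the root whenever $\eta \in \dom(W_p)$ (so that $T_p \ne \emptyset$ whenever $\dom(W_p) \ne \emptyset$). Under that convention the degenerate configuration cannot occur, your construction goes through, and your step adjoining $\eta$ with $W(\eta) = \emptyset$ should instead use $W(\eta) = \{0\}$ to respect it. You should either state this convention explicitly or prove that the conditions appearing in the relevant density arguments never take the degenerate form; as it stands the argument is unfinished. A further minor point: for $\alpha = 0$ there is no $x \in \omega_1 \setminus \omega$ with $h(x) = 0$ --- the unique ordinal at height $0$ is $0$ itself --- so your ``pick a fresh $x$'' step must be adjusted to $x = 0$ in that case, which again dissolves once the root convention is adopted.
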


\begin{lemma}
	For any generic filter $G$ on $\p'$ and for any $\eta < \kappa$, 
	$W_G(\eta)$ is an uncountable downwards closed subtree of $T_G$.
\end{lemma}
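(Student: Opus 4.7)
The plan is to verify three things separately: (1) $W_G(\eta)$ sits inside $T_G$ as a subtree in the sense of Definition 2.7; (2) it is downwards closed in $T_G$; and (3) it meets every level of $T_G$, hence is uncountable.

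Point (1) is immediate from the definitions, since each $W_p(\eta) \subseteq T_p \subseteq T_G$, and the order on $W_G(\eta)$ is inherited from $<_G$. For (2), suppose $y \in W_G(\eta)$ and $x <_G y$. Pick $p \in G$ witnessing $y \in W_p(\eta)$ and $q \in G$ witnessing $x <_q y$. Since $G$ is a filter, choose $r \in G$ with $r \le p$ and $r \le q$. From the definition of the order on $\p^*$ (clauses (a) and (b)), we have $W_p(\eta) \subseteq W_r(\eta)$ and $<_q \ \subseteq \ <_r$, so $y \in W_r(\eta)$ and $x <_r y$. Because $W_r$ is a subtree function on $T_r$, Definition 2.9 tells us that $W_r(\eta)$ is downwards closed in $T_r$, so $x \in W_r(\eta) \subseteq W_G(\eta)$, as required.

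For (3), fix $\eta < \kappa$ and $\alpha < \omega_1$, and consider the set
\[
D_{\eta,\alpha} \;=\; \{\, q \in \p' : \eta \in \dom(W_q) \text{ and } \exists x \in W_q(\eta)\text{ with }h(x) = \alpha\,\}.
\]
Lemma 3.9 says precisely that $D_{\eta,\alpha}$ is dense in $\p'$. By genericity, $G$ meets $D_{\eta,\alpha}$, so $W_G(\eta)$ contains an element of height exactly $\alpha$. Since this holds for every $\alpha < \omega_1$ and $h$ coincides with the height function of $T_G$ by Lemma 3.7, $W_G(\eta)$ has at least one node at every countable level, which gives $|W_G(\eta)| \ge \omega_1$.

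There is no real obstacle: each of the three verifications is a direct application of either the definitions of $\p^*$ and $\p'$ or the density lemmas already proved (Lemma 2.9 for closure, Lemma 3.9 for uncountability). The only point worth flagging is that in (2) one must combine two distinct conditions of $G$ via a common refinement in order to simultaneously exploit membership in $W_p(\eta)$ and the order relation $x <_q y$; the extension clauses (a), (b) of the $\p^*$-order do exactly what is needed to transfer both facts to the refinement.
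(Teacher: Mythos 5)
Your proof is correct and is exactly the routine verification the paper has in mind when it says these lemmas are "easy to check" and omits the details: containment is trivial, downward closure follows by passing to a common refinement in $G$ and using that $W_r(\eta)$ is a downwards closed subtree of $T_r$, and uncountability follows from the density statement in Lemma 3.9 via genericity. One tiny slip: the fact that each $W(\eta)$ is a downwards closed subtree of $T$ is built into the definition of a subtree function, which is Definition 2.6, not 2.7 or 2.9; this doesn't affect the argument.
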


\begin{lemma}
	Let $G$ be a generic filter on $\p'$ and let $\eta < \xi < \kappa$. 
	Then $W_G(\eta) \cap W_G(\xi)$ is finitely generated, and hence 
	$W_G(\eta)$ and $W_G(\xi)$ are strongly almost disjoint.
\end{lemma}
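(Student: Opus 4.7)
The plan is to reduce the statement to a density argument that forces the activation of clause (d) of the ordering on $\p^*$ for the pair $\{\eta,\xi\}$. Note that $e$-separation alone only gives us that $W_G(\eta) \cap W_G(\xi)$ is contained in the levels of $T_G$ of height at most $e(\eta,\xi)$, which is countable but possibly infinite; to get finite generation we need condition (d), and condition (d) only triggers once $\{\eta,\xi\}$ appears in the $D$-component of some condition in $G$.

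First I would show that for any $r \in \p'$, there exists $p \le r$ in $\p'$ with $\eta, \xi \in \dom(W_p)$ and $\{\eta,\xi\} \in D_p$. By two applications of Lemma 3.8, extend $r$ to a condition $r'$ whose subtree function's domain contains both $\eta$ and $\xi$. Then let $p = (T_{r'}, W_{r'}, D_{r'} \cup \{\{\eta,\xi\}\})$. Since the working part is unchanged, $p$ is still in $\p'$ (e-separation is preserved), and $p \le r'$: clauses (a), (b), (c) are trivial, and (d) is vacuous for the new pair since $W_p = W_{r'}$. Consequently, the set of such $p$ is dense, so by genericity fix some $p \in G$ with $\eta, \xi \in \dom(W_p)$ and $\{\eta,\xi\} \in D_p$.

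Second, let $F = W_p(\eta) \cap W_p(\xi)$, a finite subset of $T_p \subseteq T_G$. I claim $W_G(\eta) \cap W_G(\xi) \subseteq \bigcup_{f \in F} \{ x \in T_G : x \le_G f \}$. Given any $x \in W_G(\eta) \cap W_G(\xi)$, find $q_1, q_2 \in G$ with $x \in W_{q_1}(\eta)$ and $x \in W_{q_2}(\xi)$, then choose a common extension $q \in G$ of $p$, $q_1$, $q_2$. Since $q \le p$ and $\{\eta,\xi\} \in D_p$, clause (d) of the ordering on $\p^*$ applied to $x \in W_q(\eta) \cap W_q(\xi)$ yields some $z \in W_p(\eta) \cap W_p(\xi) = F$ with $x \le_q z$, hence $x \le_G z$. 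This proves that $W_G(\eta) \cap W_G(\xi)$ is finitely generated in $T_G$.

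Finally, to deduce strong almost disjointness, note that for each $f \in F$ the set $\{ x \in T_G : x \le_G f \}$ is linearly ordered (by the tree axiom for $T_G$) and countable (since $h(f) < \omega_1$). Hence $W_G(\eta) \cap W_G(\xi)$ is contained in, and therefore equal to, a finite union of countable chains. There is no serious obstacle here beyond recognizing that without the extra density step to put $\{\eta,\xi\}$ into $D_p$, clause (d) is silent and one cannot upgrade the mere height bound coming from $e$-separation into the required finite generation.
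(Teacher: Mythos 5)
Your proof is correct and follows essentially the same approach as the paper: a density argument to place $\{\eta,\xi\}$ into the $D$-component of some condition in $G$, followed by an application of clause (d) of the $\p^*$-ordering to bound $W_G(\eta) \cap W_G(\xi)$ by the finite set $W_p(\eta) \cap W_p(\xi)$. The only difference is that you spell out the density step and the passage from finite generation to strong almost disjointness more explicitly, which the paper leaves implicit; also note the reference for extending $\dom(W_r)$ should be Lemma~3.9 rather than 3.8, though this is merely a numbering slip.
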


\begin{proof}
	It is easy to prove that for all $p \in \p'$, 
	there exists $q \le p$ such that $\{ \eta, \xi \} \in D_q$. 
	It follows that there exists some $q \in G$ such that $\{ \eta, \xi \} \in D_q$. 
	We claim that any member of $W_G(\eta) \cap W_G(\xi)$ is less than or equal to 
	some member of the finite set $W_q(\eta) \cap W_q(\xi)$ in $T_G$. 
	So let $x \in W_G(\eta) \cap W_G(\xi)$. 
	Then there exists some 
	$r \le q$ in $G$ such that $x \in W_r(\eta) \cap W_r(\xi)$. 
	Since $\{ \eta, \xi \} \in D_q$, by the definition of $\p'$ 
	there exists some $z \in W_q(\eta) \cap W_q(\xi)$ such that $x \le_r z$. 
	Then $x \le_{G} z$.
\end{proof}

We now describe a special property of $e$ and prove that it implies that 
$\p'$ is c.c.c.

\begin{definition}
	A function $f : \kappa^2 \to \omega_1$ 
	is a \emph{weak $\rho$-function} if whenever 
	$\langle F_i : i < \omega_1 \rangle$ is a pairwise disjoint 
	sequence of finite subsets of $\kappa$, then 
	for any $\gamma < \omega_1$ there exist $i < j < \omega_1$ such that 
	for all $\eta \in F_i$ and for all $\xi \in F_j$, 
	$f(\eta,\xi) \ge \gamma$. 
\end{definition}

\begin{thm}
	Suppose that $e$ is a weak $\rho$-function. 
	Assume that $\langle p_\alpha : \alpha < \omega_1 \rangle$ 
	is a sequence of conditions in $\p'$. 
	Then for some $\alpha < \beta$ in $C_h$, 
	$(p_\alpha,p_\beta)$ is $(\alpha,\beta)$-split 
	and $p_\alpha \oplus p_\beta$ 
	is a condition in $\p'$ which extends $p_\alpha$ and $p_\beta$.
\end{thm}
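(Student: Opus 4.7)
The plan is to thin $\langle p_\alpha:\alpha<\omega_1\rangle$ by $\Delta$-system and pigeonhole arguments to a uniform uncountable subsequence, choose a countable absorbent ordinal $\gamma^*$, and then invoke the weak $\rho$-function property to pick a pair $\alpha<\beta$ whose $\oplus$-sum extends both via Lemma~2.15. I would begin by applying the $\Delta$-system lemma to $\{\dom(W_{p_\alpha}):\alpha<\omega_1\}$ to obtain an uncountable $S\subseteq\omega_1$ and a finite root $r$. Using Fodor's lemma on $\alpha\mapsto\max(T_{p_\alpha}\cap\alpha)$ together with pigeonhole over the countably many finite subsets of any fixed countable ordinal and the finitely many tree orders they can carry, I would thin $S$ so that there is a fixed standard finite tree $T^*$ with $T_{p_\alpha}\cap\alpha=T^*$ and with the induced order on $T^*$ constant in $\alpha\in S$. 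A further pigeonhole step yields, for each $\eta\in r$, a fixed $W^*(\eta)\subseteq T^*$ with $W_{p_\alpha}(\eta)\cap\alpha=W^*(\eta)$ for all $\alpha\in S$. Setting
\[
\gamma^*:=\sup h[T^*]+1+\max\{e(\eta,\xi)+1:\eta,\xi\in r,\ \eta\neq\xi\},
\]
I would thin $S$ once more so that $S\subseteq C_h$, every $\alpha\in S$ exceeds $\gamma^*$, and the family $\langle T_{p_\alpha}:\alpha\in S\rangle$ is interval-disjoint in the sense that $T_{p_\alpha}\subseteq\beta$ whenever $\alpha<\beta$ are consecutive in $S$.

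Since the sets $F_\alpha:=\dom(W_{p_\alpha})\setminus r$ for $\alpha\in S$ are pairwise disjoint, the weak $\rho$-function property applied with parameter $\gamma^*$ yields $\alpha<\beta$ in $S$ with $e(\eta,\xi)\geq\gamma^*$ for all $\eta\in F_\alpha$ and $\xi\in F_\beta$. Next I would verify Definition~2.13 for $(p_\alpha,p_\beta)$ with $(\delta_\alpha,\delta_\beta)=(\alpha,\beta)$: clauses~(1) and~(3) follow from the uniformities of $T^*$ and $W^*$, clause~(2) from interval-disjointness, and clause~(4) because for distinct $\eta,\xi\in r$ and any $x\in W_{p_i}(\eta)\cap W_{p_i}(\xi)$, $e$-separation of $p_i$ gives $h(x)\leq e(\eta,\xi)<\gamma^*\leq i$, whence $x<i$ as $i\in C_h$. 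Since the domains form a $\Delta$-system with root $r$, Lemma~2.15 produces $p_\alpha\oplus p_\beta\in\p^*$ extending both. For $e$-separation of the $\oplus$-sum, I would split into four subcases based on which of $p_\alpha,p_\beta$ witnesses each membership: the two internal cases use $e$-separation of the relevant $p_i$; the mixed cases with one of $\eta,\xi$ in $r$ reduce, via split clause~(3) or Lemma~2.14(b,c), to an internal case; and the only genuinely new subcase $\eta\in F_\alpha$, $\xi\in F_\beta$ forces $x\in W_{p_\alpha}(\eta)\cap W_{p_\beta}(\xi)\subseteq T_{p_\alpha}\cap T_{p_\beta}=T^*$, so $h(x)\leq\sup h[T^*]<\gamma^*\leq e(\eta,\xi)$ by the choice of the pair.

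The main obstacle I foresee is the combined bookkeeping of the uniformization: one must stabilize, on a single uncountable $S$, the $\Delta$-system root $r$, the finite tree $T^*$ with its induced order, the restrictions $W^*(\eta)=W_{p_\alpha}(\eta)\cap\alpha$ for $\eta\in r$, and the interval-disjoint placement of the $T_{p_\alpha}\setminus T^*$, all while engineering $\alpha\in C_h$ itself to play the role of the split ordinal $\delta_\alpha$. A secondary subtlety is that the weak $\rho$-function as formulated controls $e(\eta,\xi)$ only in the ``earlier, later'' direction; handling the reverse-direction instance of $e$-separation in the final subcase relies on the effective symmetry of $e$ on unordered pairs implicit in Definition~3.1, or else on a second invocation of the weak $\rho$-function property with a suitably reindexed sequence.
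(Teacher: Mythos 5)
Your proposal is correct and takes essentially the same route as the paper: thin by a $\Delta$-system together with pressing down and pigeonhole to get an uncountable uniform subsequence, apply the weak $\rho$-function to the non-root parts $\dom(W_{p_\alpha})\setminus r$, verify Definition~2.13 and invoke Lemma~2.15 to amalgamate, and finish by checking $e$-separation of the amalgam via a case analysis that reduces the mixed cases to internal ones using clause~(3) of the split definition and Lemma~2.14(b,c). The only cosmetic differences are the order of the thinning steps (you apply the $\Delta$-system lemma first; the paper packages the pressing down step before it) and your explicit absorbent ordinal $\gamma^*$ in place of the paper's $\zeta\in C_h$ chosen with $T\subseteq\zeta$ and $\{e(\eta,\xi):\eta,\xi\in r\}\subseteq\zeta$. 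Regarding the secondary subtlety you flag at the end: the paper's own verification in Case~3 of the amalgam's $e$-separation uses the weak $\rho$-function conclusion for both orderings of $(\eta,\xi)$ without comment, exactly as you worry about. So this is not a gap in your argument relative to the paper's; it reflects the standard convention, going back to Todor\v{c}evi\'{c}'s construction of $\rho$-functions, that $e$ is effectively a function on unordered pairs (equivalently, is symmetric), under which both arguments are fine.
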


\begin{proof}
	Write $p_\alpha = (T^\alpha,W^\alpha,D^\alpha)$ for all $\alpha < \omega_1$. 
	For each $\alpha < \omega_1$, enumerate $\dom(W^\alpha)$ in increasing order as 
	$\langle \eta_0^\alpha, \ldots, \eta_{n_\alpha-1}^\alpha \rangle$. 

	By a standard pressing down argument, we can find a stationary set 
	$Z_0 \subseteq C_h \cap \cof(> \! \omega)$, 
	a standard finite tree $T$, $n < \omega$, 
	and downwards closed subtrees $w_0,\ldots,w_{n-1}$ of $T$ 
	such that for all $\alpha \in Z_0$:
	\begin{itemize}
		\item $T^\alpha \res \alpha = T$;
		\item $n_\alpha = n$;
		\item for all $k < n$, $W^\alpha(\eta_k^\alpha) \cap \alpha = w_k$;
	\end{itemize}
	and moreover, for all $\alpha < \beta$ in $Z_0$, $T^\alpha \subseteq \beta$.
	
	Applying the $\Delta$-system lemma, 
	fix an uncountable set $Z_1 \subseteq Z_0$ 
	and a finite set $r \subseteq \kappa$ 
	such that for all 
	$\alpha < \beta$ in $Z_1$, $\dom(W^\alpha) \cap \dom(W^\beta) = r$. 
	Now find an uncountable set $Z \subseteq Z_1$, an ordinal $\zeta \in C_h$, 
	and a set $x \subseteq n$ 
	such that for all $\alpha \in Z$:
	\begin{itemize}
		\item $\{ k < n : \eta_k^\alpha \in r \} = x$ 
		\item $T \subseteq \zeta$;
		\item $\{ e(\eta,\xi) : \eta, \xi \in r \} \subseteq \zeta$.
	\end{itemize}
	Note that for all $\alpha < \beta$ in $Z$ and for all $k \in x$, 
	$\eta_k^\alpha = \eta_k^\beta$.
	
	\textbf{Claim:} For all $\alpha < \beta$ in $Z$, $(p_\alpha,p_\beta)$ 
	is $(\alpha,\beta)$-split.
	
	\emph{Proof:} (1) and (2) of Definition 2.13 are immediate by the choice of 
	$T$ and $Z$. 
	For (3), let $\eta \in \dom(W^\alpha) \cap \dom(W^\beta) = r$. 
	Then for some $k \in x$, $\eta = \eta_k^\alpha = \eta_k^\beta$. 
	Hence, $W^\alpha(\eta) \cap \alpha = W^\alpha(\eta_k^\alpha) \cap \alpha = 
	w_k = W^\beta(\eta_k^\beta) \cap \beta = W^\beta(\eta) \cap \beta$. 
	For (4), let $\eta$ and $\xi$ be distinct elements of 
	$\dom(W^\alpha) \cap \dom(W^\beta)$. 
	Then $\eta, \xi \in r$. 
	So $e(\eta,\xi) < \zeta < \alpha$. 
	Let $x \in W^\alpha(\eta) \cap W^\alpha(\xi)$ and we show that $x < \alpha$. 
	Since $W^\alpha$ is $e$-separated, 
	$h(x) \le e(\eta,\xi) < \alpha$, so $x < \alpha$. 
	A similar argument show that 
	$W^\beta(\eta) \cap W^\beta(\xi) \subseteq \beta$. 
	This completes the proof of the claim.
	
	By Lemma 2.15, it follows that for all $\alpha < \beta$ in $Z$, 
	$p_\alpha \oplus p_\beta$ is in $\p^*$ and is an extension of $p_\alpha$ and $p_\beta$. 
	Applying the assumption that $e$ is a weak $\rho$-function to 
	$\langle \dom(W^\alpha) \setminus r : \alpha \in Z \rangle$, 
	fix $\alpha < \beta$ in $Z$ such that for all 
	$\eta \in \dom(W^\alpha) \setminus r$ 
	and for all $\xi \in \dom(W^\beta) \setminus r$, 
	$e(\eta,\xi) \ge \zeta$. 

	We claim that $p_\alpha$ and $p_\beta$ are as required. 
	We already know that $(p_\alpha,p_\beta)$ is $(\alpha,\beta)$-split 
	and $p_\alpha \oplus p_\beta$ is in $\p^*$ and is an 
	extension of $p_\alpha$ and $p_\beta$. 
	So it suffices to show that $W^\alpha \oplus W^\beta$ is $e$-separated. 
	Consider distinct $\eta$ and $\xi$ 
	in $\dom(W^\alpha \oplus W^\beta)$ and 
	assume that $x \in (W^\alpha \oplus W^\beta)(\eta) \cap 
	(W^\alpha \oplus W^\beta)(\xi)$. 
	We show that $e(\eta,\xi) \ge h(x)$.

	\emph{Case 1:} $x \in T^\alpha \setminus \alpha$. 
	Then $x \notin T^\beta$. 
	By the definition of $W^\alpha \oplus W^\beta$, we must have that 
	$x \in W^\alpha(\eta) \cap W^\alpha(\xi)$, for the other 
	other possibilities imply that $x \in T^\beta$. 
	Since $W^\alpha$ is $e$-separated, 
	it follows that $e(\eta,\xi) \ge h(x)$.

	\emph{Case 2:} $x \in T^\beta \setminus \beta$. 
	Then $x \notin T^\alpha$. 
	So as in Case 1, $x \in W^\beta(\eta) \cap W^\beta(\xi)$. 
	Since $W^\beta$ is $e$-separated, 
	it follows that $e(\eta,\xi) \ge h(x)$.

	\emph{Case 3:} $x \in T$. 
	Then $h(x) < \zeta$.  
	If one of $\eta$ or $\xi$ is in 
	$\dom(W^\alpha) \setminus r$ and the other is in $\dom(W^\beta) \setminus r$, 
	then by the choice of $\alpha$ and $\beta$ we have that 
	$e(\eta,\xi) \ge \zeta > h(x)$ and we are done. 
	Otherwise, one of $\eta$ or $\xi$ is in $\dom(W^\alpha) \cap \dom(W^\beta)$. 
	Without loss of generality, assume that $\xi \in \dom(W^\alpha) \cap \dom(W^\beta)$. 
	If $x$ is either in $W^\alpha(\eta) \cap W^\alpha(\xi)$ or in 
	$W^\beta(\eta) \cap W^\beta(\xi)$, then $e(\eta,\xi) \ge h(x)$ 
	by the fact that $W^\alpha$ and $W^\beta$ are $e$-separated. 
	So assume not. 
	Then either $x \in W^\alpha(\eta) \cap W^\beta(\xi)$ or 
	$x \in W^\beta(\eta) \cap W^\alpha(\xi)$. 
	Since $(p_\alpha,p_\beta)$ is $(\alpha,\beta)$-split, 
	by Lemma 2.14 we have that $x \in W^\alpha(\xi)$ in the first case and 
	$x \in W^\beta(\xi)$ in the second case. 
	So $x \in W^\alpha(\eta) \cap W^\alpha(\xi)$ in the first case and 
	$x \in W^\beta(\eta) \cap W^\beta(\xi)$ in the second case, both of 
	which contradict our current assumptions.
\end{proof}

\begin{corollary}
	If $e$ is a weak $\rho$-function, then $\p'$ is c.c.c.
\end{corollary}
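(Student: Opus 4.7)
The plan is to derive this corollary as an essentially immediate consequence of Theorem 3.12. Suppose toward a contradiction that $e$ is a weak $\rho$-function but $\p'$ fails to be c.c.c.; then there is an uncountable antichain $A \subseteq \p'$, which I can enumerate as $\langle p_\alpha : \alpha < \omega_1 \rangle$ with the $p_\alpha$ pairwise incompatible.

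Next I apply Theorem 3.12 directly to this sequence. The theorem yields ordinals $\alpha < \beta$ in $C_h$ such that $(p_\alpha,p_\beta)$ is $(\alpha,\beta)$-split and $p_\alpha \oplus p_\beta$ is a condition in $\p'$ which extends both $p_\alpha$ and $p_\beta$. But the existence of such a common extension contradicts the assumption that $p_\alpha$ and $p_\beta$ belong to an antichain.

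There is no real obstacle here; all of the work has been done in Theorem 3.12, where the weak $\rho$-function hypothesis was used to locate two indices whose ``tail'' sets $\dom(W^\alpha) \setminus r$ and $\dom(W^\beta) \setminus r$ have $e$-values large enough to preserve $e$-separation in the $\oplus$-amalgamation. The corollary is just the repackaging of that conclusion in the language of chain conditions, so the write-up will occupy only a couple of sentences.
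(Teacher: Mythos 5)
Your proof is correct and is exactly the argument the paper leaves implicit: given any uncountable sequence of conditions, Theorem 3.13 (which you cite by a slightly shifted number) produces two with a common extension, so no uncountable antichain exists. Nothing more is needed.
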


\begin{corollary}
	If $e$ is a weak $\rho$-function, then $\p'$ forces that 
	$T_{\dot G}$ has no uncountable chain.
\end{corollary}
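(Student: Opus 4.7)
The plan is a standard c.c.c.-style contradiction argument which leverages the very explicit compatibility provided by Theorem 3.12 together with the end-extension structure of $\le$ on $\p'$. Suppose for contradiction that some $p \in \p'$ forces that $\dot b$ is an uncountable chain in $T_{\dot G}$. For each $\alpha < \omega_1$, I would first use Lemma 3.6(2) and a routine density argument to find $q_\alpha \le p$ and an ordinal $x_\alpha \in T_{q_\alpha}$ with $h(x_\alpha) \ge \alpha$ such that $q_\alpha \Vdash x_\alpha \in \dot b$.

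The second step is to apply Theorem 3.12 to the sequence $\langle q_\alpha : \alpha < \omega_1 \rangle$, obtaining $\alpha < \beta$ in $C_h$ such that $(q_\alpha, q_\beta)$ is $(\alpha,\beta)$-split and $q_\alpha \oplus q_\beta$ is a condition in $\p'$ extending both $q_\alpha$ and $q_\beta$. The heart of the argument is then to show that $x_\alpha$ and $x_\beta$ are incomparable in $T_{q_\alpha \oplus q_\beta}$. Since $\alpha, \beta \in C_h$, we have $\omega \cdot \alpha = \alpha$ and $\omega \cdot \beta = \beta$; combined with the inequality $x \ge \omega \cdot h(x)$ from the definition of $h$, this converts the height bounds $h(x_\alpha) \ge \alpha$ and $h(x_\beta) \ge \beta$ into the ordinal bounds $x_\alpha \ge \alpha$ and $x_\beta \ge \beta$. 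By Definition 2.13(2), $T_{q_\alpha} \subseteq \beta$, so $x_\beta \notin T_{q_\alpha}$; and by Lemma 2.14(a), $T_{q_\alpha} \cap T_{q_\beta} \subseteq \alpha$, so $x_\alpha \notin T_{q_\beta}$. Since $<_{q_\alpha \oplus q_\beta}$ is just $<_{q_\alpha} \cup <_{q_\beta}$, neither order can relate $x_\alpha$ and $x_\beta$, giving incomparability in $T_{q_\alpha \oplus q_\beta}$.

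Finally, I would invoke the end-extension clause of $\le$ on $\p'$: if $r \le q_\alpha \oplus q_\beta$, then $T_r$ end-extends $T_{q_\alpha \oplus q_\beta}$, so the relationship between $x_\alpha$ and $x_\beta$ in $<_r$ must coincide with the incomparability already present in $<_{q_\alpha \oplus q_\beta}$. Unpacking the definition of $T_{\dot G}$ then yields $q_\alpha \oplus q_\beta \Vdash x_\alpha$ and $x_\beta$ are incomparable in $T_{\dot G}$, contradicting that $\dot b$ is a chain containing both.

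The main obstacle is the third step: ensuring that $x_\alpha$ and $x_\beta$ genuinely lie outside the common stem $T_{q_\alpha} \cap T_{q_\beta}$ rather than inside it, so that the $\oplus$ operation cannot accidentally put them in a $<_{q_\alpha \oplus q_\beta}$-relation. This is exactly what forces the bookkeeping with $C_h$ to be done carefully, using that $\omega \cdot \alpha = \alpha$ for $\alpha \in C_h$ to translate a height bound on $x_\alpha$ into an ordinal bound. Once that conversion is in hand, everything else is a direct application of Theorem 3.12 and the structural lemmas of Section 2.
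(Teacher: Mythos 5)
Your proof is correct and takes essentially the same route as the paper: pick witnesses $x_\alpha$ for each $\alpha$, apply the splitting theorem (Theorem 3.13 in the paper's numbering) to obtain $\alpha < \beta$ in $C_h$ with $p_\alpha \oplus p_\beta \in \p'$, and show $x_\alpha, x_\beta$ land outside the common stem so the $\oplus$ of the orders cannot relate them; the end-extension property then propagates incomparability. The only cosmetic difference is that the paper chooses $x_\alpha \ge \alpha$ directly from the forcing statement $x_\alpha \in \dot b \setminus \alpha$ (leaning on the earlier remark that for $\delta \in C_h$, $\alpha < \delta$ iff $h(\alpha) < \delta$), while you choose $x_\alpha$ by its height $h(x_\alpha) \ge \alpha$ and rederive the ordinal bound from $\omega \cdot \alpha = \alpha$; both are fine.
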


\begin{proof}
	Suppose for a contradiction that $p \in \p'$ forces that $\dot b$ is an uncountable 
	chain of $T_{\dot G}$. 
	For each $\alpha < \omega_1$, fix a condition $p_\alpha \le p$ and 
	some $x_\alpha \in T_{p_\alpha}$ 
	such that $p_\alpha$ forces that $x_\alpha \in \dot b \setminus \alpha$. 
	Applying Theorem 3.13, find $\alpha < \beta$ in $C_h$ such that 
	$(p_\alpha,p_\beta)$ is $(\alpha,\beta)$-split and 
	$p_\alpha \oplus p_\beta$ 
	is a condition in $\p'$ which extends both $p_\alpha$ and $p_\beta$. 
	Write $T_{p_\alpha} = T^\alpha$ and $T_{p_\beta} = T^\beta$.
	
	Since $(p_\alpha,p_\beta)$ is $(\alpha,\beta)$-split, 
	$T^\alpha \res \alpha = T^\beta \res \beta$ and $T^\alpha \subseteq \beta$. 
	As $x_\alpha \ge \alpha$, $x_\alpha \in T^\alpha \setminus T^\beta$, and since 
	$x_\beta \ge \beta$, $x_\beta \in T^\beta \setminus T^\alpha$. 
	By the definition of $T^\alpha \oplus T^\beta$, 
	$x_\alpha$ and $x_\beta$ are incomparable in $T^\alpha \oplus T^\beta$. 
	Now for any $r \le p_\alpha \oplus p_\beta$, $T_r$ is an end-extension of 
	$T^\alpha \oplus T^\beta$, and therefore $x_\alpha$ and $x_\beta$ 
	are incomparable in $T_r$. 
	Consequently, $p_\alpha \oplus p_\beta$ forces that $x_\alpha$ and $x_\beta$ 
	are incomparable in $T_{\dot G}$, which contradicts that $p_\alpha \oplus p_\beta$ 
	forces that $x_\alpha$ and $x_\beta$ are both in the chain $\dot b$.
\end{proof}

\begin{thm}
	Suppose that $e$ is a weak $\rho$-function. 
	Let $G$ be a generic filter on $\p'$. 
	Then $T_G$ is a normal infinitely splitting Aronszajn tree 
	and $\{ W_G(\eta) : \eta < \kappa \}$ is a pairwise strongly almost disjoint 
	family of uncountable downwards closed subtrees of $T_G$ witnessing that 
	$T_G$ is strongly non-saturated.
\end{thm}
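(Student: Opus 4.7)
My plan is to assemble the lemmas already proven in this section, adding only one small density argument for infinite splitting.

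To see that $T_G$ is Aronszajn, Lemma 3.4 gives that $T_G$ is a tree with a root, and Lemma 3.7 gives that its height function coincides with $h$, so its height equals $\omega_1^V$. By Corollary 3.14, $\p'$ is c.c.c., so $\omega_1^V = \omega_1^{V[G]}$, and the height is $\omega_1^{V[G]}$. The levels are automatically countable: every element of $T_G$ is an ordinal in $\{0\} \cup (\omega_1 \setminus \omega)$, and each fiber $h^{-1}\{\alpha\}$ has order type at most $\omega$. Corollary 3.15 rules out uncountable chains, so $T_G$ is Aronszajn.

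For normality I would use Lemma 3.6(2), which directly provides that every node extends to every higher level. For infinite splitting I would prove by a density argument that every node of $T_G$ has infinitely many immediate successors: given $p \in \p'$ and $x \in T_p$, the fiber $h^{-1}\{h(x)+1\}$ has order type $\omega$, so I may choose an ordinal $y$ in that fiber not appearing in the finite set $T_p$. Adjoining $y$ to $T_p$ as a new immediate successor of $x$, while leaving $W_p$ and $D_p$ unchanged, produces a condition $q \le p$ in $\p'$: the $e$-separation of $W_q = W_p$ is trivially preserved because $y$ lies in none of the $W_p(\eta)$. Standard genericity then gives infinitely many immediate successors of $x$ in $T_G$.

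For the final clause, Lemma 3.10 gives that each $W_G(\eta)$ is an uncountable downwards closed subtree of $T_G$, and Lemma 3.11 gives pairwise strong almost disjointness. Since $T_G$ is Aronszajn, every chain in $T_G$ is countable, so any finitely generated subset is countable; combined with uncountability of each $W_G(\eta)$, this forces $W_G(\eta) \ne W_G(\xi)$ whenever $\eta \ne \xi$. Since $\p'$ preserves cardinals by c.c.c.\ and $\kappa \ge \omega_2$, the family $\{W_G(\eta) : \eta < \kappa\}$ has cardinality $\kappa \ge \omega_2$ in $V[G]$, witnessing that $T_G$ is strongly non-saturated. The main obstacle is essentially absent here: the substantive content — the c.c.c., the $e$-separation machinery, and the strong almost disjointness — is carried by the previous lemmas, leaving only the packaging and the routine density argument for infinite splitting.
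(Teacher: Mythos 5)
Your proposal is correct and takes essentially the same route as the paper: Lemmas 3.7, 3.10, 3.11 and Corollary 3.15 are the load-bearing steps, and your density argument for infinite splitting is just an explicit version of what the paper reads off from Lemma 3.6(2). The extra bookkeeping you add (countable levels, injectivity of $\eta \mapsto W_G(\eta)$, cardinality $\ge \omega_2$ via the c.c.c.) is fine and merely fills in details the paper leaves implicit.
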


\begin{proof}
	Lemma 3.7 and Corollary 3.15 imply that $T_G$ is an Aronszajn tree. 
	Lemma 3.5 implies that $T_G$ is Hausdorff, and Lemma 3.6(2) implies that 
	$T_G$ is normal and infinitely splitting. 
	By Lemmas 3.10 and 3.11 we are done. 
\end{proof}

\section{The Main Theorems: Part 1}

Our first main theorem is proven by combining the results of the previous section 
with work of Jensen-Schlechta and Todor\v{c}evi\'{c}. 
Recall that the \emph{generic Kurepa hypothesis} (\textsf{GKH}) 
is the statement that there 
exists a Kurepa tree in some c.c.c.\ forcing extension (\cite{jensenschlechta}). 
So $\neg \textsf{GKH}$ is equivalent to 
the statement that the negation of Kurepa's hypothesis is c.c.c.\ indestructible.

\begin{thm}[{\cite[Lemma 4]{todorchang}}]
	The negation of Chang's conjecture is equivalent to the existence of a 
	weak $\rho$-function $e : (\omega_2)^2 \to \omega_1$.
\end{thm}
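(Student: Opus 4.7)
The plan is to prove the two directions separately, with essentially all of the work in the forward direction.

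The reverse direction $(\Leftarrow)$ is a short contradiction argument. Suppose $e \colon \omega_2^2 \to \omega_1$ is a weak $\rho$-function and, toward a contradiction, that Chang's conjecture holds. Applying CC to the structure $(\omega_2; \in, \omega_1, e)$ yields an elementary substructure $M \subseteq \omega_2$ with $|M| = \omega_1$, $|M \cap \omega_1| = \omega$, and $M$ closed under $e$. Set $\delta := \sup(M \cap \omega_1) < \omega_1$ and enumerate $M$ as $\{\eta_i : i < \omega_1\}$. The family $F_i := \{\eta_i\}$ is pairwise disjoint, and applying the weak $\rho$-function property with $\gamma := \delta + 1$ produces $i < j$ such that $e(\eta_i,\eta_j) \geq \delta + 1$. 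But $M$ is closed under $e$, so $e(\eta_i,\eta_j) \in M \cap \omega_1 \subseteq \delta + 1$, hence $e(\eta_i,\eta_j) \leq \delta$, a contradiction.

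For the forward direction $(\Rightarrow)$, I would first invoke the standard Skolemization reformulation of $\neg\textsf{CC}$: there is a function $G \colon [\omega_2]^{<\omega} \to \omega_1$ such that $G''[X]^{<\omega}$ is cofinal in $\omega_1$ for every $X \in [\omega_2]^{\omega_1}$. (A failure of CC produces a structure on $\omega_2$ whose Skolem hulls cannot be $(\omega_1,\omega)$-small, and encoding its Skolem functions as a single $G$ yields the characterization.) Next, I would build $e$ via a walks-on-ordinals scheme: fix a $C$-sequence $\langle C_\alpha : \alpha < \omega_2 \rangle$ with $\mathrm{otp}(C_\alpha) \leq \omega_1$, perform the minimal walk from $\beta$ down to $\alpha$, and let $e(\alpha,\beta)$ combine the classical walk rank $\rho_1(\alpha,\beta)$ with a $G$-value evaluated on (an encoding of) the walk's trace. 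To verify the weak $\rho$-function property, given a pairwise disjoint sequence $\langle F_i : i < \omega_1 \rangle$ and $\gamma < \omega_1$, I would apply a $\Delta$-system reduction together with pressing-down on walk data to refine to an uncountable subfamily whose walks from members of later blocks down to members of earlier blocks share a common initial segment. The unboundedness of $G$ on $[\bigcup_i F_i]^{<\omega}$ would then yield a single large $G$-value that, through the shared walk structure, forces $e \geq \gamma$ uniformly on some rectangle $F_i \times F_j$.

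The main obstacle is precisely this last propagation step. The characterization of $\neg\textsf{CC}$ only supplies unboundedness of $G$ across finite subsets of uncountable sets, whereas a weak $\rho$-function demands simultaneous largeness of $e$ on every pair $(\eta,\xi) \in F_i \times F_j$. The role of the walks-on-ordinals framework is to introduce enough uniformity in the two arguments of $e$ so that a single unbounded $G$-value can be distributed uniformly across an entire rectangle after the $\Delta$-system and pressing-down reductions. Making this distribution work---in particular, ensuring that the walk structure is rigid enough for one $G$-value to control all pairs in $F_i \times F_j$ simultaneously---is the technical heart of the proof and the step most likely to require delicate combinatorial care.
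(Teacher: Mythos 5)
The paper does not give a proof of this theorem; it cites it as {\cite[Lemma 4]{todorchang}}, so there is no internal argument to compare yours against.

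Your $(\Leftarrow)$ direction is correct and complete: applying Chang's conjecture to $(\omega_2;\in,\omega_1,e)$ produces an $(\omega_1,\omega)$-submodel $M$ closed under $e$, and then feeding the singletons $\{\eta_i\}$ from an $\omega_1$-enumeration of $M$ into the weak-$\rho$ property with $\gamma=\sup(M\cap\omega_1)+1$ gives the contradiction. The characterization of $\neg\textsf{CC}$ via a function $G\colon[\omega_2]^{<\omega}\to\omega_1$ whose range over finite subsets of any $\omega_1$-sized set is cofinal in $\omega_1$ is also correct.

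The $(\Rightarrow)$ direction, however, is not a proof but a plan, and you have accurately located its missing piece yourself. The difficulty is exactly the one you name: $\neg\textsf{CC}$ hands you one Skolem term on one finite tuple attaining a large value, while a weak $\rho$-function demands a single $\gamma$-lower bound on \emph{every} ordered pair of the rectangle $F_i\times F_j$. Nothing in the sketch supplies a mechanism for distributing a single large $G$-value over all $|F_i|\cdot|F_j|$ pairs. Invoking a $C$-sequence on $\omega_2$, minimal walks, and $\Delta$-system plus pressing-down reductions does not by itself close this: a $\Delta$-system refinement makes the traces of the finite sets structurally similar, but it does not explain why a Skolem value computed from a handful of coordinates should bound $e$ at pairs whose coordinates were never fed to the Skolem term. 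Indeed, the naive choice $e(\alpha,\beta)=\sup(\mathrm{Hull}(\{\alpha,\beta\})\cap\omega_1)$ already exhibits the failure: a large element of $\mathrm{Hull}(\bigcup_i F_i)\cap\omega_1$ need not belong to any two-element hull, so no rectangle is controlled. Until you specify how the definition of $e(\alpha,\beta)$ ``sees'' enough of the ambient finite sets (for instance via an injective filtration of each $\beta<\omega_2$ tied to the Skolem structure, or another concrete device) and prove the resulting uniformity, the forward direction remains open. As written, this is a genuine gap rather than a routine detail.
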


\begin{thm}[{\cite[Proposition 1.4]{jensenschlechta}}]
	Suppose that $\kappa$ is a Mahlo cardinal. 
	Then the L\'{e}vy collapse $\col(\omega_1,<\kappa)$ forces 
	$\neg \textsf{GKH}$.
\end{thm}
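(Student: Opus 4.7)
I would argue by contradiction, exploiting the Mahloness of $\kappa$ via a reflection argument. Let $G$ be $\col(\omega_1, {<}\kappa)$-generic over $V$, and suppose for contradiction that in $V[G]$ there is a c.c.c.\ forcing $\q$ together with a $\q$-name $\dot T$ forced to be a Kurepa tree. After routine normalization I may assume that $\q$ has underlying set $\omega_1$ and that $\dot T$ names a subset of $\omega_1$, so that the whole setup is coded in $H(\omega_2)^{V[G]}$.

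The next step is a reflection argument. Since $\col(\omega_1,{<}\kappa)$ is $\kappa$-c.c.\ in $V$, the set of $\alpha < \kappa$ such that both $\q$ and $\dot T$ already lie in $V[G_\alpha]$, where $G_\alpha := G \cap \col(\omega_1,{<}\alpha)$, contains a club in $\kappa$. Intersecting with the stationary set of $V$-inaccessibles below $\kappa$, which is nonempty since $\kappa$ is Mahlo, I would pick such an inaccessible $\alpha$. Let $H$ denote the $\q$-generic induced by the full extension, so $T := \dot T^H \in V[G_\alpha][H]$. Since $\alpha$ is inaccessible in $V$, the iteration $\col(\omega_1,{<}\alpha) \ast \q$ has size $\alpha$, hence $V[G_\alpha][H]$ satisfies $2^{\omega_1} \leq \alpha$ and in particular $T$ has at most $\alpha$-many cofinal branches there.

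The passage from $V[G_\alpha][H]$ up to $V[G][H]$ is realized by forcing with the tail $\col(\omega_1,[\alpha,\kappa))$, via the factorization $\col(\omega_1,{<}\kappa) \cong \col(\omega_1,{<}\alpha) \ast \col(\omega_1,[\alpha,\kappa))$. Over $V[G_\alpha]$, the forcings $\q$ and $\col(\omega_1,[\alpha,\kappa))$ are independent, the former c.c.c.\ and the latter $\sigma$-closed, and a classical product-forcing argument shows that the tail introduces no new cofinal branches to the $\omega_1$-tree $T$. Thus $T$ still has at most $\alpha$ cofinal branches in $V[G][H]$. But in $V[G][H]$ it is Kurepa, requiring at least $\kappa = \omega_2^{V[G][H]}$ branches, and $\alpha < \kappa$, giving the desired contradiction.

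The main obstacle is the branch-preservation claim used in the previous paragraph. In $V[G_\alpha][H]$ the tail need not be literally $\sigma$-closed, since c.c.c.\ forcing by $\q$ may have added new countable sequences of tail conditions. The standard remedy, implicit in the Jensen-Schlechta proof, is to argue instead in $V[G_\alpha]$ with the product $\col(\omega_1,[\alpha,\kappa)) \times \q$: combining the $\sigma$-closure of one factor with the c.c.c.\ of the other yields an \emph{``$\omega_1$-tree branch preservation''} property for the product, which transfers to either order of iteration and supplies the bound on branches needed above. Establishing this technical lemma cleanly is the heart of the argument.
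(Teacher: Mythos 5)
The paper does not prove this theorem; it is imported verbatim from Jensen--Schlechta as a black box, so there is no ``paper's own proof'' to compare against. Your reconstruction follows what I believe is indeed the standard strategy: reflect the c.c.c.\ poset and the name for the tree to some $V$-inaccessible $\alpha < \kappa$ (using that the Lévy collapse is $\kappa$-c.c.\ and that the inaccessibles are stationary), observe that $T$ has at most $\alpha$ cofinal branches in $V[G_\alpha][H]$, and then show that the passage to $V[G][H]$ adds no new branches. You correctly single out the branch-preservation lemma --- for a $\sigma$-closed poset $\p$ and a c.c.c.\ poset $\q$ over $V^*$ and a $\q$-name $\dot T$ for an $\omega_1$-tree, every cofinal branch of $\dot T$ in $V^{* \, \p \times \q}$ already lies in $V^{* \, \q}$ --- as the heart of the matter. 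That lemma is indeed a standard fact (essentially due to Silver and Mitchell; the mutual genericity observation turns the iterated picture $V[G_\alpha][\text{tail}][H]$ into a product over $V[G_\alpha]$, which is exactly the configuration the lemma handles), and the fusion-over-$V^*$ argument you gesture at is the right mechanism, though it is genuinely delicate because one cannot strengthen the $\q$-side $\omega$-many times.

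The one step I would push back on is the opening ``routine normalization'': you assume $\q$ has underlying set $\omega_1$ before reflecting, so that $\q$ is coded in $H(\omega_2)^{V[G]}$ and therefore lies in some $V[G_\alpha]$ with $\alpha < \kappa$. This is not automatic. If $|\q| \ge \kappa$, then any $\col(\omega_1,{<}\kappa)$-name for $\q$ has size $\ge \kappa$ and cannot be absorbed into $V[G_\alpha]$ for $\alpha < \kappa$. The naive fix --- cut down to a complete suborder $\q_0 \lessdot \q$ of size $\omega_1$ containing $\dot T$ --- determines $T$ but does not obviously preserve the Kurepa property, since the remaining quotient $\q/\q_0$ is again c.c.c.\ and could in principle be the part of $\q$ that supplies the $\omega_2$-many branches; c.c.c.\ forcing can certainly add cofinal branches to $\omega_1$-trees (e.g.\ a Suslin tree forcing does). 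So either the reduction to a small c.c.c.\ poset requires a genuine argument (which your sketch omits), or the reflection step must be run differently --- for instance by reflecting individual branch-names rather than all of $\q$, or by using a stronger Löwenheim--Skolem/Mahloness argument. I would not call this a fatal error, since the Jensen--Schlechta theorem is true and your high-level strategy is the correct one, but the word ``routine'' understates a real gap that needs to be filled before the reflection to $V[G_\alpha]$ is legitimate.
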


Finally, we use the fact that Chang's conjecture implies the existence 
of $0^{\#}$ and therefore fails in any generic extension of $L$.

\begin{thm}
	Suppose that there exists a Mahlo cardinal. 
	Then there is a generic extension of $L$ in which there exists 
	a strongly non-saturated Aronszajn tree and $\neg \textsf{GKH}$ holds.
\end{thm}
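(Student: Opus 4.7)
The plan is a two-step generic extension of $L$, combining the Jensen-Schlechta L\'{e}vy collapse of a Mahlo cardinal with the c.c.c.\ forcing $\p'$ from Section 3.

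First, I work in $L$, where by hypothesis a Mahlo cardinal $\kappa$ exists (Mahloness is downward absolute to $L$, since if $\kappa$ is Mahlo in $V$ then the $V$-inaccessibles below $\kappa$ are $L$-inaccessible and still form a stationary set in $\kappa$). Force over $L$ with $\col(\omega_1,<\kappa)$ to obtain an intermediate model $V_1$. By Theorem 4.2, $\neg \textsf{GKH}$ holds in $V_1$, and by the standard properties of the L\'{e}vy collapse, $\kappa = \omega_2^{V_1}$.

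Next, since $V_1$ is a set-generic extension of $L$, the sharp $0^{\#}$ does not exist in $V_1$; hence by Silver's classical theorem that Chang's conjecture implies $0^{\#}$, Chang's conjecture fails in $V_1$. Applying Theorem 4.1 inside $V_1$ produces a weak $\rho$-function $e : (\omega_2^{V_1})^2 \to \omega_1$. Using this $e$, and taking $\kappa = \omega_2^{V_1}$ as the ambient parameter of Section 3, I form the forcing poset $\p'$ in $V_1$ and let $G$ be $V_1$-generic on $\p'$. Set $V_2 = V_1[G]$. By Theorem 3.16, in $V_2$ the generic tree $T_{G}$ is a strongly non-saturated Aronszajn tree.

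The only remaining point is that $\neg \textsf{GKH}$ survives the final forcing step. By Corollary 3.14, $\p'$ is c.c.c.\ in $V_1$. If some c.c.c.\ forcing $\q \in V_2$ were to add a Kurepa tree over $V_2$, then since two-step iterations of c.c.c.\ forcings remain c.c.c., the iteration $\p' * \dot{\q}$ would be a c.c.c.\ forcing over $V_1$ whose extension contains a Kurepa tree, contradicting $\neg \textsf{GKH}$ in $V_1$. Hence $V_2$ is the required generic extension of $L$. The argument is essentially a bookkeeping composition of the cited ingredients; the only substantive point to check is this last preservation of $\neg \textsf{GKH}$ through the c.c.c.\ step, which is immediate from the c.c.c.\ iteration fact above.
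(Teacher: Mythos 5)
Your proposal is correct and follows essentially the same route as the paper: collapse a Mahlo cardinal over $L$ via $\col(\omega_1,<\kappa)$ to get $\neg\textsf{GKH}$ from Theorem 4.2, use the failure of Chang's conjecture in any set-generic extension of $L$ to obtain a weak $\rho$-function via Theorem 4.1, force with $\p'$ to add the strongly non-saturated Aronszajn tree, and finish with the two-step c.c.c.\ iteration observation to see that $\neg\textsf{GKH}$ persists. The only cosmetic difference is that you phrase the last step as a contradiction while the paper argues directly, and you spell out the downward absoluteness of Mahloness, which the paper takes for granted.
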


\begin{proof}
	Let $\kappa$ be a Mahlo cardinal. 
	Then $\kappa$ is a Mahlo cardinal in $L$. 
	Let $K$ be an $L$-generic filter on the L\'{e}vy collapse 
	$\col(\omega_1,< \! \kappa)^L$. 
	Working in $L[K]$, $0^{\#}$ does not exist and hence 
	Chang's conjecture fails. 
	By Theorem 4.1, in $L[K]$ we can fix a 
	weak $\rho$-function $e : (\omega_2)^2 \to \omega_1$. 
	Define $\p'$ in $L[K]$ using $\kappa = \omega_2$ and the function $e$. 

	Let $G$ be an $L[K]$-generic filter on $\p'$. 
	By Theorem 3.16, in $L[K][G]$ we have that $T_G$ is a normal 
	infinitely splitting Aronszajn tree which is strongly non-saturated. 
	Consider any c.c.c.\ forcing poset $\q$ in $L[K][G]$. 
	In $L[K]$ fix a $\p'$-name $\dot \q$ for a c.c.c.\ forcing such that $\dot \q^G = \q$. 
	Then the two-step forcing iteration $\p' * \dot \q$ is c.c.c.\ in $L[K]$, 
	and hence by Theorem 4.2, 
	$\p' * \dot \q$ forces over $L[K]$ that there does not exist a Kurepa tree. 
	So in $L[K][G]$, $\q$ forces that there does not exist a Kurepa tree. 
	So $\neg \textsf{GKH}$ holds in $L[K][G]$.
\end{proof}

We note that in contrast to the model of \cite{KS}, in the model of the above theorem 
$\neg \textsf{GKH}$ implies that there does not exist an almost Kurepa Suslin tree.

\section{Adequate Sets}

We now turn to developing our second forcing poset for adding a 
strongly non-saturated Aronszajn tree. 
For the remainder of the article, assume that $\kappa$ is an inaccessible cardinal. 
In this section we review the type of side conditions which are used in this forcing. 
We refer the reader to \cite{JK21} for the proofs of Proposition 5.8 and 
Theorems 5.11 and 5.15 below, as well as for a general discussion of this style of 
side conditions and its history. 
Other than these three black boxes, 
we include the remaining proofs for completeness, all of which are easy. 
We do note that in \cite{JK21} the context is a bit different, since 
the inaccessible cardinal $\kappa$ is replaced with $\omega_2$. 
But everything works almost identically in both cases, with the difference 
being that in our current situation $\kappa$ will be collapsed to become $\omega_2$.

Fix a bijection $\psi : \kappa \to H(\kappa)$. 
Define a well-ordering $\lhd$ of $H(\kappa)$ by $a \lhd b$ if 
$\psi^{-1}(a) < \psi^{-1}(b)$.  
Let $\mathcal A$ denote the structure $(H(\kappa),\in,\psi)$. 
Since $\lhd$ is a well-ordering of $H(\kappa)$ which is definable in $\mathcal A$, 
the structure $\mathcal A$ has definable Skolem functions. 
For any set $x \subseteq H(\kappa)$, let $\sk(x)$ denote the closure of 
$x$ under these definable Skolem functions. 
And let $\text{cl}(x)$ denote the set consisting of the elements of $x$ 
together with the limit points of $x$. 
Define $\Lambda_0$ to be the club of all $\beta < \kappa$ such that 
$\sk(\beta) \cap \kappa = \beta$.

\begin{definition}
	Define $\Lambda$ to be the set of all $\beta < \kappa$ with uncountable cofinality 
	which are limit points of $\Lambda_0$ and satisfy that 
	$[\beta]^\omega \subseteq \sk(\beta)$.
\end{definition}

Since $\kappa$ is inaccessible, $\Lambda$ is the intersection of some club subset 
of $\kappa$ with $\kappa \cap \cof(> \! \omega)$.

In \cite{JK21} we also fix a thin stationary subset of $[\omega_2]^\omega$ 
which is only needed in the case that \textsf{CH} is false. 
In this article, this set will just be $[\kappa]^\omega$ and will not be mentioned explicitly.

\begin{definition}
	Define $\mathcal X$ to be the set of all $N \in [\kappa]^\omega$ such that 
	$\sk(N) \cap \kappa = N$ and for all $\gamma \in N$, 
	$\sup(\gamma \cap \Lambda_0) \in N$.
\end{definition}

The main point of this definition for us is the property that $\sk(N) \cap \kappa = N$. 
The second requirement is of minor technical importance 
and can be ignored for this article.

Note that $\mathcal X$ is a club subset of $[\kappa]^\omega$. 
It is easy to check that 
$\mathcal X$ is closed under intersections and if $M \in \mathcal X$ 
and $\beta \in \Lambda$, then $M \cap \beta \in \mathcal X$. 
Observe that if $\beta \in \Lambda$ then $\mathcal X \cap \mathcal P(\beta) = 
\mathcal X \cap \sk(\beta)$.

\begin{definition}
For all $M, N \in \mathcal X$, define 
$$
\beta_{\scaleto{M,N}{5pt}} = 
\min(\Lambda \setminus \sup(\cl(M) \cap \cl(N))).
$$
The ordinal $\beta_{\scaleto{M,N}{5pt}}$ is called the 
\emph{comparison point of $M$ and $N$}.
\end{definition}

\begin{definition}
	A set $A$ is \emph{adequate} if $A$ is a finite subset of $\mathcal X$ and 
	for all $M$ and $N$ in $A$, one of the following holds:
	\begin{enumerate}
		\item ($M < N$) \ $M \cap \beta_{\scaleto{M,N}{5pt}} \in \sk(N)$;
		\item ($N < M$) \  $N \cap \beta_{\scaleto{M,N}{5pt}} \in \sk(M)$;
		\item ($M \sim N$) \  
		$M \cap \beta_{\scaleto{M,N}{5pt}} = N \cap \beta_{\scaleto{M,N}{5pt}}$.
	\end{enumerate}
\end{definition}

If $A$ is adequate and $M, N \in A$, then $M < N$ is equivalent to 
$M \cap \omega_1 < N \cap \omega_1$, and $M \sim N$ is equivalent to 
$M \cap \omega_1 = N \cap \omega_1$. 
We also write $M \le N$ to mean that either $M < N$ or $M \sim N$.

The next lemma follows easily from the definitions.

\begin{lemma}
	Assume that $A$ is adequate, $M, N \in A$, and $M < N$. 
	Then $M \cap N = M \cap \beta_{M,N}$, and hence $M \cap N \in \sk(N)$.
\end{lemma}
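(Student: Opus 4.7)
The plan is to establish the two inclusions $M \cap N \subseteq M \cap \beta_{M,N}$ and $M \cap \beta_{M,N} \subseteq M \cap N$ separately, and then read off the ``hence'' clause.

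For the first inclusion, given $x \in M \cap N$, I would note that $x$ lies in $\cl(M) \cap \cl(N)$ and is therefore bounded by $s := \sup(\cl(M) \cap \cl(N))$. The key observation is that $s$ itself cannot belong to $\Lambda$: since $M$ and $N$ are countable, $s$ is either $0$, a successor, or a limit of cofinality $\omega$, whereas $\Lambda$ consists of ordinals of uncountable cofinality. Consequently $\beta_{M,N} = \min(\Lambda \setminus s)$ must be strictly larger than $s$, giving $x \le s < \beta_{M,N}$, so $x \in M \cap \beta_{M,N}$.

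For the reverse inclusion, the containment $M \cap \beta_{M,N} \subseteq M$ is automatic, so I only need $M \cap \beta_{M,N} \subseteq N$. The adequacy assumption together with $M < N$ gives, by Definition 5.4(1), that $M \cap \beta_{M,N} \in \sk(N)$. Since $M$ is countable, $M \cap \beta_{M,N}$ is a countable element of $\sk(N)$, and a standard application of the definable Skolem functions (enumerate it by a function $f \colon \omega \to M \cap \beta_{M,N}$ that itself lies in $\sk(N)$, then evaluate at each $n \in \omega \subseteq \sk(N)$) shows that every element of $M \cap \beta_{M,N}$ lies in $\sk(N)$. Because $M \cap \beta_{M,N} \subseteq \kappa$, the defining property $\sk(N) \cap \kappa = N$ from the definition of $\mathcal X$ yields $M \cap \beta_{M,N} \subseteq N$.

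The ``hence'' clause is then immediate, since $M \cap N = M \cap \beta_{M,N}$ and the right-hand side is in $\sk(N)$ by the definition of $M < N$. I do not foresee any real obstacle here; the only point that requires care is the observation that $s \notin \Lambda$, which genuinely depends on the countability of members of $\mathcal X$.
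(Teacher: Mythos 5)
Your reverse inclusion and the ``hence'' clause are correct, and the overall strategy is the natural one. However, there is a genuine gap in the forward inclusion, specifically in the justification that $s := \sup(\cl(M) \cap \cl(N))$ cannot lie in $\Lambda$. You assert that ``since $M$ and $N$ are countable, $s$ is either $0$, a successor, or a limit of cofinality $\omega$,'' but countability alone does not give this. The set $\cl(M) \cap \cl(N)$ could a priori have a maximum element $m$ belonging to $M \cap N$, and a countable set of ordinals can perfectly well contain an element of uncountable cofinality; in that case $s = m$ could have uncountable cofinality, and your argument breaks down at exactly the point you flag as ``the only point that requires care.''

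What is actually needed is a property of $\mathcal X$ beyond countability: since $M, N \in \mathcal X$, we have $\sk(M) \cap \kappa = M$ and $\sk(N) \cap \kappa = N$, so both $M$ and $N$ are closed under the ordinal successor function. Consequently, if $m \in M \cap N$ were the maximum of $\cl(M) \cap \cl(N)$, then $m+1 \in M \cap N \subseteq \cl(M) \cap \cl(N)$ would exceed it, a contradiction. So any maximum of $\cl(M) \cap \cl(N)$ must be a limit point of $M$ (or of $N$) not lying in that set, hence of cofinality $\omega$; and if there is no maximum, $s$ is a proper supremum of a countable set and again has cofinality $\omega$. In both cases $s \notin \Lambda$, so $\beta_{M,N} = \min(\Lambda \setminus s) > s$, and the inclusion $M \cap N \subseteq M \cap \beta_{M,N}$ follows. (This also shows the ``successor'' case you list for $s$ never actually occurs, for the same reason.)
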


\begin{lemma}
	Suppose that $A$ is adequate, $N \in \mathcal X$, and 
	$A \subseteq \sk(N)$. 
	Then $A \cup \{ N \}$ is adequate.
\end{lemma}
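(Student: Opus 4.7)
The plan is to verify Definition 5.4 for the new pairs arising in $A \cup \{N\}$, namely those of the form $(M, N)$ with $M \in A$; pairs within $A$ are already handled by the hypothesis that $A$ is adequate. I will show each new pair falls into case (1), i.e., $M < N$, witnessed by $M \cap \beta_{M,N} \in \sk(N)$.

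The only nontrivial ingredient is the observation that $M \subseteq N$. This is a standard Skolem hull argument: $M$ is a countable subset of $\kappa$ with $M \in \sk(N)$, and the $\lhd$-least bijection $f : \omega \to M$ is definable from $M$ in the structure $\mathcal A$, so $f \in \sk(N)$; hence each $f(n) \in \sk(N)$. Therefore $M \subseteq \sk(N) \cap \kappa = N$, using the defining property $\sk(N) \cap \kappa = N$ of membership in $\mathcal X$.

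Once $M \subseteq N$ is established, $\cl(M) \subseteq \cl(N)$ forces $\cl(M) \cap \cl(N) = \cl(M)$ and hence $\sup(\cl(M) \cap \cl(N)) = \sup M$. Thus $\beta_{M,N} = \min(\Lambda \setminus \sup M)$, which is definable in $\mathcal A$ from $M$ alone (since $\Lambda$ is definable without parameters). As $M \in \sk(N)$, this yields $\beta_{M,N} \in \sk(N)$, and consequently $M \cap \beta_{M,N} \in \sk(N)$, verifying case (1) of Definition 5.4. The only mild obstacle is the countability step in the second paragraph; the rest is straightforward unwinding of the definitions of $\beta_{M,N}$ and of $\mathcal X$.
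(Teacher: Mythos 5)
Your proof is correct and follows essentially the same strategy as the paper's: new pairs are $(M,N)$ with $M\in A\subseteq\sk(N)$, and you show $M<N$. The paper's one-line verification is slightly more direct: it observes that $\beta_{M,N}>\sup(M)$ (which, as you note, follows because $\cl(M)\cap\cl(N)=\cl(M)$ and $\sup(M)\notin\Lambda$, since models in $\mathcal X$ have no maximum and hence $\sup(M)$ has countable cofinality), so that $M\cap\beta_{M,N}=M\in\sk(N)$ outright. You instead compute $\beta_{M,N}=\min(\Lambda\setminus\sup M)$, deduce $\beta_{M,N}\in\sk(N)$ by definability, and conclude $M\cap\beta_{M,N}\in\sk(N)$ as an intersection of two members of $\sk(N)$, without explicitly identifying $M\cap\beta_{M,N}$ with $M$. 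Both routes are valid; the paper's buys a cleaner identity, while yours makes explicit the useful auxiliary fact that $M\in\sk(N)$ together with countability of $M$ forces $M\subseteq N$, which the paper leaves implicit.
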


\begin{proof}
	If $M, N \in \mathcal X$ and $M \in \sk(N)$, 
	then easily $\beta_{\scaleto{M,N}{5pt}} > \sup(M)$, and hence 
	$M \cap \beta_{M,N} = M \in \sk(N)$.
\end{proof}

\begin{lemma}
	Suppose that $\chi \ge \kappa$ is regular and $M$ is a countable elementary 
	substructure of $(H(\chi),\in,\psi)$ 
	such that $N = M \cap \kappa \in \mathcal X$. 
	Then $M \cap H(\kappa) = \sk(N)$.
\end{lemma}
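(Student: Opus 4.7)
The plan is to verify the two inclusions separately, exploiting the fact that the definable Skolem functions of $\mathcal{A} = (H(\kappa), \in, \psi)$ are definable from the well-ordering $\lhd$, and $\lhd$ is in turn definable from $\psi$. Consequently, every such Skolem function is given by a formula in the language of $(H(\chi), \in, \psi)$, with at most $H(\kappa)$ as an additional parameter when $\chi > \kappa$; note that $H(\kappa)$ lies in $M$ since $\kappa$ is definable from $\psi$ (as the domain of $\psi$) and $\psi \in M$.

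For the inclusion $\sk(N) \subseteq M \cap H(\kappa)$, I would observe that $N = M \cap \kappa \subseteq M$ and then use elementarity of $M$ in $(H(\chi), \in, \psi)$ together with the observation above: $M$ is closed under each definable Skolem function of $\mathcal{A}$. Iterating, $\sk(N) \subseteq M$, and since $\sk(N) \subseteq H(\kappa)$ by construction, we obtain $\sk(N) \subseteq M \cap H(\kappa)$.

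For the reverse inclusion $M \cap H(\kappa) \subseteq \sk(N)$, I would take $x \in M \cap H(\kappa)$ and exhibit a code for it inside $N$. Since $\psi$ is a bijection from $\kappa$ onto $H(\kappa)$, put $\alpha = \psi^{-1}(x)$. The statement ``there is a unique $\alpha \in \kappa$ with $\psi(\alpha) = x$'' is expressible in the language of $(H(\chi), \in, \psi)$ with $x$ as a parameter, so by elementarity $\alpha \in M$, hence $\alpha \in M \cap \kappa = N$. Because $y \mapsto \psi(y)$ is itself a definable function of $\mathcal{A}$, we conclude $x = \psi(\alpha) \in \sk(\{\alpha\}) \subseteq \sk(N)$. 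I do not anticipate any serious obstacle; the only care needed is in verifying that the Skolem functions of $\mathcal{A}$ transfer cleanly to the ambient structure, and this is immediate once $\psi$ is in the language.
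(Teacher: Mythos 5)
Your proof is correct and follows essentially the same route as the paper: one inclusion comes from $M \cap H(\kappa)$ (equivalently, $M$) being closed under the definable Skolem functions of $\mathcal{A}$, and the other comes from taking $\alpha = \psi^{-1}(x) \in M \cap \kappa = N$ and noting $x = \psi(\alpha) \in \sk(N)$. The extra care you take about relativizing to $H(\kappa)$ (and observing $H(\kappa) \in M$) is a harmless elaboration of the same idea.
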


\begin{proof}
	Since $M \cap H(\kappa)$ is easily an elementary substructure of 
	$(H(\kappa),\in,\psi)$, $\sk(N) \subseteq M \cap H(\kappa)$. 
	But since $\psi : \kappa \to H(\kappa)$ is a bijection, by elementarity 
	$M \cap H(\kappa) = \psi[N] \subseteq \sk(N)$.
\end{proof}

\begin{proposition}[{\cite[Proposition 3.4]{JK21}}]
	Suppose that $A, C \subseteq \mathcal X$ are finite, $A$ is adequate, 
	$A \subseteq C$, and for all $K \in C \setminus A$, there exists 
	some $M \in A$ and some $\beta \in \Lambda$ such that $K = M \cap \beta$. 
	Then $C$ is adequate.
\end{proposition}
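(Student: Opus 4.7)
The plan is to argue by induction on $|C \setminus A|$. The hypothesis on $C \setminus A$ is preserved under enlarging $A$ by one of its projections, so it suffices to establish the single-step case: if $A$ is adequate, $M \in A$, and $\beta \in \Lambda$, then $A \cup \{K\}$ is adequate, where $K := M \cap \beta$. We may assume $K \neq M$. The text has already observed that $M \cap \beta \in \mathcal X$, so $K$ is a legitimate element. The only new adequacy conditions to verify are those for pairs $(K, N)$ with $N \in A$.

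The central technical tools are the correspondence $x \in \sk(P) \iff \psi^{-1}(x) \in P$ for $P \in \mathcal X$, yielding $\sk(K) = \sk(M) \cap \sk(\beta)$, together with the property $[\beta]^\omega \subseteq \sk(\beta)$ from the definition of $\Lambda$. The latter guarantees that every countable set of ordinals below $\beta$ lies in $\sk(\beta)$, which is the crucial ingredient for transferring membership from the larger Skolem hull $\sk(M)$ down to $\sk(K) = \sk(M) \cap \sk(\beta)$.

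Fix $N \in A$ and split into cases according to whether $\beta \geq \beta_{M,N}$ or $\beta < \beta_{M,N}$. In the first case, since $\sup(\cl(M) \cap \cl(N)) < \beta_{M,N} \leq \beta$, trimming $M$ to $M \cap \beta$ does not alter $\cl(M) \cap \cl(N)$, so $\beta_{K,N} = \beta_{M,N}$ and $K \cap \beta_{K,N} = M \cap \beta_{M,N}$. If $M < N$ or $M \sim N$, the adequacy condition for $(M,N)$ transfers verbatim to $(K,N)$. If $N < M$, then $N \cap \beta_{M,N} \in \sk(M)$, and because $N \cap \beta_{M,N}$ is a countable subset of $\beta$ it also lies in $\sk(\beta)$; combining, $N \cap \beta_{K,N} = N \cap \beta_{M,N} \in \sk(M) \cap \sk(\beta) = \sk(K)$, giving $N < K$.

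In the second case $\beta < \beta_{M,N}$, the set $K$ sits strictly below the comparison point of $M$ and $N$, and the sub-case analysis on the $(M,N)$ relation proceeds analogously. The case $M \sim N$ uses $M \cap \beta_{M,N} = N \cap \beta_{M,N}$ to conclude $K = N \cap \beta$, from which $K \sim N$ follows after checking the behavior of $\beta_{K,N}$. The asymmetric cases $M < N$ and $N < M$ again rely on $[\beta]^\omega \subseteq \sk(\beta)$ to place the witness $K \cap \beta_{K,N}$ or $N \cap \beta_{K,N}$ into the appropriate Skolem hull. The main obstacle throughout is the bookkeeping needed to verify that comparison points behave correctly under trimming by $\beta$ and that witness sets land in the right Skolem hulls; without the $[\beta]^\omega \subseteq \sk(\beta)$ condition built into $\Lambda$, one could not recover $\sk(K)$-membership from $\sk(M)$-membership after trimming.
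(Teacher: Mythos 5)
The paper itself cites this as \cite[Proposition 3.4]{JK21} and does not reprove it, so there is no in-paper argument to compare against; I am evaluating your proof on its own terms. Your overall plan---reduce by induction on $|C\setminus A|$ to the single-step claim that $A\cup\{M\cap\beta\}$ is adequate, then split on whether $\beta\ge\beta_{M,N}$ or $\beta<\beta_{M,N}$---is a reasonable route, and Case~1 is argued correctly. The sub-case $M\sim N$ of Case~2 also goes through essentially as you sketch, once one notes that $K=M\cap\beta=N\cap\beta\subseteq N$ and $\beta_{K,N}\le\beta$.

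The genuine gap is in the asymmetric sub-cases of Case~2. You assert that these ``rely on $[\beta]^\omega\subseteq\sk(\beta)$ to place the witness $K\cap\beta_{K,N}$ or $N\cap\beta_{K,N}$ into the appropriate Skolem hull,'' but for $M<N$ that mechanism does not apply: the target hull is $\sk(N)$ (since $K\cap\omega_1=M\cap\omega_1<N\cap\omega_1$ forces $K<N$), and $[\beta]^\omega\subseteq\sk(\beta)$ only puts $K\cap\beta_{K,N}$ into $\sk(\beta)$, which neither contains nor is contained in $\sk(N)$, and nothing in your outline bridges that. What actually makes this sub-case work is the observation that, because $\beta<\beta_{M,N}$ and $M<N$, one has $K=M\cap\beta\subseteq M\cap\beta_{M,N}=M\cap N$; hence $\cl(K)\cap\cl(N)=\cl(K)$, so the witness $K\cap\beta_{K,N}$ is $K$ itself (or $K$ minus its maximum), and $K$ lies in $\sk(N)$ because $K=(M\cap N)\cap\gamma$ where $\gamma$ is $\min\bigl((M\cap N)\setminus\beta\bigr)\in N$ (or $K=M\cap N$ outright if $\sup(M\cap N)<\beta$). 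This ``cut the known witness at an element of the other model'' step is the missing idea; it is also needed, in addition to the $[\beta]^\omega$ fact, to supply the $\sk(M)$ half of $\sk(K)=\sk(M)\cap\sk(\beta)$ in the $N<M$ sub-case. A secondary issue: your Case~1 computation that $\beta_{K,N}=\beta_{M,N}$ tacitly assumes $\sup(\cl(M)\cap\cl(N))<\beta_{M,N}$; you should either rule out equality (e.g.\ by showing $\beta_{M,N}\notin M\cap N$) or treat it separately.
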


\begin{definition}
	Let $A$ be adequate and let $N \in A$. 
	We say that $A$ is \emph{$N$-closed} if for all $M \in A$, 
	if $M < N$ then $M \cap N \in A$.
\end{definition}

\begin{lemma}
	Suppose that $A$ is adequate and $N \in A$. 
	Then 
	$$
	A \cup \{ M \cap N : M \in A, \ M < N \}
	$$
	is adequate and $N$-closed.
\end{lemma}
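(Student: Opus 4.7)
The plan is to set $C = A \cup \{ M \cap N : M \in A,\ M < N \}$ and then argue that $C$ is adequate by appealing to Proposition 5.8, after which I will verify $N$-closedness directly by inspecting the two kinds of elements in $C$.

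For adequacy, I would check each hypothesis of Proposition 5.8 applied to the pair $(A,C)$. The containment $A \subseteq C$ is immediate, $C$ is finite, and $C \subseteq \mathcal{X}$ because $A \subseteq \mathcal{X}$ and $\mathcal{X}$ is closed under intersections (as noted in the discussion after Definition 5.2). It remains to verify that each new element $K \in C \setminus A$ has the form $M \cap \beta$ for some $M \in A$ and some $\beta \in \Lambda$. But if $K = M \cap N$ with $M \in A$ and $M < N$, then Lemma 5.5 gives exactly $K = M \cap \beta_{M,N}$, and $\beta_{M,N} \in \Lambda$ by the very definition of the comparison point in Definition 5.3. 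Proposition 5.8 then yields that $C$ is adequate.

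For $N$-closedness, I would fix an arbitrary $M' \in C$ with $M' < N$ and show $M' \cap N \in C$, splitting into two cases. If $M' \in A$, then $M' \cap N \in C$ by the very definition of $C$. Otherwise $M' = M \cap N$ for some $M \in A$ with $M < N$, and then by idempotence $M' \cap N = (M \cap N) \cap N = M \cap N = M' \in C$. Either way the required condition holds.

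The argument is short once Proposition 5.8 is available; the only real content is the observation that $M \cap N$ is precisely of the form $M \cap \beta$ with $\beta \in \Lambda$ demanded by that proposition, and this is exactly the content of Lemma 5.5. Consequently I do not anticipate any significant obstacle — the $N$-closure operation is, by design, the minimal enlargement realized by cutting each smaller $M$ at the comparison point $\beta_{M,N}$, which is the canonical move covered by Proposition 5.8.
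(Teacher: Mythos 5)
Your proposal is correct and takes essentially the same route as the paper, which simply cites Lemma 5.5 and Proposition 5.8; you have spelled out the details (that $M\cap N = M\cap\beta_{M,N}$ with $\beta_{M,N}\in\Lambda$, that $\mathcal X$ is closed under intersections, and the two-case check of $N$-closedness) exactly as intended.
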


\begin{proof}
	This follows immediately from Lemma 5.5 and Proposition 5.8.
\end{proof}

\begin{thm}[{\cite[Proposition 3.9]{JK21}}]
	Let $A$ be adequate, let $N \in A$, and suppose that $A$ is $N$-closed. 
	Assume that $B$ is adequate and 
	$$
	A \cap \sk(N) \subseteq B \subseteq \sk(N).
	$$
	Then $A \cup B$ is adequate.
\end{thm}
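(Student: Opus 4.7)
The plan is to verify the adequacy of $A \cup B$ by checking, for every pair of models in $A \cup B$, the comparability requirement of Definition 5.4. Pairs entirely within $A$ or within $B$ are immediate from the hypothesized adequacy of each set, so the work lies with mixed pairs $(M,M')$ where $M \in A$ and $M' \in B$. The first and crucial observation is that $M' \in B \subseteq \sk(N)$ is a countable subset of $\kappa$, whence $M' \subseteq \sk(N) \cap \kappa = N$ and $\cl(M') \subseteq \cl(N)$, yielding the fundamental bound $\beta_{M,M'} \le \beta_{M,N}$.

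I would then split into cases on the $A$-relation between $M$ and $N$. In the easy case $M < N$, the $N$-closedness hypothesis together with Lemma 5.5 gives $M \cap N = M \cap \beta_{M,N} \in A \cap \sk(N) \subseteq B$. The bound $\beta_{M,M'} \le \beta_{M,N}$ then implies $\cl(M) \cap \cl(M') = \cl(M \cap N) \cap \cl(M')$, so $\beta_{M,M'} = \beta_{M \cap N, M'}$ and $M \cap \beta_{M,M'} = (M \cap N) \cap \beta_{M,M'}$. Applying adequacy of $B$ to the pair $(M \cap N, M') \in B \times B$ produces one of the three outcomes of Definition 5.4, and each of them lifts directly to the corresponding outcome for $(M,M')$, using $\sk(M \cap N) \subseteq \sk(M)$ for the middle outcome.

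The harder cases are $N < M$ and $M \sim N$. In both, $M \cap N$ coincides with $N \cap \beta_{M,N}$ and lies in $\sk(M)$ (by the definition of $N < M$ in the first subcase, and trivially via $M$ itself in the second), but $M \cap N$ is not guaranteed to be an element of $A$, so one cannot appeal to $B$'s adequacy as in the easy case. The strategy here is to show directly that $M' \le M$ by verifying $M' \cap \beta_{M,M'} \in \sk(M)$. The set-theoretic inclusion $M' \cap \beta_{M,M'} \subseteq M' \cap \beta_{M,N} \subseteq N \cap \beta_{M,N} \subseteq M$ is routine from the earlier observations; the work is in promoting this inclusion to actual membership in $\sk(M)$. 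The plan is to use $N \cap \beta_{M,N} \in \sk(M)$ to conclude $\sk(N \cap \beta_{M,N}) \subseteq \sk(M)$, and then invoke the identification $\sk(N) \cap \mathcal{P}(\beta_{M,N}) = \sk(N \cap \beta_{M,N})$, valid for $\beta_{M,N} \in \Lambda$, to locate the relevant countable subsets of $\beta_{M,N}$ coming from $\sk(N)$ inside $\sk(M)$.

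The main obstacle I expect is precisely this membership step: verifying that $M' \cap \beta_{M,M'}$, as an element of $\sk(N)$, actually lies in $\sk(M)$. The $N$-closedness hypothesis provides no shortcut here, and the argument depends on the structural properties of the club $\Lambda$, the Skolem closure operation on elementary substructures, and the identification of countable subsets of $\beta_{M,N}$ across the two envelopes $\sk(M)$ and $\sk(N)$ via their shared initial segment $N \cap \beta_{M,N}$.
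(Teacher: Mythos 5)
The paper does not prove this theorem; it is cited as a black box to \cite[Proposition 3.9]{JK21}, so there is no in-paper argument to compare against. I will therefore assess the proposal on its own terms.

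Your reduction to mixed pairs $M \in A$, $M' \in B$ is correct, as is the observation that $M' \in \sk(N)$ and $M'$ countable force $M' \subseteq N$, hence $\beta_{M,M'} \le \beta_{M,N}$. The $M < N$ case is handled correctly: $N$-closedness and Lemma 5.5 give $M \cap N = M \cap \beta_{M,N} \in A \cap \sk(N) \subseteq B$, the comparison-point computation $\beta_{M,M'} = \beta_{M\cap N, M'}$ goes through (using that $\beta_{M,N}$ has uncountable cofinality so cannot lie in the countable set $\cl(M')$), and adequacy of $B$ applied to the pair $(M\cap N, M')$ transfers to $(M,M')$ as you describe.

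The hard cases ($N \le M$) are where the argument is genuinely incomplete, and you say as much yourself, but the difficulties are worse than you suggest. First, in the $M \sim N$ subcase you assert that $M \cap N = M \cap \beta_{M,N}$ "lies in $\sk(M)$ trivially via $M$ itself"; this is not so. $M \notin \sk(M)$, and $\beta_{M,N}$ is not generally in $\sk(M)$, so $M \cap \beta_{M,N} \in \sk(M)$ does not come for free: a countable set being a subset of $M$, or being definable from $M$ and $\beta_{M,N}$, does not place it inside the countable hull $\sk(M)$. Second, the identity $\sk(N) \cap \mathcal{P}(\beta_{M,N}) = \sk(N \cap \beta_{M,N})$ that your plan hinges on does not hold: $\sk(N \cap \beta_{M,N})$ is a countable hull and cannot absorb every subset of $\beta_{M,N}$ that happens to lie in $\sk(N)$, since elements of $\sk(N)$ may require parameters from $N \setminus \beta_{M,N}$. (You may be misremembering the paper's observation that $\mathcal{X} \cap \mathcal{P}(\beta) = \mathcal{X} \cap \sk(\beta)$ for $\beta \in \Lambda$; but $\sk(\beta)$ for an ordinal $\beta$ has size $|\beta|$, which is a fundamentally different object from the countable hull $\sk(N \cap \beta)$, and knowing $M' \cap \beta_{M,M'} \in \sk(\beta_{M,N})$ is far weaker than what you need.) Third, even granting the identity, you would need $M' \cap \beta_{M,M'} \in \sk(N)$, but $\beta_{M,M'}$ is defined using $M \notin \sk(N)$ and is not obviously recoverable inside $\sk(N)$. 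So the membership step you flag as "the main obstacle" is indeed a genuine gap, and the tools you name for closing it are not sufficient as stated; the actual argument must exploit finer structural properties of $\Lambda_0$ and $\mathcal{X}$ (such as the closure condition $\sup(\gamma \cap \Lambda_0) \in N$ for $\gamma \in N$) that do not appear in your sketch.
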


In this article, we need an extension of Theorem 5.11 to finitely many adequate sets.

\begin{corollary}
	Let $1 < d < \omega$. 
	Suppose:
	\begin{enumerate}
		\item $A_0,\ldots,A_{d-1}$ are adequate;
		\item for all $0 < i < d$, $N_i \in A_i$ and $A_i$ is $N_i$-closed;
		\item for all $0 < i < d$, 
		$$
		A_i \cap \sk(N_i) \subseteq A_{i-1} \subseteq \sk(N_{i}).
		$$
	\end{enumerate}
	Then $A_0 \cup \cdots \cup A_{d-1}$ is adequate.
\end{corollary}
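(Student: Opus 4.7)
The plan is to argue by induction on $d \ge 2$. The base case $d = 2$ is exactly Theorem 5.11. For the inductive step from $d-1$ to $d$, I would first apply the inductive hypothesis to $A_0, \ldots, A_{d-2}$: hypotheses (1)--(3) restricted to indices below $d-1$ are precisely what is needed, so $B := A_0 \cup \cdots \cup A_{d-2}$ is adequate. It then remains to glue on $A_{d-1}$, which I would do by one more application of Theorem 5.11, taking $A = A_{d-1}$, $N = N_{d-1}$, and $B$ as above.

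Three of the four hypotheses of Theorem 5.11 come for free: $A_{d-1}$ is adequate and $N_{d-1}$-closed by (2); $B$ is adequate by the inductive step; and $A_{d-1} \cap \sk(N_{d-1}) \subseteq A_{d-2} \subseteq B$ by (3) at $i = d-1$. The only nontrivial hypothesis is $B \subseteq \sk(N_{d-1})$, that is, $A_i \subseteq \sk(N_{d-1})$ for every $i \le d-2$, which I would verify by a secondary downward induction on $i$. The base case $i = d-2$ is again (3) at $i = d-1$. For the step, assuming $A_{i+1} \subseteq \sk(N_{d-1})$ with $1 \le i+1 \le d-2$, we have $N_{i+1} \in A_{i+1} \subseteq \sk(N_{d-1})$, and (3) at $i+1$ gives $A_i \subseteq \sk(N_{i+1})$, so it suffices to show $\sk(N_{i+1}) \subseteq \sk(N_{d-1})$.

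The latter reduces to the following auxiliary claim, which is the one place requiring care: if $M, N \in \mathcal X$ and $M \in \sk(N)$, then $\sk(M) \subseteq \sk(N)$. This is standard in the present side-condition framework. Given $M \in \sk(N)$ with $M$ countable, the definable well-ordering $\lhd$ of $\mathcal A$ yields a definable enumeration of $M$ in $\lhd$-order, which therefore lies in $\sk(N)$; evaluating at each $n \in \omega \subseteq \sk(N)$ shows that every element of $M$ lies in $\sk(N)$, and hence $M \subseteq \sk(N) \cap \kappa = N$ by the defining property of $\mathcal X$. Taking Skolem hulls gives $\sk(M) \subseteq \sk(N)$, the downward induction closes, and Theorem 5.11 delivers the corollary.
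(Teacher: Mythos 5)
Your proof is correct and matches the paper's intent exactly: the paper's proof is the single line ``By induction on $d$ using Theorem 5.11,'' and your argument is precisely the natural unpacking of that, with the one genuinely nontrivial detail being the verification that $A_0 \cup \cdots \cup A_{d-2} \subseteq \sk(N_{d-1})$, which you correctly reduce to the standard fact that for $M, N \in \mathcal X$ with $M \in \sk(N)$ one has $M \subseteq \sk(N) \cap \kappa = N$ and hence $\sk(M) \subseteq \sk(N)$. (A minor stylistic point: rather than enumerating $M$ in $\lhd$-increasing order, whose length may exceed $\omega$, it is cleaner to take the $\lhd$-least bijection $f \colon \omega \to M$, which lies in $\sk(N)$ by definability and immediately yields $M = f[\omega] \subseteq \sk(N)$.)
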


\begin{proof}
	By induction on $d$ using Theorem 5.11.
\end{proof}

\begin{definition}
	Let $A$ be adequate and let $\beta \in \Lambda$. 
	We say that $A$ is \emph{$\beta$-closed} if for all $M \in A$, 
	$M \cap \beta \in A$.
\end{definition}

\begin{lemma}
	Suppose that $A$ is adequate and $\beta \in \Lambda$. 
	Then 
	$$
	C = A \cup \{ M \cap \beta : M \in A \}
	$$
	is adequate and $\beta$-closed. 
	Moreover, if $N \in A$ and $A$ is $N$-closed, then 
	$C$ is also $N$-closed.
\end{lemma}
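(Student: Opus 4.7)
The plan is to derive the three conclusions sequentially, leveraging Proposition 5.8 for adequacy and handling both closure properties by case analysis on whether an element of $C$ is an original member of $A$ or one of the newly adjoined intersections $M \cap \beta$.

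Adequacy of $C$ follows directly from Proposition 5.8. The set $C$ is a finite subset of $\mathcal{X}$, since each $M \cap \beta$ lies in $\mathcal{X}$ by the noted closure property that $\mathcal{X}$ is stable under intersection with any ordinal in $\Lambda$, and every element of $C \setminus A$ has the required form $M \cap \beta$ with $M \in A$ and $\beta \in \Lambda$ (the same $\beta$ for every new element). The $\beta$-closedness is then immediate by inspection: if $K \in A$, then $K \cap \beta \in C$ by construction, while if $K = M \cap \beta$ with $M \in A$, then $K \cap \beta = (M \cap \beta) \cap \beta = M \cap \beta = K \in C$.

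For the moreover clause, assume $N \in A$ and $A$ is $N$-closed, and let $K \in C$ with $K < N$; the goal is to produce $K \cap N \in C$. If $K \in A$, then $K \cap N \in A \subseteq C$ directly by $N$-closedness of $A$. Otherwise $K = M \cap \beta$ for some $M \in A$. Since $\beta \in \Lambda$ has uncountable cofinality, $\beta \geq \omega_1$, whence $K \cap \omega_1 = (M \cap \beta) \cap \omega_1 = M \cap \omega_1$. Combined with $K < N$, this yields $M \cap \omega_1 < N \cap \omega_1$, which forces $M < N$ in $A$. By $N$-closedness of $A$, $M \cap N \in A$, so $(M \cap N) \cap \beta \in C$ by construction, and the identity $(M \cap N) \cap \beta = (M \cap \beta) \cap N = K \cap N$ finishes the argument.

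The only subtlety worth flagging is the final rearrangement: new members of $C$ arise as (member of $A$)$\,\cap\, \beta$, so $K \cap N$ for a new $K$ must be produced in $C$ as $(M \cap N) \cap \beta$ rather than directly, which requires both the commutativity of intersection and the transfer of $K < N$ in $C$ to $M < N$ in $A$ via $\omega_1$-traces. Beyond this, the proof is a routine application of Proposition 5.8 and the stated equivalence between the adequate-set order $<$ and comparison of intersections with $\omega_1$.
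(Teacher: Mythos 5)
Your proof is correct and takes essentially the same approach as the paper's: adequacy via Proposition 5.8, $\beta$-closedness by inspection, and the $N$-closedness of $C$ by rewriting $(M\cap\beta)\cap N$ as $(M\cap N)\cap\beta$ and appealing to $N$-closedness of $A$. Your version is slightly more careful in explicitly converting the hypothesis $K<N$ in $C$ into $M<N$ in $A$ via the $\omega_1$-traces (using $\beta\ge\omega_1$), a step the paper's proof leaves implicit.
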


\begin{proof}
	The set $C$ is adequate by Proposition 5.8, 
	and it is easily $\beta$-closed. 
	Consider $M \in A$ and we show that 
	$(M \cap \beta) \cap N \in C$. 
	But $(M \cap \beta) \cap N = (M \cap N) \cap \beta$, and since 
	$A$ is $N$-closed, $(M \cap N) \in A$. 
	Hence, $(M \cap N) \cap \beta \in C$.
\end{proof}

\begin{thm}[{\cite[Proposition 3.11]{JK21}}]
	Let $A$ be adequate, let $\beta \in \Lambda$, and assume that $A$ is $\beta$-closed. 
	Suppose that $B$ is adequate and 
	$$
	A \cap \sk(\beta) \subseteq B \subseteq \sk(\beta).
	$$
	Then $A \cup B$ is adequate.
\end{thm}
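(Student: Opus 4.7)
The plan is to reduce the adequacy of $A \cup B$ to the adequacy of $A$ and of $B$ plus a short comparison-point calculation. Since adequacy is a pairwise condition and both $A$ and $B$ are already adequate, it suffices to verify Definition 5.4 for each cross-pair $M \in A$, $N \in B$. The key move is to replace $M$ by $M \cap \beta$: this lies in $A$ by the $\beta$-closure hypothesis, and is a countable subset of $\beta$, so because $\beta \in \Lambda$ guarantees $[\beta]^\omega \subseteq \sk(\beta)$, we have $M \cap \beta \in A \cap \sk(\beta) \subseteq B$. Thus $M \cap \beta$ and $N$ sit together inside the adequate set $B$, and already satisfy one of the three alternatives of Definition 5.4.

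The core of the proof is to transfer this $B$-comparison from the pair $(M \cap \beta, N)$ back to $(M, N)$. First, since $\mathcal{X} \cap \mathcal{P}(\beta) = \mathcal{X} \cap \sk(\beta)$ forces $N \subseteq \beta$, and $\cf(\beta) > \omega$ together with countability of $N$ gives $\sup(N) < \beta$, we obtain $\cl(N) \subseteq \beta$. Second, the identity
$$
\cl(M) \cap \beta = \cl(M \cap \beta)
$$
holds for any countable $M$ and any ordinal $\beta$ of uncountable cofinality: truncation at $\beta$ neither drops a genuine limit point (the ordinals of $M$ witnessing it are already below $\beta$) nor creates a new one. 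Combining these two facts gives $\cl(M) \cap \cl(N) = \cl(M \cap \beta) \cap \cl(N)$, and hence $\beta_{M,N} = \beta_{M \cap \beta, N}$. Moreover this common value is at most $\beta$, since the relevant supremum is bounded below $\beta$ and $\beta \in \Lambda$, so $M \cap \beta_{M,N} = (M \cap \beta) \cap \beta_{M \cap \beta, N}$.

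With these identities in hand, each of the three adequacy outcomes for $(M \cap \beta, N)$ in $B$ translates directly into the corresponding outcome for $(M, N)$ in $A \cup B$. If $(M \cap \beta) \cap \beta_{M \cap \beta, N} \in \sk(N)$ (the case $M \cap \beta < N$), then $M \cap \beta_{M,N} \in \sk(N)$, witnessing $M < N$. The case $M \cap \beta \sim N$ is an immediate rewrite of the defining equation on both sides. For $N < M \cap \beta$, we get $N \cap \beta_{M,N} \in \sk(M \cap \beta) \subseteq \sk(M)$, using that $\sk$ is monotone in its input and $M \cap \beta \subseteq M$.

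The main obstacle, and the step I would write out most carefully, is the comparison-point identity $\beta_{M,N} = \beta_{M \cap \beta, N}$, which rests on the closure computation $\cl(M) \cap \beta = \cl(M \cap \beta)$ together with $\cl(N) \subseteq \beta$. Both require careful use of $\cf(\beta) > \omega$ and the countability of elements of $\mathcal{X}$; once they are pinned down, the reduction to the already-verified adequacy of $B$ is essentially automatic case analysis.
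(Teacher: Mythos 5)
The paper does not actually prove this statement: Theorem 5.15 is cited as a black box from \cite[Proposition 3.11]{JK21}, with the paper explicitly declining to reproduce the proofs of Proposition 5.8 and Theorems 5.11 and 5.15. So there is no in-paper argument to compare against line by line. That said, your proof is correct, and it is worth noting that it closely mirrors the technique the paper does display in Lemma 5.16, where the same truncation-by-$\beta$ move and the stability of the comparison point under such truncation (there for $N$ rather than $M$) are worked out explicitly.

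To summarize the verification: the reduction to cross-pairs $M \in A$, $N \in B$ is legitimate since adequacy is a pairwise condition and $A$, $B$ are each adequate; $M \cap \beta \in A$ by $\beta$-closure and $M \cap \beta \in \sk(\beta)$ by $[\beta]^\omega \subseteq \sk(\beta)$, so $M \cap \beta \in B$; $N \subseteq \beta$ from $\mathcal X \cap \sk(\beta) = \mathcal X \cap \mathcal P(\beta)$, and $\cl(N) \subseteq \beta$ since $\cf(\beta) > \omega$ and $N$ is countable. The identity $\cl(M) \cap \beta = \cl(M \cap \beta)$ is exactly where $\cf(\beta) > \omega$ is needed (to keep $\sup(M \cap \beta) < \beta$ so that no spurious limit point at $\beta$ appears), and you flag that correctly. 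From there $\beta_{M,N} = \beta_{M \cap \beta, N} \le \beta$ and $M \cap \beta_{M,N} = (M \cap \beta) \cap \beta_{M \cap \beta, N}$, so each of the three outcomes of Definition 5.4 for the pair $(M \cap \beta, N)$ in $B$ rewrites directly into the corresponding outcome for $(M, N)$; in the $N < M \cap \beta$ case the inclusion $\sk(M \cap \beta) \subseteq \sk(M)$ is exactly monotonicity of the Skolem hull. No gaps.
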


\begin{lemma}
	Suppose that $\beta \in \Lambda$, 
	$A \subseteq \sk(\beta)$ is adequate, 
	$N \in \mathcal X$, and $N \cap \beta \in A$. 
	Then $A \cup \{ N \}$ is adequate.
\end{lemma}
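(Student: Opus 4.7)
The plan is to reduce the claim to Theorem 5.15 by enlarging $\{N\}$ to a $\beta$-closed adequate set whose trace on $\sk(\beta)$ already lies inside $A$.

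First I would apply Lemma 5.14 to the singleton (hence trivially adequate) set $\{N\}$ to conclude that
\[
A' := \{N,\, N\cap\beta\}
\]
is adequate and $\beta$-closed. This bypasses any hands-on computation of $\beta_{N,\,N\cap\beta}$ and direct verification of the $\sim$ relation between $N$ and $N\cap\beta$.

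Next I would verify the inclusion $A' \cap \sk(\beta) \subseteq A$ demanded by Theorem 5.15. The element $N \cap \beta$ lies in $A$ by hypothesis. For $N$, the key point is that since $\psi$ is a definable bijection of $\kappa$ with $H(\kappa)$, each ``$n$th element under $\lhd$'' is a definable Skolem function; consequently any countable set of ordinals lying in $\sk(\beta)$ has all of its elements in $\sk(\beta) \cap \kappa = \beta$, and is therefore contained in $\beta$. Thus $N \in \sk(\beta)$ would force $N \subseteq \beta$, whence $N = N \cap \beta \in A$; in either case $A' \cap \sk(\beta) \subseteq A$. Combining this with the given inclusion $A \subseteq \sk(\beta)$, Theorem 5.15 applied with its first argument $A'$ and second argument $A$ then yields that $A \cup A' = A \cup \{N\}$ is adequate, using $N \cap \beta \in A$.

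The only step requiring real care is the inclusion $A' \cap \sk(\beta) \subseteq A$, and its only subtlety is the observation that countable sets of ordinals contained in $\sk(\beta)$ are subsets of $\beta$; everything else is a direct chaining of Lemma 5.14 and Theorem 5.15.
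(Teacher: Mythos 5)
Your proof is correct, but it takes a genuinely different route from the paper's. The paper argues directly: for each $M \in A$ it observes that $\cl(M) \cap \cl(N) = \cl(M) \cap \cl(N \cap \beta)$, hence $\beta_{\scaleto{M,N}{5pt}} = \beta_{\scaleto{M,N\cap\beta}{5pt}} \le \beta$, which lets it replace $N \cap \beta_{\scaleto{M,N}{5pt}}$ by $(N\cap\beta) \cap \beta_{\scaleto{M,N}{5pt}}$ and then reduce each of the three cases of Definition 5.4 to the known comparison between $M$ and $N \cap \beta$. You instead package everything through the machinery already available: Lemma 5.14 applied to the singleton $\{N\}$ produces the $\beta$-closed adequate set $A' = \{N, N\cap\beta\}$, and then Theorem 5.15 (with $A'$ in the role of its $A$ and the given $A$ in the role of its $B$) yields that $A' \cup A = A \cup \{N\}$ is adequate. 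The one real point you need --- that $A' \cap \sk(\beta) \subseteq A$, i.e.\ that $N \in \sk(\beta)$ would force $N \subseteq \beta$ and hence $N = N\cap\beta \in A$ --- is exactly the paper's earlier remark that $\mathcal X \cap \sk(\beta) = \mathcal X \cap \mathcal P(\beta)$ for $\beta \in \Lambda$, so you could cite that directly rather than re-deriving it from the definability of $\lhd$. Your reduction is a bit longer to state but cleaner conceptually, since it invokes no comparison-point arithmetic; the paper's direct computation is shorter and more self-contained, which is probably why it was chosen for what is meant to be a quick lemma.
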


\begin{proof}
	Note that for all $M \in \mathcal X \cap \sk(\beta)$, 
	$\cl(M) \cap \cl(N) = \cl(M) \cap \cl(N \cap \beta)$, and therefore by definition 
	$\beta_{\scaleto{M,N}{5pt}} = \beta_{\scaleto{M,N \cap \beta}{5pt}}$. 
	Since $\beta \in \Lambda$, $\beta_{\scaleto{M,N}{5pt}} \le \beta$. 
	Hence, 
	$N \cap \beta_{\scaleto{M,N}{5pt}} = (N \cap \beta) \cap \beta_{\scaleto{M,N}{5pt}}$. 
	Let $M \in A$. 
	If $N \cap \omega_1 < M \cap \omega_1$, then $(N \cap \beta) < M$, so 
	$N \cap \beta_{\scaleto{M,N}{5pt}} = 
	(N \cap \beta) \cap \beta_{\scaleto{M,N \cap \beta}{5pt}}$ 
	is in $\sk(M)$. 
	If $N \cap \omega_1 = M \cap \omega_1$, then 
	$(N \cap \beta) \sim M$, so 
	$N \cap \beta_{\scaleto{M,N}{5pt}} = 
	(N \cap \beta) \cap \beta_{\scaleto{M,N \cap \beta}{5pt}} = 
	M \cap \beta_{\scaleto{M,N \cap \beta}{5pt}} = M \cap \beta_{\scaleto{M,N}{5pt}}$. 
	If $M \cap \omega_1 < N \cap \omega_1$, then 
	$M < (N \cap \beta)$, 
	so $M \cap \beta_{\scaleto{M,N}{5pt}} = 
	M \cap \beta_{\scaleto{M,N \cap \beta}{5pt}} \in 
	\sk(N \cap \beta) \subseteq \sk(N)$.
\end{proof}

\section{The Second Forcing}

In this section, we introduce the second main forcing poset $\p$ of the article. 
A condition in $\p$ will consist of a working part, which is a member of $\p^*$, 
together with an adequate set as a side condition. 
The next definition describes the required interaction between the working 
part and the side condition.

\begin{definition}[$A$-Separation]
	Let $T$ be a standard finite tree, 
	let $W$ be a subtree function on $T$, 
	and let $A$ be adequate. 
	We say that $W$ is \emph{$A$-separated} if 
	whenever $M \in A$, 
	$\eta$ and $\xi$ are distinct elements of $M \cap \dom(W)$, 
	and $x \in W(\xi) \cap W(\eta)$, then $x \in M$.
\end{definition}

\begin{definition}
	Let $\p$ be the forcing poset consisting 
	of all quadruples $(T,W,D,A)$ such that:
\begin{enumerate}
	\item $(T,W,D) \in \p^*$;
	\item $A$ is adequate;
	\item $W$ is $A$-separated.
\end{enumerate}
Let $(U,Y,E,B) \le (T,W,D,A)$ in $\p$ 
if $(U,Y,E) \le (T,W,D)$ in $\p^*$ and $A \subseteq B$.
\end{definition}

\begin{notation}
	For any $p \in \p$, we write $(T_p,W_p,D_p,A_p)$ for $p$.
\end{notation}

Note that $\p \subseteq H(\kappa)$.

\begin{lemma}
	For any $p \in \p$, there exists $q \le p$ such that 
	$T_q$ is downwards closed and has minimal splits.
\end{lemma}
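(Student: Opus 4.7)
The plan is to apply Lemma 2.11 to the $\p^*$-part of $p$, adjoin the unchanged side condition $A_p$, and verify $A_p$-separation of the resulting subtree function. Starting from $p = (T_p, W_p, D_p, A_p)$, apply Lemma 2.11 to $(T_p, W_p, D_p)$ to obtain a condition $(T, W, D) \le (T_p, W_p, D_p)$ in $\p^*$ with $T$ downwards closed and having minimal splits, $D = D_p$, $\dom(W) = \dom(W_p)$, each $W(\eta)$ equal to the downward closure of $W_p(\eta)$ in $T$, and the moreover clause: for any distinct $\eta, \xi \in \dom(W_p)$ and any $x \in W(\eta) \cap W(\xi)$, there exists $z \in W_p(\eta) \cap W_p(\xi)$ with $x \le_T z$. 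Set $q := (T, W, D, A_p)$; showing $q \in \p$ reduces to proving that $W$ is $A_p$-separated.

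The key observation is an ordinal-height fact. For any $M \in A_p \subseteq \mathcal{X}$, the ordinal $\delta_M := M \cap \omega_1$ is a countable limit ordinal belonging to $C_h$: this follows because $M = \sk(M) \cap \kappa$ and the function $\gamma \mapsto \omega \cdot (\gamma + 1)$ is definable in $\mathcal{A}$, so any $\gamma < \delta_M$ has $\omega \cdot (\gamma + 1) \in \sk(M) \cap \kappa = M$, hence $\omega \cdot (\gamma+1) < \delta_M$. Moreover, every ordinal below $\delta_M$ lies in $M$, by applying elementarity to a countable bijection $\omega \to \gamma$ in $\sk(M)$ for any $\gamma \in M \cap \omega_1$. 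Consequently, for any $\beta$ with $h(\beta) < \delta_M$, the $C_h$-closure yields $\beta < \omega \cdot (h(\beta) + 1) \le \delta_M$, hence $\beta \in M$.

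To finish, fix $M \in A_p$, distinct $\eta, \xi \in M \cap \dom(W)$, and $x \in W(\eta) \cap W(\xi)$. The moreover clause supplies $z \in W_p(\eta) \cap W_p(\xi)$ with $x \le_T z$, and $A_p$-separation of $W_p$ forces $z \in M$. Therefore $h(x) \le h(z) < \delta_M$, and the ordinal-height fact places $x$ in $M$. Thus $W$ is $A_p$-separated and $q$ is as required. No significant new obstacle arises beyond Lemma 2.11 itself; the $C_h$-closure of $\delta_M$ translates the height bound $h(x) \le h(z)$ directly into the membership $x \in M$, so $A_p$-separation follows uniformly, with no case analysis needed between old and newly added nodes of $T$.
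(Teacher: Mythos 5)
Your proof is correct and follows essentially the same route as the paper: apply Lemma 2.11 to the $\p^*$-part, keep $A_p$ unchanged, and verify $A_p$-separation via the ``moreover'' clause of Lemma 2.11 together with the $A_p$-separation of $W_p$. The paper's final step is slightly more direct---once $z \in M \cap \omega_1$ and $x \le_{T} z$, it uses that $x \le z$ as ordinals and that $M \cap \omega_1$ is an ordinal to conclude $x \in M$---whereas your detour through the height function and $C_h$-closure is a valid but unnecessary elaboration.
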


\begin{proof}
	Fix $(T,W,D) \le (T_p,W_p,D_p)$ in $\p^*$ 
	satisfying the properties described in Lemma 2.11. 
	We claim that $W$ is $A_p$-separated, which easily implies 
	that $q = (T,W,D,A_p)$ is as required. 
	So let $M \in A_p$, let $\eta$ and $\xi$ be distinct elements of $M \cap \dom(W)$, 
	and let $x \in W(\eta) \cap W(\xi)$. 
	We claim that $x \in M$. 
	By Lemma 2.11,
	fix $z \in W_p(\eta) \cap W_p(\xi)$ such that $x \le_T z$. 
	Since $W_p$ is $A_p$-separated, $z \in M \cap \omega_1$. 
	As $x \le z$, $x \in M \cap \omega_1$ as well.
\end{proof}

\begin{definition}
	For any $p \in \p$ and for any $N \in \mathcal X$, 
	define $p + N = (T_p,W_p,D_p,A_p \cup \{ N \})$.
\end{definition}

\begin{lemma}
	For any $p \in \p$ and for any $N \in \mathcal X$ with $p \in \sk(N)$, 
	$p + N$ is in $\p$ and is an extension of $p$.
\end{lemma}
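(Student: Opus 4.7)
The plan is direct. Since $p + N = (T_p,W_p,D_p,A_p \cup \{N\})$ has the same working part as $p$ and merely augments the side condition by $N$, the inequality $p + N \le p$ will be automatic once we establish $p + N \in \p$. The task therefore reduces to two verifications: (i) the set $A_p \cup \{N\}$ is adequate, and (ii) $W_p$ is $(A_p \cup \{N\})$-separated.

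For (i), I would invoke Lemma 5.6, which says exactly that $B \cup \{N\}$ is adequate whenever $B$ is adequate, $N \in \mathcal X$, and $B \subseteq \sk(N)$. So the only thing to check is $A_p \subseteq \sk(N)$. From the hypothesis $p \in \sk(N)$ it follows that $A_p \in \sk(N)$, because $A_p$ is definable from $p$ via a Skolem function of $\mathcal A$; and since $A_p$ is a finite set, each element of $A_p$ is in turn definable from $A_p$ together with a natural number, hence belongs to $\sk(N)$ as well.

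For (ii), because $W_p$ is already $A_p$-separated by assumption, it suffices to check the separation requirement at the new model $N$. Suppose $\eta$ and $\xi$ are distinct elements of $N \cap \dom(W_p)$ and $x \in W_p(\eta) \cap W_p(\xi)$; we must show $x \in N$. By the same finiteness-plus-definability principle applied to $W_p \in \sk(N)$, the finite set $\dom(W_p)$ lies inside $\sk(N)$, and consequently $W_p(\eta)$ and $W_p(\xi)$ are both in $\sk(N)$, as is their intersection. The elements of this intersection are ordinals below $\omega_1$, which sits below $\kappa$ as $\kappa$ is inaccessible; by the defining property $\sk(N) \cap \kappa = N$ of members of $\mathcal X$, we conclude $x \in N$, as required.

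There is essentially no obstacle here; the entire proof is a routine unpacking of what $p \in \sk(N)$ gives about each component of the condition $p$. The only subtlety worth flagging is the repeated use of the observation that for a finite object $X \in \sk(N)$ one also has $X \subseteq \sk(N)$, which is what allows one to pass from ``$p \in \sk(N)$'' to statements about individual elements of $A_p$ and values of $W_p$.
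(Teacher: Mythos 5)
Your proof is correct and matches the paper's intended argument, which simply cites Lemma 5.6 for the adequacy of $A_p \cup \{N\}$ and leaves the separation check as routine. A slight shortcut for part (ii): since $T_p$ is a finite set in $\sk(N)$ and $T_p \subseteq \kappa$, one has $T_p \subseteq \sk(N) \cap \kappa = N$, so any $x \in W_p(\eta) \cap W_p(\xi) \subseteq T_p$ is already in $N$ without tracking $W_p(\eta)$ and $W_p(\xi)$ separately.
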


\begin{proof}
	The proof is easy using Lemmas 5.6.
\end{proof}

\begin{lemma}
	Suppose that $p \in \p$ and $N \in A_p$. 
	Define $C = A \cup \{ M \cap N : M \in A, \ M < N \}$. 
	Then $C$ is $N$-closed and $(T_p,W_p,D_p,C)$ is in $\p$ and extends $p$.
\end{lemma}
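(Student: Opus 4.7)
The plan is to derive the three required properties—that $C$ is adequate, that $C$ is $N$-closed, and that $W_p$ is $C$-separated—in that order, and then observe that the extension clause is immediate from $A_p \subseteq C$.

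First, I would invoke Lemma 5.10 directly. That lemma asserts exactly that $A_p \cup \{ M \cap N : M \in A_p,\ M < N \}$ is adequate and $N$-closed, given that $A_p$ is adequate and $N \in A_p$. Both hypotheses hold by assumption, so $C$ is adequate and $N$-closed with no further work.

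The substantive verification is that $W_p$ remains $A$-separated with the enlarged side condition $C$. Since $W_p$ is already $A_p$-separated by hypothesis, I only need to verify the separation clause for the new members of $C$, namely the sets of the form $M \cap N$ with $M \in A_p$ and $M < N$. Fix such an $M$, and suppose $\eta, \xi$ are distinct elements of $(M \cap N) \cap \dom(W_p)$ with $x \in W_p(\eta) \cap W_p(\xi)$. Then $\eta, \xi \in M \cap \dom(W_p)$ and $\eta, \xi \in N \cap \dom(W_p)$, and since $M, N \in A_p$ and $W_p$ is $A_p$-separated, both $x \in M$ and $x \in N$, hence $x \in M \cap N$, as required.

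Finally, $(T_p, W_p, D_p, C)$ is a condition in $\p$ by Definition 6.2 since the underlying $\p^*$-part is unchanged and the side condition $C$ is adequate with $W_p$ being $C$-separated. It extends $p$ since $T_p, W_p, D_p$ are identical and $A_p \subseteq C$ by construction. No step here is a real obstacle; the entire proof is an application of Lemma 5.10 combined with the elementary observation that intersecting the index set for separation only strengthens the separation condition pointwise.
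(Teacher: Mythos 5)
Your proposal is correct and follows essentially the same structure as the paper's proof: invoke Lemma 5.10 to obtain adequacy and $N$-closedness of $C$, then reduce the $C$-separation check to the new models $M \cap N$. The only minor difference is in that last step: to conclude $x \in M \cap N$ you apply $A_p$-separation twice, once for $M$ and once for $N$, while the paper applies it once (for $M$) and then observes that since $x$ is a countable ordinal and $M < N$, one has $x \in M \cap \omega_1 = (M \cap N) \cap \omega_1$. Both are valid; yours is arguably a touch cleaner.
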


\begin{proof}
	By Lemma 5.10, $C$ is adequate and $N$-closed. 
	It suffices to prove that $W_p$ is $C$-separated. 
	Let $M \in C$, let $\eta$ and $\xi$ be distinct elements of $\dom(W_p) \cap M$, 
	and let $x \in W_p(\eta) \cap W_p(\xi)$. 
	If $M \in A_p$, then since $W_p$ is $A_p$-separated, $x \in M$. 
	Otherwise, for some $K \in A_p$ with $K < N$, $M = K \cap N$. 
	But then $\eta$ and $\xi$ are in $K$, so 
	$x \in K \cap \omega_1 = M \cap \omega_1$. 
\end{proof}

\begin{lemma}
	Suppose that $p \in \p$ and $\beta \in \Lambda$. 
	Define $C = A_p \cup \{ M \cap \beta : M \in A \}$. 
	Then $C$ is $\beta$-closed and 
	$(T_p,W_p,A_p,C)$ is in $\p$ and extends $p$. 
	Moreover, if $N \in A_p$ and $A_p$ is $N$-closed, then $C$ is $N$-closed.
\end{lemma}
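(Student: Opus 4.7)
The plan is to reduce the lemma in three steps, almost all of the work being immediate from earlier results. First I would invoke Lemma 5.14 directly: this gives that $C = A_p \cup \{M \cap \beta : M \in A_p\}$ is adequate and $\beta$-closed, and also furnishes the ``moreover'' clause, since Lemma 5.14 states that $N$-closedness of $A_p$ is preserved under this closure operation. Noting that the side condition part of the definition of $\p$ only requires adequacy, the only remaining content is to verify the $A$-separation clause (Definition 6.1) for the new side condition $C$, i.e.\ to show that $W_p$ is $C$-separated.

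Next I would verify $C$-separation by a case split on where the witnessing model comes from, paralleling the argument given for Lemma 6.6. Fix $M \in C$, distinct $\eta, \xi \in M \cap \dom(W_p)$, and $x \in W_p(\eta) \cap W_p(\xi)$; I want $x \in M$. If $M \in A_p$, then $W_p$ is $A_p$-separated gives $x \in M$ immediately. The interesting case is $M = K \cap \beta$ for some $K \in A_p$. Since $\eta, \xi \in M \subseteq K$, applying $A_p$-separation to $K$ yields $x \in K$, and in fact $x \in K \cap \omega_1$ because every element of $T_p$ sits in $\{0\} \cup (\omega_1 \setminus \omega)$.

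The one small point to be careful about is that $x \in M = K \cap \beta$ and not merely $x \in K$; this is where the hypothesis $\beta \in \Lambda$ is used, since $\beta$ has uncountable cofinality and therefore $\beta \ge \omega_1$. Hence $x < \omega_1 \le \beta$ gives $x \in K \cap \beta = M$, finishing $C$-separation. This is the only substantive step beyond Lemma 5.14 and is essentially routine, so I do not expect any real obstacle. Finally I would observe that $(T_p, W_p, D_p, C)$ extends $p$ in $\p$ since $A_p \subseteq C$ and the working part is unchanged; and note in passing that the statement of the lemma contains a mild typo ``$(T_p,W_p,A_p,C)$'' which should be $(T_p,W_p,D_p,C)$, matching the ordered quadruple convention of Notation 6.3.
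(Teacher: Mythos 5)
Your proposal is correct and takes essentially the same route as the paper, whose entire proof is the one-line remark ``Similar to the proof of Lemma 6.7 using Lemma 5.14.'' Your expansion of that parallel is accurate: Lemma 5.14 handles adequacy, $\beta$-closedness, and the preservation of $N$-closedness, and your case analysis for $C$-separation correctly mirrors the argument in Lemma 6.7, with the (correctly identified) key point being that $x \in T_p \subseteq \omega_1$ together with $\cf(\beta) > \omega$, hence $\beta \ge \omega_1$, yields $x \in K \cap \omega_1 \subseteq K \cap \beta = M$. Your observation about the typos in the statement (both ``$(T_p,W_p,A_p,C)$'' for ``$(T_p,W_p,D_p,C)$'' and ``$M \in A$'' for ``$M \in A_p$'') is also correct.
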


\begin{proof}
	Similar to the proof of Lemma 6.7 using Lemma 5.14.
\end{proof}	

\begin{definition}
	Let $1 < d < \omega$. 
	Let $p_0,\ldots,p_{d-1}$ be in $\p$. 
	Define $p_0 \oplus \cdots \oplus p_{d-1}$ to be the quadruple $(T,W,D,A)$ satisfying:
	\begin{enumerate}
		\item $(T,W,D) = (T_{p_0},W_{p_0},D_{p_0}) \oplus \cdots \oplus 
		(T_{p_{d-1}},W_{p_{d-1}},D_{p_{d-1}})$;
		\item $A = A_{p_0} \cup \cdots \cup A_{p_{d-1}}$.
	\end{enumerate}
\end{definition}

The question of when $p_0 \oplus \cdots \oplus p_{d-1}$ is a condition extending 
each of $p_0,\dots,p_{d-1}$, in $\p$ or in quotients of $\p$, is one of 
the central issues we deal with in this article.

The next lemma is critical for analyzing quotients of $\p$ in later sections.

\begin{lemma}
	Let $1 < d < \omega$. 
	Let $p_0,\ldots,p_{d-1}$ be in $\p$. 
	Suppose that $p_0 \oplus \cdots \oplus p_{d-1}$ is a condition in $\p$ which extends 
	each of $p_0,\ldots,p_{d-1}$. 
	Assume that:
	\begin{enumerate}
		\item $r$ is in $\p$;
		\item $r \le p_0,\ldots,p_{d-1}$;
		\item for all $i < j < d$, for all $x \in T_{p_i} \setminus T_{p_j}$ 
		and for all $y \in T_{p_j} \setminus T_{p_i}$, 
		$x$ and $y$ are incomparable in $T_r$.
	\end{enumerate}
	Then $r \le p_0 \oplus \cdots \oplus p_{d-1}$.
\end{lemma}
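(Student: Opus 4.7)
Write $p := p_0 \oplus \cdots \oplus p_{d-1} = (T,W,D,A)$. Unpacking the definitions, to show $r \le p$ I must verify (a) $T_r$ end-extends $T$; (b) $\dom(W) \subseteq \dom(W_r)$ and $W(\eta) \subseteq W_r(\eta)$ for every $\eta \in \dom(W)$; (c) $D \subseteq D_r$; (d) the witness clause of Definition 2.9 for pairs in $D$; and (e) $A \subseteq A_r$. Clauses (b), (c) and (e) are immediate, since each of $\dom(W)$, $D$ and $A$ is by definition a union of the corresponding objects attached to the $p_i$, and $r \le p_i$ gives the containment $\dom(W_{p_i}) \subseteq \dom(W_r)$, $W_{p_i}(\eta) \subseteq W_r(\eta)$, $D_{p_i} \subseteq D_r$, and $A_{p_i} \subseteq A_r$ individually. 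So the real content is (a) and (d).

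For the end-extension clause (a), I already have $T \subseteq T_r$ and $<_T\, \subseteq\, <_r$ from the analogous inclusions for each $p_i$; what remains is to show that if $a, b \in T$ and $a <_r b$, then $a <_T b$. Pick $i$ with $a \in T_{p_i}$ and $j$ with $b \in T_{p_j}$. If there exists a single index $k$ with $\{a,b\} \subseteq T_{p_k}$, then end-extension of $T_{p_k}$ by $T_r$ yields $a <_{p_k} b$, hence $a <_T b$ by definition of $\oplus$. Otherwise $a \in T_{p_i} \setminus T_{p_j}$ and $b \in T_{p_j} \setminus T_{p_i}$; applying hypothesis (3) to the pair $\min\{i,j\} < \max\{i,j\}$ then forces $a$ and $b$ to be incomparable in $T_r$, contradicting $a <_r b$. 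This is the one place where hypothesis (3) is really used, and I view it as the main obstacle: without it, a condition extending each $p_i$ might introduce a comparability between ``foreign'' nodes of different $T_{p_i}$'s that is not witnessed in $T$, so that $T_r$ fails to end-extend $T$.

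For the witness clause (d), suppose $\{\eta,\xi\} \in D$ and $x \in W_r(\eta) \cap W_r(\xi)$. By the definition of $D$ as $\bigcup_i D_{p_i}$, pick $i < d$ with $\{\eta,\xi\} \in D_{p_i}$. Since $r \le p_i$ in $\p^*$, clause (d) of Definition 2.9 produces $z \in W_{p_i}(\eta) \cap W_{p_i}(\xi)$ with $x \le_r z$; and $W_{p_i}(\eta) \cap W_{p_i}(\xi) \subseteq W(\eta) \cap W(\xi)$ by the definition of $\oplus$, so this $z$ is the desired witness. Combining (a)--(e) completes the proof.
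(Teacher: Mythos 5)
Your proof is correct and follows essentially the same structure as the paper's: reduce to the end-extension clause and the witness clause of Definition 2.9, use hypothesis (3) to rule out the only problematic case in the end-extension argument, and locate the witness for the $D$-clause in the single $p_i$ whose $D_{p_i}$ contains the pair. The only cosmetic difference is the shape of the case split for end-extension (you split on whether some single $T_{p_k}$ contains both nodes; the paper fixes $i,j$ and argues WLOG $x \in T_i \cap T_j$), but these are logically equivalent.
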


\begin{proof}
	For each $i < d$, write $p_i = (T_i,W_i,D_i,A_i)$, and write 
	$p_0 \oplus \cdots \oplus p_{d-1} = (T,W,D,A)$. 
	By (2), $A_0,\ldots,A_{d-1}$ are subsets of $A_r$. 
	Hence, $A \subseteq A_r$. 
	We claim that $(T_r,W_r,D_r) \le (T,W,D)$ in $\p^*$. 
	By (2), $D \subseteq D_r$, $T \subseteq T_r$, and 
	$<_T \ \subseteq \ <_r$.
	
	To see that $T_r$ is an end-extension of $T$, 
	suppose that $x <_r y$ where $x, y \in T$, and we prove that $x <_T y$. 
	Fix $i, j < d$ such that $x \in T_i$ and $y \in T_j$. 
	If $i = j$, then we done since $r \le p_i$. 
	Assume that $i \ne j$. 
	By (3) and the fact that $x <_r y$, it cannot be the case that 
	both $x \in T_i \setminus T_j$ and $y \in T_j \setminus T_i$. 
	Without loss of generality, assume that $x \in T_i \cap T_j$. 
	Then $x$ and $y$ are in $T_j$, and since $r \le p_j$, it follows 
	that $x <_{T_j} y$ and hence $x <_T y$.

	By (2), $\dom(W) \subseteq \dom(W_r)$. 
	Consider $\eta \in \dom(W)$. 
	Then $W(\eta) = W_0(\eta) \cup \cdots \cup W_{d-1}(\eta)$. 
	Since $r \le p_0,\ldots,p_{d-1}$, for all $i < d$, $W_i(\eta)$ is a subset of $W(\eta)$. 
	So $W(\eta) \subseteq W_r(\eta)$. 
	Now assume that $\{ \eta, \xi \} \in D$ and 
	$x \in W_r(\eta) \cap W_r(\xi)$. 
	We will find some $z \in W(\eta) \cap W(\xi)$ such that $x \le_r z$.  
	Fix $i < d$ such that $\{ \eta, \xi \} \in D_i$. 
	Since $r \le p_i$, there exists some $z \in W_i(\eta) \cap W_i(\xi)$ 
	such that $x \le_r z$. 
	Then $z \in W(\eta) \cap W(\xi)$ and we are done.
\end{proof}

\section{Properness and Collapsing}

In this section, we prove that the forcing poset $\p$ is proper and 
collapses cardinals larger than $\omega_1$ and less than $\kappa$. 
Lemma 7.1 describes properties which are sufficient for amalgamating 
conditions over countable elementary substructures. 
The case that $d = 2$ is used in Theorem 7.2 
to prove that $\p$ is proper, and the general 
case that $d \ge 2$ is used in Section 7 to prove that $\p$ is Y-proper. 
Lemma 7.1 is also used implicitly in proving that quotients of $\p$ are Y-proper, 
by way of Lemma 8.6.

\begin{lemma}
	Let $1 < d < \omega$. 
	Let $p_0,\ldots,p_{d-1}$ be in $\p$. 
	Write $p_i = (T_i,W_i,D_i,A_i)$ for all $i < d$. 
	Assume that for all $i < d$, $N_i \in A_i$, $A_i$ is $N_i$-closed, 
	and for all $i < j < d$, $p_i \in \sk(N_j)$. 
	Let $\delta_i = N_i \cap \omega_1$ for all $i < d$.
	
	Assume that there exist commutative families of functions 
	$\{ f_{j,i} : i < j < d \}$ and 
	$\{ g_{j,i} : i < j < d \}$ such that for each $i < j < d$, 
	$f_{j,i} : \dom(W_j) \to \dom(W_{i})$ and $g_{j,i} : A_j \to A_{i}$ are bijective. 
	Finally, assume that the following statements hold for all $i < j < d$:
	\begin{enumerate}
		\item $T_{i} \res \delta_i = T_{j} \res \delta_j$;
		\item $\dom(W_i) \cap N_i = \dom(W_j) \cap N_j$ and for all $\eta$ 
		in this set, $f_{j,i}(\eta) = \eta$;
		\item for all $\eta \in \dom(W_j)$ and for all $M \in A_j$, 
		$\eta \in M$ iff $f_{j,i}(\eta) \in g_{j,i}(M)$;
		\item for all $\eta \in \dom(W_j)$, 
		$W_{i}(f_{j,i}(\eta)) \cap \delta_{i} = W_j(\eta) \cap \delta_j$;
		\item $A_i \cap \sk(N_i) = A_j \cap \sk(N_j)$ and for all $M$ 
		in this set, $g_{j,i}(M) = M$;
		\item for all $M \in A_j$, if $M \cap \omega_1 < \delta_j$, then 
		$g_{j,i}(M) \cap \omega_1 = M \cap \omega_1$ 
		and $M \cap N_j \subseteq g_{j,i}(M)$.
	\end{enumerate}
	Then $p_0 \oplus \cdots \oplus p_{n-1}$ is a condition which extends 
	$p_0,\ldots,p_{n-1}$.
\end{lemma}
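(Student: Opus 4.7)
The plan is to verify in succession that the quadruple $p := p_0 \oplus \cdots \oplus p_{d-1} = (T, W, D, A)$ satisfies the three clauses of Definition~6.2 and that it extends each $p_i$. Clauses (1) and (2) would follow from Lemma~2.15 and Corollary~5.12 respectively; the real work is in verifying clause (3), the $A$-separation of $W$.

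For clause (1), I would check the hypotheses of Lemma~2.15. Each pair $(p_i,p_j)$ with $i<j$ is $(\delta_i,\delta_j)$-split: Definition~2.13(1) is hypothesis~(1); Definition~2.13(2) follows from $T_i\in\sk(N_j)$ being finite, giving $T_i\subseteq N_j\cap\omega_1=\delta_j$; for Definition~2.13(3), using $\dom(W_i)\subseteq N_j$ from $p_i\in\sk(N_j)$, any $\eta\in\dom(W_i)\cap\dom(W_j)$ lies in $\dom(W_j)\cap N_j=\dom(W_i)\cap N_i$ by hypothesis~(2), so $f_{j,i}(\eta)=\eta$ and hypothesis~(4) delivers the required equality; Definition~2.13(4) follows from the same containment together with the $A_i$-separation of $W_i$ using $N_i\in A_i$, which confines intersections to $N_i\cap\omega_1=\delta_i$. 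The $\Delta$-system hypothesis holds because hypothesis~(2) forces $\dom(W_i)\cap\dom(W_j)=\dom(W_i)\cap N_i$ to be independent of $i<j$. For clause (2), Corollary~5.12 applies: hypothesis~(5) gives $A_i\cap\sk(N_i)=A_{i-1}\cap\sk(N_{i-1})\subseteq A_{i-1}$, and $p_{i-1}\in\sk(N_i)$ gives $A_{i-1}\subseteq\sk(N_i)$.

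For $A$-separation, fix $M\in A_k$, distinct $\eta,\xi\in M\cap\dom(W)$, and $x\in W_a(\eta)\cap W_b(\xi)$ with $a\le b$; the task is to show $x\in M$. Consider first the case $a=b=:l$, where I need to produce a model $M'\in A_l$ with $\eta,\xi\in M'$ and $M'\cap\omega_1\le M\cap\omega_1$, so that $A_l$-separation of $W_l$ forces $x\in M'\cap\omega_1\subseteq M$. If $k=l$, take $M'=M$. If $k<l$: $M\in A_k\subseteq\sk(N_l)$ gives $M\subseteq N_l$, so $\eta,\xi\in N_l\cap\dom(W_l)=N_k\cap\dom(W_k)$ by hypothesis~(2); $A_l$-separation with $N_l$ forces $x<\delta_l$, and hypothesis~(4) then places $x\in W_k(\eta)\cap W_k(\xi)$, whereupon $A_k$-separation with $M$ concludes. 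If $k>l$: either $M\cap\omega_1\ge\delta_k$, in which case $x\in T_l\subseteq N_k$ gives $x<\delta_k\le M\cap\omega_1$; or $M\cap\omega_1<\delta_k$, in which case hypothesis~(6) produces $M':=g_{k,l}(M)\in A_l$ with $M'\cap\omega_1=M\cap\omega_1$ and $M\cap N_k\subseteq M'$, and $\eta,\xi\in\dom(W_l)\subseteq N_k$ places them in $M'$. When $a<b$, $x\in T_a\cap T_b\subseteq\delta_a$, and I use hypothesis~(4) to translate $\eta$ up to level $b$ by setting $\eta':=\eta$ if $\eta\in\dom(W_b)$ (equivalently $\eta\in N_a$) and $\eta':=f_{b,a}^{-1}(\eta)$ otherwise, yielding $x\in W_b(\eta')\cap W_b(\xi)$; this reduces to the single-level case at $l=b$, with the extra task of verifying $\eta'\in M'$ when $\eta'\ne\eta$.

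The principal obstacle is this last verification. For $k=a$ the argument short-circuits: $M\subseteq N_b$ forces $\xi\in N_b\cap\dom(W_b)=N_a\cap\dom(W_a)\subseteq\dom(W_a)$ by hypothesis~(2), so hypothesis~(4) places $x\in W_a(\xi)$ and $A_a$-separation with $M$ directly gives $x\in M$ without moving $\eta$. For $k=b$, hypothesis~(6) applied to $M\in A_b$ (when $M\cap\omega_1<\delta_b$) yields $\eta\in M\cap N_b\subseteq g_{b,a}(M)$, and hypothesis~(3) then gives $\eta'\in M$. For $k>b$ one iterates hypothesis~(6) using commutativity $g_{k,a}=g_{b,a}\circ g_{k,b}$: $M$ maps first to $M_b:=g_{k,b}(M)\in A_b$ containing $M\cap N_k$, and then to $M_a:=g_{b,a}(M_b)=g_{k,a}(M)\in A_a$ containing $M_b\cap N_b$; hypothesis~(3) translates $\eta\in M_a$ into $\eta'\in M_b$. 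The cases where $M\cap\omega_1$ dominates the relevant bound on $x$ are always immediate. This careful tracking of models through the bijections is the sole technical difficulty; everything else is a routine application of the structural hypotheses.
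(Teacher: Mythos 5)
Your organization of the verification (Lemma~2.15 for $(T,W,D) \in \p^*$ and its extension property, Corollary~5.12 for adequacy of $A$, then a case analysis for $A$-separation) matches the paper's, and the checks of the $(\delta_i,\delta_j)$-split hypotheses and the $\Delta$-system hypothesis are correct. Your single-level case ($a=b=l$) is also handled correctly in all three subcases $k=l$, $k<l$, $k>l$, paralleling the paper's Cases~1--3.

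There is, however, a gap in the two-level case ($a<b$). Your stated plan is to \emph{always} translate $\eta$ up to $W_b$ (getting $\eta'$) and reduce to the single-level case at $l=b$; you then address $k=a$, $k=b$, and $k>b$ but omit $k<a$ and $a<k<b$. For $k<a$ this does not hurt, since $M\subseteq N_a$ forces $\eta\in N_a$, so $\eta'=\eta$ and the reduction is harmless. But for $a<k<b$ with $\eta\in M\cap(\dom(W_a)\setminus N_a)$ (which is a genuinely possible configuration: $\dom(W_a)\subseteq N_k$ but need not lie in $N_a$), the translated index $\eta'=f_{b,a}^{-1}(\eta)$ lies in $\dom(W_b)\setminus N_b$, and since $M\subseteq N_b$ (as $k<b$), $\eta'\notin M$. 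Your single-level argument at $l=b$ with $k<l$ needs both indices to lie in $M$, so the reduction to $l=b$ fails here; moreover your explicit $k=a$ short-circuit invokes ``$A_a$-separation with $M$'', which requires $M\in A_a$ and so is literally valid only for $k=a$.

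The ingredient that rescues this is already present in your write-up: the observation that $M\subseteq N_b$ forces $\xi\in N_b\cap\dom(W_b)=N_a\cap\dom(W_a)$, whence $x\in W_a(\xi)$, works for \emph{every} $k<b$ (not just $k=a$) and reduces to the single-level case at $l=a$ rather than $l=b$. So the two-level case should be split according to $k<b$ (translate $\xi$ down, apply the single-level case at $l=a$, with subcases $k<a$, $k=a$, $a<k<b$) versus $k\ge b$ (translate $\eta$ up, apply the single-level case at $l=b$). Once rewritten this way your proof closes the gap. By contrast, the paper avoids the problematic subcase by a preliminary reduction (its Case~4, using Lemma~2.14) to the situation $\eta\in\dom(W_j)\setminus\dom(W_k)$ and $\xi\in\dom(W_k)\setminus\dom(W_j)$, after which its Case~7 shows that the analogue of your $k<b$ is outright impossible: $\xi\in M\subseteq N_k$ would force $\xi$ into the $\Delta$-system root and hence into $\dom(W_j)$. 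Your unreduced decomposition has more live cases, which is precisely why the missing subcase matters.
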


\begin{proof}
	Observe that by elementarity, $\delta_i \in C_h$ for all $i < d$. 
	Note that for all $i < j < d$, by (1) and the fact that 
	$p_i \in \sk(N_j)$, $T_i \subseteq \delta_j$ and 
	$T_i \cap T_j \subseteq \delta_i$.

	\textbf{Claim 1:} For all $k < d$, 
	$\{ \dom(W_i) : i < d \}$ is a $\Delta$-system with root $\dom(W_k) \cap N_k$. 
	So for all $i < j < d$, 
	$\dom(W_i) \cap \dom(W_j) \subseteq N_i \cap N_j$.
	
	\emph{Proof:} This follows easily from (2) together with the fact that 
	for all $i < j < d$, $p_i \in \sk(N_j)$.
	
	\textbf{Claim 2:} For all $i < j < d$, 
	$(T_i,W_i,D_i)$ and $(T_j,W_j,D_j)$ are $(\delta_i,\delta_j)$-split.

	\emph{Proof:} We have that $T_i \res \delta_i = T_j \res \delta_j$ by (1), 
	and since $p_i \in \sk(N_j)$, $T_i \subseteq \delta_j$. 
	If $\eta \in \dom(W_i) \cap \dom(W_j)$, then by (2), $f_{j,i}(\eta) = \eta$. 
	Hence by (4), $W_i(\eta) \cap \delta_i = W_j(\eta) \cap \delta_j$. 
	Finally, consider distinct $\eta, \xi \in \dom(W_i) \cap \dom(W_j)$. 
	Then $\eta$ and $\xi$ are in $N_i \cap N_j$. 
	Since $W_i$ is $A_i$-separated and $N_i \in A_i$, it follows that 
	$W_i(\eta) \cap W_i(\xi) \subseteq N_i \cap \omega_1 = \delta_i$. 
	And because $W_j$ is $A_j$-separated and $N_j \in A_j$, 
	$W_j(\eta) \cap W_j(\xi) \subseteq N_j \cap \omega_1 = \delta_j$. 
	This completes the proof of the claim.
	
	Define $r = p_0 \oplus \cdots \oplus p_{d-1}$, 
	which we denote by $(T,W,D,A)$. 
	We prove that $r \in \p$ and $r$ is an extension of $p_i$ for all $i < d$. 
	By claims 1 and 2 and Lemma 2.15, 
	$(T,W,D)$ is in $\p^*$ and is an 
	extension of $(T_i,W_i,D_i)$ for all $i < d$. 
	Obviously, $A_i \subseteq A$ for all $i < d$. 
	Using (5), it is easy to check that the assumptions of Corollary 5.12 hold for 
	$A_0,\ldots,A_{d-1}$ and $N_1,\ldots,N_{d-1}$, 
	so $A = A_0 \cup \cdots \cup A_{d-1}$ is adequate.

	Finally, we prove that $W$ is $A$-separated. 
	Let $M \in A$, 
	let $\eta$ and $\xi$ be distinct elements of $M \cap \dom(W)$, 
	and let $x \in W_r(\eta) \cap W_r(\xi)$. 
	We show that $x \in M$. 
	If there exists some $i < d$ such that 
	$M \in A_i$ and $x \in W_i(\eta) \cap W_i(\xi)$, then we are done since $p_i$ 
	is a condition. 
	So assume not. 
	Fix $i, j, k < d$ such that $M \in A_i$, $x \in W_j(\eta)$, and $x \in W_k(\xi)$.

	\emph{Case 1:} $j = k$, $j < i$, and $M \in A_i \setminus \sk(N_i)$. 			
	Then $p_j \in \sk(N_i)$, so $x \in T_j \subseteq \delta_i$ and 
	$\eta$ and $\xi$ are in $N_i$. 
	And $x \in W_j(\eta) \cap W_j(\xi)$. 
	If $N_i \le M$, then $x \in \delta_i \subseteq M$ 
	and we are done. 
	Otherwise, $M \cap \omega_1 < \delta_i$. 
	By (6), $M \cap \omega_1 = g_{i,j}(M) \cap \omega_1$ and 
	$\eta$ and $\xi$ are in $M \cap N_i \subseteq g_{i,j}(M)$. 
	Since $W_j$ is $A_j$-separated and $g_{i,j}(M) \in A_j$, 
	$x \in g_{i,j}(M) \cap \omega_1 \subseteq M$.

	\emph{Case 2:} $j = k$, $j < i$, and $M \in A_i \cap \sk(N_i)$. 
	By (5), $M \in A_j$. 
	So we are done since $W_j$ is $A_j$-separated.
	
	\emph{Case 3:} $j = k$ and $i < j$. 
	Then $p_i \in \sk(N_j)$, so $M \in \sk(N_j)$. 
	As $\eta, \xi \in M$, it follows that $\eta, \xi \in N_j$. 
	So by (2), $\eta, \xi \in \dom(W_j) \cap N_j \subseteq \dom(W_i)$. 
	By Definition 2.13(4), $W_j(\eta) \cap W_j(\xi) \subseteq \delta_j$, and 
	hence $x < \delta_j$. 
	By Definition 2.13(3), $W_i(\eta) \cap \delta_i = W_j(\eta) \cap \delta_j$ 
	and $W_i(\xi) \cap \delta_i = W_j(\xi) \cap \delta_j$. 
	So $x \in W_i(\eta) \cap W_i(\xi)$. 
	Since $W_i$ is $A_i$-separated and $M \in A_i$, $x \in M$.
	
	In the remaining cases, $j \ne k$. 
	Without loss of generality assume that $j < k$.
	
	\emph{Case 4:} $j \ne k$ and either 
	$\eta \in \dom(W_k)$ or $\xi \in \dom(W_j)$. 
	Assume that $\eta \in \dom(W_k)$. 
	Then $\xi \in \dom(W_k)$ and $\eta \in \dom(W_j) \cap \dom(W_k)$. 
	By Lemma 2.14(c), $W_k(\xi) \cap W_j(\eta) \subseteq W_k(\eta)$. 
	Hence, $x \in W_k(\eta) \cap W_k(\xi)$. 
	So we are back to the situation of Cases 1, 2, and 3, and we are done. 
	The case that $\xi \in \dom(W_j)$ is similar using Lemma 2.14(b).
	
	For the remaining cases, we may assume that 
	$\eta \in \dom(W_j) \setminus \dom(W_k)$ and 
	$\xi \in \dom(W_k) \setminus \dom(W_j)$. 
	Note that 
	$x \in W_j(\eta) \cap W_k(\xi) \subseteq 
	T_j \cap T_k \subseteq \delta_j$. 
	So $x < \delta_j$. 	
	By (4), $W_j(f_{k,j}(\xi)) \cap \delta_j = W_k(\xi) \cap \delta_k$. 
	So $x \in W_j(f_{k,j}(\xi))$. 
	If $\delta_j \le M \cap \omega_1$, then we are done. 
	So assume that $M \cap \omega_1 < \delta_j$. 

	\emph{Case 5:} $k < i$. 
	Then $M \cap \omega_1 < \delta_i$. 
	And $p_k \in \sk(N_i)$ implies that 
	$\xi \in M \cap N_i \subseteq g_{i,k}(M)$ by (6). 
	So by (3) and commutativity, 
	$f_{k,j}(\xi) \in g_{k,j}(g_{i,k}(M)) = g_{i,j}(M)$. 
	And $p_j \in \sk(N_i)$ and $M \cap \omega_1 < \delta_i$ implies by (6) that 
	$\eta \in M \cap N_i \subseteq g_{i,j}(M)$. 
	So $x \in W_j(\eta) \cap W_j(f_{k,j}(\xi))$, 
	$\eta$ and $f_{k,j}(\xi)$ are in $g_{i,j}(M)$, and $g_{i,j}(M) \in A_j$. 
	As $W_j$ is $A_j$-separated, it follows that 
	$x \in g_{i,j}(M) \cap \omega_1 \subseteq M$.

	\emph{Case 6:} $k = i$. 
	By (3), $f_{k,j}(\xi) \in g_{k,j}(M) = g_{i,j}(M)$. 
	Also, $p_j \in \sk(N_i)$. 
	By (6), $\eta \in \dom(W_j) \cap M \subseteq N_i \cap M \subseteq g_{i,j}(M)$. 
	So $x \in W_j(\eta) \cap W_j(f_{k,j}(\xi))$, 
	and $\eta$ and $f_{k,j}(\xi)$ are in $g_{i,j}(M)$. 
	Since $W_j$ is $A_j$-separated and $g_{i,j}(M) \in A_j$, 
	$x \in g_{i,j}(M) \cap \omega_1 \subseteq M$.

	\emph{Case 7:} $i < k$. 
	Then $M \in A_i \subseteq \sk(N_k)$. 
	So $\xi \in M \subseteq N_k$. 
	By (2), $\xi \in \dom(W_k) \cap N_k = \dom(W_j) \cap N_j$, which 
	contradicts our assumption that $\xi \notin \dom(W_j)$.
\end{proof}

\begin{thm}
	Let $\chi > \kappa$ be regular. 
	Let $M$ be a countable elementary 
	substructure of $\mathcal B = (H(\chi),\in,\psi,\p)$ such that 
	$N = M \cap \kappa \in \mathcal X$. 
	Then for any $u \in M \cap \p$, $u + N$ is in $\p$, $u + N \le u$, 
	and $u + N$ is $(M,\p)$-generic. 
	In fact, for any dense open set $\mathcal{D} \subseteq \p$ in $M$, 
	for any $q \le u + N$ in $\mathcal D$ such that $A_q$ is $N$-closed, 
	there exists some $\bar q \in M \cap \mathcal{D}$ such that 
	$\bar q \oplus q$ is in $\p$ and extends $\bar q$ and $q$. 
\end{thm}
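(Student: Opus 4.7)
The plan is to first verify the basic properties of $u+N$ and then to establish the ``in fact'' clause, from which $(M,\p)$-genericity follows at once: given a dense open $\mathcal{D} \in M$ and $q \le u+N$, I extend $q$ into $\mathcal{D}$, apply Lemma 6.7 to thicken its side condition to an $N$-closed one (staying in $\mathcal{D}$), and then invoke the ``in fact'' clause to obtain a $\bar q \in M \cap \mathcal{D}$ compatible with $q$. For the basic properties, Lemma 5.7 gives $M \cap H(\kappa) = \sk(N)$, so $u \in \sk(N)$, and Lemma 6.6 then delivers $u+N \in \p$ with $u+N \le u$.

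To prove the ``in fact'' clause, let $q \le u+N$ lie in $\mathcal{D}$ with $A_q$ being $N$-closed, and set $\delta_q := N \cap \omega_1$. Define the \emph{projection} of $q$ to $N$ by
$$
q^* := \bigl(T_q \res \delta_q,\; W_q^*,\; D_q \cap [N]^2,\; A_q \cap \sk(N)\bigr),
$$
where $W_q^*$ has domain $\dom(W_q) \cap N$ and $W_q^*(\eta) := W_q(\eta) \cap \delta_q$. The $A_q$-separation of $W_q$ together with $N \in A_q$ forces every pairwise intersection $W_q(\eta) \cap W_q(\xi)$ with distinct $\eta, \xi \in N$ to lie in $\delta_q$; a routine check then yields $q^* \in \p$ and $q \le q^*$. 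Since every component of $q^*$ is a finite object built from elements of $\sk(N)$, $q^* \in \sk(N) \subseteq M$.

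Next fix $\delta_0 \in \sk(N) \cap C_h$ with $\delta_0 < \delta_q$, lying above every $\omega_1$-ordinal in $T_{q^*}$, in the ranges of $W_q^*$, and in $\{M' \cap \omega_1 : M' \in A_{q^*}\}$. Encode the combinatorial type of $q$ over $q^*$ in a parameter $\tau \in \sk(N)$ recording, up to order-isomorphism, the new tree elements $T_q \setminus T_{q^*}$, the new subtree-indices $\dom(W_q) \setminus N$, the new pairs of $D_q$, the new models of $A_q \setminus \sk(N)$, and their attachment data to $q^*$. Consider the first-order statement $\varphi$ in $\mathcal{B}$, with parameters $q^*, \delta_0, \mathcal{D}, \tau$ all in $M$, asserting the existence of $(\bar q, \bar\delta, N_0, f, g)$ with: $\bar q \in \mathcal{D}$ and $\bar q \le q^*$; $\bar\delta \in (\delta_0, \omega_1) \cap C_h$; $N_0 \in \mathcal{X} \cap A_{\bar q}$ with $N_0 \cap \omega_1 = \bar\delta$; $T_{\bar q} \cap \bar\delta = T_{q^*}$ and $W_{\bar q}(\eta) \cap \bar\delta = W_q^*(\eta)$ for all $\eta \in \dom(W_q^*)$; and $f, g$ bijections realizing $\tau$ in such a way that hypotheses (1)--(6) of Lemma 7.1 hold for $p_0 = \bar q$, $p_1 = q$, $N_1 = N$ with $\delta_0 := \bar\delta$, $\delta_1 := \delta_q$, $f_{1,0} := f^{-1}$, $g_{1,0} := g^{-1}$. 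In $V$ the obvious choices $\bar q := q$, $\bar\delta := \delta_q$, $N_0 := N$, and the canonical bijections realizing $\tau$ witness $\varphi$, so by elementarity a witness $(\bar q, \bar\delta, N_0, f, g)$ exists in $M$; Lemma 6.7 then lets me additionally assume $A_{\bar q}$ is $N_0$-closed without leaving $\mathcal{D}$. Lemma 7.1 applied to these data delivers $\bar q \oplus q \in \p$ extending both $\bar q$ and $q$.

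The main obstacle is the faithful encoding of the type $\tau$ and the formulation of $\varphi$ so that all six hypotheses of Lemma 7.1 are captured by a single $M$-parametric statement while still being witnessed by $q$ itself in $V$. The most delicate ingredient is Lemma 7.1(6): to ensure that the reflected bijection $g_{1,0}$ preserves $\omega_1$-heights of low models of $A_q$ and respects their $N$-traces, $\tau$ must record, for each low $M' \in A_q$, both $M' \cap \omega_1$ and the set $M' \cap N$---the latter lies in $\sk(N)$ precisely because $A_q$ is $N$-closed. Once $\varphi$ is in place, the remaining verification of Lemma 7.1's hypotheses is mechanical.
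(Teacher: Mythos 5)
Your proposal takes essentially the same route as the paper: encode the part of $q$ visible to $M$ (your $q^*$, which in the paper appears implicitly as the parameters $T_q\res\delta_q$, $W_q(\eta_k)\cap\delta_q$, $\dom(W_q)\cap N$, $A_q\cap\sk(N)$ together with the combinatorial type data $U_0,\ldots,U_3$), express the reflection requirement as a first-order statement $\varphi$ over $\mathcal{B}$ with parameters in $M$, witness it in $V$ by $q$ itself, pull back a $\bar q$ by elementarity, and amalgamate via Lemma 7.1 with $d=2$. This matches the paper's proof step by step, so I only flag one caveat.

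The step ``Lemma 6.7 then lets me additionally assume $A_{\bar q}$ is $N_0$-closed'' is the one place where you should be careful. If $\bar q$ came out of elementarity without being $N_0$-closed and you then fattened its side condition, the new models $K\cap N_0$ would land in $\sk(N_0)$ and could destroy the constraint $A_{\bar q}\cap\sk(N_0) = A_{q^*}$ needed for Lemma 7.1(5), and the bijection $g$ would no longer be onto. Fortunately, if $\varphi$ really does assert both $A_{\bar q}\cap\sk(N_0)=A_{q^*}$ and, for each low $M'\in A_q$, that $\bar M'\cap N_0$ equals the recorded element $M'\cap N\in A_{q^*}$ (which your $\tau$ tracks), then $A_{\bar q}$ is automatically $N_0$-closed and the Lemma 6.7 step is a no-op. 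The paper handles this cleanly by building the closedness requirement directly into the formula (its clause (13): $\dot M_l\cap\dot M_0\in\dot A$); you should either do the same or observe, as above, that your encoding already forces it. Modulo this bookkeeping, the proposal is correct.
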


\begin{proof}
	By Lemma 5.7, $M \cap H(\kappa) = \sk(N)$. 
	So $u \in \sk(N)$. 
	By Lemma 6.6, $u + N$ is in $\p$ and extends $u$. 
	Fix a dense open set $\mathcal{D} \subseteq \p$ in $M$. 
	Fix $q \le u + N$ in $\mathcal{D}$ which is $N$-closed.

	Let $N_q = N$ and let $\delta_q = N \cap \omega_1$. 
	Enumerate $\dom(W_q) = \{ \eta_0,\ldots,\eta_{m-1} \}$ and 
	$A_q = \{ M_0,\ldots,M_{n-1} \}$, where $m, n < \omega$ and $M_0 = N$. 
	Define:
	\begin{itemize}
		\item $U_0 = \{ k < m : \eta_k \in N \}$;
		\item $U_1 = \{ l < n : M_l \in \sk(N) \}$;
		\item $U_2 = \{ l < n : M_l < N \}$;
		\item $U_3 = \{ (k,l) \in m \times n : \eta_k \in M_l \}$.
	\end{itemize}
	
	Define a formula with free variables 
	$$
	\varphi = \varphi(\dot q,\dot T,\dot W,\dot D,\dot A,\dot \delta, 
	\dot \eta_0,\ldots,\dot \eta_{m-1},\dot M_0,\ldots,\dot M_{n-1})
	$$
	to be conjunction of the following:
	\begin{enumerate}
		\item $\dot q = (\dot{T},\dot{W},\dot{D},\dot A) \in \mathcal{D}$;
		\item $\dom(\dot{W}) = \{ \dot \eta_0,\ldots,\dot \eta_{m-1} \}$ 
		and $\dot A = \{ \dot M_0,\ldots,\dot M_{n-1} \}$;
		\item $\dot \delta = \dot M_0 \cap \omega_1$;
		\item $\dot{T} \res \dot \delta = T_q \res \delta_q$;
		\item for all $k \in U_0$, $\dot \eta_k = \eta_k$;
		\item for all $(k,l) \in m \times n$, 
		$(k,l) \in U_3$ iff $\dot \eta_k \in \dot M_l$;
		\item $\dom(\dot{W}) \cap \dot M_0 = \dom(W_q) \cap N$;
		\item for all $k < m$, 
		$\dot{W}(\dot \eta_k) \cap \dot \delta = 
		W_{q}(\eta_k) \cap \delta_q$;
		\item $\dot A \cap \sk(\dot M_0) = A_q \cap \sk(N)$;
		\item for all $l \in U_1$, $\dot M_l = M_l$;
		\item for all $l \in U_2$, 
		$\dot M_l \cap \omega_1 = M_l \cap \omega_1$ 
		and $M_l \cap N \subseteq \dot M_l$;
		\item for all $l < n$, $l \in U_2$ iff $\dot M_l < \dot M_0$;
		\item for all $l \in U_2$, $\dot M_l \cap \dot M_0 \in \dot A$.
	\end{enumerate}
	
	It is routine to check that all of the parameters appearing in $\varphi$ 
	are members of $M$, and that 
	$$
	\mathcal B \models \varphi[q, T_q, W_q, D_q, A_q, N \cap \omega_1, 
	\eta_0,\ldots,\eta_{m-1},M_0,\ldots,M_{n-1}].
	$$
	By elementarity, we can find objects 
	$\bar q$, $\bar T$, $\bar W$, $\bar D$, $\bar A$, $\bar \delta$, 
	$\bar \eta_0,\ldots,\bar \eta_{m-1}$, and $\bar M_0,\ldots,\bar M_{n-1}$ 
	in $M$ which satisfy the same.

	Now it is straightforward to check that the assumptions of Lemma 7.1 
	are satisfied, where $d = 2$, $\bar q$ and $\bar M_0$ 
	serve the roles of $p_0$ and $N_0$, $q$ and $N$ serve the roles of 
	$p_1$ and $N_1$, and the functions $f_{1,0}$ and $g_{1,0}$ 
	are defined by $f_{1,0}(\eta_k) = \bar \eta_k$ for all $k < m$ 
	and $g_{1,0}(M_l) = \bar M_l$ for all $l < n$. 
	It follows that $\bar q \oplus q$ is a condition which extends 
	$\bar q$ and $q$. 
\end{proof}

Since $\mathcal X$ is a club subset of $[\kappa]^\omega$, we immediately 
have the following corollary.

\begin{corollary}
	The forcing poset $\p$ is proper.
\end{corollary}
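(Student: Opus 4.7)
The plan is to derive this directly from Theorem 7.2 by verifying that the hypothesis on $M$ there, namely $M \cap \kappa \in \mathcal{X}$, holds on a club of countable elementary substructures. Recall the standard characterization: $\p$ is proper iff for every sufficiently large regular $\chi$ there is a club of countable $M \prec (H(\chi),\in,\psi,\p)$ such that every $u \in M \cap \p$ has an $(M,\p)$-generic extension.

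First, I fix a regular $\chi > \kappa$ and let $\mathcal{B} = (H(\chi),\in,\psi,\p)$. The map $M \mapsto M \cap \kappa$ is continuous and cofinal on the club of countable $M \prec \mathcal{B}$, so its preimage of any club in $[\kappa]^\omega$ contains a club in $[H(\chi)]^\omega$. Since $\mathcal{X}$ was noted above to be a club subset of $[\kappa]^\omega$, the set
\[
\mathcal{C} = \{\, M \in [H(\chi)]^\omega : M \prec \mathcal{B},\ M \cap \kappa \in \mathcal{X} \,\}
\]
contains a club in $[H(\chi)]^\omega$.

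Next, I fix an arbitrary $M \in \mathcal{C}$ and an arbitrary $u \in M \cap \p$. Let $N = M \cap \kappa \in \mathcal{X}$. By Theorem 7.2, $u + N$ lies in $\p$, extends $u$, and is $(M,\p)$-generic. This verifies the defining property of properness on the club $\mathcal{C}$, so $\p$ is proper.

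No serious obstacle is expected here; the content of the corollary is entirely absorbed by Theorem 7.2, and the only remaining task is the bookkeeping check that $\mathcal{X}$ being club in $[\kappa]^\omega$ transfers to a club of elementary substructures of $\mathcal{B}$ via the intersection map, which is a standard fact about countable substructures.
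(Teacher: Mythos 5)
Your argument is correct and matches the paper's reasoning exactly: the paper derives the corollary immediately from Theorem 7.2 using the fact that $\mathcal{X}$ is a club subset of $[\kappa]^\omega$, which is precisely what you spell out. Nothing to add.
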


Since $\p$ is proper, it preserves $\omega_1$. 
Let us check that $\p$ collapses every cardinal $\mu$ such that $\omega_1 < \mu < \kappa$.

\begin{proposition}
	Suppose that $\mu$ is a cardinal and $\omega_1 < \mu < \kappa$. 
	Then $\p$ forces that $\mu$ is not a cardinal.
\end{proposition}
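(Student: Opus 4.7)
The plan is to show that $\mu$ is collapsed to cardinality $\omega_1$ in $V[G]$ by constructing a covering of $\mu$ by $\omega_1$-many countable sets drawn from the side-condition component of the generic filter. Write $A_G = \bigcup\{A_p : p \in G\}$ for the generic adequate set. The first ingredient will be the density fact that for every $\xi < \mu$, the set $D_\xi = \{p \in \p : \exists N \in A_p,\ \{\xi,\mu\} \subseteq N\}$ is dense in $\p$: given $p \in \p$, since $\mathcal X$ is club in $[\kappa]^\omega$ I can choose $N \in \mathcal X$ with $p \in \sk(N)$ and $\{\xi,\mu\} \subseteq N$, so by Lemma 6.6 the condition $p + N \le p$ belongs to $D_\xi$.

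The crux of the argument is the following rigidity claim: whenever $N, N' \in A_G$ satisfy $N \cap \omega_1 = N' \cap \omega_1$ and $\mu \in N \cap N'$, then $N \cap \mu = N' \cap \mu$. To see this, the filter property of $G$ yields some $r \in G$ with $\{N,N'\} \subseteq A_r$, so adequacy of $A_r$ forces $N \sim N'$, hence $N \cap \beta_{\scaleto{N,N'}{5pt}} = N' \cap \beta_{\scaleto{N,N'}{5pt}}$. But $\mu \in N \cap N' \subseteq \cl(N) \cap \cl(N')$ implies $\sup(\cl(N) \cap \cl(N')) \ge \mu$, so $\beta_{\scaleto{N,N'}{5pt}} \ge \mu$ by Definition 5.3, and intersecting both sides of the $\sim$-equation with $\mu$ then gives $N \cap \mu = N' \cap \mu$.

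Armed with rigidity, I will define in $V[G]$ a set $X_\alpha \subseteq \mu$ for each $\alpha < \omega_1$ as follows: if some $N \in A_G$ has $N \cap \omega_1 = \alpha$ and $\mu \in N$, set $X_\alpha = N \cap \mu$ (well-defined by rigidity and countable, since $N$ is countable); otherwise set $X_\alpha = \emptyset$. The density fact ensures every $\xi < \mu$ lies in some $X_\alpha$, so $\mu = \bigcup_{\alpha < \omega_1} X_\alpha$ is a covering of $\mu$ by $\omega_1$-many countable sets. Since $\omega_1$ is preserved by properness (Corollary 7.3), this forces $|\mu|^{V[G]} \le \omega_1$, so $\mu$ is not a cardinal in $V[G]$. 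The only substantive step is the rigidity claim, and the key point behind it is that the presence of the large ordinal $\mu$ in both $N$ and $N'$ pushes the comparison point $\beta_{\scaleto{N,N'}{5pt}}$ up to at least $\mu$, forcing $N$ and $N'$ to agree on all of $\mu$.
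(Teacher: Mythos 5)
Your proof is correct and follows essentially the same approach as the paper: both arguments cover $\mu$ by $\omega_1$-many countable sets of the form $N \cap \mu$ for models $N$ appearing in the side conditions of $G$ and containing $\mu$, with the key rigidity step in each case being that $\mu \in M \cap N$ forces the comparison point $\beta_{\scaleto{M,N}{5pt}}$ up to at least $\mu$, so adequacy determines $M \cap \mu$ and $N \cap \mu$ from $M \cap \omega_1$ and $N \cap \omega_1$. The paper additionally records that the resulting family $\{N \cap \mu\}$ is a $\subsetneq$-chain of order type exactly $\omega_1$, whereas you only need the covering, but the underlying mechanism is identical.
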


\begin{proof}
	Let $G$ be a generic filter on $\p$. 
	In $V[G]$, define 
	$$
	Z = \{ M \in \mathcal X : \exists p \in G \ (M \in A_p \ \land \ \mu \in M) \}.
	$$
	If $M$ and $N$ are in $Z$, then $\mu \in M \cap N \cap \kappa$, and hence 
	$\beta_{\scaleto{M,N}{5pt}} > \mu$. 
	It easily follows from the definition of $\p$ that 
	for all $M, N \in Z$, $M \cap \omega_1 < N \cap \omega_1$ iff 
	$M \cap \mu \subsetneq N \cap \mu$. 
	So $Z_\mu = \{ M \cap \mu : M \in Z \}$ is a well-ordered chain of 
	countable sets with order-type at most $\omega_1$. 
	It easily follows from Lemma 6.6 that the union of this chain is equal to $\mu$, 
	and hence this chain has order type equal to $\omega_1$ since $\omega_1$ is preserved. 
	So $\mu$ is the union of $\omega_1$-many countable sets 
	and therefore has size $\omega_1$.	
\end{proof}

We now briefly discuss the generic object which is added by $\p$.

\begin{definition}
	Let $G$ be a generic filter on $\p$. 
	Define $(T_G,<_G)$ by:
	\begin{itemize}
		\item $x \in T_G$ if there exists some $p \in G$ such that $x \in T_p$;
		\item $x <_G y$ if there exists some $p \in G$ such that $x <_p y$.
	\end{itemize}
	For any $\eta < \kappa$, define 
	$W_G(\eta) = \bigcup \{ W_p(\eta) : p \in G, \ \eta \in \dom(W_p) \}$.
\end{definition}

As usual, we abbreviate $(T_G,<_G)$ by $T_G$. 
We occasionally write $\dot G$ for the canonical $\p$-name for a 
generic filter on $\p$.
Let $T_{\dot G}$ be a $\p$-name for the above object.

The following proposition has almost the same proof as the analogous fact 
about $\p'$ from Section 3.

\begin{proposition}
	Let $G$ be a generic filter on $\p$. 
	Then $T_G$ is a normal infinitely splitting $\omega_1$-tree and 
	$\{ W_G(\eta) : \eta < \kappa \}$ is a pairwise strongly almost disjoint 
	family of uncountable downwards closed subtrees of $T_G$.
\end{proposition}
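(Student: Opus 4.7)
The plan is to establish direct analogues, for $\p$, of Lemmas 3.4, 3.6, 3.7, 3.9, 3.10, and 3.11, and then combine them exactly as in Theorem 3.16. Since these earlier lemmas were stated for $\p'$ but their proofs are density arguments that use only the closure operation of Lemma 3.5 (downward closure and minimal splits), the work reduces to showing that the analogous density arguments still produce conditions in $\p$ when a side condition $A$ is present. The key point is that when we extend $p$ by adding elements at heights strictly above $\sup\{h(x):x\in T_p\}$, we do not create any new members of intersections $W(\eta)\cap W(\xi)$ that sit inside any $M\in A_p$, since every ordinal of $M\cap\omega_1$ has already been realized in $T_p$ below that supremum; hence $A$-separation is automatically preserved.

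More precisely, I would proceed as follows. First, the analogue of Lemma 3.4 is immediate from Definition 2.7(4) and a genericity argument that forces the condition with $T=\{0\}$ to lie in $G$. For the analogue of Lemma 3.6, given $p\in\p$ and $x\in T_p$ with $\alpha<h(x)$, apply Lemma 6.4 to pass to $q\le p$ that is downwards closed; then the required $y$ of height $\alpha$ with $y<_q x$ already exists, as $h[T_q]\supseteq h[T_p]$ and any two incomparable members of $T_q$ have a minimal split. For extending above, given $\beta$ with $h(x)<\beta<\omega_1$, adjoin to $T_p$ a fresh ordinal $y$ of height $\beta$ (chosen large enough to avoid all elements referenced by $A_p$) with $x<_q y$, and place $y$ in $W_q(\eta)$ for exactly those $\eta\in\dom(W_p)$ with $x\in W_p(\eta)$. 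The verification that this yields a condition in $\p$ is routine: the only non-obvious condition, $A_p$-separation, holds because for distinct $\eta,\xi\in M\cap\dom(W_q)$ we have $W_q(\eta)\cap W_q(\xi)=W_p(\eta)\cap W_p(\xi)$ (adding $y$ to at most one of them). From these one deduces the analogue of Lemma 3.7, that the height function of $T_G$ is $h$ and so $T_G$ has height $\omega_1^V=\omega_1^{V[G]}$ (the latter since $\p$ is proper), and countable levels are immediate from the definition.

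Next, for the analogue of Lemma 3.9, given $p\in\p$, $\eta<\kappa$, and $\alpha<\omega_1$, first pass to $q_0\le p$ with $\eta\in\dom(W_{q_0})$ (by enlarging $\dom(W_p)$, which trivially preserves $A$-separation), then apply the height density just proved to obtain $q\le q_0$ and some $x\in T_q$ of height $\alpha$; finally, add the downward closure of $x$ in $T_q$ to $W_q(\eta)$. Since the added nodes all have heights $\le\alpha$ already occurring in $T_q$ and the new members of $W(\eta)$ do not create unwanted intersections (by the choice of $q_0$), $A$-separation is preserved. Combined with Lemma 6.4, this gives the analogue of Lemma 3.10: $W_G(\eta)$ is an uncountable downwards closed subtree of $T_G$. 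The analogue of Lemma 3.11 follows verbatim as in Section 3: one first shows that $\{\{\eta,\xi\}\}\subseteq D_q$ is a dense condition (achieved by extending $\dom(W)$ and adding the pair to $D$, which does not affect $A$-separation), and then Definition 2.9(d) gives that every member of $W_G(\eta)\cap W_G(\xi)$ lies below some element of the finite set $W_q(\eta)\cap W_q(\xi)$, so the intersection is finitely generated and hence strongly almost disjoint.

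The main potential obstacle is confirming that the single density step of ``adding a new node at a prescribed height above everything in $T_p$'' is compatible with the side condition. This ultimately reduces to choosing the new ordinal large enough (above $\sup(M\cap\omega_1)$ for all $M\in A_p$ at the relevant height level) so that it does not accidentally land in any $M\in A_p$ at a level where $\eta\in M$ but the predecessors of the new node are not; since $A_p$ is finite and each $M\cap\omega_1$ is countable, such a choice is possible. Normality and infinite splitting then follow by standard density arguments using the upward extension above. Assembling all of these, one concludes as in Theorem 3.16 that $T_G$ is a normal infinitely splitting $\omega_1$-tree and that $\{W_G(\eta):\eta<\kappa\}$ is a pairwise strongly almost disjoint family of uncountable downwards closed subtrees of $T_G$.
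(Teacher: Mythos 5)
Your overall strategy is exactly the paper's: the paper simply remarks that Proposition 7.6 has ``almost the same proof as the analogous fact about $\p'$ from Section 3,'' and you propose to redo the Section 3 density arguments while checking $A$-separation. That is the right plan, and most of it is sound. However, one of your density steps is written incorrectly, and the justification you give for it is self-contradictory.

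In the upward-extension step (your analogue of Lemma 3.6(2)), you adjoin a fresh ordinal $y$ of height $\beta$ above $x$ and then ``place $y$ in $W_q(\eta)$ for exactly those $\eta\in\dom(W_p)$ with $x\in W_p(\eta)$.'' If $x\in W_p(\eta)\cap W_p(\xi)$ for two distinct $\eta,\xi$ lying in some $M\in A_p$ (which is perfectly possible; $A$-separation only forces $x\in M$), then your construction puts $y$ into both $W_q(\eta)$ and $W_q(\xi)$. Since you deliberately chose $y$ to avoid $M\cap\omega_1$, we then have $y\in W_q(\eta)\cap W_q(\xi)$ with $\eta,\xi\in M$ but $y\notin M$, so $A$-separation fails. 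Your parenthetical ``(adding $y$ to at most one of them)'' contradicts the sentence before it and masks exactly this failure. Also note that choosing $y$ ``large enough to avoid $A_p$'' works \emph{against} you here: to keep $A$-separation you would want new common points to land inside $M$, not outside. The fix is simply to \emph{not} put $y$ into any $W_q(\eta)$: Lemma 3.6(2) only asks for $y\in T_q$ with $x<_q y$ and $h(y)=\beta$, and keeping $W_q=W_p$ preserves $A$-separation trivially.

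A similar vagueness occurs in your analogue of Lemma 3.9, where you ``add the downward closure of $x$ in $T_q$ to $W_q(\eta)$'' and justify $A$-separation ``by the choice of $q_0$.'' Since $q_0$ merely adds $\eta$ to the domain of the subtree function, this says nothing about why the new elements of $W_q(\eta)$ do not create forbidden intersections with some $W_q(\xi)$ inside a model $M\ni\eta,\xi$. The standard and essentially necessary move, which you should make explicit, is to route the new node $x$ along a completely fresh branch that splits off from $T_{q_0}$ at the root (or at least below every relevant old split); then the downward closure of $x$ meets the old tree only in elements $\le_{T_{q_0}} 0$, so $W_q(\eta)\cap W_q(\xi)$ can grow by at most $\{0\}$, which lies in every $M$, and $A$-separation is preserved. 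With these two corrections the remainder of your outline (the Lemma 3.10 and 3.11 analogues and the assembly as in Theorem 3.16) is fine.
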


\begin{proposition}
	The forcing poset $\p$ forces that $T_{\dot G}$ is Aronszajn.
\end{proposition}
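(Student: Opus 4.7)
The plan is to adapt the argument of Corollary 3.15 to the proper setting, using the ``moreover'' clause of Theorem 7.2 to amalgamate a suitable extension $q$ of $p+N$ with a reflected condition $\bar q$ in a countable elementary substructure $M$, so as to produce the analog of the incomparability $x_\alpha \in T^\alpha \setminus T^\beta$, $x_\beta \in T^\beta \setminus T^\alpha$ used there.

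Suppose for contradiction that $p \in \p$ forces $\dot b$ to be an uncountable chain in $T_{\dot G}$. Fix a regular $\chi > \kappa$ and a countable $M \prec (H(\chi),\in,\psi,\p,\dot b,p)$ with $N = M \cap \kappa \in \mathcal X$, and set $\delta = N \cap \omega_1$. By Theorem 7.2, $p + N$ extends $p$ and is $(M,\p)$-generic. Since $p+N$ forces $\dot b$ to meet level $\delta$, a density argument analogous to Lemma 3.6(2) yields $q \le p + N$ and $y \in T_q$ with $h(y) = \delta$ and $q \Vdash y \in \dot b$. Applying Lemmas 6.4 and 6.7, I may arrange in addition that $T_q$ is downwards closed with minimal splits and $A_q$ is $N$-closed.

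Rather than invoking Theorem 7.2 as a black box, I would revisit its proof and enlarge the reflection formula $\varphi$ by the additional conjunct
\[
(\star) \qquad \exists \dot y \in \dot T \; \bigl( h(\dot y) = \dot \delta \; \land \; \dot q \Vdash \dot y \in \dot b \bigr),
\]
whose only new parameter is $\dot b \in M$, and which is witnessed by $y$ for $q$. By elementarity, there exists $\bar q \in M$ satisfying the augmented formula, and in particular some $\bar y \in T_{\bar q}$ with $h(\bar y) = \bar \delta := \bar M_0 \cap \omega_1$ and $\bar q \Vdash \bar y \in \dot b$, where $\bar M_0 \in A_{\bar q}$ reflects the role of $N$. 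The remainder of the proof of Theorem 7.2, via Lemma 7.1, then shows that $\bar q \oplus q$ is a condition in $\p$ that extends both, and that the pair $(\bar q, q)$ is $(\bar \delta, \delta)$-split in the sense of Definition 2.13.

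To conclude, $\bar q \oplus q$ forces both $\bar y$ and $y$ into the chain $\dot b$, so they are comparable in $T_{\bar q \oplus q}$, with $\bar y <_{\bar q \oplus q} y$ because $h(\bar y) = \bar \delta < \delta = h(y)$. By Definition 2.7 the order $<_{T_{\bar q \oplus q}}$ is $<_{T_{\bar q}} \cup <_{T_q}$; since $T_{\bar q} \subseteq \delta$ while $y \ge \delta$, $y \notin T_{\bar q}$, so $\bar y <_{T_q} y$, which forces $\bar y \in T_q$. But the $(\bar \delta, \delta)$-split property gives $T_q \cap \delta = T_{\bar q} \cap \bar \delta \subseteq \bar \delta$, so $T_q \cap [\bar \delta, \delta) = \emptyset$; and since $\bar \delta$ is a limit ordinal, $h(\bar y) = \bar \delta$ places $\bar y$ in $[\bar \delta, \bar \delta + \omega) \subseteq [\bar \delta, \delta)$, contradicting $\bar y \in T_q$. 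The main obstacle is the middle paragraph: one cannot apply Theorem 7.2 as stated, because the height of the $\dot b$-witness in the reflected $\bar q$ must be pinned down, and this requires enlarging the reflection formula from within the proof of that theorem. Once that is in hand, the rest of the argument closely parallels the contradiction derived from $(\alpha,\beta)$-splitness in the proof of Corollary 3.15.
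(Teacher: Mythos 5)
Your proposal is correct and follows the same overall strategy as the paper's proof: work below the $(M,\p)$-generic condition $u+N$, produce $q \le u+N$ carrying a high element of $\dot b$, reflect $q$ to some $\bar q \in M$, and use the amalgamated condition $\bar q \oplus q$ to force two elements of $\dot b$ to be incomparable. However, you misdiagnose the ``main obstacle'': the paper shows that Theorem 7.2 \emph{can} be applied exactly as stated, without reopening its proof. The trick is to put all the needed information into a dense set: the paper picks $\delta < N \cap \omega_1$ \emph{after} $q$ so that $T_q \cap (N \cap \omega_1) \subseteq \delta$, and then lets $\mathcal D \in M$ be the set of conditions $s$ carrying some $x_s \ge \delta$ in $T_s$ forced into $\dot b$. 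The ``moreover'' clause of Theorem 7.2 (with this $\mathcal D$) then hands over $\bar q \in \mathcal D \cap M$, whose witness $x_{\bar q}$ automatically lives in $[\delta, N \cap \omega_1)$ and so is disjoint from $T_q$, with no need to pin the height to exactly $\bar\delta$ or to invoke the split property. Your alternative of enlarging the reflection formula $\varphi$ with the conjunct $(\star)$ would also work, and the split-based derivation of the final contradiction is valid, but it is less modular since it requires reproving Theorem 7.2 with a bespoke formula each time.

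Two minor points worth tightening: (i) you write ``uncountable chain'' but need $\dot b$ to be downward closed to guarantee an element of $\dot b$ at exactly level $\delta$; the paper takes $\dot b$ to be a cofinal branch, and the safe move is to replace $\dot b$ by its downward closure at the outset. (ii) The statement ``they are comparable in $T_{\bar q \oplus q}$'' needs the observation that since every $T_s$ for $s \le \bar q \oplus q$ end-extends $T_{\bar q \oplus q}$, comparability of two fixed elements of $T_{\bar q \oplus q}$ in some $T_G$ implies comparability already in $T_{\bar q \oplus q}$; the paper phrases the conclusion in the contrapositive (the two elements are incomparable in $T_{\bar q \oplus q}$, hence forced incomparable), which avoids this back-transfer entirely.
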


\begin{proof}
	Suppose for a contradiction that $u \in \p$ forces that 
	$\dot b$ is a cofinal branch of $T_{\dot G}$. 
	Fix a regular cardinal $\chi > \kappa$ such that $\dot b \in H(\chi)$. 
	Let $M$ be a countable elementary substructure of $(H(\chi),\in,\psi,\p)$ 
	such that $u$ and $\dot b$ are in $M$ and $N = M \cap \kappa \in \mathcal X$. 
	By Theorem 7.2, $u + N$ is a condition extending $u$ which is $(M,\p)$-generic.

	Fix $q \le u + N$ and $x_q \ge N \cap \omega_1$ 
	such that $x_q \in T_q$ and $q$ forces that $x_q \in \dot b$. 
	By extending $q$ further if necessary, we may assume that $A_q$ is $N$-closed. 
	Fix $\delta < N \cap \omega_1$ such that $T_q \cap (N \cap \omega_1) \subseteq \delta$. 
	Let $\mathcal{D}$ be the set of conditions $s \in \p$ such that for some 
	$x_s \ge \delta$ in $T_s$, $s$ forces that $x_s \in \dot b$. 
	Note that $\mathcal{D}$ is dense open, $\mathcal{D} \in M$, and $q \in \mathcal{D}$.
	
	By Theorem 7.2, fix $\bar q$ in $\mathcal{D} \cap M$ such that $r = \bar q \oplus q$ 
	is in $\p$ and is an extension of $\bar q$ and $q$. 
	As $\bar q \in \mathcal{D} \cap M$, fix $x_{\bar q} \ge \delta$ in 
	$T_{\bar q} \cap M$ such that $\bar q$ forces that $x_{\bar q} \in \dot b$. 
	Now $T_q \cap (N \cap \omega_1) \subseteq \delta$ and $\bar q \in M$ imply that 
	$x_{\bar q} \in T_{\bar q} \setminus T_q$ and $x_q \in T_q \setminus T_{\bar q}$. 
	By the definition of $T_{\bar q} \oplus T_q$, $x_{\bar q}$ and $x_q$ 
	are not comparable in $T_r$. 
	For all $s \le r$, $T_s$ is an end-extension of $T_r$ and hence  
	$x$ and $y$ are incomparable in $T_s$. 
	So $r$ forces that $x$ and $y$ are incomparable in $T_{\dot G}$, which 
	contradicts that $r$ forces that $x$ and $y$ are both in $\dot b$.
\end{proof}

\section{Y-Properness}

In our applications of the forcing poset $\p$, we need to know that quotients of 
$\p$ have the $\omega_1$-approximation property, and in particular, that they   
do not add new cofinal branches of trees with height $\omega_1$. 
The key to this fact is the property of Y-properness due to 
Chodounsk\'{y} and Zapletal (\cite{CZ}).

\begin{definition}
	A forcing poset $\q$ is \emph{Y-proper} if for all large enough 
	regular cardinals $\chi$ with $\q \in H(\chi)$, there are club 
	many $M \in [H(\chi)]^\omega$ such that $M$ is an elementary 
	substructure of $(H(\chi),\in,\q)$, and for all $p \in M \cap \q$ 
	there exists $q \le p$ which is $(M,\q)$-generic and satisfies that 
	for all $r \le q$, there exists a filter $\mathcal F \in M$ on the Boolean 
	completion $\mathcal B(\q)$ such that 
	$\{ s \in M \cap \mathcal B(\q) : r \le s \} \subseteq \mathcal F$. 
	If the above holds for stationarily many (rather than club many) 
	$M$ in $[H(\chi)]^\omega$, then we say that $\q$ is 
	\emph{Y-proper on a stationary set}.
\end{definition}

Note that Y-proper implies proper. 
Recall that a forcing poset $\q$ has the \emph{$\omega_1$-approximation property} 
if whenever $X \in V$, $B \subseteq X$ is in $V^\q$, and for all countable 
$a \subseteq X$ in $V$, $B \cap a \in V$, then $B \in V$. 
If $\q$ has the $\omega_1$-approximation property, then for any regular 
uncountable cardinal $\mu$, $\q$ does not add new cofinal branches to 
any tree with height $\mu$. 
Our interest in Y-properness comes from the following consequence of it 
(see \cite[Corollary 4.1]{CZ} and the proof of \cite[Theorem 2.8]{CZ}).

\begin{thm}
	Suppose that $\q$ is a forcing poset which is Y-proper on a stationary set. 
	Then $\q$ has the $\omega_1$-approximation property.
\end{thm}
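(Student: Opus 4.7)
The plan is to prove the contrapositive: given $X \in V$, a $\q$-name $\dot B$, and $q_0 \in \q$ forcing $\dot B \subseteq \check X$ together with ``$\dot B \cap \check a \in V$ for every countable $a \in V$'', produce an extension $q \le q_0$ forcing $\dot B = \check{B^*}$ for some $B^* \in V$. A standard density argument then gives $q_0 \Vdash \dot B \in V$.

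First I would fix a sufficiently large regular $\chi$, choose a countable $M \prec (H(\chi),\in,\q,\dot B,X,q_0)$ from the stationary set witnessing Y-properness, and use it to pass to $q \le q_0$ which is $(M,\q)$-generic and has the Y-filter property. The central step is to use the Y-filter to extract a ground-model candidate for $\dot B \cap (X \cap M)$ that lives inside $M$. For each $r \le q$, Y-properness supplies a filter $\mathcal{F}_r \in M$ on $\mathcal{B}(\q)$ containing $\{s \in M \cap \mathcal{B}(\q) : r \le s\}$. Set
\[
B^*_r = \{ x \in X : \llbracket \check x \in \dot B \rrbracket \in \mathcal{F}_r\},
\]
which lies in $M$ since all its defining parameters do, and effectively lies inside $X \cap M$ since $\llbracket \check x \in \dot B \rrbracket \in M$ requires $x \in M$ by elementarity. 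If $r \Vdash \check x \in \dot B$ for some $x \in X \cap M$, then $r$ lies below the Boolean value $\llbracket \check x \in \dot B\rrbracket \in M$, so that value enters $\mathcal{F}_r$ and $x \in B^*_r$; symmetrically, $r \Vdash \check x \notin \dot B$ forces $x \notin B^*_r$. Combined with the approximation hypothesis applied to $a = X \cap M$ (which makes dense below $q$ the set of $r$ deciding $\dot B \cap (X \cap M)$ to equal a fixed ground-model set), this pins down a single $B^* \in M$ and an extension, still called $q$, forcing $\dot B \cap (X \cap M) = \check{B^*}$.

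To lift this decision from $X \cap M$ to all of $X$, I would iterate the construction along a continuous $\in$-chain $\langle M_\xi : \xi < \omega_1\rangle$ of Y-good countable models with $X \subseteq \bigcup_{\xi < \omega_1} M_\xi$, producing coherent ground-model pieces $B^*_\xi$ with a common $q$ forcing $\dot B \cap (X \cap M_\xi) = \check{B^*_\xi}$ for every $\xi$; the union is then a ground-model $B^* \subseteq X$ with $q \Vdash \dot B = \check{B^*}$. The main obstacle is precisely this globalization: a single fixed $q$ must simultaneously witness the Y-filter property against every model in the chain and the local pieces must cohere into one consistent set. This is exactly what the stationary-set formulation of Y-properness and its preservation-theoretic strength are designed to supply, and the careful fusion-style construction that delivers it is the content of \cite[Corollary 4.1]{CZ}.
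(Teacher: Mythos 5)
Your local step --- extracting the ground-model candidate $B^*_r \in M$ from the Y-filter and verifying that it agrees with every decision $r \le q$ makes about $\dot B$ on $X \cap M$ --- is correct and is the key idea in the cited Chodounsk\'{y}--Zapletal argument. The globalization you propose, however, is a genuine gap. The set $X$ is arbitrary, so a chain of $\omega_1$-many countable models need not cover it; and, more fundamentally, even for $X \subseteq \omega_1$ there is in general no single condition lying below $\omega_1$-many conditions in a proper forcing, so ``a common $q$ forcing $\dot B \cap (X \cap M_\xi) = \check{B^*_\xi}$ for every $\xi$'' is unobtainable. No fusion or chain argument is needed or available here.

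The argument instead closes in a single model and is by contradiction. Suppose $q_0$ forces the approximation hypothesis together with $\dot B \notin V$. Pick $M \prec (H(\chi),\in,\q,\dot B,X,q_0)$ from the stationary set and $q \le q_0$ which is $(M,\q)$-generic with the Y-filter property. By the approximation hypothesis applied to $a = X \cap M$, choose $r \le q$ deciding $\dot B \cap (X \cap M)$; by your local computation the decision equals $B^*_r \cap (X \cap M)$ for some $B^*_r \in M$. Since $q_0 \Vdash \dot B \ne \check{B^*_r}$ and $q_0, \dot B, X, B^*_r$ all lie in $M$, the dense set $D$ of conditions below $q_0$ that decide $\check x \in \dot B$ contrary to $B^*_r$ for some $x \in X$ belongs to $M$; by $(M,\q)$-genericity $D \cap M$ is predense below $q$. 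Fix $s \in D \cap M$ compatible with $r$ and, by elementarity, a witnessing $x \in X \cap M$ for $s$. Any common extension $r'$ of $r$ and $s$ then forces $x \in \dot B$ in a way disagreeing with $B^*_r$, while also, because $r' \le r$, forcing $\dot B \cap (X \cap M) = B^*_r \cap (X \cap M)$ --- a contradiction. (The paper does not supply this argument itself; it cites \cite[Corollary 4.1]{CZ} and the proof of \cite[Theorem 2.8]{CZ}, so your sketch should be measured against those sources rather than against a proof in the text.)
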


In Section 12, we also make use of the Y-c.c.\ property of a forcing poset (\cite{CZ}). 
The only things which the reader needs to know about this property is that 
it implies Y-properness and it is preserved under finite support forcing iterations.

We now proceed towards proving that $\p$ is Y-proper.

\begin{notation}
	For any $p \in \p$ define:
	\begin{itemize}
		\item $m_p = |\dom(W_p)|$ and $n_p = |A_p|$;
		\item $\langle \eta^p_i : i < m_p \rangle$ is the unique 
		enumeration of $\dom(W_p)$ such that 
		$\eta^p_i < \eta^p_j$ for all $i < j < m_p$;
		\item $\langle K^p_i : i < n_p \rangle$ is the unique enumeration of $A_p$ 
		such that $K^p_i \lhd K^p_j$ for all $i < j < n_p$.
	\end{itemize} 
\end{notation}

\begin{definition}
	Define a function $w$ as follows. 
	The domain of $w$ is the set of ordered pairs $(p, N)$ such that 
	$p \in \p$, $N \in A_p$, and $A_p$ is $N$-closed. 
	For any such $(p,N)$, letting $\delta = N \cap \omega_1$, define 
	$$
	w(p, N) = (t,a,b,m,n,w_0,\ldots,w_{m-1},U_0,U_1,U_2,U_3,h_0,h_1),
	$$
	where:
	\begin{enumerate}
		\item[(a)] $t = T \res \delta$;
		\item[(b)] $a = \dom(W) \cap N$;
		\item[(c)] $b = A \cap \sk(N)$;
		\item[(d)] $m = m_p$ and $n = n_p$;
		\item[(e)] $w_k = W(\eta^p_k) \cap \delta$ for all $k < m$;
		\item[(f)] $U_0 = \{ k < m : \eta^p_k \in N \}$;
		\item[(g)] $U_1 = \{ l < n : K^p_l \in \sk(N) \}$;
		\item[(h)] $U_2 = \{ l < n : K^p_l \cap \omega_1 < \delta \}$.
		\item[(i)] $U_3 = \{ (k,l) \in m \times n : \eta^p_k \in K^p_l \}$;
		\item[(j)] $h_0 : U_2 \to \delta$ is a function and 
		for all $l \in U_2$, $h_0(l) = K^p_l \cap \omega_1$;
		\item[(k)] $h_1 : U_2 \to \sk(N)$ is a function and 
		for all $l \in U_2$, $h_1(l) = K^p_l \cap N$.
	\end{enumerate}
\end{definition}

\begin{lemma}
	For all $(p, N)$ in the domain of $w$, 
	$w(p, N) \in \sk(N)$.
\end{lemma}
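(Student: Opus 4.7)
The plan is to verify, component by component, that each entry of the $14$-tuple $w(p,N)$ belongs to $\sk(N)$, and then invoke closure of $\sk(N)$ under finite tuple (and finite set) formation, which holds because $\mathcal{A}$ has definable Skolem functions. The two general facts I would use throughout are: first, $N \subseteq \sk(N)$ and $\omega_1, \delta \in \sk(N)$, where $\delta = N \cap \omega_1$ (since $\omega_1$ is definable and $\delta$ is $\emptyset$-definable from $N$ and $\omega_1$); second, any finite subset of $\sk(N)$ is itself an element of $\sk(N)$, and similarly for finite sequences or functions whose graph is a finite subset of $\sk(N) \times \sk(N)$.

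For the components $(a)$--$(j)$ most of the work is essentially automatic. The natural numbers $m$ and $n$ trivially lie in $\sk(N)$. The tree $t = T_p \res \delta$ has all its vertices in $\delta \subseteq N$, so $t$ and its edge relation form a finite subset of $\sk(N)$, hence $t \in \sk(N)$. The set $a = \dom(W_p) \cap N$ is a finite subset of $N$, and $b = A_p \cap \sk(N)$ is by definition a finite subset of $\sk(N)$. Each $w_k = W_p(\eta^p_k) \cap \delta$ is a finite subset of $\delta$ (together with an induced order), hence a finite subset of $\sk(N)$. The sets $U_0, U_1, U_2, U_3$ are subsets of $\omega$ or of $\omega \times \omega$, so they belong to $\sk(N)$. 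For $h_0$, the values $h_0(l) = K^p_l \cap \omega_1$ are ordinals below $\delta$, so the graph of $h_0$ is a finite subset of $\omega \times \delta \subseteq \sk(N)$.

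The only genuinely interesting component is $h_1$, whose values are $h_1(l) = K^p_l \cap N$ for $l \in U_2$. Here I would use the hypothesis that $A_p$ is $N$-closed together with the definition of $U_2$: for $l \in U_2$, $K^p_l \cap \omega_1 < \delta = N \cap \omega_1$, so $K^p_l < N$ in the adequate set sense, and Lemma 5.5 then gives $K^p_l \cap N = K^p_l \cap \beta_{K^p_l,N} \in \sk(N)$. Thus the graph of $h_1$ is a finite subset of $\omega \times \sk(N)$, so $h_1 \in \sk(N)$. Assembling the components into the tuple using pairing (definable in $\mathcal{A}$) yields $w(p,N) \in \sk(N)$. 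The only step that requires more than bookkeeping is the verification for $h_1$, where the adequacy/$N$-closedness hypothesis is genuinely used; everything else is direct closure of $\sk(N)$ under finite combinatorial operations.
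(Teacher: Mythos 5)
Your proposal is correct and follows the same underlying idea as the paper, whose entire proof is the single sentence ``Every member of the tuple $w(p,N)$ is a finite subset of $\sk(N)$''; you have simply unpacked that observation component by component and correctly identified $h_1$ as the one entry requiring a genuine argument (Lemma~5.5 applied to $K^p_l < N$). One small imprecision: for the $h_1$ step you invoke $N$-closedness of $A_p$, but in fact Lemma~5.5 needs only that $A_p$ is adequate with $K^p_l, N \in A_p$ and $K^p_l < N$; the $N$-closedness hypothesis in the definition of $\dom(w)$ is not actually used in this lemma.
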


\begin{proof}
	Every member of the tuple $w(p,N)$ is a finite subset of $\sk(N)$.
\end{proof}

\begin{lemma}
	Let $1 < d < \omega$. 
	Let $p_0,\ldots,p_{d-1}$ be in $\p$, let $N_0,\ldots,N_{d-1}$ be in $\mathcal X$, 
	and suppose that for all $i < d$, $(p_i,N_i) \in \dom(w)$. 
	Assume that for all $i < j < d$, $w(p_i, N_i) = w(p_j, N_j)$ and $p_i \in \sk(N_j)$. 
	Then $p_0 \oplus \cdots \oplus p_{d-1}$ is a condition which extends 
	each of $p_0,\ldots,p_{d-1}$.
\end{lemma}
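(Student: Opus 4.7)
The plan is to apply Lemma 7.1 directly: the tuple $w(p_i,N_i)$ has been designed precisely to encode the combinatorial data required by that lemma, and the hypothesis $w(p_i,N_i) = w(p_j,N_j)$ translates one-to-one into the six numbered conditions in its statement.

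First I will set $\delta_i = N_i \cap \omega_1$ and let $m = m_{p_i}$, $n = n_{p_i}$, which are constant over $i$ by component $(d)$ of $w$. Define the bijections by matching the canonical enumerations: let $f_{j,i} : \dom(W_{p_j}) \to \dom(W_{p_i})$ send $\eta^{p_j}_k \mapsto \eta^{p_i}_k$ for $k<m$, and let $g_{j,i} : A_{p_j} \to A_{p_i}$ send $K^{p_j}_l \mapsto K^{p_i}_l$ for $l<n$. Commutativity of the two families is immediate since the compositions match index-by-index. The standing hypotheses of Lemma 7.1 that $N_i \in A_{p_i}$ and $A_{p_i}$ is $N_i$-closed are built into the statement $(p_i,N_i) \in \dom(w)$, and $p_i \in \sk(N_j)$ for $i<j$ is given.

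Next I verify the six numbered conditions of Lemma 7.1. Conditions (1), (3), and (4) follow directly from the agreement of the components $t$, $U_3$, and $w_k$, respectively. For condition (2): by the $U_0$ component, $k \in U_0$ iff $\eta^{p_i}_k \in N_i$ iff $\eta^{p_j}_k \in N_j$, and by the $a$ component the two sets $\dom(W_{p_i}) \cap N_i$ and $\dom(W_{p_j}) \cap N_j$ coincide; since $\langle \eta^{p_i}_k : k \in U_0\rangle$ and $\langle \eta^{p_j}_k : k \in U_0\rangle$ are strictly increasing enumerations of this single common set, they agree pointwise, so $f_{j,i}$ fixes every element of $\dom(W_{p_j}) \cap N_j$. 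Condition (5) follows by the same argument applied to $A_p$, $K^p_l$, $U_1$, and components $b$ and $g$. Finally, for condition (6): if $M = K^{p_j}_l$ satisfies $M \cap \omega_1 < \delta_j$, then $l \in U_2$ by component $U_2$, whereupon component $h_0$ yields $K^{p_i}_l \cap \omega_1 = K^{p_j}_l \cap \omega_1$, and component $h_1$ yields $M \cap N_j = K^{p_j}_l \cap N_j = K^{p_i}_l \cap N_i \subseteq g_{j,i}(M)$.

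With all the hypotheses verified, Lemma 7.1 delivers the conclusion that $p_0 \oplus \cdots \oplus p_{d-1}$ is a condition extending each $p_i$. The only step that is not purely mechanical is the pointwise-agreement argument used for conditions (2) and (5); this step reflects the fact that the enumerations $\langle \eta^p_k \rangle$ and $\langle K^p_l \rangle$ are canonical (determined by the ordinal ordering of $\kappa$ and by $\lhd$, respectively), so two matching $U_0$-indexed (resp.\ $U_1$-indexed) subsequences of enumerations of the same underlying set must coincide termwise. No further obstacle arises.
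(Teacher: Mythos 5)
Your proposal is correct and takes essentially the same approach as the paper's proof: apply Lemma 7.1 with the bijections defined by matching the canonical enumerations, then read off conditions (1)--(6) from the matching components of $w$. The only difference is that you spell out the pointwise-agreement argument for conditions (2) and (5) — which the paper leaves implicit by merely citing components (b),(f) and (c),(g) — but this is exactly the intended justification.
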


\begin{proof}
	Let $m = m_{p_i}$ and $n = n_{p_i}$ for some (any) $i < d$. 
	For each $i < j < d$, define $f_{j,i} : \dom(W_j) \to \dom(W_i)$ by 
	letting $f_{j,i}(\eta_k^{p_j}) = \eta_k^{p_i}$ for all $k < m$, 
	and define $g_{j,i} : A_j \to A_i$ by 
	letting $g_{j,i}(K_l^{p_j}) = K_l^{p_i}$ for all $l < n$. 
	Clearly, this definition gives commutative families of bijections.
	
	We verify properties (1)-(6) of Lemma 7.1  
	using (a)-(k) of Definition 8.4, with the other required properties 
	being immediate. 
	(1) follows from (a). 
	(2) follows from (b) and (f). 
	(3) follows from (i). 
	(4) follows from (e). 
	(5) follows from (c) and (g). 
	The first part of (6) follows from (h) and (j). 
	For the second part of (6), if $i < j < d$ and $l \in U_2$, 
	then by (k), 
	$K^{p_j}_l \cap N_j = h_1(l) = K^{p_i}_l \cap N_i \subseteq g_{j,i}(K^{p_j}_l)$.
	
	By Lemma 7.1, $p_0 \oplus \cdots \oplus p_{d-1}$ is in $\p$ 
	and extends each of $p_0,\ldots,p_{d-1}$.
\end{proof}

\begin{lemma}
	Under the assumptions of Lemma 8.6, for all $k < d$:
	\begin{itemize}
		\item $\{ T_{p_i} : i < d \}$ is a $\Delta$-system with 
		root $T_{p_k} \cap (N_k \cap \omega_1)$;
		\item $\{ \dom(W_{p_i}) : i < d \}$ is a $\Delta$-system 
		with root $\dom(W_{p_k}) \cap N_k$;
		\item $\{ A_{p_i} : i < d \}$ is a $\Delta$-system with root $A_{p_k} \cap \sk(N_k)$.
	\end{itemize}
\end{lemma}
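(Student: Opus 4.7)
The plan is to deduce the three $\Delta$-system assertions from the same two ingredients: (i) the inclusion $p_i \in \sk(N_j)$ whenever $i < j$, and (ii) the equality $w(p_i,N_i) = w(p_j,N_j)$, which gives the common values $t$, $a$, $b$ described in clauses (a)--(c) of Definition 8.4. Let $\delta_i = N_i \cap \omega_1$ for each $i < d$, and recall from Definition 5.2 that $\sk(N_j) \cap \kappa = N_j$.

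First, I would observe that (i) gives the crucial one-sided containments when $i<j$: since $T_{p_i} \subseteq \omega_1 \subseteq \kappa$ is a finite set in $\sk(N_j)$, each element lies in $\sk(N_j) \cap \kappa = N_j$, hence $T_{p_i} \subseteq N_j \cap \omega_1 = \delta_j$. Analogously, $\dom(W_{p_i}) \subseteq N_j$ (as a finite subset of $\kappa$ lying in $\sk(N_j)$), and $A_{p_i} \subseteq \sk(N_j)$ (as a finite subset of $\mathcal X$ lying in $\sk(N_j)$).

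Next I would read off from (ii) that for all $i,j<d$,
\[
T_{p_i} \cap \delta_i = t = T_{p_j} \cap \delta_j, \qquad
\dom(W_{p_i}) \cap N_i = a = \dom(W_{p_j}) \cap N_j, \qquad
A_{p_i} \cap \sk(N_i) = b = A_{p_j} \cap \sk(N_j).
\]
Combining the two ingredients is now routine. For any $i<j<d$, the inclusion $T_{p_i} \subseteq \delta_j$ yields
\[
T_{p_i} \cap T_{p_j} \;=\; T_{p_i} \cap (T_{p_j} \cap \delta_j) \;=\; T_{p_i} \cap t \;=\; t,
\]
the last equality since $t \subseteq T_{p_i} \cap \delta_i \subseteq T_{p_i}$. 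Exactly the same computation, using $\dom(W_{p_i}) \subseteq N_j$ and $A_{p_i} \subseteq \sk(N_j)$ in place of $T_{p_i} \subseteq \delta_j$, shows that $\dom(W_{p_i}) \cap \dom(W_{p_j}) = a$ and $A_{p_i} \cap A_{p_j} = b$. Thus all three families are $\Delta$-systems, with the common roots $t$, $a$, $b$ respectively; and since $t = T_{p_k} \cap \delta_k$, $a = \dom(W_{p_k}) \cap N_k$, and $b = A_{p_k} \cap \sk(N_k)$ for every $k<d$, this is the form of the root stated in the lemma.

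There is really no main obstacle: the lemma is a bookkeeping consequence of Definition 8.4 together with the definition of $\mathcal X$. The only point one must be slightly careful about is that the $t$-component of $w(p_i,N_i)$ is formally the ordered pair $(T_{p_i}\cap\delta_i,\ <_{p_i}\cap\,\delta_i^2)$, so the equality $t = t$ across $i$ supplies equality of the underlying sets $T_{p_i}\cap\delta_i$ (which is all we need for the $\Delta$-system statement on $T_{p_i}$'s).
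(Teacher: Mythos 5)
Your proof is correct and is exactly the expansion of the paper's one-line proof, which simply points to clauses (a), (b), (c) of Definition 8.4 together with the containments coming from $p_i \in \sk(N_j)$ for $i < j$. The observation about $t$ being an ordered pair (so that the relevant equality is of underlying sets) is a good point of care that the paper elides.
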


\begin{proof}
	Straightforward using (a), (b), and (c) of Definition 8.4 
	together with the fact that $p_i \in \sk(N_j)$ whenever $i < j < d$.
\end{proof}

\begin{definition}
	Let $\vec z$ be in the range of $w$. 
	A set $R \subseteq \p$ is said to be \emph{$\vec z$-robust} 
	if the set 
	$$
	\{ N \in \mathcal X : \exists p \in R \ (w(p, N) = \vec z) \}
	$$
	is stationary in $[\kappa]^\omega$.
\end{definition}

\begin{proposition}
	For any $\vec z$ in the range of $w$, the collection 
	$\{ \ \sum R : R \subseteq \p \ \text{is $\vec z$-robust} \ \}$
	is centered.
\end{proposition}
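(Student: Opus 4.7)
The plan is to reduce the Boolean statement to a compatibility statement inside $\p$. In any Boolean algebra one has
$$
\bigwedge_{i<d} \sum R_i \;=\; \sum\bigl\{\, p_0 \wedge \cdots \wedge p_{d-1} : (p_0,\ldots,p_{d-1}) \in R_0 \times \cdots \times R_{d-1} \,\bigr\},
$$
so the family is centered if and only if for any finitely many $\vec z$-robust sets $R_0,\ldots,R_{d-1}$ there exist $p_i \in R_i$ admitting a common extension in $\p$. The plan is thus to produce, for any such finite collection, a compatible tuple.

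For each $i<d$ let $S_i = \{\,N \in \mathcal X : \exists p \in R_i \ (w(p,N) = \vec z)\,\}$, which is stationary in $[\kappa]^\omega$ by the definition of $\vec z$-robustness. I would recursively choose $N_j \in S_j$ and $p_j \in R_j$ with $w(p_j,N_j) = \vec z$ so that $p_i \in \sk(N_j)$ for all $i < j$. At stage $j$ the set $C_j = \{\,N \in [\kappa]^\omega : \psi^{-1}(p_0),\ldots,\psi^{-1}(p_{j-1}) \in N\,\}$ is club, so $S_j \cap C_j \neq \emptyset$; picking $N_j$ in the intersection, we have $\psi^{-1}(p_i) \in N_j$ for $i<j$, and since $N_j \in \mathcal X$ gives $\sk(N_j) \cap \kappa = N_j$ and $\psi$ is definable in $\mathcal A$, each $p_i = \psi(\psi^{-1}(p_i))$ lies in $\sk(N_j)$. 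Then choose any witness $p_j \in R_j$ with $w(p_j,N_j) = \vec z$.

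Once the recursion is complete, all hypotheses of Lemma 8.6 are in place: each $(p_i,N_i) \in \dom(w)$, the common value $\vec z$ gives $w(p_i,N_i) = w(p_j,N_j)$ for $i<j<d$, and $p_i \in \sk(N_j)$ was ensured by construction. Hence $p_0 \oplus \cdots \oplus p_{d-1}$ is a condition in $\p$ extending each $p_i$, so in $\mathcal B(\p)$ it witnesses $\bigwedge_{i<d} \sum R_i > 0$.

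The substantive combinatorial work has already been absorbed into Lemma 8.6, so there is no real obstacle here; the only point requiring care is the recursive selection of $N_j$, which is a routine application of the fact that a stationary set meets any club, combined with the clubness of \textquotedblleft containing a prescribed finite subset of $\kappa$\textquotedblright{} in $[\kappa]^\omega$.
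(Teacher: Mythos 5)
Your proposal is correct and follows essentially the same approach as the paper: recursively choose $(p_j, N_j)$ using stationarity of the $\vec z$-robust sets so that $p_i \in \sk(N_j)$ for $i < j$, then invoke Lemma 8.6 to produce the common extension $p_0 \oplus \cdots \oplus p_{d-1}$. The paper is a touch more direct in concluding centeredness (it simply notes $q \le p_i \le \sum R_i$ for each $i$, rather than passing through the Boolean distributivity identity), but this is a cosmetic difference.
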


\begin{proof}
	Let $1 < d < \omega$ and let 
	$R_0,\ldots,R_{d-1}$ be $\vec z$-robust sets. 
	We prove that there exists some $r \in \p$ such that for all $i < d$, $r \le \sum R_i$. 
	By induction, we choose $p_0,\ldots,p_{d-1}$ and $N_0,\ldots,N_{d-1}$ as follows. 
	Fix any $p_0 \in R_0$ and $N_0$ such that $w(p_0,N_0) = \vec z$. 
	Now let $0 < i < d$ and assume that $p_j$ and $N_j$ are defined for all $j < i$. 
	Since $R_i$ is $\vec z$-robust, by stationarity we can find some 
	$p_i \in R$ and $N_i$ such that 
	$w(p_i, N_i) = \vec z$ and 
	for all $j < i$, $p_j \in \sk(N_i)$. 
	This completes the induction. 
	By Lemma 8.6, $q = p_0 \oplus \cdots \oplus p_{d-1}$ is in $\p$ and extends each of 
	$p_0,\ldots,p_{d-1}$. 
	Hence, for all $i < d$, $q \le p_i \le \sum R_i$.
\end{proof}

\begin{thm}
	The forcing poset $\p$ is Y-proper.
\end{thm}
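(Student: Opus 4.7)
The plan is to follow the standard template for Y-properness via the centered family of $\vec z$-robust subsets of $\p$ developed in Section 8. Fix a regular $\chi > \kappa$ with $\p \in H(\chi)$; the witnessing club consists of countable $M \prec (H(\chi), \in, \psi, \p)$ with $N := M \cap \kappa \in \mathcal X$. Given $u \in M \cap \p$, set $q := u + N$; by Theorem 7.2, $q \le u$, $q \in \p$, and $q$ is $(M,\p)$-generic. This handles the $(M,\p)$-genericity clause.

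For the Y-filter clause, fix $r \le q$. By Lemma 6.7 we may assume $A_r$ is $N$-closed (replacing $r$ by an $N$-closed extension; any filter valid for the extension is valid for $r$). Then $(r, N) \in \dom(w)$; set $\vec z := w(r, N)$, and note by Lemma 8.5 that $\vec z \in \sk(N) = M \cap H(\kappa)$, hence $\vec z \in M$. Let $\mathcal F$ be the filter on $\mathcal B(\p)$ generated by
\[
\mathcal G := \{\, \textstyle\sum R : R \in M,\ R \text{ is } \vec z\text{-robust}\,\}.
\]
Since $\mathcal G$ is definable in $M$ from the parameter $\vec z$, $\mathcal G \in M$ and hence $\mathcal F \in M$; and Proposition 8.9 shows $\mathcal G$ is centered (as a subfamily of a centered collection), so $\mathcal F$ is a proper filter.

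To verify $\{\, s \in M \cap \mathcal B(\p) : r \le s\,\} \subseteq \mathcal F$, for each such $s$ form the definable set
\[
R_s := \{\, p \in \p : p \le s \text{ and } (\exists N' \in \mathcal X)\, w(p,N') = \vec z \,\} \in M.
\]
Since $\sum R_s \le s$ in $\mathcal B(\p)$, showing $R_s \in \mathcal G$, i.e., that $R_s$ is $\vec z$-robust, will give $s \in \mathcal F$. The task thus reduces to proving that $S_s := \{\, N' \in \mathcal X : (\exists p \le s)\, w(p,N') = \vec z \,\}$ is stationary in $[\kappa]^\omega$.

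The main obstacle is showing this stationarity. Given a club $C \subseteq [\kappa]^\omega$, pick a countable $M^* \prec (H(\chi),\in,\psi,\p)$ containing $M, C, r, s, \vec z$ as elements with $N^* := M^* \cap \kappa \in \mathcal X \cap C$; since $M \in M^*$ and $M$ is countable, $M \subseteq M^*$, $N \subseteq N^*$, and $\sk(N) \subseteq \sk(N^*)$. One then constructs $p \le s$ with $w(p, N^*) = \vec z$ using $r$ as a template: the ``inside-$N^*$'' part of $p$ reproduces the inside-$N$ data of $r$ (which automatically contains all of $s$'s data since $s \in \sk(N)$ entails $T_s \subseteq \delta$, $\dom(W_s) \subseteq N$, $A_s \subseteq \sk(N)$, and these are already compatible with $\vec z$ because $r \le s$), while the ``outside-$N^*$'' part of $p$ is populated by a fresh isomorphic copy of the outside-$N$ data of $r$, placed above $N^* \cap \omega_1$ in the tree coordinate and outside $N^*$ in the index and side condition coordinates so as to avoid conflict with $\vec z$. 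The delicate verifications are that $p$ is a valid condition (adequacy of $A_p$ via Corollary 5.12 applied to the chain of closures at $N$ and $N^*$, and $A$-separation of $W_p$ from that of $W_r$ together with the $N$-closure of $A_r$ and the freshness of the new data), that $p \le s$ (which reduces to the inclusions of $s$'s data listed above), and that $w(p, N^*) = \vec z$ (which reads off componentwise from the construction). Granting such $p$, we have $N^* \in S_s \cap C$, completing the proof.
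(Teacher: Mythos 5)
Your overall framework is the paper's: take $u + N$ for the genericity, extend $r$ to be $N$-closed, read off $\vec z = w(r,N)$, build a filter from the centered family of $\vec z$-robust sets, and reduce to showing a definable $R \in M$ (with $r$ as a member) is $\vec z$-robust. Up to that point the argument is sound, modulo one small slip: your $\mathcal G := \{\sum R : R \in M,\ R \text{ is } \vec z\text{-robust}\}$ has $M$ itself as a parameter and so need not be an element of $M$; the correct thing is to use the whole collection $\mathcal F_0 = \{\sum R : R \subseteq \p \text{ is } \vec z\text{-robust}\}$, which \emph{is} in $M$ because it is definable from $\vec z \in M$, and take the filter it generates. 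This is easily patched.

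The real gap is in the stationarity verification, which you call ``the main obstacle'' and then attack by a direct construction. The paper instead uses elementarity, and your construction is both unnecessary and broken. The elementarity argument: $S_s \in M$ (definable from $s, \vec z \in M$), and $N = M \cap \kappa$ lies in every club of $[\kappa]^\omega$ that belongs to $M$; since $r \in R_s$ and $w(r,N) = \vec z$, we have $N \in S_s$, so every club in $M$ meets $S_s$; by elementarity of $M$ in $H(\chi)$ this forces $S_s$ to be stationary in $V$. No construction of a witness in an arbitrary external club is needed. Your direct approach, by contrast, has a fundamental type error: you speak of ``$s$'s data'' via ``$T_s \subseteq \delta$, $\dom(W_s) \subseteq N$, $A_s \subseteq \sk(N)$,'' but in the definition of Y-properness $s$ ranges over $M \cap \mathcal B(\p)$, the Boolean completion, and such an $s$ is not in general (the image of) a condition in $\p$ and has no tree, subtree function, or side-condition components. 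Consequently the plan of verifying $p \le s$ ``by the inclusions of $s$'s data'' is not meaningful, and even setting that aside, the ``fresh isomorphic copy'' construction is sketched at a level where the claimed adequacy, $A$-separation, and $p \le s$ verifications are not actually supported. The conclusion is correct, and the centered-family machinery is right, but the final elementarity step is the heart of the proof and is replaced here by an argument that does not go through.
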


\begin{proof}
	Fix a regular cardinal $\chi > \kappa$. 
	Let $M$ be a countable elementary substructure of $(H(\chi),\in,\psi,\p)$ 
	such that $N = M \cap \kappa \in \mathcal X$. 
	Note that there are club many such $M$ in $[H(\chi)]^\omega$. 
	Consider $u \in M \cap \p$. 
	By Theorem 7.2, $u + N$ is a condition in $\p$ extending $u$ which is $(M,\p)$-generic. 
	Consider any condition $q \le u + N$. 
	We will find a filter $\mathcal F$ on $\mathcal B(\p)$ in $M$ such that 
	for every $s \in M \cap \mathcal B(\p)$, if $q \le s$ then $s \in \mathcal F$.
	
	Using Lemma 6.7, extend $q$ to $r$ such that $A_r$ is $N$-closed. 
	Then $(r, N)$ is in the domain of $w$. 
	Let $\vec z = w(r, N)$. 
	Then $\vec z \in \sk(N) \subseteq M$. 
	Define 
	$\mathcal F_0 = 
	\left\{ \sum R : R \subseteq \p \ \text{is $\vec z$-robust} \right\}$. 
	By Proposition 8.9, $\mathcal F_0$ is centered, and by elementarity, 
	$\mathcal F_0 \in M$. 
	Define $\mathcal F = \{ b \in \mathcal B(\p) : \exists c \in \mathcal F_0 \ (c \le b) \}$. 
	Then $\mathcal F$ is a filter on $\mathcal B(\p)$ and $\mathcal F \in M$.

	Suppose that $q \le s$ and $s \in M \cap \mathcal B(\p)$. 
	Define $R = \{ t \in \p : t \le s \}$. 
	Clearly, $s = \sum R$, $R \in M$, and $r \in R$. 
	We claim that $R$ is $\vec z$-robust, and therefore 
	$s = \sum R \in \mathcal F_0 \subseteq \mathcal F$. 
	Let $C$ be a club subset of $[\kappa]^\omega$ in $M$. 
	Then $N \in C$. 
	So $N \in C$, $r \in R$, and $w(r, N) = \vec z$. 
	By elementarity, it follows that the set of all $K \in [\kappa]^\omega$ 
	for which there exists some $t \in R$ such that $w(t, K) = \vec z$ 
	is stationary.
\end{proof}

\section{A Dense Set For Projecting}

The main goal for the remainder of the article is to prove that certain quotients 
of the forcing $\p$ are Y-proper in an intermediate extension. 
The proof of this fact is complex and will be completed in several steps 
over the next few sections. 
In this section, we identify a dense subset of $\p$ which we use in the 
next section to define a natural projection mapping.

\begin{definition}
	Define $\Sigma$ to be the set of $\theta \in \Lambda$ such that 
	$\sk(\theta)$ is an elementary substructure of 
	$(H(\kappa),\in,\psi,\mathcal X,\p)$.
\end{definition}

Recall that $\sk(\theta)$ denotes the closure of $\theta$ 
under the definable Skolem functions for the structure $\mathcal A = (H(\kappa),\in,\psi)$. 
If $\theta \in \Lambda$, then $\theta = \sk(\theta) \cap \kappa = \psi[\theta]$. 
The set $\Sigma$ is equal to a club subset of $\kappa$ intersected with 
$\kappa \cap \cof(> \! \omega)$. 

\begin{definition}
	Let $\theta \in \Sigma$. 
	Define $D_\theta$ to be the set of conditions $r \in \p$ satisfying 
	that $A_r$ is $\theta$-closed and there exist functions 
	$f : \dom(W_r) \setminus \theta \to \dom(W_r) \cap \theta$ and 
	$g : A_r \setminus \sk(\theta) \to A_r \cap \sk(\theta)$ satisfying:
	\begin{enumerate}
		\item[(a)] for all $\eta \in \dom(W_r) \setminus \theta$, 
		$W(\eta) = W(f(\eta))$;
		\item[(b)] for all $M \in A_r \setminus \sk(\theta)$, 
		$M \cap \omega_1 = g(M) \cap \omega_1$ 
		and $M \cap \theta \subseteq g(M)$;
		\item[(c)] for all $\eta \in \dom(W_r) \setminus \theta$ 
		and for all $M \in A_r \setminus \sk(\theta)$, 
		$\eta \in M$ iff $f(\eta) \in g(M)$;
		\item[(d)] for all $\eta \in \dom(W_r) \setminus \theta$ 
		and for all $\xi \in \dom(W_r) \cap \theta$, 
		if $\{ \eta, \xi \} \in D_r$ then $\{ f(\eta), \xi \} \in D_r$;
		\item[(e)] for all $\eta, \xi \in \dom(W_r) \setminus \theta$, 
		if $\{ \eta, \xi \} \in D_r$ then $\{ f(\eta), f(\xi) \} \in D_r$;
		\item[(f)] if $K, M \in A_r \setminus \sk(\theta)$ and $K \subseteq M$, 
		then $g(K) \subseteq g(M)$.  
	\end{enumerate}
\end{definition}

\begin{lemma}
	Let $\theta \in \Sigma$. 
	Suppose that $q = (T,W,D,A) \in \p$ and $A$ is $\theta$-closed. 
	Assume that $\bar q = (\bar T,\bar W,\bar D,\bar A) \in \sk(\theta) \cap \p$, 
	where $\bar T = T$, 
	and there exist bijections 
	$f : \dom(W) \to \dom(\bar W)$ and 
	$g : A \to \bar A$ satisfying:
	\begin{enumerate}
		\item for all $\eta \in \dom(W) \cap \theta$, $f(\eta) = \eta$;
		\item for all $M \in A \cap \sk(\theta)$, $g(M) = M$;
		\item for all $\eta \in \dom(W)$, $W(\eta) = \bar{W}(f(\eta))$;
		\item for all $M \in A$, $M \cap \omega_1 = g(M) \cap \omega_1$ 
		and $M \cap \theta \subseteq g(M)$;
		\item for all $\eta \in \dom(W)$ and for all 
		$M \in A$, $\eta \in M$ iff $f(\eta) \in g(M)$.
	\end{enumerate}
	Then $\bar q \oplus q$ is a condition in $\p$ which extends $\bar q$ and $q$.
\end{lemma}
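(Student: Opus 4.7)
Write $r := \bar q \oplus q = (T, W \oplus \bar W, D \cup \bar D, A \cup \bar A)$ (using $\bar T = T$). The plan is to verify, in order: (i) $r \in \p^*$; (ii) $A \cup \bar A$ is adequate; (iii) $W \oplus \bar W$ is $(A \cup \bar A)$-separated; and (iv) $r$ extends both $\bar q$ and $q$. A useful preliminary observation is that, since $\bar q \in \sk(\theta)$ and each component of $\bar q$ is a finite set of ordinals below $\kappa$, one has $\bar T \subseteq \theta$, $\dom(\bar W) \subseteq \theta$, and $\bar A \subseteq \sk(\theta)$. From (1) one then sees $\dom(W) \cap \theta \subseteq \dom(\bar W)$, so $\dom(W) \cap \dom(\bar W) = \dom(W) \cap \theta$; on this overlap (1) and (3) force $\bar W(\eta) = W(\eta)$, so $W \oplus \bar W$ is unambiguously a subtree function on $T$ and (i) is immediate. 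For (ii), Theorem 5.15 applies with $\beta = \theta$ and $B = \bar A$: $A$ is $\theta$-closed by hypothesis, $\bar A \subseteq \sk(\theta)$ as above, and (2) yields $A \cap \sk(\theta) \subseteq \bar A$ via the identity action of $g$.

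The heart of the argument is step (iii). I would introduce a reduction map $\pi : \dom(W \oplus \bar W) \to \dom(W)$ by $\pi(\zeta) = \zeta$ if $\zeta \in \dom(W)$ and $\pi(\zeta) = f^{-1}(\zeta)$ otherwise; by (3), $(W \oplus \bar W)(\zeta) = W(\pi(\zeta))$ for every $\zeta$ in the domain. Now fix $M \in A \cup \bar A$, distinct $\eta, \xi \in M \cap \dom(W \oplus \bar W)$, and $x \in W(\pi(\eta)) \cap W(\pi(\xi))$. If $M \in \bar A$, then $M \subseteq \theta$, so $M \cap (\dom(W) \setminus \theta) = \emptyset$ and both $\eta, \xi$ lie in $M \cap \dom(\bar W)$; since $(W \oplus \bar W)(\zeta) = \bar W(\zeta)$ for $\zeta \in \dom(\bar W)$ (symmetric to the previous computation), the desired $x \in M$ follows directly from $\bar A$-separation of $\bar W$.

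Suppose instead $M \in A$. The $\theta$-closedness of $A$ puts $M \cap \theta \in A$, and (2) gives $g(M \cap \theta) = M \cap \theta$. First, I would show $\pi(\eta), \pi(\xi) \in M$: for any $\zeta \in M \cap (\dom(\bar W) \setminus \dom(W))$, applying (5) to $f^{-1}(\zeta) \in \dom(W)$ and $M \cap \theta$ yields $f^{-1}(\zeta) \in M \cap \theta$ iff $\zeta \in M \cap \theta$, and the right-hand side holds because $\zeta \in M$ and $\zeta \in \theta$. Second, to rule out $\pi(\eta) = \pi(\xi)$: were they equal, then up to swapping one would have $\eta \in \dom(W) \setminus \theta$ and $\xi = f(\eta) \in \dom(\bar W) \setminus \dom(W)$, and (5) applied to $\eta$ and $M \cap \theta$ would require $\eta \in M \cap \theta$ iff $f(\eta) \in M \cap \theta$, which is absurd since $\eta \notin \theta$ while $\xi = f(\eta) \in M \cap \theta$. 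Hence $\pi(\eta)$ and $\pi(\xi)$ are distinct elements of $M \cap \dom(W)$, and $A$-separation of $W$ delivers $x \in M$. Finally, (iv) is routine: for $\eta \in \dom(W)$ one has $(W \oplus \bar W)(\eta) = W(\eta)$ and symmetrically for $\bar W$, so clause (d) of Definition 2.12 for either $\bar q$ or $q$ reduces to what is already known for $W$ and $\bar W$ individually.

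The main obstacle is exactly the ``coincident preimage'' subcase just handled: a priori, $\eta \in \dom(W) \setminus \theta$ and its image $f(\eta) \in \dom(\bar W)$ could both lie in some $M \in A$, making the relevant intersection in $W \oplus \bar W$ equal to the full subtree $W(\eta)$, which need not be contained in $M \cap \omega_1$. Eliminating this case is the one place where I expect the $\theta$-closedness of $A$ to be genuinely essential, through the pairing of (5) with the identity action of $g$ on $\sk(\theta)$ prescribed by (2).
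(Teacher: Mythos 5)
Your proof is correct and takes a genuinely different route from the paper's in the central step (the $A$-separation of $W \oplus \bar W$). The paper verifies that $(\bar T,\bar W,\bar D)$ and $(T,W,D)$ are $(\bar\delta,\delta)$-split for a suitably chosen $\bar\delta < \delta$ and invokes Lemma 2.15 to handle the $\p^*$ part, then proves separation by a five-way case analysis on whether $M \in \bar A$ or $M \in A$ and how $\eta,\xi$ are distributed between $\dom(W)$ and $\dom(\bar W)$; you instead verify the $\p^*$ part directly (which is fine, since $T = \bar T$ and the two subtree functions agree on the overlap) and, for separation, collapse the three $M \in A$ subcases into a single argument via the reduction map $\pi$. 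Your $\pi$-argument uses hypothesis (5) twice against the model $M \cap \theta$ (which lies in $A \cap \sk(\theta)$ by $\theta$-closedness and $[\theta]^\omega \subseteq \sk(\theta)$, a point worth stating explicitly since (2) needs it) — once to put $\pi(\eta),\pi(\xi)$ in $M$, and once to rule out the coincident-preimage degeneracy — and then closes with $A$-separation of $W$. By contrast, the paper's mixed case (its Case 5) pushes everything into $\sk(\theta)$, uses $g(M)$ rather than $M \cap \theta$, and invokes hypothesis (4) together with $\bar A$-separation of $\bar W$. A notable consequence of your route is that (4) is never used; the paper's case analysis needs it, so your argument is genuinely leaner on the hypotheses. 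Two very minor nits: the citation to clause (d) should be to Definition 2.9 rather than 2.12, and the justification $M \cap \theta \in \sk(\theta)$ deserves a word.
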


\begin{proof}
	Write $\bar q \oplus q = (T,Y,E,C)$. 
	Then $\dom(Y) = \dom(\bar W) \cup \dom(W)$. 
	By (1) and (2), $\dom(W) \cap \theta \subseteq \dom(\bar W)$ and 
	$A \cap \sk(\theta) \subseteq \bar A$. 
	By (1) and (3), for all $\eta \in \dom(\bar W) \cap \dom(W)$, 
	$\bar W(\eta) = W(\eta)$. 
	If $\eta \in \dom(\bar W)$, 
	then $Y(\eta) = \bar W(\eta)$, and if $\eta \in \dom(W)$, then $Y(\eta) = W(\eta)$.

	Choosing any $\bar \delta < \delta$ in $C_h$ such that 
	$\bar \delta > \max(T)$, 
	it is simple to check that 
	$(\bar T,\bar W,\bar D)$ and $(T,W,D)$ are $(\bar \delta,\delta)$-split. 
	By Lemma 2.15, it follows that 
	$(\bar T,\bar W,\bar D) \oplus (T,W,D)$ is in $\p^*$ and extends 
	$(\bar T,\bar W,\bar D)$ and $(T,W,D)$.
	We know that $A \cap \sk(\theta) \subseteq \bar A \subseteq \sk(\theta)$. 
	By Theorem 5.15, $C = \bar A \cup A$ is adequate. 
	Also, obviously $\bar A$ and $A$ are subsets of $C$.

	It remains to prove that $Y$ is $C$-separated. 
	Let $M \in C$, let $\eta$ and $\xi$ be distinct elements of $M \cap \dom(Y)$, 
	and let $x \in Y(\eta) \cap Y(\xi)$. 
	We prove that $x \in M$.

	\emph{Case 1:} $M \in \bar{A}$ and $\eta$ and $\xi$ are both in $\dom(\bar{W})$. 
	Then $Y(\eta) = \bar{W}(\eta)$ and $Y(\xi) = \bar{W}(\xi)$. 
	So $x \in \bar{W}(\eta) \cap \bar{W}(\xi)$. 
	Since $\bar{W}$ is $\bar{A}$-separated, $x \in M$.

	\emph{Case 2:} $M \in \bar{A}$ and at least one of $\eta$ or $\xi$ is not in $\dom(\bar{W})$. 
	Without loss of generality, assume that $\eta \notin \dom(\bar{W})$. 
	Since $\dom(W) \cap \theta \subseteq \dom(\bar{W})$, 
	$\eta$ is not in $\theta$. 
	But $\eta \in M \subseteq \theta$, which is a contradiction.

	\emph{Case 3:} $M \in A$ and $\eta$ and $\xi$ are both in $\dom(W)$. 
	Then $Y(\eta) = W(\eta)$ and $Y(\xi) = W(\xi)$. 
	So $x \in W(\eta) \cap W(\xi)$, and therefore $x \in M$ 
	since $W$ is $A$-separated.

	\emph{Case 4:} $M \in A$ and neither $\eta$ nor $\xi$ are in $\dom(W)$. 
	Then $\eta$ and $\xi$ are in $\dom(\bar{W}) \subseteq \theta$. 
	Since $A$ is $\theta$-closed and $\theta^\omega \subseteq \sk(\theta)$,  
	$M \cap \theta \in A \cap \sk(\theta) \subseteq \bar{A}$. 
	So $\eta$ and $\xi$ are in $(M \cap \theta) \cap \dom(\bar W)$. 
	By Case 1, $x \in M \cap \theta \subseteq M$.

	\emph{Case 5:} $M \in A$ and one of $\eta$ or $\xi$ is in $\dom(W)$ 
	and the other is not in $\dom(W)$. 
	Without loss of generality, assume that 
	$\eta \in \dom(W)$ and $\xi \in \dom(\bar W) \setminus \dom(W)$. 
	Then $\xi \in \theta$, $Y(\eta) = W(\eta) = \bar W(f(\eta))$ by (3), 
	and $Y(\xi) = \bar W(\xi)$. 
	By (4), $M \cap \omega_1 = g(M) \cap \omega_1$ and $M \cap \theta \subseteq g(M)$. 
	So $\xi \in g(M)$. 
	Also, $\eta \in M$ implies that $f(\eta) \in g(M)$ by (5). 
	So $x \in \bar W(f(\eta)) \cap \bar W(\xi)$ and $g(M) \in \bar A$. 
	As $\bar{A}$ is $\bar{W}$-separated, it follows that 
	$x \in g(M) \cap \omega_1 \subseteq M$.
\end{proof}

\begin{proposition}
	Let $\theta \in \Sigma$. 
	Then $D_\theta$ is dense in $\p$. 
	In fact, if $q \in \p$ and $N \in A_q$, then there exists $r \le q$ 
	which is in $D_\theta$ and satisfies that $A_r$ is $N$-closed.
\end{proposition}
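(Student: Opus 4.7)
The plan is to mimic the proof of Theorem 7.2 by constructing a twin $\bar q \in \sk(\theta)$ of $q$ via elementarity and then forming $r := \bar q \oplus q$, with Lemma 9.3 guaranteeing that $r$ extends both. First, apply Lemma 6.7 to extend $q$ so that $A_q$ is $N$-closed, then Lemma 6.8 (moreover clause) to extend further so that $A_q$ becomes simultaneously $\theta$-closed and $N$-closed. Enumerate $\dom(W_q) = \{ \eta_0,\ldots,\eta_{m-1} \}$ with $\eta_i \in \theta$ iff $i < m_0$, and $A_q = \{ M_0,\ldots,M_{n-1} \}$ with $M_j \in \sk(\theta)$ iff $j < n_0$.

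The key observation is that $\theta \in \Lambda$ yields $[\theta]^\omega \subseteq \sk(\theta)$, so the following data all lie in $\sk(\theta)$: $T_q$ (a finite subset of $\omega_1 \subseteq \theta$), the subtrees $W_q(\eta_i)$ for $i < m_0$, the countable sets $M_j \cap \theta$ and ordinals $M_j \cap \omega_1$ for $j \ge n_0$, and the finite combinatorial tables recording which outside $\eta_i$ belong to which outside $M_j$, which outside $M_j$'s are included in which other outside $M_k$'s, and which outside pairs lie in $D_q$. Using these as parameters, I would write a first-order formula $\varphi$ over $(H(\kappa),\in,\psi,\mathcal X,\p)$ asserting the existence of $\bar q = (\bar T,\bar W,\bar D,\bar A) \in \p$ with enumerations $\dom(\bar W) = \{ \eta_0,\ldots,\eta_{m_0-1},\bar\eta_{m_0},\ldots,\bar\eta_{m-1} \}$ and $\bar A = \{ M_0,\ldots,M_{n_0-1},\bar M_{n_0},\ldots,\bar M_{n-1} \}$ such that $\bar T = T_q$; $\bar W(\bar\eta_i) = W_q(\eta_i)$ for every $i$; $\bar M_j \in \mathcal X$ with $\bar M_j \cap \omega_1 = M_j \cap \omega_1$, $M_j \cap \theta \subseteq \bar M_j$, and $\bar M_j \neq M_j \cap \theta$ (to prevent $g$ from collapsing); the membership and inclusion patterns among the $\bar\eta_i$ and $\bar M_j$ match those among the $\eta_i$ and $M_j$; $\bar D$ reproduces $D_q$ under the identifications; and $\bar A$ is $\theta$-closed (automatic since $\bar A \subseteq \sk(\theta)$) together with $N$-closed (when $N \in \sk(\theta)$) or $(N \cap \theta)$-closed (when $N \notin \sk(\theta)$; note $N \cap \theta \in A_q \cap \sk(\theta)$ by $\theta$-closedness of $A_q$). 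The formula is witnessed in $(H(\kappa),\in,\psi,\mathcal X,\p)$ by $q$ itself, taking $\bar\eta_i := \eta_i$ and $\bar M_j := M_j$ (using that $M_j \not\subseteq \theta$ for $j \ge n_0$, since otherwise $M_j \in [\theta]^\omega \subseteq \sk(\theta)$), so the elementarity granted by $\theta \in \Sigma$ supplies a witness $\bar q \in \sk(\theta)$.

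Define bijections $f : \dom(W_q) \to \dom(W_{\bar q})$ and $g : A_q \to A_{\bar q}$ from the enumerations. Conditions (1)--(5) of Lemma 9.3 hold by construction, so $r = \bar q \oplus q$ is a condition in $\p$ extending $\bar q$ and $q$. To see $r \in D_\theta$, take the restrictions of $f^{-1}$ and $g^{-1}$ to the outside parts as the required reflection functions: clauses (a)--(c) and (f) of Definition 9.2 follow directly from the construction of $\bar q$, while (d) and (e) follow from the clause of $\varphi$ concerning $\bar D$. For the \emph{in fact} conclusion, $A_r = A_q \cup \bar A$ is $\theta$-closed because each summand is; and it is $N$-closed because for $K \in A_q$ with $K < N$ one has $K \cap N \in A_q$, while for $K \in \bar A$ with $K < N$ one has $K \cap N = K \cap (N \cap \theta) \in \bar A$ by the $(N \cap \theta)$-closedness built into $\varphi$ (in the case $N \notin \sk(\theta)$; the case $N \in \sk(\theta)$ is direct from the $N$-closedness of $\bar A$).

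The main obstacle is verifying that every parameter of $\varphi$ actually lies in $\sk(\theta)$; this is delicate for the traces $M_j \cap \theta$ and relies crucially on the property $[\theta]^\omega \subseteq \sk(\theta)$ coming from $\theta \in \Lambda$. A secondary subtlety is the split into the cases $N \in \sk(\theta)$ and $N \notin \sk(\theta)$ when ensuring $N$-closedness of $A_r$: in the latter case one cannot directly demand that $\bar A$ be $\bar N$-closed, because $\bar N = g(N)$ is only produced by elementarity and is not a fixed parameter, so one instead governs the closure by the canonical object $N \cap \theta$ which already lies in $\sk(\theta)$.
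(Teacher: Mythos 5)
Your proposal is essentially the same as the paper's proof: extend $q$ so that $A_q$ is simultaneously $N$-closed and $\theta$-closed (Lemmas 6.7 and 6.8), record the finitary combinatorial type of $q$ in parameters lying in $\sk(\theta)$, use elementarity of $\sk(\theta)$ (from $\theta \in \Sigma$) together with $[\theta]^\omega \subseteq \sk(\theta)$ (from $\theta \in \Lambda$) to reflect to a twin $\bar q \in \sk(\theta)$, amalgamate via Lemma 9.3, and then verify $r = \bar q \oplus q \in D_\theta$ with $A_r$ being $N$-closed.

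A few small remarks. First, where you write ``take the restrictions of $f^{-1}$ and $g^{-1}$ to the outside parts as the required reflection functions,'' you should take the restrictions of $f$ and $g$ (not their inverses): the outside part of $\dom(W_r)$ is $\dom(W_q)\setminus\theta$ which is the domain of $f$, whereas $f^{-1}$ lives on $\dom(W_{\bar q})\subseteq\theta$ which has empty outside part. This is clearly a notational slip, not a conceptual error. Second, your case split on $N \in \sk(\theta)$ versus $N \notin \sk(\theta)$ is unnecessary: the paper imposes the single clause $\dot K_l \cap (N\cap\theta) \in \dot A$ for all $l$, which yields $N$-closure of $\bar A$ uniformly (when $N \in \sk(\theta)$ one has $N \subseteq \theta$ and so $N \cap\theta = N$). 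Third, your explicit requirement $\bar M_j \neq M_j \cap \theta$ is superfluous: provided your ``inclusion patterns match'' clause ranges over all index pairs (not only outside-outside pairs), it already forces the enumeration of $\bar A$ to be injective, since $\bar M_j = \bar M_k$ would give mutual inclusion and hence $M_j = M_k$. The paper records this via the table $U_4 = \{(k,l)\in n\times n : K_k \subseteq K_l\}$.

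Subject to these cosmetic points, the proposal matches the paper's argument.
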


\begin{proof}
	Let $\mathcal B = (H(\kappa),\in,\psi,\mathcal X,\p)$. 
	Consider $q \in \p$ and $N \in A_q$, 
	and we find an extension $r \le q$ which is in $D_\theta$ and satisfies that 
	$A_r$ is $N$-closed. 
	Write $q = (T,W,D,A)$. 
	By extending further if necessary using in succession  
	Lemmas 6.7 and 6.8, we may assume that 
	$A$ is $N$-closed and $\theta$-closed.

	Enumerate $\dom(W) = \{ \eta_0,\ldots,\eta_{m-1} \}$ and 
	$A = \{ K_0,\ldots,K_{n-1} \}$, where $m, n < \omega$. 
	Define:
	\begin{itemize}
		\item $U_0 = \{ k < m : \eta_k \in \theta \}$;
		\item $U_1 = \{ l < n : M_l \in \sk(\theta) \}$;
		\item $U_2 = \{ (k,l) \in m \times n : \eta_k \in K_l \}$;
		\item $U_3 = \{ (j,k) \in m \times m : \{ \eta_j, \eta_k \} \in D \}$;
		\item $U_4 = \{ (k,l) \in n \times n : K_k \subseteq K_l \}$.
	\end{itemize}
	Define a formula with free variables 
	$$
	\varphi = \varphi(\dot q,\dot T,\dot W,\dot D,\dot A,\dot \eta_0,\ldots,\dot \eta_{m-1},
	\dot K_0,\ldots,\dot K_{n-1})
	$$
	to be the conjunction of the following statements:
	\begin{enumerate}
		\item $\dot q = (\dot T,\dot W,\dot D,\dot A) \in \p$;
		\item $\dot T = T$;
		\item $\dom(\dot W) = \{ \dot \eta_0,\ldots,\dot \eta_{m-1} \}$;
		\item $\dot A = \{ \dot K_0,\ldots,\dot K_{n-1} \}$;
		\item for all $k \in U_0$, $\dot \eta_k = \eta_k$;
		\item for all $l \in U_1$, $\dot K_l = K_l$;
		\item for all $k < m$, $\dot W(\dot \eta_k) = W(\eta_k)$;
		\item for all $l < n$, $\dot K_l \cap \omega_1 = K_l \cap \omega_1$ 
		and $K_l \cap \theta \subseteq \dot K_l$;
		\item for all $(k,l) \in m \times n$, $(k,l) \in U_2$ 
		iff $\dot \eta_k \in \dot K_l$;
		\item for all $(j,k) \in m \times m$, $(j,k) \in U_3$ iff 
		$\{ \dot \eta_j, \dot \eta_k \} \in \dot D$;
		\item for all $(k,l) \in n \times n$, $(k,l) \in U_4$ 
		iff $\dot K_k \subseteq \dot K_l$;
		\item for all $l < n$, $\dot K_l \cap (N \cap \theta) \in \dot A$.
	\end{enumerate}
	Note that all of the parameters appearing in $\varphi$ 
	are members of $\sk(\theta)$, and that 
	$$
	\mathcal B \models \varphi[q,T,W,D,A,\eta_0,\ldots,\eta_{m-1},K_0,\ldots,K_{n-1}].
	$$
	By elementarity, we can find in $\sk(\theta)$ objects 
	$\bar q $, $\bar T$, $\bar W$, $\bar D$, 
	$\bar A$, $\bar \eta_0,\ldots,\bar \eta_{m-1}$, and 
	$\bar K_0,\ldots,\bar K_{n-1}$ which satisfy the same. 
	By (2), $\bar T = T$. 
	By (3) and (5), $\dom(W) \cap \theta \subseteq \dom(\bar W)$, 
	and by (4) and (6), $A \cap \sk(\theta) \subseteq \bar A$. 
	By (12), for all $K \in \bar A$, $K \cap N = K \cap (N \cap \theta) \in \bar A$. 
	So $\bar A$ is $N$-closed.

	Define functions $f : \dom(W) \to \dom(\bar W)$ and 
	$g : A \to \bar A$ by letting 
	$f(\eta_k) = \bar \eta_k$ for all $k < m$ and 
	$g(K_l) = \bar K_l$ for all $l < n$. 
	Using the definition of $\varphi$, 
	it is routine to check that $q$, $\bar q$, $f$, and $g$ 
	satisfy all of the assumptions of Lemma 9.3. 
	Hence, $\bar q \oplus q$ is in $\p$ and extends $\bar q$ and $q$. 
	Since $A_{\bar q}$ and $A_q$ are both $N$-closed, so 
	is $A_{\bar q} \cup A_q = A_{\bar q \oplus q}$.

	We claim that $\bar q \oplus q$ is in $D_\theta$, which completes the proof. 
	Write $\bar q \oplus q = (T,Y,E,C)$. 
	The set $A$ is $\theta$-closed, and if $K \in \bar A$, then 
	$K \in \sk(\theta)$ so $K \cap \theta = K \in \bar A$. 
	Hence, $C = A \cup \bar A$ is $\theta$-closed. 
	For the functions described in Definition 9.2, we use 
	$f_0 = f \res (\dom(W) \setminus \theta)$ and $g_0 = g \res (A \setminus \sk(\theta))$.

	(3a) Let $\eta \in \dom(Y) \setminus \theta = \dom(W) \setminus \theta$. 
	Fix $k < m$ such that $\eta = \eta_k$. 
	Then $Y(\eta_k) = W(\eta_k)$. 
	By (7), $Y(f_0(\eta)) = \bar W(\bar \eta_k) = W(\eta_k) = Y(\eta_k)$.
	
	(3b) Let $K \in C \setminus \sk(\theta) = A \setminus \sk(\theta)$. 
	Fix $l < n$ such that $K = K_l$. 
	By (8), $g_0(K_l) \cap \omega_1 = \bar K_l \cap \omega_1 = K_l \cap \omega_1$ and 
	$K_l \cap \theta \subseteq \bar K_l = g_0(K_l)$.

	(3c) Let $\eta \in \dom(Y) \setminus \theta = \dom(W) \setminus \theta$ and 
	let $K \in C \setminus \sk(\theta) = A \setminus \sk(\theta)$. 
	Fix $k < m$ and $l < n$ such that $\eta = \eta_k$ and $K = K_l$. 
	Then by (9), $\eta_k \in K_l$ iff $(k,l) \in U_2$ iff 
	$\bar \eta_k \in \bar K_l$ iff $f_0(\eta_k) \in g_0(K_l)$.

	(3d) Let $\eta \in \dom(Y) \setminus \theta$ and let $\xi \in \dom(Y) \cap \theta$. 
	Then $\eta \in \dom(W)$. 
	Assume that $\{ \eta, \xi \} \in E$. 
	Since $E = \bar D \cup D$ and $\eta \notin \theta$, 
	$\{ \eta, \xi \} \in D$. 
	So $\xi \in \dom(W)$. 
	Fix $j, k < m$ such that $\eta = \eta_j$ and $\xi = \eta_k$. 
	Then $(j,k) \in U_3$. 
	By (10), $\{ f_0(\eta), f_0(\xi) \} \in \bar D$. 
	But $\xi \in \theta$ implies that $k \in U_0$. 
	Hence by (5), $f_0(\xi) = \xi$. 
	So $\{ f_0(\eta), \xi \} \in \bar D$, and therefore 
	$\{ f_0(\eta), \xi \} \in E$.
	
	(3e) Let $\eta, \xi \in \dom(Y) \setminus \theta$. 
	Then $\eta, \xi \in \dom(W)$. 
	Assume that $\{ \eta, \xi \} \in E$. 
	Since $E = \bar D \cup D$ and $\eta \notin \theta$, 
	$\{ \eta, \xi \} \in D$. 
	Fix $j, k < m$ such that $\eta = \eta_j$ and $\xi = \eta_k$. 
	Then $(j,k) \in U_3$. 
	By (10), $\{ f_0(\eta), f_0(\xi) \} \in \bar D \subseteq E$.
	
	(3f) Let $K, M \in C \setminus \sk(\theta)$ and assume that $K \subseteq M$. 
	Fix $k, l < n$ such that $K = K_k$ and $M = K_l$. 
	Then $(k,l) \in U_4$. 
	By (11), $f_0(K_k) = \bar K_k \subseteq \bar K_l = f_0(K_l)$.
\end{proof}

\section{Projection and Chain Condition}

In this section, we prove that for all $\theta \in \Sigma$, 
a certain natural map of a dense subset of $\p$ into 
the suborder $\p \cap \sk(\theta)$ is 
a projection mapping.

\begin{definition}
	For any $\theta \in \Sigma$, let $\p_\theta = \p \cap \sk(\theta)$.
\end{definition}

\begin{definition}
	For any $\theta \in \Sigma$, define $\pi_\theta$ with domain $\p$ by 
	letting 
	$$
	\pi_\theta(p) = (T_p,W_p \res \theta,D_p \cap [\theta]^2,A_p \cap \sk(\theta)).
	$$
\end{definition}

\begin{lemma}
	Let $\theta \in \Sigma$. 
	\begin{enumerate}
	\item For any $p \in \p$, 
	$\pi_\theta(p) \in \p_\theta$ and $p \le \pi_\theta(p)$.
	\item If $q \le p$, then $\pi_\theta(q) \le \pi_\theta(p)$.
	\item If $q \le s$, where $q \in \p$ and $s \in \p_\theta$, then 
	$\pi_\theta(q) \le s$.
	\end{enumerate}
\end{lemma}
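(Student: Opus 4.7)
The plan is to verify the three clauses by routine checking against Definition 6.2 of $\p$ and Definition 2.10 of $\p^*$. The only non-computational ingredient is that each component of $\pi_\theta(p)$ lies in $\sk(\theta)$, which uses $\theta \in \Sigma \subseteq \Lambda$ (so that $\sk(\theta)$ is elementary in the relevant structure and $[\theta]^\omega \subseteq \sk(\theta)$) together with closure of $\sk(\theta)$ under finite operations.

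For (1), I would first show $\pi_\theta(p) \in \p$. The triple $(T_p, W_p \res \theta, D_p \cap [\theta]^2)$ lies in $\p^*$: $T_p$ is already a standard finite tree, restriction of $W_p$ to $\theta$ preserves being a subtree function, and $D_p \cap [\theta]^2 \subseteq [\dom(W_p) \cap \theta]^2 = [\dom(W_p \res \theta)]^2$. The set $A_p \cap \sk(\theta)$ is adequate because any subset of an adequate set is adequate (immediate from Definition 5.4). For $(A_p \cap \sk(\theta))$-separation of $W_p \res \theta$: if $M \in A_p \cap \sk(\theta)$, distinct $\eta, \xi \in M \cap \dom(W_p \res \theta)$, and $x \in (W_p \res \theta)(\eta) \cap (W_p \res \theta)(\xi)$, then $\eta, \xi \in M \cap \dom(W_p)$ and $x \in W_p(\eta) \cap W_p(\xi)$, so $A_p$-separation of $W_p$ gives $x \in M$. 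Membership $\pi_\theta(p) \in \sk(\theta)$ follows since each component is a finite object built from parameters in $\sk(\theta)$: $T_p$ is a finite subset of $\omega_1 \subseteq \sk(\theta)$, and similarly for the other components (using $\dom(W_p \res \theta) \subseteq \theta$ and $A_p \cap \sk(\theta) \subseteq \sk(\theta)$). Finally, $p \le \pi_\theta(p)$ in $\p$ is immediate: $T_p$ trivially end-extends itself; $\dom(W_p \res \theta) \subseteq \dom(W_p)$ with equal values on the intersection; $D_p \cap [\theta]^2 \subseteq D_p$; clause (d) is witnessed by $z = x$ because $(W_p \res \theta)(\eta) = W_p(\eta)$ for $\eta \in \theta$; and $A_p \cap \sk(\theta) \subseteq A_p$.

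For (2), assuming $q \le p$, each clause of $\pi_\theta(q) \le \pi_\theta(p)$ follows from the corresponding clause of $q \le p$ by restricting to $\theta$ and $\sk(\theta)$. The end-extension, domain inclusion, and $D$-containment are immediate. For clause (d), given $\{\eta,\xi\} \in D_p \cap [\theta]^2$ and $x \in (W_q \res \theta)(\eta) \cap (W_q \res \theta)(\xi) = W_q(\eta) \cap W_q(\xi)$, applying $q \le p$ gives $z \in W_p(\eta) \cap W_p(\xi)$ with $x \le_q z$; since $\eta, \xi \in \theta$, this $z$ already lies in $(W_p \res \theta)(\eta) \cap (W_p \res \theta)(\xi)$. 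Inclusion $A_p \cap \sk(\theta) \subseteq A_q \cap \sk(\theta)$ is clear.

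For (3), the key observation is that since $s \in \sk(\theta)$, we have $\dom(W_s) \subseteq \sk(\theta) \cap \kappa = \theta$, $D_s \subseteq [\theta]^2$, and $A_s \subseteq \sk(\theta)$, so the restrictions defining $\pi_\theta(q)$ do not discard any $s$-content of $q$. Each clause of $\pi_\theta(q) \le s$ then reads off directly from the corresponding clause of $q \le s$. Nothing in this lemma poses a serious obstacle; the only point of care is remembering to track that the components of $\pi_\theta(p)$ must actually lie in $\sk(\theta)$ rather than merely being subsets of $\theta$, and this is where the hypothesis $\theta \in \Sigma$ (rather than just $\theta \in \Lambda$) is used for smooth elementarity arguments later.
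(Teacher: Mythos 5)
Your proof is correct and is precisely the routine verification the paper has in mind; the paper states only ``The proof is straightforward'' and omits it, and your argument fills that in faithfully, including the key point in part (3) that $s \in \sk(\theta)$ forces $\dom(W_s) \subseteq \theta$, $D_s \subseteq [\theta]^2$, and $A_s \subseteq \sk(\theta)$, so nothing of $s$ is lost under $\pi_\theta$.
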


The proof is straightforward.

\begin{lemma}
	Let $\theta \in \Sigma$. 
	Let $1 < d < \omega$ and suppose 
	that $p_0,\ldots,p_{d-1}$ are in $\p$ and 
	$p_0 \oplus \cdots \oplus p_{d-1} \in \p$. 
	Then 
	$$
	\pi_\theta(p_0 \oplus \cdots \oplus p_{d-1}) = 
	\pi_\theta(p_0) \oplus \cdots \oplus \pi_\theta(p_{d-1}).
	$$
\end{lemma}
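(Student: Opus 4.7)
The plan is to prove the equality by unpacking both sides componentwise and observing that $\pi_\theta$ commutes with each of the four set-theoretic operations that $\oplus$ performs on the coordinates of a condition. Write $q = p_0 \oplus \cdots \oplus p_{d-1}$ and recall that by Definitions 2.12 and 6.9 we have $T_q = T_{p_0} \cup \cdots \cup T_{p_{d-1}}$ (with ordering the union of the $<_{p_i}$), $W_q = W_{p_0} \oplus \cdots \oplus W_{p_{d-1}}$, $D_q = D_{p_0} \cup \cdots \cup D_{p_{d-1}}$, and $A_q = A_{p_0} \cup \cdots \cup A_{p_{d-1}}$. Then $\pi_\theta(q)$ and $\pi_\theta(p_0) \oplus \cdots \oplus \pi_\theta(p_{d-1})$ can each be computed as a quadruple, and it suffices to verify that the two quadruples agree in every coordinate.

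The $T$-coordinate is immediate since $\pi_\theta$ does not modify the tree component; so both sides have first coordinate $T_{p_0} \cup \cdots \cup T_{p_{d-1}}$. For the $D$- and $A$-coordinates, intersection with $[\theta]^2$ (respectively $\sk(\theta)$) distributes over finite unions, giving
$$D_q \cap [\theta]^2 = \bigcup_{i<d}(D_{p_i} \cap [\theta]^2), \quad A_q \cap \sk(\theta) = \bigcup_{i<d}(A_{p_i} \cap \sk(\theta)),$$
and these are precisely the $D$- and $A$-coordinates of $\pi_\theta(p_0) \oplus \cdots \oplus \pi_\theta(p_{d-1})$.

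The only coordinate requiring a small amount of care is the subtree function, because $\pi_\theta$ restricts its domain to $\theta$ while $\oplus$ of subtree functions takes the union of domains with pointwise union of values. One checks that
$$\dom((W_{p_0} \oplus \cdots \oplus W_{p_{d-1}}) \res \theta) = \bigcup_{i<d}(\dom(W_{p_i}) \cap \theta) = \dom((W_{p_0}\res\theta) \oplus \cdots \oplus (W_{p_{d-1}}\res\theta)),$$
and for any $\eta$ in this common domain,
$$\bigl((W_{p_0} \oplus \cdots \oplus W_{p_{d-1}}) \res \theta\bigr)(\eta) = \bigcup_{i:\eta\in\dom(W_{p_i})} W_{p_i}(\eta) = \bigl((W_{p_0}\res\theta)\oplus\cdots\oplus(W_{p_{d-1}}\res\theta)\bigr)(\eta),$$
since $\eta \in \theta$ makes the conditions ``$\eta \in \dom(W_{p_i})$'' and ``$\eta \in \dom(W_{p_i} \res \theta)$'' equivalent. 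With all four coordinates matching, the two conditions are identical. There is no real obstacle here; the lemma is a formal bookkeeping statement, and the only mild subtlety is confirming that domain-restriction and $\oplus$ for subtree functions commute, which follows from distributivity of intersection over union.
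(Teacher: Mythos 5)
Your proof is correct and is exactly the coordinatewise bookkeeping verification that the paper has in mind when it simply states ``The proof is easy.'' The only point worth being explicit about, which you do handle, is that $W_p \res \theta$ is domain restriction and that the paper's convention $W(\eta) = \emptyset$ for $\eta \notin \dom(W)$ makes the pointwise-union computation go through without case splitting.
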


The proof is easy.

\begin{definition}
	Define a function $w_\theta$ as follows. 
	The domain of $w_\theta$ is the set of ordered pairs $(q, N)$ such that 
	$q \in D_\theta$, $N \in A_q$, and $A_q$ is $N$-closed. 
	For any such ordered pair $(q,N)$, define 
	$$
	w_\theta(q,N) = 
	w(q,N)^{\frown} \langle f \res N, g \res \sk(N) \rangle,
	$$
	where $f$ and $g$ are the $\lhd$-least witnesses to the fact 
	that $q \in D_\theta$. 
\end{definition}

\begin{lemma}
	Let $\theta \in \Sigma$. 
	Suppose that $p$ and $q$ are in $D_\theta$ as witnessed by functions 
	$f_p$ and $g_p$ for $p$ and $f_q$ and $g_q$ for $q$. 
	Assume that $w_\theta(p,M) = w_\theta(q,N)$ and $p \in \sk(N)$. 
	Then:
	\begin{enumerate}
		\item For all $\eta \in \dom(W_q) \setminus \theta$ and for all 
		$K \in A_p \setminus \sk(\theta)$, 
		$\eta \in K$ implies that $f_q(\eta) \in g_p(K)$.
		\item For all $\eta \in \dom(W_p) \setminus \theta$ 
		and for all $K \in A_q \setminus \sk(\theta)$ such that $K < N$, 
		$\eta \in K$ implies that $f_p(\eta) \in g_q(K)$.	
	\end{enumerate}
\end{lemma}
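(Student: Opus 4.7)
The plan is to exploit the equality $w_\theta(p,M) = w_\theta(q,N)$ together with $p \in \sk(N)$ to transfer the relevant membership facts between the $p$-side and the $q$-side, and then invoke clause (c) of Definition 9.2 on the appropriate side. The preliminary step is to unpack the equality of tuples: $m_p = m_q$, $n_p = n_q$, and the index sets $U_0, U_1, U_2, U_3$ together with the functions $h_0, h_1$ agree. Because both $\langle \eta^p_k : k \in U_0 \rangle$ and $\langle \eta^q_k : k \in U_0 \rangle$ are the $\lhd$-increasing enumeration of the common set $\dom(W_p) \cap M = \dom(W_q) \cap N$, they coincide index by index, and similarly $K^p_l = K^q_l$ for $l \in U_1$. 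The tail components of $w_\theta$ supply $f_p \res M = f_q \res N$ and $g_p \res \sk(M) = g_q \res \sk(N)$, which I will use to match $f_p(\eta) = f_q(\eta)$ whenever $\eta$ lies in the common domain of the two restrictions.

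For Part (1), I would argue as follows. Since $p \in \sk(N)$, $K \in A_p \subseteq \sk(N)$; as $K$ is a countable subset of $\kappa$ and the definable Skolem functions of $\mathcal A$ enumerate it inside $\sk(N)$, we get $K \subseteq \sk(N) \cap \kappa = N$. Hence $\eta \in K \subseteq N$, placing $\eta$ in $\dom(W_q) \cap N = \dom(W_p) \cap M$ and so in $\dom(W_p) \setminus \theta$. Applying Definition 9.2(c) to $p$ with $(\eta, K)$ yields $f_p(\eta) \in g_p(K)$, and the equality of $f_p \res M$ and $f_q \res N$ at $\eta$ gives $f_q(\eta) = f_p(\eta) \in g_p(K)$, as required.

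For Part (2) the strategy is symmetric, but now the hypothesis $K < N$ is used to force $\eta \in M$. Since $p \in \sk(N)$, we have $\eta \in \dom(W_p) \subseteq \sk(N) \cap \kappa = N$, so $\eta \in K \cap N$. The hypothesis $K < N$ places the index $l^*$ with $K = K^q_{l^*}$ in $U_2^q$, hence by tuple equality $l^* \in U_2^p$, giving $K^p_{l^*} < M$; moreover $h_1^p = h_1^q$ gives $K^p_{l^*} \cap M = K^q_{l^*} \cap N = K \cap N$, so $\eta \in K \cap N \subseteq M$. This places $\eta \in \dom(W_p) \cap M = \dom(W_q) \cap N$, hence $\eta \in \dom(W_q) \setminus \theta$. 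Applying Definition 9.2(c) to $q$ with $(\eta, K)$ yields $f_q(\eta) \in g_q(K)$, and $f_p \res M = f_q \res N$ at $\eta$ gives the desired $f_p(\eta) \in g_q(K)$.

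The only real subtlety I anticipate is the unpacking step: verifying that equality of the $w_\theta$-tuples genuinely forces $\eta^p_k = \eta^q_k$ for $k \in U_0$ and $K^p_l = K^q_l$ for $l \in U_1$, and in Part (2) making precise use of $U_2$ and $h_1$ to obtain the identification $K^p_{l^*} \cap M = K \cap N$. Once these index-matching facts are in hand, each part reduces to a single application of clause (c) in Definition 9.2 on the appropriate side, combined with the basic consequence of $p \in \sk(N)$ that countable members (and elements of those members) of $A_p \cup \{\dom(W_p)\}$ are contained in $N$.
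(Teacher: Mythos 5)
Your proof is correct, and Part (1) takes essentially the same route as the paper's: you show $K \subseteq N$ (via countability of $K$ together with $K \in \sk(N)$) to conclude $\eta \in N$ and hence $\eta \in \dom(W_q) \cap N = \dom(W_p) \cap M$, then apply Definition 9.2(c) on the $p$-side and transfer via $f_p \res M = f_q \res N$. The paper packages the same observation as a case split (showing the case $\eta \notin \dom(W_p)$ is contradictory), but the substance is identical.

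Part (2) is where you genuinely diverge from the paper, and your route is cleaner. The paper distinguishes three subcases: $\eta \in \dom(W_p) \cap \dom(W_q)$, $K \in A_p \cap A_q$, and neither; in the last subcase it passes from $K$ to $K \cap N$ (using that $A_q$ is $N$-closed to see $K \cap N \in A_q \cap \sk(N) = A_p \cap \sk(M)$), applies clause (c) on the $p$-side to $(\eta, K \cap N)$, and then invokes clause (f) to lift $g_q(K \cap N) \subseteq g_q(K)$. You instead observe that the $h_1$ component of $w(p,M) = w(q,N)$ forces $K^p_{l^*} \cap M = K^q_{l^*} \cap N = K \cap N$, so $\eta \in K \cap N$ automatically lands in $M$, hence $\eta \in \dom(W_p) \cap M = \dom(W_q) \cap N \subseteq \dom(W_q)$; then a single application of clause (c) on the $q$-side finishes the argument. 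This sidesteps the case analysis and eliminates any use of Definition 9.2(f), and it reveals that the paper's second and third subcases in Part (2) are in fact vacuous under the stated hypotheses. (Keep in mind that the witnesses $f_p, g_p, f_q, g_q$ must be understood as the $\lhd$-least ones, since the tuple $w_\theta$ records only those; this is implicit in both your proof and the paper's.)
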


\begin{proof}
	Since $w_\theta(p,M) = w_\theta(q,N)$, 
	$f_p \res M = f_q \res N$ and $g_p \res \sk(M) = g_q \res \sk(N)$. 
	By Lemma 8.7, for all $\eta \in (\dom(W_p) \cap \dom(W_q)) \setminus \theta$, 
	$\eta \in M \cap N$ and hence $f_p(\eta) = f_q(\eta)$, 
	and for all $K \in ((A_p \cap A_q) \setminus \sk(\theta))$, 
	$K \in \sk(M) \cap \sk(N)$ and so $g_p(K) = g_q(K)$.

	(1) Let $\eta \in \dom(W_q) \setminus \theta$ and  
	$K \in A_p \setminus \sk(\theta)$, and suppose that $\eta \in K$. 
	First, assume that $\eta \in \dom(W_p) \cap \dom(W_q)$. 
	Then $f_p(\eta) = f_q(\eta)$. 
	Since $\eta \in K$, by Definition 9.2(c), 
	$f_q(\eta) = f_p(\eta) \in g_p(K)$. 
	Now assume that $\eta \in \dom(W_q) \setminus \dom(W_p)$. 
	Since $p \in \sk(N)$, $K \in \sk(N)$. 
	As $\eta \in K$, $\eta \in \dom(W_q) \cap N = \dom(W_p) \cap M$, which 
	contradicts that $\eta \notin \dom(W_p)$.

	(2) Let $\eta \in \dom(W_p) \setminus \theta$ and 
	$K \in A_q \setminus \sk(\theta)$ with $K < N$, and suppose that $\eta \in K$. 
	First, assume that $\eta \in \dom(W_p) \cap \dom(W_q)$. 
	Then $f_p(\eta) = f_q(\eta)$. 
	Since $\eta \in K$, by Definition 9.2(c), 
	$f_p(\eta) = f_q(\eta) \in g_q(K)$. 
	Secondly, assume that $K \in A_p \cap A_q$. 
	Then $g_p(K) = g_q(K)$. 
	Since $\eta \in K$, by Definition 9.2(c), 
	$f_p(\eta) \in g_p(K) = g_q(K)$.

	Now assume that $\eta \in \dom(W_p) \setminus \dom(W_q)$ 
	and $K \in A_q \setminus A_p$. 
	Since $p \in \sk(N)$, $\eta \in N$. 
	So $\eta \in K \cap N$. 
	Since $(q,N) \in \dom(w)$, $A_q$ is $N$-closed. 
	As $K < N$, it follows that $K \cap N \in A_q \cap \sk(N) = A_p \cap \sk(M)$. 
	Since $\eta \in (K \cap N) \setminus \theta$, $K \cap N \notin \sk(\theta)$. 
	So $g_p(K \cap N) = g_q(K \cap N)$. 
	As $\eta \in K \cap N$, by Definition 9.2(c), 
	$f_p(\eta) \in g_p(K \cap N) = g_q(K \cap N)$. 
	By Definition 9.2(f), 
	$g_q(K \cap N) \subseteq g_q(K)$, so $f_p(\eta) \in g_q(K)$.	
\end{proof}

It follows from Proposition 10.9 below that $\pi_\theta$ restricted to the dense 
set $D_\theta$ is a projection mapping into $\p_\theta$. 
However, in order to prove that quotient forcings of $\p$ are Y-proper, this is not enough. 
In particular, Lemma 11.6 below needs $\pi_\theta$ to be a projection mapping on a larger 
set of conditions, which we introduce now.

\begin{definition}
	Let $\theta \in \Sigma$. 
	Define $E_\theta$ to be the set of conditions $p \in \p$ such that 
	either $p \in D_\theta$ and $A_p \ne \emptyset$, or else 
	for some $1 < d < \omega$, $p_0,\ldots,p_{d-1}$, and $N_0,\ldots,N_{d-1}$:
	\begin{enumerate}
		\item $p_0,\ldots,p_{d-1}$ are in $D_\theta$;
		\item $p = p_0 \otimes \cdots \otimes p_{d-1}$;
		\item for all $i < d$, $(p_i,N_i) \in \dom(w_\theta)$;
		\item for all $i < j < d$, 
		$w_\theta(p_i, N_i) = w_\theta(p_j,N_j)$;
		\item for all $i < j < d$, 
		$p_i \in \sk(N_j)$.
	\end{enumerate}
\end{definition}

Note that by Lemma 8.6, in the above $p$ is an extension of each of 
$p_0,\ldots,p_{d-1}$.

\begin{lemma}
	For any $\theta \in \Sigma$, $E_\theta$ is dense in $\p$.
\end{lemma}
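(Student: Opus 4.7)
The plan is to exploit the observation that the first clause of Definition 10.7 already suffices for density: any condition in $D_\theta$ with nonempty side condition lies in $E_\theta$ automatically (as a ``trivial amalgam''), so I need only extend an arbitrary $q \in \p$ into $D_\theta$ while ensuring its side condition is nonempty.

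More precisely, given $q \in \p$, I will first find some $N \in \mathcal{X}$ with $q \in \sk(N)$. Since $\psi : \kappa \to H(\kappa)$ is a bijection, there is $\alpha < \kappa$ with $\psi(\alpha) = q$; and because $\mathcal{X}$ is a club subset of $[\kappa]^\omega$, I can pick any $N \in \mathcal{X}$ with $\alpha \in N$, which yields $q \in \psi[N] \subseteq \sk(N)$. By Lemma 6.6 the condition $q + N = (T_q, W_q, D_q, A_q \cup \{N\})$ is in $\p$ and extends $q$, and $N \in A_{q+N}$.

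Next I apply Proposition 9.4 to the pair $(q+N, N)$ to obtain $r \le q+N$ with $r \in D_\theta$ and $A_r$ $N$-closed. Since $r \le q+N$, we have $A_{q+N} \subseteq A_r$, and in particular $N \in A_r$, so $A_r \neq \emptyset$. Hence $r$ witnesses the first clause of Definition 10.7 (taking the ``trivial'' case where no decomposition $p_0 \oplus \cdots \oplus p_{d-1}$ is needed), so $r \in E_\theta$ and $r \le q$.

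There is no real obstacle here: the density of $E_\theta$ is essentially immediate from the density already established for $D_\theta$ in Proposition 9.4, together with the ability to adjoin models to side conditions via Lemma 6.6. The content of Definition 10.7 that really matters, the second clause involving amalgamation of multiple conditions with equal $w_\theta$-value, will only be needed later in the development (e.g.\ for the analysis of quotients), not for establishing density.
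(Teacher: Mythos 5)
Your proof is correct and is exactly what the paper means by its one-line argument ``Immediate by Lemma 6.6 and Proposition 9.4'': adjoin a suitable model $N$ via Lemma 6.6 so the side condition becomes nonempty, then extend into $D_\theta$ via Proposition 9.4, landing in the first clause of Definition 10.7. You have simply unpacked the paper's terse proof.
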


\begin{proof}
	Immediate by Lemma 6.6 and Proposition 9.4. 
\end{proof}

\begin{proposition}
	Let $\theta \in \Sigma$. 
	Assume that $p \in E_\theta$ and $s \le \pi_\theta(p)$ in $\p_\theta$. 
	Define $Y$ with domain equal to $\dom(W_s) \cup \dom(W_p)$ so that for all 
	$\eta \in \dom(W_s)$, $Y(\eta) = W_s(\eta)$, and for all 
	$\xi \in \dom(W_p) \setminus \dom(W_s)$, 
	$Y(\xi)$ is the downward closure of $W_p(\xi)$ in $T_s$. 
	Then $(T_s,Y,D_s \cup D_p,A_s \cup A_p)$ is in $\p$ 
	and is an extension of $s$ and $p$.
\end{proposition}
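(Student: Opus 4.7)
The plan is to verify, in order, the four conditions that make $r = (T_s, Y, D_s \cup D_p, A_s \cup A_p)$ a condition in $\p$ extending both $s$ and $p$: (i) $Y$ is a subtree function on $T_s$; (ii) $A_s \cup A_p$ is adequate; (iii) $Y$ is $(A_s \cup A_p)$-separated; and (iv) the ordering requirements $r \le s$ and $r \le p$ hold.

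Conditions (i) and (ii) are the warm-up. For (i), $Y(\eta) = W_s(\eta)$ is downwards closed in $T_s$ when $\eta \in \dom(W_s)$, while $Y(\xi)$ is downwards closed in $T_s$ by construction when $\xi \in \dom(W_p) \setminus \dom(W_s)$. For (ii), I will use that $p \in E_\theta$ forces $A_p$ to be $\theta$-closed, either directly from Definition 9.2 if $p \in D_\theta$, or as a union of $\theta$-closed sets if $p$ is an $\oplus$ of $D_\theta$-conditions. Since $s \in \sk(\theta)$ and $s \le \pi_\theta(p)$ yield $A_s \subseteq \sk(\theta)$ and $A_p \cap \sk(\theta) \subseteq A_s$, Theorem 5.15 delivers adequacy of $A_p \cup A_s$.

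The heart of the argument is (iii). Given $M \in A_s \cup A_p$, distinct $\eta, \xi \in M \cap \dom(Y)$, and $x \in Y(\eta) \cap Y(\xi)$, I associate to each of $\eta$ and $\xi$ a $\theta$-representative: set $\eta' = \eta$ if $\eta \in \dom(W_s)$, and otherwise $\eta' = f(\eta)$, where $f$ comes from the $D_\theta$-structure on $p$ (or on the relevant component of an $\oplus$-decomposition). Since $\dom(W_s) \subseteq \theta$, the second case happens precisely when $\eta \in \dom(W_p) \setminus \theta$, and then property (a) of Definition 9.2 together with $s \le \pi_\theta(p)$ gives $Y(\eta) \subseteq W_s(\eta')$; likewise $Y(\xi) \subseteq W_s(\xi')$, so $x \in W_s(\eta') \cap W_s(\xi')$. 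If $M \in A_s$, then $\eta, \xi \in \theta \cap \dom(Y) \subseteq \dom(W_s)$ forces $\eta' = \eta$, $\xi' = \xi$, and $A_s$-separation finishes. If $M \in A_p \setminus A_s$, then $M \notin \sk(\theta)$, and properties (b) and (c) of Definition 9.2 ensure $\eta', \xi' \in g(M) \in A_p \cap \sk(\theta) \subseteq A_s$; provided $\eta' \ne \xi'$, $A_s$-separation applied to $g(M)$ yields $x \in g(M) \cap \omega_1 = M \cap \omega_1 \subseteq M$.

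The main obstacle is the collision case $\eta' = \xi'$, which forces both $\eta$ and $\xi$ to lie in $\dom(W_p)$ (otherwise one representative is in $\dom(W_s) \setminus \dom(W_p)$ and the other is in the disjoint image of $f$). Here $A_p$-separation applied to the distinct pair $\eta, \xi \in M \cap \dom(W_p)$ forces $W_p(\eta) \cap W_p(\xi) \subseteq M$, and since (a) gives $W_p(\eta) = W_p(\eta') = W_p(\xi)$, this means $W_p(\eta') \subseteq M \cap \omega_1$. Any $x \in Y(\eta) \cap Y(\xi)$ then satisfies $x \le_{T_s} y$ for some $y \in W_p(\eta')$, so $h(x) \le h(y) < M \cap \omega_1$; since $M \in \mathcal X$ forces $M \cap \omega_1 \in C_h$ and $M$ to contain every ordinal below $M \cap \omega_1$, I conclude $x \in M$. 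Finally, for (iv), $r \le s$ is routine, and the only nontrivial clause of $r \le p$ is condition (d) of the $\p^*$-ordering; for $\{\eta,\xi\} \in D_p$ I will use properties (d) and (e) of Definition 9.2 to transfer $\{\eta,\xi\}$ to a pair $\{\eta',\xi'\} \subseteq \dom(W_p) \cap \theta$ lying in $D_p$, and then apply condition (d) of $s \le \pi_\theta(p)$ to produce $z \in W_p(\eta') \cap W_p(\xi') \subseteq W_p(\eta) \cap W_p(\xi)$ with $x \le_{T_s} z$; for the $\oplus$-case, each pair belongs to some $D_{p_i}$ and the argument runs within that component.
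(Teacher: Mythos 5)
Your high-level decomposition (subtree function, adequacy, $A$-separation, ordering) matches the paper, and the "$\theta$-representative" device $\eta \mapsto \eta'$ with the containment $Y(\eta) \subseteq W_s(\eta')$ is a clean way to package the idea behind the paper's Claim. The problem is that you consistently treat the $\oplus$-case $p \in E_\theta \setminus D_\theta$ as if it reduced immediately to the $D_\theta$-case by "running within the relevant component," and that is precisely where the real work of the proof lies.

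Concretely: for $p = p_0 \oplus \cdots \oplus p_{d-1}$ there is no single pair $(f,g)$ witnessing Definition 9.2 for $p$ -- each $p_i$ carries its own $(f_i, g_i)$, and $p$ itself is \emph{not} in $D_\theta$. When you write "$\eta', \xi' \in g(M) \in A_p \cap \sk(\theta) \subseteq A_s$," you are implicitly choosing some $i$ with $M \in A_i$ and invoking Definition 9.2(c) for $p_i$. But 9.2(c) for $p_i$ only says $\eta \in M \Leftrightarrow f_i(\eta) \in g_i(M)$ \emph{for $\eta \in \dom(W_i)$}. Your $\eta'$ was produced from some component $j$ with $\eta \in \dom(W_j)$ (the $j$ coming from where $x$ sits via the downward closure), and there is no reason $j$ should equal the $i$ that carries $M$; if $\eta \notin \dom(W_i)$, you cannot apply 9.2(c) to $p_i$. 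Bridging this mismatch between the index carrying $x$ and the index carrying $M$ is exactly what the paper's Lemma 10.6 (together with the stratification $w_\theta$) is for, and the paper's Cases 4 and 5 are a careful subcase analysis on the relative positions of $i$, $j$, $k$ and on whether $N_i \le K$ or $K < N_i$. Your sketch never invokes Lemma 10.6 or any substitute, so this step is genuinely missing.

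A second instance of the same issue: your collision argument hinges on $W_p(\eta) = W_p(\eta')$ "by (a)." For $p \in D_\theta$ this is Definition 9.2(a) verbatim, and your collision reasoning is fine there. But for $p$ an $\oplus$ we only have $W_p(\eta) = \bigcup_{i \in I(\eta)} W_i(\eta) = \bigcup_{i \in I(\eta)} W_i(f(\eta))$, whereas $W_p(f(\eta)) = \bigcup_{i \in I(f(\eta))} W_i(f(\eta))$ with $I(\eta) \subseteq I(f(\eta))$ possibly strict, so $W_p(\eta) \subsetneq W_p(\eta')$ can occur and the equality you rely on fails. The same slip appears in your treatment of clause (d) of the ordering: the inclusion "$W_p(\eta') \cap W_p(\xi') \subseteq W_p(\eta) \cap W_p(\xi)$" is exactly the reverse of what 9.2(a) gives for an $\oplus$-condition; the paper instead first extracts $z \in W_{\pi_\theta(p)}(f_i(\eta)) \cap W_{\pi_\theta(p)}(f_i(\xi))$ via $s \le \pi_\theta(p)$ and then uses $p \le p_i$ together with $\{f_i(\eta), f_i(\xi)\} \in D_i$ to push $z$ up to a $c \in W_i(\eta) \cap W_i(\xi) \subseteq W_p(\eta) \cap W_p(\xi)$. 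That second extraction step is absent from your sketch. In short, what you have is essentially a correct proof for $p \in D_\theta$; the general $E_\theta$ case needs the $w_\theta$-coherence machinery and Lemma 10.6, which your proposal does not supply.
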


\begin{proof}
	We begin by fixing some notation. 
	If $p \notin D_\theta$, then fix 
	$1 < d < \omega$, $p_0,\ldots,p_{d-1}$, and $N_0,\ldots,N_{d-1}$ 
	witnessing that $p \in E_\theta \setminus D_\theta$ 
	(and in particular, $p = p_0 \oplus \cdots \oplus p_{d-1}$), and 
	for each $i < d$, fix functions $f_i$ and $g_i$ witnessing that $p_i \in D_\theta$. 
	If $p \in D_\theta$, then let $p_0 = p$, let $N_0$ be any member of $A_p$, 
	and let $f_0$ and $g_0$ witness that $p \in D_\theta$. 	
	By Lemma 10.3(2), for all $i < d$, 
	$s \le \pi_\theta(p_i)$. 
	Since $T_{\pi_\theta(p)} = T_p$, $T_s$ is an end-extension of $T_p$. 
	For each $i < d$, write $p_i = (T_i,W_i,D_i,A_i)$.
	
	\textbf{Claim:} If $\eta \in \dom(W_p) \setminus \theta$ and $x \in Y(\eta)$, then 
	there exists $j < d$ such that $\eta \in \dom(W_j)$, 
	$x$ is in the downward closure of $W_j(f_j(\eta))$ 
	in $T_s$, and $x \in W_s(f_j(\eta))$.
	
	\emph{Proof:} Since $Y(\eta)$ is the downward closure of $W_p(\eta)$ in $T_s$, 
	we can fix 
	$j < d$ such that $x$ is in the downward closure of $W_j(\eta)$ in $T_s$. 
	But $W_j(\eta) = W_j(f_j(\eta))$ by Definition 9.2(a). 
	So $x$ is in the downward closure of $W_j(f_j(\eta))$ in $T_s$. 
	Since $s \le \pi_\theta(p_j)$, $W_j(f_j(\eta)) \subseteq W_s(f_j(\eta))$, 
	and as $W_s(f_j(\eta))$ is downward closed in $T_s$, 
	$x \in W_s(f_j(\eta))$.  This completes the proof of the claim.

	It is straightforward to check that 
	$(T_s,Y,D_s \cup D_p)$ is in $\p^*$ and extends 
	$(T_s,W_s,D_s)$ in $\p^*$. 
	Concerning $(T_s,Y,D_s \cup D_p)$ being an extension of $(T_p,W_p,D_p)$ in $\p^*$, 
	(a-c) of Definition 2.9 follow easily from the fact that $s \le \pi_\theta(p)$.
	
	For (d) of Definition 2.9, suppose that $\{ \eta, \xi \} \in D_p$ 
	and $x \in Y(\eta) \cap Y(\xi)$. 
	We show that there exists some $z \in W_p(\eta) \cap W_p(\xi)$ 
	such that $x \le_{T_s} z$. 
	Fix $i < d$ such that $\{ \eta, \xi \} \in D_i$.
	
	\emph{Case A:} $\eta$ and $\xi$ are both in $\theta$. 
	Since $s \le \pi_\theta(p)$, 
	easily $\eta$ and $\xi$ are both in $\dom(W_s)$. 
	Hence, $Y(\eta) = W_s(\eta)$ and $Y(\xi) = W_s(\xi)$. 
	So $x \in W_s(\eta) \cap W_s(\xi)$. 
	Also, $\{ \eta, \xi \} \in D_p \cap [\theta]^2 = D_{\pi_\theta(p)}$. 
	Since $s \le \pi_\theta(p)$, 
	there exists $z \in W_{\pi_\theta(p)}(\eta) \cap W_{\pi_\theta(p)}(\xi) = 
	W_p(\eta) \cap W_p(\xi)$ such that $x \le_{T_s} z$.

	\emph{Case B:} One of $\eta$ or $\xi$ is in $\theta$ and the other is not. 
	Without loss of generality, assume that $\eta \notin \theta$ and $\xi \in \theta$. 
	Since $s \le \pi_\theta(p)$, 
	easily $\xi \in \dom(W_s)$. 
	By Definition 9.2(d), $\{ f_i(\eta), \xi \} \in D_i$. 
	By the claim, fix $j < d$ such that $\eta \in \dom(W_j)$ and $x \in W_s(f_j(\eta))$. 
	Now $\eta \in \dom(W_i) \cap \dom(W_j)$ implies that $f_i(\eta) = f_j(\eta)$. 
	So $\{ f_j(\eta), \xi \} \in D_i \cap [\theta]^2 \subseteq D_p \cap [\theta]^2 
	= D_{\pi_\theta(p)}$. 
	As $s \le \pi_\theta(p)$ and $x \in W_{s}(f_j(\eta)) \cap W_s(\xi)$, 
	there exists $z \in W_{\pi_\theta(p)}(f_j(\eta)) \cap W_{\pi_\theta(p)}(\xi)$ 
	such that $x \le_{T_s} z$. 
	Then $z \in W_p(f_i(\eta)) \cap W_p(\xi)$. 
	Since $\{ f_i(\eta), \xi \} \in D_i$ and $p \le p_i$, there exists 
	$c \in W_i(f_i(\eta)) \cap W_i(\xi)$ such that $z \le_{T_p} c$. 
	Then $c \in W_i(\eta)$ by Definition 9.2(a). 
	So $c \in W_p(\eta) \cap W_p(\xi)$. 
	As $T_s$ end-extends $T_p$, 
	$x \le_{T_s} z \le_{T_s} c$ and so $x \le_{T_s} c$.

	\emph{Case C:} $\eta$ and $\xi$ are both in $\dom(W_p) \setminus \theta$. 
	By the claim, fix $j, k < d$ such that $\eta \in \dom(W_j)$, $\xi \in \dom(W_k)$, and 
	$x \in W_s(f_j(\eta)) \cap W_s(f_k(\xi))$. 
	By Definition 9.2(e), $\{ f_i(\eta), f_i(\xi) \} \in D_i$. 	
	Since $\eta \in \dom(f_i) \cap \dom(f_j)$ and 
	$\xi \in \dom(f_i) \cap \dom(f_k)$, 
	$f_i(\eta) = f_j(\eta)$ and $f_i(\xi) = f_k(\xi)$. 
	So $\{ f_j(\eta), f_k(\xi) \} \in D_i \cap [\theta]^2 \subseteq D_p \cap [\theta]^2 
	= D_{\pi_\theta(p)}$. 
	Since $s \le \pi_\theta(p)$ and $x \in W_s(f_j(\eta)) \cap W_s(f_k(\xi))$, 
	there exists some $z \in W_{\pi_\theta(p)}(f_j(\eta)) \cap W_{\pi_\theta(p)}(f_k(\xi))$ 
	such that $x \le_{T_s} z$. 
	Then $z \in W_p(f_i(\eta)) \cap W_p(f_i(\xi))$. 
	As $p \le p_i$ and $\{ f_i(\eta), f_i(\xi) \} \in D_i$, 
	there exists $c \in W_i(f_i(\eta)) \cap W_i(f_i(\xi))$ 
	such that $z \le_{T_p} c$. 
	Then $c \in W_i(\eta) \cap W_i(\xi)$ by Definition 9.2(a), 
	and hence $c \in W_p(\eta) \cap W_p(\xi)$. 
	As $T_s$ end-extends $T_p$, 
	$x \le_{T_s} z \le_{T_s} c$, so $x \le_{T_s} c$. 
	This completes the proof that 
	$(T_s,Y,D_s \cup D_p)$ is an extension of $(T_p,W_p,D_p)$ in $\p^*$.

	Since $A_{i}$ is $\theta$-closed for all $i < d$, it easily follows that 
	$A_p = A_0 \cup \cdots \cup A_{d-1}$ is $\theta$-closed. 
	As $s \le \pi_\theta(p)$, 
	$A_p \cap \sk(\theta) = A_{\pi_\theta(p)} \subseteq A_s$. 
	Also, $s \in \p_\theta$ implies $A_s \subseteq \sk(\theta)$. 
	By Theorem 5.15, $A_s \cup A_p$ is adequate.

	It remains to prove that $Y$ is $(A_s \cup A_p)$-separated. 
	Let $K \in A_s \cup A_p$, let $\eta$ and $\xi$ be distinct elements of $K \cap \dom(Y)$, 
	and let $x \in Y(\eta) \cap Y(\xi)$. 
	We prove that $x \in K$.

	\emph{Case 1:} $K \in A_s$ and $\eta$ and $\xi$ are both in $\dom(W_s)$. 
	Then by definition, $Y(\eta) = W_s(\eta)$ and $Y(\xi) = W_s(\xi)$. 
	Hence, $x \in W_s(\eta) \cap W_s(\xi)$. 
	As $W_s$ is $A_s$-separated, it follows that $x \in K$.

	\emph{Case 2:} $K \in A_s$ and at least one of $\eta$ or $\xi$ is not in $\dom(W_s)$. 
	Without loss of generality, assume that $\eta \notin \dom(W_s)$. 
	Since $\dom(W_p) \cap \theta \subseteq \dom(W_s)$, 
	$\eta$ is not in $\theta$. 
	But $\eta \in K \subseteq \theta$, which is a contradiction.

	\emph{Case 3:} $K \in A_p \setminus A_s$ and $\eta$ and $\xi$ are both in 
	$\dom(W_s)$. 
	Then $K \notin \sk(\theta)$. 
	We have that $Y(\eta) = W_s(\eta)$ and $Y(\xi) = W_s(\xi)$, 
	so $x \in W_s(\eta) \cap W_s(\xi)$. 
	Fix $i < d$ such that $K \in A_i$. 
	Then $\eta, \xi \in K \cap \theta \subseteq g_i(K)$ by Definition 9.2(b). 
	Since $W_s$ is $A_s$-separated and $g_i(K) \in A_s$, 
	it follows that $x \in g_i(K) \cap \omega_1 \subseteq K$.
	
	\emph{Case 4:} $K \in A_p \setminus A_s$ and one of $\eta$ or $\xi$ is in 
	$\dom(W_p) \setminus \theta$ and the other is in $\dom(W_s)$. 
	Without loss of generality, assume that $\eta \in \dom(W_p) \setminus \theta$ 
	and $\xi \in \dom(W_s)$. 
	Then $Y(\xi) = W_s(\xi)$. 
	By the claim, fix $j < d$ such that $\eta \in \dom(W_j)$, 
	$x$ is in the downward closure of $W_j(f_j(\eta))$ in $T_s$, and $x \in W_s(f_j(\eta))$.	
	Fix $i < d$ such that $K \in A_i \setminus \sk(\theta)$. 
	By Definition 9.2(b), $\xi \in K \cap \theta \subseteq g_i(K)$.

	First, consider the case that $j < i$ and $N_i \le K$. 
	Since $x$ is in the downward closure of $W_j(f(\eta))$ in $T_s$, 
	fix $z \in W_j(f(\eta))$ such that $x \le_{T_s} z$. 
	Then $j < i$ implies that $p_j \in \sk(N_i)$, 
	and hence $z \in N_i \cap \omega_1 \le K \cap \omega_1$. 
	So $z \in K \cap \omega_1$, and as $x \le z$, 
	$x \in K \cap \omega_1$ as well.

	Secondly, assume that either $i \le j$, or $j < i$ and $K < N_i$. 
	We claim that $f_j(\eta) \in g_i(K)$. 
	If $i = j$, then $\eta \in K$ implies by Definition 9.2(c) 
	that $f_j(\eta) \in g_j(K) = g_i(K)$. 
	If $i < j$, then $f_j(\eta) \in g_i(K)$ by Lemma 10.6(1). 
	If $j < i$ and $K < N_i$, then $f_j(\eta) \in g_i(K)$ by Lemma 10.6(2). 
	So indeed $f_j(\eta) \in g_i(K)$. 
	But $x \in W_s(\xi) \cap W_s(f_j(\eta))$ 
	and $g_i(K) \in A_i \cap \sk(\theta) = A_{\pi_\theta(p_i)} \subseteq A_s$. 
	Since $\xi$ and $f_j(\eta)$ are in $g_i(K)$ 
	and $W_s$ is $A_s$-separated, it follows that 
	$x \in g_i(K) \cap \omega_1 = K \cap \omega_1$.

	\emph{Case 5:} $K \in A_p \setminus A_s$ and 
	$\eta$ and $\xi$ are both in $\dom(W_p) \setminus \theta$. 	
	By the claim, fix $j < d$ such that $\eta \in \dom(W_j)$, 
	$x$ is in the downward closure of $W_j(f_j(\eta))$ in $T_s$, 
	and $x \in W_s(f_j(\eta))$.	
	And fix $k < d$ such that $\xi \in \dom(W_k)$, 
	$x$ is in the downward closure of $W_k(f_k(\xi))$ in $T_s$, 
	and $x \in W_s(f_k(\xi))$.	
	Fix $i < d$ such that $K \in A_i$.

	Without loss of generality, assume that $j \le k$. 
	If $j < i$ and $N_i \le K$, then by the same argument 
	as in the first subcase of Case 4, $x \in K$. 
	The remaining cases are: $i < j$, $j = i = k$, 
	$j = i < k$, $j < i < k$ and $K < N_i$, 
	$j < k = i$ and $K < N_i$, and $k < i$ and $K < N_i$. 
	Note that by Definition 9.2(c), 
	if $i = j$ then $f_j(\eta) \in g_i(K)$, and if $i = k$, then $f_k(\xi) \in g_i(K)$. 
	Combining this information with Lemma 10.6, it is routine to check that 
	both $f_j(\eta)$ and $f_k(\xi)$ are in $g_i(K)$. 
	But $g_i(K) \in A_s$ and $x \in W_s(f_j(\eta)) \cap W_s(f_k(\xi))$. 
	Since $W_s$ is $A_s$-separated, it follows that 
	$x \in g_i(K) \cap \omega_1 = K \cap \omega_1$.
\end{proof}

\begin{proposition}
	Let $\theta \in \Sigma$. 
	Then $\pi_\theta \res E_\theta : E_\theta \to \p_\theta$ 
	is a projection mapping.
\end{proposition}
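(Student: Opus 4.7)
The plan is to verify the three defining properties of a projection mapping for $\pi_\theta \restriction E_\theta : E_\theta \to \p_\theta$: that $\pi_\theta$ lands in $\p_\theta$, that it is order-preserving, and that given any $p \in E_\theta$ together with $s \le \pi_\theta(p)$ in $\p_\theta$, there exists some $q \le p$ in $E_\theta$ with $\pi_\theta(q) \le s$. The first two of these are essentially immediate: Lemma 10.3(1) gives $\pi_\theta(p) \in \p_\theta$, and Lemma 10.3(2) gives order-preservation.

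For the lifting property, I would proceed as follows. Given $p \in E_\theta$ and $s \le \pi_\theta(p)$ in $\p_\theta$, I apply Proposition 10.8 verbatim to obtain the quadruple
\[
q_0 := (T_s, Y, D_s \cup D_p, A_s \cup A_p),
\]
where $Y$ is the subtree function specified in that proposition. Proposition 10.8 tells us directly that $q_0$ is a condition in $\p$ which extends both $p$ and $s$. Since $q_0 \le s$ and $s \in \p_\theta$, Lemma 10.3(3) immediately gives $\pi_\theta(q_0) \le s$.

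The final step is to refine $q_0$ into the dense set $E_\theta$. By Lemma 10.7, $E_\theta$ is dense in $\p$, so I can pick any $q \le q_0$ with $q \in E_\theta$. Order-preservation of $\pi_\theta$ (Lemma 10.3(2)) then yields $\pi_\theta(q) \le \pi_\theta(q_0) \le s$, while $q \le p$ holds by transitivity through $q_0$. This verifies the lifting property.

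The genuine work has already been carried out in Proposition 10.8, whose proof required the careful book-keeping in Definition 9.2 and Lemma 10.6 to control both the intersections of subtree functions and the interaction with the adequate side conditions when amalgamating $p$ with $s$. Relative to that, the present statement is a short formal wrap-up: the only subtlety is to remember that one must separately refine $q_0$ into $E_\theta$, rather than expecting $q_0$ itself to lie in $E_\theta$, and to use Lemma 10.3(3) to deduce $\pi_\theta(q_0) \le s$ from $q_0 \le s$ without any further calculation.
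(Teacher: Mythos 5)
Your proof is correct and follows essentially the same route as the paper: both invoke the amalgamation result (your ``Proposition 10.8'' is the paper's Proposition 10.9) to form the condition $(T_s,Y,D_s\cup D_p,A_s\cup A_p)$ extending $p$ and $s$, then pull the result back into $E_\theta$ by density, and conclude via Lemma 10.3. The one small stylistic difference is in how one lands in $E_\theta$: you extend $q_0$ into $E_\theta$ directly using density of $E_\theta$ (Lemma 10.8, not 10.7) and then use order-preservation together with Lemma 10.3(3) to get $\pi_\theta(q)\le\pi_\theta(q_0)\le s$; the paper instead first extends $s$ to ensure $A_s\neq\emptyset$, extends $q_0$ into $D_\theta$, and observes that non-emptiness of $A_s$ propagates to $A_r$ so $r\in E_\theta$, finishing with Lemma 10.3(3) applied to $r\le s$. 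The two are equivalent, and your version is if anything slightly cleaner since it avoids the preliminary extension of $s$. (Also note: your off-by-one in the numbering of the cited proposition and lemma should be corrected to 10.9 and 10.8 respectively.)
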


\begin{proof}
	Clearly, $\pi_\theta$ maps the maximum condition 
	in $D_\theta$ to the maximum condition in $\p_\theta$. 
	The map $\pi_\theta$ is order-preserving by Lemma 10.3(2).

	Suppose that $p \in E_\theta$ and $s \le \pi_\theta(p)$ in $\p_\theta$. 
	We find $r \le p$ in $E_\theta$ such that $\pi_\theta(r) \le s$. 
	By extending further if necessary, 
	assume that $A_s$ is non-empty. 
	Define $Y$ with domain equal to $\dom(W_s) \cup \dom(W_p)$ so that for all 
	$\eta \in \dom(W_s)$, $Y(\eta) = W_s(\eta)$, and 
	for all $\xi \in \dom(W_p) \setminus \dom(W_s)$, 
	$Y(\xi)$ is the downward closure of $W_p(\xi)$ in $T_s$. 
	By Proposition 10.9, $(T_s,Y,D_s \cup D_p,A_s \cup A_p)$ 
	is in $\p$ and extends $p$ and $s$. 
	Since $D_\theta$ is dense, fix 
	$r \le (T_s,Y,D_s \cup D_p,A_s \cup A_p)$ in $D_\theta$. 
	As $A_s$ is non-empty, so is $A_r$, so $r \in E_\theta$. 
	Then $r \le p$, and since $r \le s$, $\pi_\theta(r) \le s$ by Lemma 10.3(3).
\end{proof}

\begin{proposition}
	The forcing poset $\p$ is $\kappa$-c.c.
\end{proposition}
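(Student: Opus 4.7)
The plan is to combine a pressing-down (Fodor-style) argument on $[\kappa]^\omega$ with the centeredness provided by Proposition 8.9. Given a family $\{p_\alpha : \alpha < \kappa\} \subseteq \p$, for each $\alpha$ and each $N \in \mathcal X$ with $p_\alpha \in \sk(N)$, let $q_\alpha^N \le p_\alpha$ be the canonical extension produced by first applying Lemma 6.6 to adjoin $N$ to the side condition and then Lemma 6.7 to $N$-close the resulting adequate set; this ensures $(q_\alpha^N, N) \in \dom(w)$.

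Fix $\alpha$ and consider the function $f_\alpha(N) := \psi^{-1}(w(q_\alpha^N, N))$, defined on the club $\{N \in \mathcal X : p_\alpha \in \sk(N)\}$ (which contains the club $\{N \in \mathcal X : \psi^{-1}(p_\alpha) \in N\}$). By Lemma 8.5 we have $w(q_\alpha^N, N) \in \sk(N)$, and since $\sk(N) \cap H(\kappa) = \psi[N]$ this yields $f_\alpha(N) \in N$; so $f_\alpha$ is regressive on a club in $[\kappa]^\omega$. The pressing-down lemma then supplies a stationary set $\mathcal T_\alpha$ on which $f_\alpha$ takes a constant value $\gamma_\alpha^*$. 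Setting $\vec z_\alpha := \psi(\gamma_\alpha^*) \in H(\omega_1)$, we have $w(q_\alpha^N, N) = \vec z_\alpha$ for all $N \in \mathcal T_\alpha$, so $R_\alpha := \{q_\alpha^N : N \in \mathcal T_\alpha\}$ is $\vec z_\alpha$-robust.

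Now $\vec z_\alpha \in H(\omega_1)$ and $|H(\omega_1)| = 2^{\aleph_0} < \kappa$ by the inaccessibility of $\kappa$, so by pigeonhole there are distinct $\alpha, \beta < \kappa$ with $\vec z_\alpha = \vec z_\beta =: \vec z$. Proposition 8.9 applied to the two $\vec z$-robust sets $R_\alpha$ and $R_\beta$ yields a condition $r \in \p$ with $r \le \sum R_\alpha$ and $r \le \sum R_\beta$; hence $r$ extends some $q_\alpha^N \in R_\alpha$ and some $q_\beta^{N'} \in R_\beta$, and therefore extends both $p_\alpha$ and $p_\beta$, witnessing their compatibility. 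The main points requiring verification are that $w(q_\alpha^N, N)$ genuinely lies in $\sk(N)$ so that $f_\alpha$ is regressive (this is exactly Lemma 8.5), and that the canonical choice of $q_\alpha^N$ is well-defined and $N$-closed so that $(q_\alpha^N, N) \in \dom(w)$; once these are in hand the Fodor argument together with Proposition 8.9 closes the proof without further difficulty.
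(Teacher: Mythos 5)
Your overall strategy---use the robustness mechanism from Section 8 to amalgamate conditions---is appealing, and most of the setup is correct: $q_\alpha^N$ is well-defined, Lemma 8.5 makes $f_\alpha$ regressive on $[\kappa]^\omega$, and the pressing-down lemma yields a constant value $\vec z_\alpha$ on a stationary set. But the pigeonhole step that closes the argument is wrong, and this is a genuine gap, not a presentational issue.

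The claim that $\vec z_\alpha \in H(\omega_1)$ is false. Inspect Definition 8.4: the tuple $w(q_\alpha^N, N)$ includes the component $a = \dom(W_{q_\alpha^N}) \cap N$, which (since $p_\alpha \in \sk(N)$ forces $\dom(W_{p_\alpha}) \subseteq N$) is just $\dom(W_{p_\alpha})$, a finite subset of $\kappa$. It also includes $b = A_{q_\alpha^N} \cap \sk(N)$, a finite set of countable subsets of $\kappa$, and the function $h_1$ whose values are sets of the form $K \cap N$. None of these live in $H(\omega_1)$. Lemma 8.5 only gives $w(q_\alpha^N, N) \in \sk(N)$, which is a subset of $H(\kappa)$ of size $\aleph_0$, not a subset of $H(\omega_1)$. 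The set of possible values of $\vec z_\alpha$ therefore has size $\kappa$, not $2^{\aleph_0}$. Concretely, if you start with an antichain $\{p_\alpha : \alpha < \kappa\}$ whose conditions have pairwise distinct $\dom(W_{p_\alpha})$ (easy to arrange), then the $a$-components of the $\vec z_\alpha$ are pairwise distinct and the pigeonhole finds no collision at all, so your argument produces nothing.

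Fixing this requires an idea beyond pressing down on the $N$'s; you need to reflect into a single structure of size $< \kappa$ so that the relevant invariant really does take $< \kappa$ values. The paper does exactly this: it fixes $Q \prec (H(\chi),\in,\psi,\mathcal X,\p)$ with $|Q| < \kappa$ and $\theta = Q \cap \kappa \in \Sigma$, takes a hypothetical $p$ in the antichain outside $Q$, extends it into the dense set $E_\theta$ (Section 9), projects via $\pi_\theta$ into $\p_\theta \subseteq Q$, and uses maximality of the antichain plus elementarity of $Q$ to find $s \in Q \cap A$ and $r$ with $r \le p$ and $r \le s$, forcing $p = s \in Q$, a contradiction. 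The projection $\pi_\theta$ (Proposition 10.10) is the tool that plays the role your $w$-invariant was supposed to play, but it lands in a set of size $< \kappa$ because it is cut down by the fixed $\theta$, rather than depending on a varying countable $N$. One minor additional remark: even if the pigeonhole had worked, the inference from $r \le \sum R_\alpha$ in $\mathcal B(\p)$ to ``$r$ extends some $q_\alpha^N \in R_\alpha$'' is not what Proposition 8.9 literally gives you; you would instead argue that $\sum R_\alpha \le p_\alpha$ since every member of $R_\alpha$ extends $p_\alpha$, and hence $r \le p_\alpha$.
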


\begin{proof}
	Let $A$ be a maximal antichain of $\p$, and we show that $|A| < \kappa$. 
	Fix a regular cardinal $\chi > \kappa$ such that $A \in H(\chi)$. 
	Fix an elementary substructure $Q$ of 
	$\mathcal B = (H(\chi),\in,\psi,\mathcal X,\p)$ satisfying that 
	$|Q| < \kappa$, $\theta = Q \cap \kappa \in \Sigma$, and $A \in Q$. 
	Note that by elementarity, $Q \cap H(\kappa) = \psi[\theta] = \sk(\theta)$.

	Suppose for a contradiction that $|A| \ge \kappa$. 
	Then in particular, $A$ is not a subset of $Q$, so we can fix some 
	$p \in A \setminus Q$. 
	Fix $q \le p$ in $E_\theta$. 
	Then $\pi_\theta(q) \in \sk(\theta) \subseteq Q$. 
	Since $A$ is maximal, by the elementarity of $Q$ we can fix some 
	$u \in Q \cap \p$ which extends both $\pi_\theta(q)$ 
	and some element $s$ of $Q \cap A$. 
	Then $u \in Q \cap H(\kappa) = \sk(\theta)$, so $u \in \p_\theta$. 
	Since $\pi_\theta \res E_\theta$ is a projection mapping, 
	fix $r \le q$ in $E_\theta$ such that 
	$\pi_\theta(r) \le u$. 
	By Lemma 10.3(1), $r \le \pi_\theta(r) \le u \le s$. 
	So $r$ is below both $p$ and $s$. 
	Since $A$ is an antichain, $p = s$, which is false since $s \in Q$ 
	and $p \notin Q$.
\end{proof}

\begin{corollary}
	The forcing poset $\p$ forces that $\omega_1^V = \omega_1$ and 
	$\kappa = \omega_2$.
\end{corollary}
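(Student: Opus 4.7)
The plan is to combine three facts already established in the excerpt. First, by Corollary 7.3 the forcing $\p$ is proper, and properness preserves $\omega_1$, so $\omega_1^V = \omega_1$ in any $\p$-generic extension. Second, by Proposition 10.11 the forcing $\p$ is $\kappa$-c.c., which implies that all cardinals $\ge \kappa$ are preserved; in particular, $\kappa$ remains a cardinal in the extension. Third, by Proposition 7.5, every cardinal $\mu$ with $\omega_1 < \mu < \kappa$ is collapsed by $\p$.

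Putting these together: in the generic extension, $\omega_1$ is still $\omega_1$, $\kappa$ is still a cardinal, and every ordinal $\mu$ strictly between them has cardinality $\omega_1$ (in fact Proposition 7.5 exhibits, for each such $\mu$, a specific chain of $\omega_1$-many countable sets with union $\mu$). Consequently no cardinal of $V$ in the open interval $(\omega_1,\kappa)$ survives as a cardinal, while $\kappa$ itself does survive; therefore $\kappa$ is the successor of $\omega_1$ in the extension, i.e.\ $\kappa = \omega_2$.

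The argument is essentially bookkeeping—no new obstacle arises. The only thing worth writing out carefully is the observation that $\p$ does not collapse $\omega_1$ (via properness) and does not collapse $\kappa$ (via the $\kappa$-c.c.), which together with Proposition 7.5 pins down the value of $\omega_2$ in the extension as $\kappa$. I would therefore give a short three-sentence proof citing Corollary 7.3, Proposition 10.11, and Proposition 7.5 in sequence, with no further computation required.
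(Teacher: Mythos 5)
Your argument matches the paper's proof exactly: properness (Corollary 7.3) preserves $\omega_1$, the $\kappa$-c.c.\ (Proposition 10.11) preserves $\kappa$, and the collapse of every intermediate cardinal (Proposition 7.4, which you misnumber as 7.5) then forces $\kappa = \omega_2$. No gap; the only slip is the reference number.
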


\begin{proof}
	By Corollary 7.3, Proposition 7.4, and Proposition 10.11.
\end{proof}

Combining Corollary 10.12 with Propositions 7.6 and 7.7, we have the following 
theorem, where $\dot G$ is the canonical $\p$-name for a generic filter on $\p$.

\begin{thm}
	The forcing poset $\p$ forces that $T_{\dot G}$ is a normal infinitely splitting 
	Aronszajn tree which is strongly non-saturated.
\end{thm}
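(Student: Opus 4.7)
The plan is to observe that Theorem 10.13 is essentially a bookkeeping consolidation of results that have already been established in Sections 7 and 10, so no new combinatorial argument is needed; I would simply cite the ingredients in the right order.

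First I would invoke Proposition 7.6 to get, in $V[G]$, that $T_G$ is a normal infinitely splitting $\omega_1$-tree and that $\{W_G(\eta) : \eta < \kappa\}$ is a family of uncountable downwards closed subtrees of $T_G$ which is pairwise strongly almost disjoint. Next I would invoke Proposition 7.7 to conclude that $T_G$ has no cofinal branch in $V[G]$, upgrading the $\omega_1$-tree from Proposition 7.6 to an Aronszajn tree. At this point the only remaining assertion is that the strongly almost disjoint family has size $\omega_2$ in $V[G]$, which is exactly the content of strong non-saturation.

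For the cardinality count I would use Corollary 10.12, which states that $\p$ forces $\omega_1^V = \omega_1$ and $\kappa = \omega_2$. Consequently the index set $\kappa$ of the family $\{W_G(\eta) : \eta < \kappa\}$ has size $\omega_2$ in $V[G]$, and since the $W_G(\eta)$ are pairwise distinct (they are uncountable downwards closed subtrees of $T_G$ indexed faithfully by $\eta$, as can be seen from the genericity arguments of Section 7 combined with the $A$-separation condition), the family witnesses strong non-saturation per the definition recalled in the introduction.

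There is no real obstacle here: the entire technical work — properness, $\kappa$-c.c., preservation of $\omega_1$, collapse of intermediate cardinals, absence of cofinal branches in $T_{\dot G}$, and the strongly almost disjoint property of the $W_G(\eta)$'s — has already been done. The only mild subtlety is to make sure one invokes the collapse corollary to convert the abstract index set $\kappa$ into genuine $\omega_2$-many distinct subtrees in the generic extension, rather than leaving the count at the ground-model cardinal $\kappa$.
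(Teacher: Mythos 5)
Your proposal is correct and matches the paper's own proof exactly: the paper establishes Theorem~10.13 by combining Corollary~10.12 with Propositions~7.6 and~7.7 in precisely the way you describe. The only remark worth adding is that the paper does not bother to belabor the point you flag at the end (that the $W_G(\eta)$ remain distinct), since uncountability plus pairwise strong almost disjointness already makes this immediate.
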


\section{The Quotient Forcing}

With a projection mapping at hand, we are now in a position to analyze 
the quotient forcing in an intermediate extension. 
In this section, we provide some information about the quotient forcing which we use in 
the next section to show that it is Y-proper on a stationary set.

For the remainder of the section, fix $\theta \in \Sigma$. 
Recall that $\pi_\theta \res E_\theta$ is a projection mapping 
from $E_\theta$ into $\p_\theta = \p \cap \sk(\theta)$. 
For any generic filter $H$ on $\p_\theta$, define in $V[H]$ 
the quotient forcings 
$E_\theta / H = \{ q \in E_\theta : \pi_\theta(q) \in H \}$ and 
$$
\p / H = \{ p \in \p : \exists q \in E_\theta / H  \ (q \le p) \},
$$
considered as suborders of $\p$. 

\begin{lemma}
	Let $H$ be a generic filter on $\p_\theta$. 
	\begin{enumerate}
		\item If $q \in \p / H$, $p \in \p$, and $q \le p$, then $p \in \p / H$.
		\item $\p / H$ is the set of all $q \in \p$ which are compatible in $\p$ 
		with every member of $H$.
		\item If $q \in \p / H$ and $s \in H$, then there exists some 
		$r \in \p / H$ which extends $q$ and $s$.
		\item If in $V$, $\mathcal D$ is a dense open subset of $\p$, then in $V[H]$, 
		$\mathcal D \cap (\p / H)$ is a dense open subset of $\p / H$.
	\end{enumerate}
\end{lemma}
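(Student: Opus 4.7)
The plan is to prove the four clauses in order, using the definition of $\p/H$ together with the fact that $\pi_\theta \res E_\theta : E_\theta \to \p_\theta$ is a projection (Proposition 10.10) and that $E_\theta$ is dense in $\p$ (Lemma 10.8). The unifying observation is that for any $s \in \p_\theta$ we have $\pi_\theta(s) = s$, since every component of $s$ lies in $\sk(\theta)$; in particular $\dom(W_s) \subseteq \sk(\theta) \cap \kappa = \theta$.

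Clause (1) is immediate: if $q' \in E_\theta/H$ with $q' \le q$ and $q \le p$, then $q' \le p$ witnesses $p \in \p/H$.

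For (3), given $q \in \p/H$ and $s \in H$, fix $q' \in E_\theta/H$ with $q' \le q$. Then $\pi_\theta(q'), s \in H$, so by filterness there is $t \in H$ with $t \le \pi_\theta(q'), s$. By the projection property, there exists $r \le q'$ in $E_\theta$ with $\pi_\theta(r) \le t$. Hence $\pi_\theta(r) \in H$, so $r \in E_\theta/H \subseteq \p/H$, and by Lemma 10.3(1) $r \le \pi_\theta(r) \le s$ and $r \le q' \le q$.

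For the forward direction of (2), apply (3): if $q \in \p/H$ and $s \in H$, a common extension exists in $\p/H \subseteq \p$. For the reverse, suppose $q \in \p$ is compatible in $\p$ with every member of $H$. In $V$, define
\[
D = \{\, s \in \p_\theta : s \perp q \text{ in } \p\,\} \cup
\{\, s \in \p_\theta : \exists\, q'' \le q \text{ in } E_\theta \text{ with } \pi_\theta(q'') \le s\,\}.
\]
I claim $D = \p_\theta$ (hence $D$ is certainly dense). Given $s \in \p_\theta$, if $s$ is compatible with $q$ in $\p$, choose a common extension and use density of $E_\theta$ to get $r \in E_\theta$ with $r \le q,s$; then by Lemma 10.3(3) $\pi_\theta(r) \le s$, so $s$ satisfies the second clause (note $\pi_\theta(s) = s$). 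By genericity pick $s \in H \cap D$; by hypothesis the first clause fails for $s$, so we get $q'' \le q$ in $E_\theta$ with $\pi_\theta(q'') \le s \in H$. Thus $\pi_\theta(q'') \in H$ and $q \in \p/H$.

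For (4), openness of $\mathcal D \cap (\p/H)$ in $\p/H$ is immediate from (1) and openness of $\mathcal D$. For density, given $q \in \p/H$, fix $q' \in E_\theta/H$ with $q' \le q$. Working in $V$, define
\[
D' = \{\, s \in \p_\theta : s \le \pi_\theta(q') \text{ and } \exists\, r \in \mathcal D \cap E_\theta
\text{ with } r \le q' \text{ and } \pi_\theta(r) \le s\,\}.
\]
Given any $s \le \pi_\theta(q')$ in $\p_\theta$, the projection property yields $r_0 \le q'$ in $E_\theta$ with $\pi_\theta(r_0) \le s$; since $\mathcal D$ is dense open and $E_\theta$ is dense, we may extend $r_0$ to $r \in \mathcal D \cap E_\theta$, and then $\pi_\theta(r) \le \pi_\theta(r_0) \le s$, so $\pi_\theta(r) \in D'$ lies below $s$. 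Hence $D'$ is dense below $\pi_\theta(q') \in H$, so $H$ meets $D'$, and any witness $r$ for the chosen $s \in H \cap D'$ satisfies $r \le q$, $r \in \mathcal D$, and $\pi_\theta(r) \in H$, giving $r \in \mathcal D \cap (\p/H)$.

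The main obstacle is keeping the two sides of the quotient straight: the proofs of (2) and (4) both rely on transferring a density statement from $V$ (where we can use Proposition 10.10 together with density of $E_\theta$ and of $\mathcal D$) to a compatibility-with-$H$ statement in $V[H]$, and the key subtlety is the identity $\pi_\theta(s) = s$ for $s \in \p_\theta$, which lets us conclude $\pi_\theta(r) \le s$ whenever $r \le s$ in $E_\theta$ via Lemma 10.3(3).
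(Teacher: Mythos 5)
The paper merely asserts that (2)--(4) have routine density proofs, so your task is to flesh these out, and your general strategy (using the projection property of $\pi_\theta\res E_\theta$ together with the density of $E_\theta$ and $\mathcal D$) is the right one. Clause (1) is fine. However, clauses (2), (3), and (4) all contain the same genuine error: in each case you obtain $\pi_\theta(r) \le t$ (or $\pi_\theta(q'')\le s$) for some $t\in H$ and then conclude $\pi_\theta(r)\in H$. This inference goes the wrong way. A filter is closed \emph{upward} in the order, so from $\pi_\theta(r)\le t$ and $t\in H$ nothing follows about whether $\pi_\theta(r)$ belongs to $H$; the projection property only guarantees $\pi_\theta(r)\le t$, never $t\le\pi_\theta(r)$, and so you cannot descend into $H$ this way.

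The fix is to make the elements of the auxiliary dense set themselves be projections. For (3), rather than applying the projection property to a single $t$, define in $V$ the set $D = \{\pi_\theta(r) : r\in E_\theta,\ r\le q'\}$, note that by the projection property $D$ is dense below $\pi_\theta(q')$, and use genericity to find $\pi_\theta(r)\in H$ with $\pi_\theta(r)\le t$ for a given $t\le \pi_\theta(q'),s$ in $H$; then $r\le \pi_\theta(r)\le t\le s$ and $r\le q'\le q$ with $\pi_\theta(r)\in H$. For (2), replace the second disjunct of your $D$ by $\{\pi_\theta(q'') : q''\le q\ \text{in}\ E_\theta\}$; the density argument you already gave shows this is dense, and for $s\in H\cap D$ compatible with $q$ you then get $s=\pi_\theta(q'')\in H$ directly. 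For (4), replace the second clause of $D'$ by $s=\pi_\theta(r)$ (equivalently take $D'$ to be the image of $\{r\in\mathcal D\cap E_\theta : r\le q'\}$ under $\pi_\theta$); your density argument carries over, and now $s\in H\cap D'$ gives $\pi_\theta(r)=s\in H$, hence $r\in\mathcal D\cap E_\theta/H\subseteq\mathcal D\cap(\p/H)$ with $r\le q$.
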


\begin{proof}
	(1) is immediate. 
	(2), (3), and (4) have routine proofs using density arguments in $V$.
\end{proof}

The following lemma is standard.

\begin{lemma}
	\begin{enumerate}
		\item If $G$ is a $V$-generic filter on $\p$, 
		then $H = \pi_\theta[G] = G \cap \p_\theta$ 
		is a $V$-generic filter on $\p_\theta$, 
		$G$ is a $V[H]$-generic filter on $\p / H$, and $V[G] = V[H][G]$.
		\item If $H$ is a generic filter on $\p_\theta$ and 
		$G$ is a $V[H]$-generic filter on $\p / H$, then 
		$G$ is a $V$-generic filter on $\p$, $H = G \cap \p_\theta = \pi_\theta[G]$, 
		and $V[G] = V[H][G]$.
	\end{enumerate}
\end{lemma}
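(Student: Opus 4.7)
The plan is to verify the standard projection-quotient correspondence, leveraging that $\pi_\theta$ restricted to the dense set $E_\theta$ is a projection into $\p_\theta$ (Proposition 10.10) together with the pointwise facts of Lemma 10.3 and the density of $E_\theta$ (Lemma 10.8). The first preliminary observation I would make is that $\pi_\theta$ acts as the identity on $\p_\theta$: if $s \in \p_\theta \subseteq \sk(\theta)$, then by elementarity $T_s,W_s,D_s,A_s \in \sk(\theta)$, and since these are finite, $\dom(W_s) \subseteq \sk(\theta) \cap \kappa = \theta$ and $A_s \subseteq \sk(\theta)$, so every restriction in Definition 10.2 is trivial. Combined with $p \le \pi_\theta(p)$ from Lemma 10.3(1), this yields $\pi_\theta[G] = G \cap \p_\theta$ in both (1) and (2): the inclusion $\subseteq$ follows from upward closure of $G$, and $\supseteq$ from the identity $\pi_\theta(s) = s$ for $s \in \p_\theta$.

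For (1), the first main step is the genericity of $H := \pi_\theta[G]$ on $\p_\theta$. That $H$ is a filter is straightforward: $\pi_\theta$ is order-preserving by Lemma 10.3(2), and given two elements of $H$, lift them to compatible members of $G$ and project a common extension. Given a dense open $D \subseteq \p_\theta$ in $V$, I claim $\pi_\theta^{-1}[D] \cap E_\theta$ is dense in $E_\theta$: for $p \in E_\theta$, pick $s \le \pi_\theta(p)$ in $D$, and use the projection property to obtain $r \le p$ in $E_\theta$ with $\pi_\theta(r) \le s$, hence $\pi_\theta(r) \in D$. Since $E_\theta$ is dense in $\p$, $\pi_\theta^{-1}[D]$ meets $G$, showing $D$ meets $H$. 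Next, $G \subseteq \p/H$ is immediate since every $q \in G$ satisfies $\pi_\theta(q) \in H$, and $G$ remains a filter on $\p/H$. Genericity on $\p/H$ follows from Lemma 11.1(4): a dense open $\mathcal D' \subseteq \p/H$ in $V[H]$ is hit because one can find (via a standard maximal-antichain argument in $V$) a dense open $\mathcal D \subseteq \p$ in $V$ with $\mathcal D \cap (\p/H) \subseteq \mathcal D'$, and $G \cap \mathcal D \ne \emptyset$ by $V$-genericity.

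For (2) the conversion is routine. The main content is $V$-genericity of $G$ on $\p$: given a dense open $\mathcal D \subseteq \p$ in $V$, Lemma 11.1(4) gives that $\mathcal D \cap (\p/H)$ is dense open in $\p/H$ in $V[H]$, and $V[H]$-genericity of $G$ on $\p/H$ then gives $G \cap \mathcal D \ne \emptyset$. That $G$ is a filter on $\p$ is immediate since $\p/H \subseteq \p$ and compatibility in $\p/H$ implies compatibility in $\p$ with a witness from $\p/H$. The identities $H = G \cap \p_\theta = \pi_\theta[G]$ proceed as in (1), and $V[G] = V[H][G]$ is automatic in both parts since $H$ is definable from $G$ by $H = G \cap \p_\theta$. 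There is no real obstacle here; the only care needed is to always route arguments involving lifting through $E_\theta$, since the projection property is only guaranteed on that dense set.
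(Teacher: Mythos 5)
The paper states this lemma with no proof, remarking only that it is ``standard,'' so there is no argument in the paper to compare against; you are supplying a proof that the authors chose to omit. Your overall approach---exploiting that $\pi_\theta$ is a projection on the dense set $E_\theta$ (Proposition 10.10) and routing every lifting argument through $E_\theta$---is the right one, and most of the steps are sound. In particular, the observation that $\pi_\theta$ restricts to the identity on $\p_\theta$ (by elementarity of $\sk(\theta)$ and finiteness of the components) and hence $\pi_\theta[G] = G \cap \p_\theta$, the density of $\pi_\theta^{-1}[D] \cap E_\theta$ for dense open $D \subseteq \p_\theta$ via the projection property, the use of Lemma 11.1(4) for $V$-genericity in part (2), and the mutual definability giving $V[G] = V[H][G]$ are all correct.

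There is, however, one genuine gap, in the step asserting that ``$G \subseteq \p/H$ is immediate since every $q \in G$ satisfies $\pi_\theta(q) \in H$.'' The implication $\pi_\theta(q) \in H \Rightarrow q \in \p/H$ is false in general: the paper explicitly points this out in the paragraph following Lemma 11.3, where it constructs a condition $p$ with $\pi_\theta(p) \in H$ but $p \notin \p/H$. Recall that by Definition, $p \in \p/H$ requires the existence of some $q \le p$ with $q \in E_\theta$ and $\pi_\theta(q) \in H$; membership of $\pi_\theta(p)$ alone in $H$ is not enough. The correct argument is short and uses exactly the density you invoke elsewhere: given $p \in G$, by genericity of $G$ and density of $E_\theta$ (Lemma 10.8) choose $q \le p$ with $q \in G \cap E_\theta$; then $\pi_\theta(q) \in \pi_\theta[G] = H$, so $q \in E_\theta/H$, and $q$ witnesses $p \in \p/H$. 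You acknowledge at the end that care is needed to ``route arguments involving lifting through $E_\theta$,'' but this particular step, as written, does not do so and reproduces precisely the false converse the paper warns about. With that fix, the proof is fine; the sketch of $V[H]$-genericity of $G$ on $\p/H$ is compressed but follows the usual maximal-antichain pattern for projections and raises no essential concern.
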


We occasionally write $\dot H$ for the canonical $\p_\theta$-name 
for a generic filter on $\p_\theta$ when working in $V$.

For the remainder of the section, fix a generic filter $H$ on $\p_\theta$.

\begin{lemma}
	If $p \in \p / H$, then $\pi_\theta(p) \in H$. 
	Consequently, $E_\theta / H = E_\theta \cap (\p / H)$.
\end{lemma}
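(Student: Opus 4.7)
The plan is essentially to chase definitions: unpack what it means for $p$ to be in $\p / H$, apply the monotonicity of $\pi_\theta$ from Lemma 10.3(2), and use upward closure of the filter $H$. I do not expect any real obstacle here.

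For the first statement, I would proceed as follows. Suppose $p \in \p / H$. By the definition of $\p / H$ given just before this lemma, there exists some $q \in E_\theta / H$ with $q \le p$. By the definition of $E_\theta / H$, this means $q \in E_\theta$ and $\pi_\theta(q) \in H$. Since $q \le p$, Lemma 10.3(2) yields $\pi_\theta(q) \le \pi_\theta(p)$. Note that $\pi_\theta(p) \in \p_\theta$ by Lemma 10.3(1), so it is a legitimate candidate for membership in $H$. Since $H$ is a filter on $\p_\theta$ containing $\pi_\theta(q)$, it must contain every weaker condition, and in particular $\pi_\theta(p) \in H$.

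For the consequence, I would handle both inclusions of $E_\theta / H = E_\theta \cap (\p / H)$ directly. For the forward inclusion, if $p \in E_\theta / H$, then $p \in E_\theta$ and $\pi_\theta(p) \in H$; since $p \le p$ and $p \in E_\theta / H$, the definition of $\p / H$ gives $p \in \p / H$, hence $p \in E_\theta \cap (\p / H)$. For the reverse inclusion, if $p \in E_\theta \cap (\p / H)$, then $p \in E_\theta$, and by the first part of the lemma $\pi_\theta(p) \in H$, so $p \in E_\theta / H$ by definition. This completes the proof with no hard step.
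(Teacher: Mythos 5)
Your proof is correct and follows the standard unwinding of the definitions of $\p / H$ and $E_\theta / H$ together with the order-preservation of $\pi_\theta$ and upward closure of the filter $H$; this is the intended argument, which the paper omits as routine.
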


The proof is easy.

The converse of Lemma 11.3 is false in general. 
For example, if $\{ M, N \} \subseteq \mathcal X$ is not adequate, 
$(\emptyset,\emptyset,\emptyset,\{ M \}) \in H$, 
and $N \notin \sk(\theta)$, then 
$\pi_\theta(\emptyset,\emptyset,\emptyset,\{ N \}) = 
(\emptyset,\emptyset,\emptyset,\emptyset) \in H$, 
but $(\emptyset,\emptyset,\emptyset,\{ M \})$ is incompatible with 
$(\emptyset,\emptyset,\emptyset,\{ N \})$. 
Therefore, the condition $(\emptyset,\emptyset,\emptyset,\{ N \})$ 
is not in $\p / H$ by Lemma 11.1(2).

\begin{definition}
Define $(T_H,<_H)$ by:
	\begin{itemize}
		\item $x \in T_H$ if there exists some $p \in H$ such that $x \in T_p$;
		\item $x <_{H} y$ if there exists some $p \in H$ such that $x <_p y$.
	\end{itemize}
\end{definition}

As usual, we abbreviate $(T_H,<_H)$ by $T_H$.

\begin{lemma}
	If $G$ is a $V[H]$-generic filter on $\p / H$, then $T_G = T_H$.
\end{lemma}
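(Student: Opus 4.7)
The plan is to show both inclusions $T_H \subseteq T_G$ and $T_G \subseteq T_H$ (and similarly for the tree orders), exploiting the simple observation that the projection map $\pi_\theta$ does not alter the working tree part of a condition.

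First I would handle the easy inclusion $T_H \subseteq T_G$. By Lemma 11.2(2), $H = G \cap \p_\theta$, so every $p \in H$ lies in $G$. Hence whenever $x \in T_p$ for some $p \in H$, we have $p \in G$ and therefore $x \in T_G$; the same argument applies to the order $<_H$. This gives $T_H \subseteq T_G$ as labeled trees.

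For the reverse inclusion $T_G \subseteq T_H$, the key observation is that for every $p \in \p$, Definition 10.2 gives $T_{\pi_\theta(p)} = T_p$ and $<_{\pi_\theta(p)} \, = \, <_p$; the projection only trims the subtree function, the $D$-part, and the side condition, but leaves the tree itself untouched. Now suppose $x \in T_G$, so there is some $p \in G$ with $x \in T_p$. By Lemma 10.3(1), $\pi_\theta(p) \in \p_\theta$ and $p \le \pi_\theta(p)$, so $\pi_\theta(p) \in G$, and since $\pi_\theta(p) \in \p_\theta$, Lemma 11.2(2) gives $\pi_\theta(p) \in G \cap \p_\theta = H$. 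But $T_{\pi_\theta(p)} = T_p \ni x$, so $x \in T_H$. The same argument, applied to a witness of $x <_G y$, shows $x <_H y$.

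There is no real obstacle here: the entire argument reduces to the trivial identity $T_{\pi_\theta(p)} = T_p$ together with the standard genericity facts from Lemma 11.2. The only point that deserves explicit mention in the write-up is that we do not need $p$ to be in the dense set $E_\theta$ for $\pi_\theta(p)$ to make sense as a member of $\p_\theta$ — that is guaranteed for all $p \in \p$ by Lemma 10.3(1) — so the projection-into-$H$ step works uniformly for every condition in $G$.
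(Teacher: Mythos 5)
Your argument is correct and is essentially the one the paper has in mind: the paper's proof of this lemma is the one-liner ``This follows easily from Lemma 11.2 and the fact that for any $p \in \p$, $T_p = T_{\pi_\theta(p)}$,'' and your write-up simply unwinds exactly those two ingredients in both directions. The only thing worth noting is that for the step $\pi_\theta(p) \in G$ you should cite either Lemma 11.1(1) (to see $\pi_\theta(p) \in \p/H$) or, more directly, Lemma 11.2(2) which tells you $G$ is a $V$-generic filter on all of $\p$ and hence upward closed there.
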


\begin{proof}
	This follows easily from Lemma 11.2 and the fact that for any $p \in \p$, 
	$T_p = T_{\pi_\theta(p)}$.
\end{proof}

\begin{lemma}
	Let $p \in E_\theta$. 
	Assume that $p_0,\ldots,p_{d-1}$ are in $\p / H$, where $1 < d < \omega$, 
	$p = p_0 \oplus \cdots \oplus p_{d-1}$, and $p$ extends 
	each of $p_0,\ldots,p_{d-1}$. 
	Suppose that 
	for all $i < j < d$, for all $x \in T_{p_i} \setminus T_{p_j}$ 
	and for all $y \in T_{p_j} \setminus T_{p_i}$, 
	$x$ and $y$ are incomparable in $T_H$. 
	Then $p \in \p / H$.
\end{lemma}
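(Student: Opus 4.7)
The plan is to reduce the problem to showing $\pi_\theta(p) \in H$: once this is established, the assumption $p \in E_\theta$ combined with the definition of $E_\theta/H$ immediately yields $p \in E_\theta/H \subseteq \p/H$. To produce $\pi_\theta(p) \in H$, I will find a single $s \in H$ lying below $\pi_\theta(p)$, using Lemma 6.10 applied to the projections $\pi_\theta(p_i)$, with $s$ playing the role of the common refinement $r$ in that lemma.

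First I would verify that each $\pi_\theta(p_i) \in H$: since $p_i \in \p/H$, the definition of $\p/H$ produces $q_i \in E_\theta/H$ with $q_i \le p_i$, so $\pi_\theta(q_i) \in H$ by the definition of $E_\theta/H$, and $\pi_\theta(q_i) \le \pi_\theta(p_i)$ by Lemma 10.3(2); upward closure of $H$ then gives $\pi_\theta(p_i) \in H$. Using that $H$ is a filter, choose $s \in H$ with $s \le \pi_\theta(p_i)$ for every $i < d$. Since $T_{\pi_\theta(p_i)} = T_{p_i}$ and $s$ extends each $\pi_\theta(p_i)$, $T_s$ is an end-extension of each $T_{p_i}$, so every pair $x \in T_{p_i} \setminus T_{p_j}$, $y \in T_{p_j} \setminus T_{p_i}$ lies in $T_s$. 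By hypothesis such pairs are incomparable in $T_H$; if any were $T_s$-comparable, then $s \in H$ together with $<_s \subseteq <_H$ would produce $T_H$-comparability, a contradiction, so the pairs are incomparable in $T_s$.

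Finally, by Lemma 10.4, $\pi_\theta(p) = \pi_\theta(p_0) \oplus \cdots \oplus \pi_\theta(p_{d-1})$ is a condition in $\p$ extending each $\pi_\theta(p_i)$ (using Lemma 10.3(2) and the assumption that $p$ extends every $p_i$). The hypotheses of Lemma 6.10 are now satisfied with $\pi_\theta(p_0), \ldots, \pi_\theta(p_{d-1})$ in place of $p_0, \ldots, p_{d-1}$ and $s$ in place of $r$, so its conclusion yields $s \le \pi_\theta(p)$; since $s \in H$, upward closure of $H$ gives $\pi_\theta(p) \in H$, as desired. The only genuinely non-cosmetic step is the translation of $T_H$-incomparability into $T_s$-incomparability at a single chosen witness $s \in H$, which is precisely what makes the tree-incomparability hypothesis on $T_H$ the exact ingredient that Lemma 6.10 requires; everything else is routine bookkeeping with the projection $\pi_\theta$ and the filter $H$.
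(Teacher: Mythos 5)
Your proof is correct and follows essentially the same route as the paper: reduce to showing $\pi_\theta(p) \in H$, apply Lemma 10.4 to write $\pi_\theta(p)$ as the amalgamation of the $\pi_\theta(p_i)$, pick a common lower bound in $H$, transfer the $T_H$-incomparability to that lower bound, and invoke Lemma 6.10. The only minor difference is that you rederive $\pi_\theta(p_i) \in H$ from the definitions rather than citing Lemma 11.3, which is immaterial.
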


\begin{proof}
	The condition $\pi_\theta(p_0 \oplus \cdots \oplus p_{d-1})$ extends 
	each of $\pi_\theta(p_0),\ldots,\pi_\theta(p_{d-1})$. 
	By Lemma 10.4, $\pi_\theta(p) = \pi_\theta(p_0 \oplus \cdots \oplus p_{d-1}) = 
	\pi_\theta(p_0) \oplus \cdots \oplus \pi_\theta(p_{d-1})$. 
	It suffices to show that 
	$\pi_\theta(p_0) \oplus \cdots \oplus \pi_\theta(p_{d-1}) \in H$, 
	for then $\pi_\theta(p) \in H$ so $p \in E_\theta / H \subseteq \p / H$. 
	As $p_0,\ldots,p_{d-1}$ are all in $\p / H$, 
	for all $i < d$, $\pi_\theta(p_i) \in H$. 
	Fix $r \in H$ 
	such that $r \le \pi_\theta(p_i)$ for all $i < d$. 

	For all $i < d$, $T_{p_i} = T_{\pi_\theta(p_i)}$. 
	So for all $i < j < d$, for all 
	$x \in T_{\pi_\theta(p_i)} \setminus T_{\pi_\theta(p_j)}$ and for all 
	$y \in T_{\pi_\theta(p_j)} \setminus T_{\pi_\theta(p_i)}$, 
	$x$ and $y$ are incomparable in $T_H$. 
	Since $r \in H$, all such $x$ and $y$ are incomparable in $T_r$ as well. 
	Applying Lemma 6.10 to the conditions 
	$\pi_\theta(p_0),\ldots,\pi_\theta(p_{d-1})$ and $r$, 
	we get that $r \le \pi_\theta(p_0) \oplus \cdots \oplus \pi_\theta(p_{d-1})$.
	Since $r \in H$, $\pi_\theta(p_0) \oplus \cdots \oplus \pi_\theta(p_{d-1}) \in H$.
\end{proof}

\begin{definition}
	Define $\mathcal X(H)$ to be the set of all $N \in \mathcal X$ 
	such that $(\emptyset,\emptyset,\emptyset,\{N \cap \theta\}) \in H$.
\end{definition}

\begin{lemma}
	In $V[H]$, $\mathcal X(H)$ is a stationary subset of $[\kappa]^\omega$.
\end{lemma}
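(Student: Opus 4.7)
The plan is to show that for any $\p_\theta$-name $\dot C$ forced by some $s_0 \in \p_\theta$ to be a club subset of $[\kappa]^\omega$, there is $\bar s_0 \le s_0$ in $\p_\theta$ and some $N^* \in \mathcal X$ with $\bar s_0 \Vdash_{\p_\theta} N^* \in \dot C$ and $\bar s_0 \le (\emptyset,\emptyset,\emptyset,\{N^* \cap \theta\})$. This combination gives $\bar s_0 \Vdash N^* \in \mathcal X(\dot H) \cap \dot C$, so that $\mathcal X(H)$ meets $\dot C^H$ in $V[H]$, as required.

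First I fix a regular cardinal $\chi > \kappa$ and take a countable elementary substructure $M^*$ of the expanded structure $(H(\chi),\in,\psi,\mathcal X,\p,\p_\theta,\pi_\theta,\dot C,s_0,\theta)$ with $N^* := M^* \cap \kappa \in \mathcal X$ (club many such $M^*$ exist). Set $N^{**} := N^* \cap \theta$; since $\theta \in \Lambda$, the closure properties of $\mathcal X$ give $N^{**} \in \mathcal X$. By Lemma 5.7 we have $M^* \cap H(\kappa) = \sk(N^*)$, and since $\sk(K) = \psi[K]$ whenever $K \in \mathcal X$, one sees $\sk(N^*) \cap \sk(\theta) = \sk(N^{**})$, so $s_0 \in \sk(N^{**})$. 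By Lemma 6.6, $\bar s_0 := s_0 + N^{**}$ is in $\p$ and extends $s_0$; since all its components lie in $\sk(\theta)$, $\bar s_0 \in \p_\theta$.

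The central step will be showing that $\bar s_0$ is $(M^*,\p_\theta)$-generic. By Theorem 7.2, $s_0 + N^*$ is $(M^*,\p)$-generic in $\p$. Using Lemma 5.16 (taking $\beta = \theta$ and noting that $N^{**} = N^* \cap \theta$ is a member of the adequate set already containing $N^{**}$), together with a routine check of $A$-separation using that $\dom(W_{s_0}) \subseteq \theta$, the condition $q' := (T_{s_0},W_{s_0},D_{s_0},A_{s_0} \cup \{N^*,N^{**}\})$ lies in $\p$ and extends both $s_0 + N^*$ and $\bar s_0$. So $q'$ is also $(M^*,\p)$-generic and $\pi_\theta(q') = \bar s_0$. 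For any dense open $D \subseteq \p_\theta$ in $M^*$, the preimage $D' := \pi_\theta^{-1}(D)$ is in $M^*$, is downward closed in $\p$, and is dense in $\p$: given $p \in \p$, pass to an extension in $E_\theta$ (dense by Lemma 10.8), then use the projection property of $\pi_\theta \res E_\theta$ (Proposition 10.10) to lift an extension of $\pi_\theta(p)$ in $D$ back to $\p$. By $\p$-genericity of $q'$, there is $r \in D' \cap M^* \cap \dot G^\p$ in the generic extension, whence $\pi_\theta(r) \in D \cap M^* \cap \dot H$. This translates, via the standard fact that forcing statements about $\dot H$ below a condition $\bar s_0 \in \p_\theta$ agree whether computed in $\p$ or in $\p_\theta$, into $\bar s_0 \Vdash_{\p_\theta} D \cap M^* \cap \dot H \ne \emptyset$.

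Finally, with $(M^*,\p_\theta)$-genericity of $\bar s_0$ in hand, the standard properness argument for clubs on $[\kappa]^\omega$ applies: in any $\p_\theta$-generic extension containing $\bar s_0$, $M^*[\dot H] \cap \kappa = N^*$, and enumerating $M^*[\dot H] \cap \dot C^{\dot H}$ as an increasing sequence with unions covering $N^*$ shows $N^* \in \dot C^{\dot H}$. Hence $\bar s_0 \Vdash_{\p_\theta} N^* \in \dot C$, and since $N^{**} \in A_{\bar s_0}$ the condition $\bar s_0$ is below $(\emptyset,\emptyset,\emptyset,\{N^* \cap \theta\})$, yielding $\bar s_0 \Vdash N^* \in \mathcal X(\dot H) \cap \dot C$. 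The main obstacle is the genericity transfer in the third paragraph; the key device is that the projection $\pi_\theta \res E_\theta$ converts ``$(M^*,\p)$-generic below $q'$'' into ``$(M^*,\p_\theta)$-generic below $\pi_\theta(q') = \bar s_0$'', allowing us to piggyback on Theorem 7.2 rather than redevelop properness for $\p_\theta$ from scratch.
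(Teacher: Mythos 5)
Your proof is correct, and it takes a genuinely different route than the paper's. The paper uses the closure-under-$F$ formulation of stationarity: it takes $M$ containing the $\p_\theta$-name $\dot F$, lets $N = M \cap \kappa$, observes that $p+N$ is $(M,\p)$-generic (Theorem 7.2) and hence forces $N$ closed under $\dot F$, extends $p+N$ to $r \in D_\theta$, and then transfers the single statement ``$N$ is closed under $\dot F$'' from $r$ forcing it in $\p$ to $\pi_\theta(r)$ forcing it in $\p_\theta$ (using Lemma 11.2, i.e.\ the projection). You instead establish full $(M^*,\p_\theta)$-genericity of $\bar s_0 = s_0 + N^{**}$ by transferring each dense-set statement, then invoke the standard properness$\Rightarrow$stationarity argument for clubs in $[\kappa]^\omega$. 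Both proofs pivot on the same device --- the projection $\pi_\theta \res E_\theta$ lets one pull a forced $\Sigma_1$ statement about $\dot H$ from $\p$ down to $\p_\theta$ --- but you package it as a genericity transfer while the paper does the minimum necessary. Your route is longer but yields slightly more information (genericity of $\bar s_0$).

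Two points in your write-up deserve tightening. First, the phrase ``forcing statements about $\dot H$ below a condition $\bar s_0 \in \p_\theta$ agree whether computed in $\p$ or in $\p_\theta$'' misstates what is actually needed: you are transferring from $q' \Vdash_{\p}$ to $\bar s_0 \Vdash_{\p_\theta}$ where $q' \lneq \bar s_0$, not comparing the two forcing relations for the same condition. The correct statement is: if $q' \in E_\theta$ and $q' \Vdash_{\p} \varphi$ for $\varphi$ concerning $V^{\p_\theta}$, then since $\pi_\theta\res E_\theta$ is a projection, $\pi_\theta(q') = \bar s_0 \Vdash_{\p_\theta} \varphi$. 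Second, this transfer requires $q' \in E_\theta$, which you never verify. It does hold: $A_{q'} = A_{s_0} \cup \{N^*, N^{**}\}$ is $\theta$-closed (since $A_{s_0} \subseteq \sk(\theta)$ and $N^* \cap \theta = N^{**} \in A_{q'}$), $\dom(W_{q'}) = \dom(W_{s_0}) \subseteq \theta$ makes the function $f$ of Definition 9.2 empty and clauses (a), (c), (d), (e) vacuous, and $g(N^*) = N^{**}$ verifies (b) and (f). So $q' \in D_\theta$ with $A_{q'} \ne \emptyset$, hence $q' \in E_\theta$. With these two points supplied, your argument is sound.
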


\begin{proof}
	We give a density argument in $V$. 
	Let $p \in \p_\theta$ and suppose that $\dot F$ is a $\p_\theta$-name 
	for a function from $\kappa^{<\omega}$ to $\kappa$. 
	We find $s \le p$ in $\p_\theta$ and $N \in \mathcal X$ such that 
	$s$ forces that $N$ is closed under $\dot F$ and $N \in \mathcal X(\dot H)$.

	Fix a regular cardinal $\chi > \kappa$ with $\dot F \in H(\chi)$. 
	Let $M$ be a countable elementary substructure of 
	$(H(\chi),\in,\psi,\p,\theta,\dot H,\dot F)$ 
	such that $p \in M$ and $N = M \cap \kappa \in \mathcal X$. 
	By Theorem 7.2, $p + N$ is in $\p$, $p + N \le p$, and 
	$p + N$ is $(M,\p)$-generic. 
	Since $p + N$ is $(M,\p)$-generic, $p + N$ forces that 
	$M$, and hence $N$, is closed under $\dot F$. 

	Fix $r \le p + N$ in $D_\theta$. 
	Since $r \in D_\theta$ and $N \in A_r$, $N \cap \theta \in A_r$. 
	As $\theta^\omega \subseteq \sk(\theta)$, $N \cap \theta \in \sk(\theta)$. 
	So $N \cap \theta \in A_r \cap \theta = A_{\pi_\theta(r)}$. 
	Therefore, $\pi_{\theta}(r) \le 
	(\emptyset,\emptyset,\emptyset,\{ N \cap \theta \})$ in $\p_\theta$. 
	So $\pi_\theta(r)$ forces that $N \in \mathcal X(\dot H)$. 
	Since $r \le p$ and $p \in \p_\theta$, 
	by Lemma 10.3(3), $\pi_\theta(r) \le p$. 
	As $r$ forces in $\p$ that $N$ is closed under $\dot F$, an easy argument using 
	Lemma 11.2 shows that $\pi_\theta(r)$ forces in $\p_\theta$ 
	that $N$ is closed under $\dot F$.
\end{proof}

\begin{lemma}
	Suppose that $p \in D_\theta \cap (\p / H)$, $A_p$ is non-empty, 
	$N \in \mathcal X(H)$, 
	$p$ and $\theta$ are in $\sk(N)$, and $A_p$ is non-empty. 
	Then $p + N$ is in $\p / H$ and is an extension of $p$.
\end{lemma}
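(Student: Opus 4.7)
The extension $p + N \leq p$ is immediate from Lemma 6.6, since $p \in \sk(N)$. For the main assertion $p + N \in \p / H$, my plan is to verify the criterion of Lemma 11.1(2): that $p + N$ is compatible in $\p$ with every $q \in H$. Fix such a $q$. Since $N \in \mathcal X(H)$ gives $(\emptyset, \emptyset, \emptyset, \{ N \cap \theta \}) \in H$ and since $p \in \p / H$ gives $\pi_\theta(p) \in H$ by Lemma 11.3, the filter property of $H$ lets me choose $s \in H$ below $q$, $\pi_\theta(p)$, and $(\emptyset, \emptyset, \emptyset, \{ N \cap \theta \})$ simultaneously; in particular $s \leq \pi_\theta(p)$ in $\p_\theta$ and $N \cap \theta \in A_s$. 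Since $p \in D_\theta \subseteq E_\theta$ with $A_p$ non-empty, Proposition 10.9 applies to $p$ and $s$ and produces a condition $r_0 = (T_s, Y, D_s \cup D_p, A_s \cup A_p) \in \p$ that extends both $s$ and $p$.

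The crux of the argument is to show that augmenting $r_0$ by adding $N$ to its side condition, namely $r = (T_s, Y, D_s \cup D_p, A_s \cup A_p \cup \{ N \})$, still yields a member of $\p$. Pairwise adequacy of $A_s \cup A_p \cup \{ N \}$ splits into two cases: for $M \in A_p$, the hypothesis $p \in \sk(N)$ forces $A_p \subseteq \sk(N)$, so compatibility of $M$ and $N$ follows from Lemma 5.6; for $M \in A_s \subseteq \sk(\theta)$, since $N \cap \theta \in A_s$, it follows from Lemma 5.16 applied at $\beta = \theta$. The main obstacle will be verifying that $Y$ is $\{ N \}$-separated. The key observation is that $p \in D_\theta$ supplies a function $f_p \colon \dom(W_p) \setminus \theta \to \dom(W_p) \cap \theta$ with $W_p(\zeta) = W_p(f_p(\zeta))$, and because $p \in \sk(N)$ forces $\dom(W_p) \subseteq N$, the range of $f_p$ lies inside $N \cap \theta$. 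Combining this with $s \leq \pi_\theta(p)$ gives $Y(\zeta) \subseteq W_s(f_p(\zeta))$ for $\zeta \in \dom(W_p) \setminus \theta$, because the downward closure of $W_p(f_p(\zeta))$ in $T_s$ lands in the downwards closed set $W_s(f_p(\zeta))$. A case split on whether each of $\eta, \xi \in \dom(Y) \cap N$ lies in $\dom(W_s) \cap (N \cap \theta)$ or in $\dom(W_p) \setminus \theta$ then reduces the $\{ N \}$-separation claim to $A_s$-separation of $W_s$ at $N \cap \theta \in A_s$, placing any $x \in Y(\eta) \cap Y(\xi)$ into $N \cap \theta \subseteq N$.

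Finally, once $r \in \p$ is in hand, $r \leq p + N$ is routine: $(T_r, W_r, D_r) \leq (T_p, W_p, D_p)$ in $\p^*$ by $r_0 \leq p$, and $A_r \supseteq A_p \cup \{ N \}$ by construction. Combined with $r \leq r_0 \leq s \leq q$, the condition $r$ witnesses compatibility of $p + N$ with $q$, and as $q \in H$ was arbitrary, Lemma 11.1(2) concludes $p + N \in \p / H$.
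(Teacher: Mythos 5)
Your proposal is correct and takes essentially the same approach as the paper: the witness to compatibility is built by first applying Proposition 10.9 to $p$ and a suitable $s$ with $s \le \pi_\theta(p)$ and $N \cap \theta \in A_s$, then adding $N$ to the side condition, with adequacy handled via Lemma~5.16 and $\{N\}$-separation handled by the observation that $f_p$ sends $\dom(W_p)\setminus\theta$ into $N\cap\theta$. The only cosmetic differences are that you verify membership in $\p/H$ directly via the compatibility characterization of Lemma~11.1(2) in $V[H]$ (where the paper packages this as a density argument in $V$), and that you deduce $\operatorname{ran}(f_p)\subseteq N\cap\theta$ from $\dom(W_p)\subseteq N$ rather than from $f_p\in\sk(N)$; both are equivalent and the paper's route through $f_p,g_p\in\sk(N)$ is needed only to handle the case in exactly the way it does.
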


\begin{proof}
	By Lemma 6.6, $p + N$ is in $\p$ and is an extension of $p$. 
	We prove that $p + N \in \p / H$ by giving a density argument in $V$. 
	Assume that $s \in \p_\theta$ 
	and $s$ forces that $N \in \mathcal X(\dot H)$ and $p \in \p / \dot H$. 
	We find an extension of $s$ in $\p_\theta$ which forces 
	that $p + N \in \p / \dot H$. 
	By extending further if necessary using Lemma 11.3, we may assume that 
	$s \le \pi_\theta(p)$ and $N \cap \theta \in A_s$. 
	To show that $s$ forces that $p + N \in \p / \dot H$, 
	by Lemma 11.1(2) it suffices to show 
	that for all $t \le s$ in $\p_\theta$, $t$ and $p + N$ are compatible in $\p$.

	Fix $t \le s$ in $\p_\theta$. 
	Define $Y$ with domain equal to $\dom(W_t) \cup \dom(W_p)$ so that for all 
	$\eta \in \dom(W_t)$, $Y(\eta) = W_t(\eta)$, and for all 
	$\xi \in \dom(W_p) \setminus \dom(W_t)$, 
	$Y(\xi)$ is the downward closure of $W_p(\xi)$ in $T_t$. 
	Since $p \in E_\theta$ and $t \le \pi_\theta(p)$, by Proposition 10.9 we have that 
	$u = (T_t,Y,D_t \cup D_p,A_t \cup A_p)$ is in $\p$ and extends $t$ and $p$. 
	Define $v = u + N$.

	We prove that $v \in \p$ and $v$ is an extension of $t$ and $p + N$, 
	which completes the proof. 
	Now $u$ is in $\p$, $u$ extends $t$ and $p$, and 
	$(T_u,W_u,D_u) = (T_v,W_v,D_v)$. 
	It follows that
	$(T_v,W_v,D_v)$ is in $\p^*$ and extends $(T_t,W_t,D_t)$ and 
	$(T_p,W_p,D_p)$ in $\p^*$. 
	Now $A_t$ and $A_p$ are subsets of $A_u$, and since $A_v = A_u \cup \{ N \}$, clearly 
	$A_t$ and $A_{p+N} = A_p \cup \{ N \}$ are subsets of $A_v$. 
	We have that $A_v = A_u \cup \{ N \} = A_t \cup A_p \cup \{ N \}$, 
	$A_u = A_t \cup A_p$ is adequate, and $A_{p+N} = A_p \cup \{ N \}$ is adequate. 
	So $A_v$ is adequate provided that $A_t \cup \{ N \}$ is adequate. 
	Since $A_t \in \sk(\theta)$ and $N \cap \theta \in A_t$, 
	$A_t \cup \{ N \}$ is adequate by Lemma 5.16.

	Finally, we show that $W_v$ is $A_v$-separated. 
	We know that $W_v = Y$ and $Y$ is $A_u$-separated. 
	So it suffices to show that if $\eta$ and $\xi$ are distinct elements of $N \cap \dom(Y)$ 
	and $x \in Y(\eta) \cap Y(\xi)$, then $x \in N$. 
	First, assume that $\eta$ and $\xi$ are in $\theta$. 
	Then $\eta$ and $\xi$ are in $N \cap \theta$. 
	Since $t \le \pi_\theta(p)$, 
	$\dom(W_p) \cap \theta \subseteq \dom(W_t)$, and hence 
	$Y(\eta) = W_t(\eta)$ and $Y(\xi) = W_t(\xi)$. 
	So $x \in W_t(\eta) \cap W_t(\xi)$ and $N \cap \theta \in A_t$, 
	Since $W_t$ is $A_t$-separated, $x \in N \cap \theta \subseteq N$.

	For the remaining cases, fix the $\lhd$-least functions 
	$f$ and $g$ witnessing that $p \in D_\theta$. 
	Since $p$ and $\theta$ are members of $\sk(N)$, 
	it is clear by elementarity that $f$ and $g$ are also in $\sk(N)$.
	
	\textbf{Claim:} If $\eta \in \dom(W_p) \setminus \theta$ and $x \in Y(\eta)$, 
	then $x \in W_t(f(\eta))$. 

	\emph{Proof:} We have that $Y(\eta)$ is the downward closure of $W_p(\eta)$ 
	in $T_t$. 
	By Definition 9.2(a), $W_p(\eta) = W_p(f(\eta))$. 
	So $x$ is in the downward closure of $W_p(f(\eta))$ in $T_t$. 
	As $t \le \pi_\theta(p)$, $W_p(f(\eta)) = W_{\pi_\theta(p)}(f(\eta)) \subseteq W_s(f(\eta))$. 
	Since $W_s(f(\eta))$ is downwards closed in $T_s$, $x \in W_s(f(\eta))$. 
	This completes the proof of the claim.
	
	Assume that one of $\eta$ or $\xi$ is in $\theta$ and the other is not. 
	Without loss of generality, assume that $\eta \in \theta$ and $\xi \notin \theta$. 
	Since $f$ and $\xi$ are in $\sk(N)$, $f(\xi) \in N$. 
	By the claim, $x \in W_t(f(\xi))$. 
	So we have that $\eta$ and $f(\xi)$ are in $N \cap \theta$, $N \cap \theta \in A_t$, 
	and $x \in W_t(\eta) \cap W_t(f(\xi))$. 
	Since $W_t$ is $A_t$-separated, $x \in N \cap \theta \subseteq N$. 
	Finally, assume that $\eta$ and $\xi$ are both not in $\theta$. 
	By the claim, $x \in W_s(f(\eta)) \cap W_s(f(\xi))$. 
	Since $f$, $\eta$, and $\xi$ are in $\sk(N)$, $f(\eta)$ and $f(\xi)$ are in $N \cap \theta$. 
	As $N \cap \theta \in A_t$ and $W_t$ is $A_t$-separated, it follows that 
	$x \in N \cap \theta \subseteq N$.
\end{proof}

\section{Quotients Are Indestructibly Y-Proper}

We are finally ready to prove that quotient forcings of $\p$ are Y-proper 
on a stationary set.
 
\begin{thm}
	Let $\theta \in \Sigma$. 
	Suppose that $H$ is a generic filter on $\p_\theta$. 
	Assume that $W$ is a transitive model of \textsf{ZFC} with the same ordinals as $V$ 
	satisfying:
	\begin{itemize}
		\item $V[H] \subseteq W$;
		\item $\omega_1^V = \omega_1^W$;
		\item $T_H$ is an Aronszajn tree in $W$;
		\item $\kappa$ is a regular cardinal in $W$;
		\item $\mathcal X(H)$ is a stationary subset of $[\kappa]^\omega$ in $W$.
	\end{itemize}
	Then in $W$, $\p / H$ is Y-proper on a stationary set.	
\end{thm}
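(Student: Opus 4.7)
I plan to follow the Y-properness proof for $\p$ itself (Theorem~8.10) adapted to the quotient $\p/H$ inside $W$, making essential use of Lemma~11.9 (to form a generic condition via $+\,N$), the projection analysis of Section~10 (for the invariant $w_\theta$), and Lemma~11.6 (the amalgamation criterion that places $\oplus$-sums back into $\p/H$). The five hypotheses on $W$ are used as follows: $V[H] \subseteq W$ makes $\p/H \in W$ meaningful; $\omega_1^V = \omega_1^W$ preserves the structural role of countable substructures and the $w_\theta$-values; $\kappa$ regular in $W$ allows $[\kappa]^\omega$ to carry stationarity there; stationarity of $\mathcal X(H)$ in $W$ provides the stationary set of models and the witness structures; and $T_H$ being Aronszajn in $W$ powers the Baumgartner--Malitz--Reinhardt/Dushnik--Miller lemma needed to amalgamate inside the quotient.

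\textbf{Generic condition.} Working in $W$, I fix a regular $\chi > \kappa$ so that $\p,\p/H,T_H,H,\mathcal X(H),\theta$ all lie in $H(\chi)^W$, and consider countable $M \prec (H(\chi)^W,\in,\psi,\p,\p/H,T_H,H,\mathcal X(H),\theta)$ with $N := M \cap \kappa \in \mathcal X(H)$; since $\mathcal X(H)$ is stationary in $W$, such $M$ form a stationary subset of $[H(\chi)^W]^\omega$. For $p \in M \cap (\p/H)$, density of $D_\theta$ in $\p$ (Proposition~9.4) together with Lemma~11.1(4) and elementarity provide $q \in M \cap D_\theta \cap (\p/H)$ extending $p$ with $A_q \neq \emptyset$. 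Since Lemma~5.7 gives $q,\theta \in \sk(N)$ and $N \in \mathcal X(H)$, Lemma~11.9 yields $q + N \in \p/H$ extending $q \leq p$; I take this as the candidate $(M,\p/H)$-generic condition.

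\textbf{The filter.} Given $r \leq q + N$, I extend $r$ to some $r' \in D_\theta \cap (\p/H)$ with $A_{r'}$ being $N$-closed (by Lemma~6.7 and Proposition~9.4) and set $\vec z = w_\theta(r',N) \in \sk(N) \subseteq M$. Call $R \subseteq \p/H$ \emph{$\vec z$-robust} (in $W$) if $\{N' \in \mathcal X(H) : \exists p \in R \cap D_\theta \cap (\p/H),\ w_\theta(p,N') = \vec z\}$ is stationary in $[\kappa]^\omega$ in $W$. Define $\mathcal F_0 = \{\sum R : R \text{ is } \vec z\text{-robust}\}$; by elementarity $\mathcal F_0 \in M$, and I let $\mathcal F$ be its upward closure in $\mathcal B(\p/H)$. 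For $s \in M \cap \mathcal B(\p/H)$ with $r \leq s$, the set $R := \{t \in \p/H : t \leq s\}$ is in $M$, contains $r'$, and is $\vec z$-robust by the standard elementarity argument from Theorem~8.10 --- if its witnessing set were non-stationary in $W$, elementarity would produce a disjoint club $C \in M$, but then $N \in C$ (since $M$ is elementary in $H(\chi)^W$ and $C \in M$) yields a contradiction.

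\textbf{Centeredness of $\mathcal F_0$ --- the main obstacle.} The bulk of the work is proving $\mathcal F_0$ is centered. Given $\vec z$-robust $R_0,\ldots,R_{d-1}$, I recursively choose in $W$ pairs $(N_i,p_i)$ with $N_i \in \mathcal X(H)$, $p_i \in R_i \cap D_\theta \cap (\p/H)$, $w_\theta(p_i,N_i) = \vec z$, and $p_0,\ldots,p_{i-1} \in \sk(N_i)$. By Lemma~8.6, the sum $q := p_0 \oplus \cdots \oplus p_{d-1}$ is a condition in $\p$ extending each $p_i$; by Lemma~11.6, $q \in \p/H$ provided that for all $i<j$ the disjoint parts $T_{p_i} \setminus T_{p_j}$ and $T_{p_j} \setminus T_{p_i}$ are pairwise elementwise incomparable in $T_H$. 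The common $w_\theta$-value forces $T_{p_i} \cap T_{p_j} = t$, so these disjoint parts are the ``tops'' $T_{p_i} \setminus t$ living in disjoint level intervals; the only way incomparability can fail is that a later top contains a $T_H$-descendant of an earlier top. Here the Aronszajn hypothesis on $T_H$ in $W$ is essential: the Baumgartner--Malitz--Reinhardt/Dushnik--Miller lemma stated in the introduction (valid for any tree with no uncountable chain, so in particular for $T_H$ in $W$) lets me thin an $\omega_1$-sequence of disjoint finite subsets of $T_H$ to a countably infinite pairwise elementwise incomparable subfamily. I apply this iteratively: at stage $i$, $\vec z$-robustness of $R_i$ combined with the stationarity of $\mathcal X(H)$ supplies an $\omega_1$-family of candidate tops $T_{p_N^i} \setminus t$ (disjoint by the $w_\theta$-constraint on levels), from which I select some $N_i$ whose top is incomparable with each of the previously fixed tops. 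The main obstacle is that BMR--DM only supplies \emph{within-family} incomparability, so achieving $d$-wise cross-incomparability requires incorporating the previously chosen tops as anchors in the thinning procedure at each stage --- this is the novel combinatorial step beyond Theorem~8.10 and is where I expect the most care to be needed.
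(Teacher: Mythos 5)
Your overall plan — use the invariant $w_\theta$, form a centered family of $\sum R$ for $\vec z$-robust $R$, and rely on the Baumgartner--Malitz--Reinhardt/Dushnik--Miller (BMR) thinning plus Lemma 11.6 to put $\oplus$-sums into $\p/H$ — is exactly the paper's plan, and your role-assignment for the five hypotheses on $W$ is accurate. But the place you flag as "the main obstacle" is a genuine gap, not a point that merely needs care, and your proposed fix does not work as stated. BMR takes a sequence of pairwise disjoint finite sets and returns an infinite subfamily that is pairwise elementwise incomparable; it does \emph{not} let you anchor a fixed finite set $F$ and select a member of an $\omega_1$-family incomparable with $F$. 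Indeed that can simply fail: in an Aronszajn tree the cone of successors of a single fixed node can be uncountable, so an $\omega_1$-family of candidate tops could all meet that cone. Your sequential construction — fix $p_0$, then pick $p_1$ whose top avoids $J_0$, then pick $p_2$ whose top avoids $J_0 \cup J_1$, etc. — has no lemma licensing any of the selection steps after the first.

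The paper avoids this by changing the order of quantifiers: it first builds, by an $\omega_1$-length induction, a sequence of \emph{$d$-tuples} $\langle (p^i_\alpha, N^i_\alpha) : i < d\rangle$ for $\alpha < \omega_1$, with $p^i_\alpha \in R_i$, $w_\theta(p^i_\alpha, N^i_\alpha) = \vec z$, and the cross-elementarity $p^i_\alpha \in \sk(N^j_\beta)$ for $\alpha < \beta$ and all $i,j$. Then it forms the single disjoint family $J_\alpha = \bigcup_{i<d}(T^i_\alpha \setminus \delta^i_\alpha)$ and applies BMR \emph{once} to $\langle J_\alpha \rangle$. Choosing any $d$ indices $\alpha_0 < \cdots < \alpha_{d-1}$ from the resulting infinite homogeneous set and setting $q = p^0_{\alpha_0} \oplus \cdots \oplus p^{d-1}_{\alpha_{d-1}}$ gives the cross-incomparability needed by Lemma 11.6 automatically, because each $T^i_{\alpha_i} \setminus \delta^i_{\alpha_i}$ sits inside $J_{\alpha_i}$. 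This "construct all tuples simultaneously, then thin once" trick is what replaces your (unjustified) iterative anchoring step; you should rewrite the centeredness argument in this shape.

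A smaller omission: you assert $u + N$ is $(M,\p/H)$-generic but do not prove it, and for the quotient this is not free. The argument for $\p$ (Theorem 7.2) does not carry over directly because the amalgamated $\bar q \oplus q$ must be shown to lie in $\p/H$, not merely in $\p$. The paper handles this in Theorem 12.4 with a second BMR argument (of the same flavor as the centeredness one) to obtain $\bar q \in M$ whose new part of the tree is elementwise incomparable in $T_H$ with the new part of $q$, and then invokes Lemma 11.6. You will need to include that argument as well.
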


For our purposes, we are primarily interested in the special cases that either $W = V[H]$, 
or $W$ is a generic extension of $V[H]$ by a Y-proper forcing. 
Recall that Y-proper forcings do not add new cofinal branches of $\omega_1$-trees  
and are proper, and therefore preserve stationary subsets of $[\kappa]^\omega$.

\begin{corollary}
	Let $\theta \in \Sigma$. 
	Suppose that $H$ is a generic filter on $\p_\theta$. 
	Then in $V[H]$, $\p / H$ is Y-proper on a stationary set.
\end{corollary}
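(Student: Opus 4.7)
The plan is to deduce Corollary 12.2 as an immediate application of Theorem 12.1 with $W = V[H]$, by verifying each of the five bulleted hypotheses in that specific model. The first ($V[H] \subseteq W$) is trivial. The remaining four all reduce either to a cardinality-based preservation fact about $\p_\theta$ or to a lemma already established in Section 11, so the proof is essentially a checklist.

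For $\omega_1^V = \omega_1^{V[H]}$, I appeal to Lemma 11.2: $H$ extends to some $V$-generic $G$ on $\p$, and since $\p$ is proper by Corollary 7.3, $\omega_1$ is preserved in $V[G]$ and hence in the intermediate model $V[H]$. For $T_H$ being an Aronszajn tree in $V[H]$, I split the argument by clause. Countability of levels is absolute downward from $V$: the level at height $\gamma$ in $T_H$ is a subset of the $V$-countable interval $\omega\cdot[\gamma,\gamma+1)$, by the defining condition on standard finite trees. Height $\omega_1$ follows from the preservation of $\omega_1$ together with the density in $\p_\theta$ of the straightforward analogue of Lemma 3.6(1) for $\p$. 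The absence of a cofinal branch is downward absolute, so it suffices to verify it in $V[G]$, where it holds by Proposition 7.7 combined with Lemma 11.5, which gives $T_H = T_G$.

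For regularity of $\kappa$ in $V[H]$, the key observation is that $\sk(\theta) = \psi[\theta]$, since $\psi : \kappa \to H(\kappa)$ is a bijection and $\sk(\theta) \cap \kappa = \theta$; hence $|\p_\theta| \le |\sk(\theta)| = |\theta| < \kappa$ by inaccessibility of $\kappa$. A forcing of cardinality less than the regular cardinal $\kappa$ cannot change its cofinality, so $\kappa$ remains regular in $V[H]$. The fifth and final hypothesis, that $\mathcal X(H)$ is stationary in $[\kappa]^\omega$ in $V[H]$, is exactly the statement of Lemma 11.8. With all five verified, Theorem 12.1 directly yields the conclusion. There is no substantive obstacle here: the real work is concentrated in Theorem 12.1 itself, and the only place one could stumble is in conflating absoluteness of the Aronszajn property between $V[H]$ and $V[G]$, which I have handled by splitting the verification into its three defining clauses above.
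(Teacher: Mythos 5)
Your proposal is correct and takes the same route the paper intends: Corollary 12.2 is obtained by instantiating Theorem 12.1 with $W = V[H]$, and the paper treats the verification of the five hypotheses as routine (the remark just before Theorem 12.1 flags exactly this special case). Your checklist fills in those routine details accurately — the extension of $H$ to a $V$-generic $G$ on $\p$ via Lemma 11.2 to get $\omega_1$-preservation and (with Lemma 11.5 and Proposition 7.7) the Aronszajn property of $T_H$, the cardinality bound $|\p_\theta| \le |\theta| < \kappa$ for regularity of $\kappa$, and Lemma 11.8 for stationarity of $\mathcal X(H)$ — so there is nothing to add.
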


\begin{corollary}
	Let $\theta \in \Sigma$, let $H$ be a generic filter on $\p_\theta$, 
	and let $\q$ be a Y-proper forcing in $V[H]$. 
	Then $\q$ forces over $V[H]$ that $\p / H$ is Y-proper on a stationary set.
\end{corollary}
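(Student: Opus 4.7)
The plan is to adapt the proof of Theorem 8.10 (that $\p$ itself is Y-proper) to the quotient setting, working inside $W$. Fix in $W$ a regular $\chi$ with $\p/H \in H(\chi)$, and consider countable $M \prec (H(\chi),\in,\psi,\p/H,H,T_H)$ such that $N := M \cap \kappa \in \mathcal{X}(H)$. By the hypothesis on $\mathcal{X}(H)$ in $W$, the collection of such $M$ is stationary in $[H(\chi)]^\omega$. Given $p \in M \cap (\p/H)$, the density of $D_\theta \cap (\p/H)$ in $\p/H$ (Proposition 9.4 and Lemma 11.1(4)) together with Lemma 6.7 and elementarity lets me extend $p$ inside $M$ to some $p' \in M \cap D_\theta \cap (\p/H)$ with $A_{p'}$ non-empty and $N$-closed. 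Since $p',\theta \in M \cap H(\kappa) = \sk(N)$ and $N \in \mathcal{X}(H)$, Lemma 11.9 produces $q := p' + N \in \p/H$ extending $p'$, which is the candidate for the Y-generic condition.

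For the Y-filter property, given $r \le q$ in $\p/H$, I use Lemma 6.7 together with the density of $D_\theta$ to extend $r$ to some $r_0 \in D_\theta \cap (\p/H)$ with $A_{r_0}$ $N$-closed, and set $\vec{z} := w_\theta(r_0,N) \in \sk(N) \subseteq M$. In $W$, I call $R \subseteq \p/H$ \emph{$\vec z$-robust} if $\{K \in \mathcal{X}(H) : \exists t \in R\ \ w_\theta(t,K) = \vec z\}$ is stationary in $[\kappa]^\omega$ in $W$. Define $\mathcal{F}_0 := \{\sum R : R \subseteq \p/H\ \text{is $\vec z$-robust}\}$ and let $\mathcal{F}$ be its upward closure in $\mathcal{B}(\p/H)$; by elementarity $\mathcal{F} \in M$. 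Exactly as in Theorem 8.10, if $s \in M \cap \mathcal{B}(\p/H)$ and $r \le s$, then $R := \{t \in \p/H : t \le s\}$ lies in $M$ and is $\vec z$-robust (witnessed by $r_0$ pulled through any club in $M$), so $s = \sum R \in \mathcal{F}$.

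The $(M,\p/H)$-genericity of $q$ is handled by an analogous argument: given a dense open $\mathcal{D} \in M$ and $r \le q$ in $\mathcal{D}$, extend to $r \in D_\theta \cap \mathcal{D}$ with $A_r$ $N$-closed, let $\vec z = w_\theta(r,N)$, and use elementarity (mimicking the formula $\varphi$ of Theorem 7.2) to find in $M$ a condition $\bar r \in \mathcal{D}$ and $N_{\bar r} \in A_{\bar r}$ with $w_\theta(\bar r,N_{\bar r}) = \vec z$; then amalgamate via Lemmas 8.6 and 11.6.

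The principal obstacle is verifying that $\mathcal{F}_0$ is centered in $\p/H$, rather than merely in $\p$. Given $\vec z$-robust $R_0,\ldots,R_{d-1}$, I choose inductively $p_i \in R_i$ and $N_i \in \mathcal{X}(H)$ with $w_\theta(p_i,N_i) = \vec z$ and $p_j \in \sk(N_i)$ for all $j < i$. Lemma 8.6 delivers $p_0 \oplus \cdots \oplus p_{d-1} \in \p$ extending each $p_i$, but to land this amalgamation in $\p/H$ via Lemma 11.6, the tree differences $T_{p_i}\setminus T_{p_j}$ and $T_{p_j}\setminus T_{p_i}$ must be pairwise incomparable in $T_H$. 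This is where the assumption that $T_H$ is Aronszajn in $W$ is used essentially: since $\vec z$ fixes the common stem $T_{p_i} \res (N_i \cap \omega_1)$, the extra tree parts $T_{p_i}\setminus(N_i \cap \omega_1)$ form pairwise disjoint finite subsets of $T_H$, and by the Baumgartner-Malitz-Reinhardt theorem together with the Dushnik-Miller relation $\omega_1 \to (\omega_1,\omega)^2$ recalled in the introduction, applied inside $W$ where $T_H$ has no uncountable chains, I can thin the stationarily many candidates at each inductive stage to an infinite subfamily along which the needed incomparability holds. Threading this thinning through the induction (and through the $(M,\p/H)$-generic argument as well) places the relevant amalgamations in $\p/H$ and completes the proof.
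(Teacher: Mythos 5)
Your proposal re-derives essentially the entire proof of Theorem 12.1 from scratch inside $W = V[H][G_\q]$, whereas the paper deliberately isolated Theorem 12.1 as an abstract statement about \emph{any} transitive model $W \supseteq V[H]$ satisfying five conditions (same $\omega_1$, $T_H$ Aronszajn, $\kappa$ regular, $\mathcal X(H)$ stationary in $[\kappa]^\omega$), precisely so that Corollary 12.3 follows in one line: a Y-proper forcing is proper, so it preserves $\omega_1$ and stationary subsets of $[\kappa]^\omega$; it has the $\omega_1$-approximation property by Theorem 8.2, so it adds no new cofinal branches of $T_H$, which therefore remains Aronszajn. Your detour through the full machinery of $\vec z$-robust filters, $D_\theta$-density, Lemma 11.9, Lemma 11.6, and Baumgartner--Malitz--Reinhardt is a correct reconstruction of the proofs of Theorem 12.4, Proposition 12.6, and Theorem 12.1 itself, and your identification of the main obstacle (getting the amalgamated condition into $\p/H$ rather than just $\p$, by thinning via BMR to ensure incomparability in $T_H$) is exactly right.

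The genuine gap is that you treat the properties of $W$ you rely on as given when they need to be proved. You write ``by the hypothesis on $\mathcal X(H)$ in $W$'' and ``applied inside $W$ where $T_H$ has no uncountable chains,'' but the statement of the corollary only assumes $\q$ is Y-proper; it does not grant you that $\mathcal X(H)$ stays stationary or that $T_H$ stays Aronszajn after forcing with $\q$. Both facts require an argument: stationarity preservation comes from properness (Y-proper $\Rightarrow$ proper), and the preservation of Aronszajn-ness comes from Theorem 8.2 (Y-proper on a stationary set $\Rightarrow$ $\omega_1$-approximation property $\Rightarrow$ no new cofinal branches). These observations are the entire content of the paper's proof of the corollary, and omitting them is the one step that would fail in your write-up. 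Also, your suggestion to handle $(M,\p/H)$-genericity by ``mimicking the formula $\varphi$ of Theorem~7.2'' is slightly off: the incomparability-in-$T_H$ requirement is not captured by the $\Delta$-system formula of Theorem~7.2, and the paper's Theorem~12.4 instead runs an $\omega_1$-length inductive contradiction argument with BMR, which you do eventually describe but attribute only to the centeredness part.
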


For the remainder of the section, fix $\theta$, $H$, and $W$ as in the 
statement of Theorem 12.1. 
All of the results in this section are intended to take place in $W$. 
A key point in what follows 
is that many of the properties related to the compatibility of 
conditions in $\p$ or $\p / H$ are absolute between $V$ or $V[H]$ and $W$.

\begin{thm}
	In $W$, let $\chi > \kappa$ be regular and let $M$ be a countable elementary 
	substructure of $\mathcal B = (H(\chi),\in,\psi,\p,\theta,H,\p / H,D_\theta,w_\theta)$ 
	such that $N = M \cap \kappa \in \mathcal X(H)$. 
	Then for any $u \in M \cap D_\theta \cap (\p / H)$ such that $A_u$ is non-empty, 
	$u + N$ is in $\p / H$, $u + N \le u$, 
	and $u + N$ is $(M,\p / H)$-generic.
\end{thm}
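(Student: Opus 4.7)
The first two assertions ($u+N \in \p/H$ and $u+N \le u$) will follow by a direct application of Lemma 11.9 to $p = u$. Its hypotheses that $u, \theta \in \sk(N)$ are easily verified: $u \in M \cap H(\kappa) = \sk(N)$ by Lemma 5.7 (using $N \in \mathcal{X}$), and $\theta \in M \cap \kappa = N \subseteq \sk(N)$ since $\theta$ is a distinguished element of $\mathcal{B}$.

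For $(M,\p/H)$-genericity, the plan is to adapt the proof scheme of Theorem 7.2 to the quotient setting. Fix a dense open $\mathcal{D} \in M$ of $\p/H$ and an arbitrary $q \le u + N$ in $\p/H$. Using density of $\mathcal{D}$ in $\p/H$, followed by Proposition 9.4 (applied in $V$) and Lemma 11.1(1), I extend $q$ to obtain $r \le q$ with $r \in \mathcal{D} \cap D_\theta \cap (\p/H)$ and $A_r$ being $N$-closed; thus $(r,N) \in \dom(w_\theta)$ and $\vec z := w_\theta(r,N) \in \sk(N) \subseteq M$. By elementarity of $M$ in $\mathcal{B}$, the statement asserting the existence of a pair $(\bar r, \bar N)$ with $\bar r \in D_\theta \cap \mathcal{D} \cap (\p/H)$, $\bar N \in A_{\bar r}$, $A_{\bar r}$ being $\bar N$-closed, and $w_\theta(\bar r, \bar N) = \vec z$---which is witnessed in $W$ by $(r, N)$ themselves, with all parameters ($\vec z$, $H$, $\theta$, $\mathcal{D}$, $w_\theta$, $D_\theta$) in $M$---yields such $(\bar r, \bar N)$ in $M$.

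Now Lemma 8.6 with $d = 2$ applies: $\bar r \in M \cap H(\kappa) = \sk(N)$ and $w_\theta$-values agree (hence so do $w$-values), so $\bar r \oplus r \in \p$ and extends both $\bar r$ and $r$. By Definition 10.7 this amalgamation lies in $E_\theta$. To upgrade $\bar r \oplus r \in \p$ to $\bar r \oplus r \in \p/H$, I invoke Lemma 11.6 with $d = 2$. The key observation that trivializes its incomparability hypothesis---and which is the main conceptual step I expect to need---is that $T_{\bar r} \subseteq T_r$: since $\bar r \in M$, every element of the finite set $T_{\bar r}$ lies in $M \cap \omega_1 = N \cap \omega_1 =: \delta$, so $T_{\bar r} = T_{\bar r} \res \delta = T_r \res \delta \subseteq T_r$, using the agreement $T_{\bar r} \res \delta = T_r \res \delta$ encoded in $w(\bar r, \bar N) = w(r, N)$. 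Hence $T_{\bar r} \setminus T_r = \emptyset$ and the incomparability hypothesis of Lemma 11.6 is vacuous, yielding $\bar r \oplus r \in \p/H$---a common extension in $\p/H$ of $\bar r \in M \cap \mathcal{D}$ and $q$, as required for genericity.
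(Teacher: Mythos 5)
The proof of the first two assertions via Lemma 11.9 is correct, and the strategy of using elementarity to find $(\bar r,\bar N)\in M$ with matching $w_\theta$-value, then applying Lemma 8.6 and Lemma 11.6, is the right skeleton. However, there is a genuine gap at the final step: the claim $T_{\bar r}\subseteq T_r$, which you need to trivialize the incomparability hypothesis of Lemma 11.6, is false in general, and your justification of it confuses $\bar\delta := \bar N\cap\omega_1$ with $\delta := N\cap\omega_1$.

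What $w(\bar r,\bar N)=w(r,N)$ encodes (Definition 8.4(a)) is $T_{\bar r}\res\bar\delta = T_r\res\delta$, not $T_{\bar r}\res\delta = T_r\res\delta$. Since $\bar N\in M$, we have $\bar\delta\in N\cap\omega_1$, so $\bar\delta<\delta$. It is true that $T_{\bar r}\subseteq\delta$, but $T_{\bar r}$ may well contain elements in $[\bar\delta,\delta)$, and these elements are precisely $T_{\bar r}\setminus T_r$: by Lemma 8.7, $T_{\bar r}\cap T_r = T_{\bar r}\cap\bar\delta$, so $T_{\bar r}\setminus T_r = T_{\bar r}\setminus\bar\delta$, which need not be empty. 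Correspondingly $T_r\setminus T_{\bar r}=T_r\setminus\delta$ can be nonempty, and elements of these two finite sets may be comparable in $T_H$. This is exactly the obstruction the paper takes seriously, and it is the whole reason Theorem 12.4 is harder than Theorem 7.2: the quotient requires $\bar r\oplus r\in\p/H$, which Lemma 11.6 gives only under the incomparability hypothesis, and that hypothesis is not automatic.

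To close the gap, you cannot take an arbitrary $(\bar r,\bar N)\in M$ witnessing the same $w_\theta$-value; you must find one for which, additionally, every $x\in T_r\setminus T_{\bar r}$ and every $y\in T_{\bar r}\setminus T_r$ are incomparable in $T_H$. The paper proves the existence of such a pair by contradiction: assuming no such pair exists, one builds inside $M$ an $\omega_1$-sequence $\langle(q_\alpha,N_\alpha):\alpha<\omega_1\rangle$ of pairs in $\mathcal I$ with $(q_\alpha,N_\alpha)\in\sk(N_\beta)$ for $\alpha<\beta$ such that for each $\alpha<\beta$ some element of $T_{q_\alpha}\setminus\delta_\alpha$ is comparable in $T_H$ with some element of $T_{q_\beta}\setminus\delta_\beta$. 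Since the sets $T_{q_\alpha}\setminus\delta_\alpha$ are pairwise disjoint finite subsets of the Aronszajn tree $T_H$ (this uses that $T_H$ is Aronszajn in $W$), the Baumgartner--Malitz--Reinhardt theorem quoted at the end of the introduction produces $\alpha<\beta$ with all such pairs incomparable, a contradiction. This argument, which is the real content of the theorem, is missing from your proposal.
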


\begin{proof}
	Throughout the proof we work in $W$. 
	The short proof of Lemma 5.7 is easily adjusted to show that 
	$M \cap H(\kappa)^V = \sk(N)$. 
	By Lemma 11.9, $u + N$ is in $\p / H$ and extends $u$. 
	To show that $u + N$ is $(M,\p / H)$-generic, fix $q \le u$ in $\p / H$ and 
	fix $\mathcal{D} \in M$ which is a dense open subset of $\p / H$. 
	By extending further if necessary using Proposition 9.4 and Lemma 11.1(4), 
	we may assume that 
	$q$ is in $\mathcal D \cap D_\theta$ and $A_q$ is $N$-closed. 
	Then $(q,N) \in \dom(w_\theta)$. 
	Let $\vec z = w_\theta(q,N)$.

	We claim that there exists some $(\bar q,\bar N) \in M$ such that 
	$\bar q \in \mathcal D \cap D_\theta$, 
	$w_\theta(\bar q,\bar N) = \vec z$, and for all $x \in T_q \setminus T_{\bar q}$ and 
	for all $y \in T_{\bar q} \setminus T_q$, $x$ and $y$ are 
	incomparable in $T_H$. 
	Suppose not. 	
	Let $\mathcal I$ be the set of all ordered pairs $(\bar q,\bar N)$ such that 
	$\bar q \in \mathcal D \cap D_\theta$ and $w_\theta(\bar q,\bar N) = \vec z$. 
	Note that $(q,N) \in \mathcal I$ and $\mathcal I \in M$ by elementarity. 

	Define in $M$ by induction a sequence 
	$\langle (q_\alpha,N_\alpha) : \alpha < \omega_1 \rangle$ 
	of members of $\mathcal I$ so that for all $\alpha < \beta < \omega_1$, 
	$(q_\alpha,N_\alpha) \in \sk(N_\beta)$ and there exist 
	$x \in T_{q_\alpha} \setminus T_{q_\beta}$ and 
	$y \in T_{q_\beta} \setminus T_{q_\alpha}$ such that 
	$x$ and $y$ are comparable in $T_H$. 
	If the induction fails, then there exists $\delta \in M \cap \omega_1$ and 
	$\langle (q_\alpha,N_\alpha) : \alpha < \delta \rangle \in M$ 
	satisfying the required properties, but this sequence 
	cannot be extended any further. 
	But then $(q,N)$ is a witness that this sequence can be extended further, 
	which is a contradiction. 

	Let $q_\alpha = (T_\alpha,W_\alpha,D_\alpha,A_\alpha)$ and 
	$\delta_\alpha = N_\alpha \cap \omega_1$ for all $\alpha < \omega_1$. 	
	By Lemma 8.7, for all $\alpha < \beta < \omega_1$, 
	$T_\alpha \cap T_\beta = T_\alpha \cap \delta_\alpha = T_\beta \cap \delta_\beta$. 
	Hence, $\{ T_\alpha \setminus \delta_\alpha : \alpha < \omega_1 \}$ is a disjoint 
	family of finite subsets of the Aronszajn tree $T_H$. 
	By the theorem of Baumgartner-Malitz-Reinhardt stated at the end of the introduction, 
	there exist $\alpha < \beta < \omega_1$ such that every element of 
	$T_\alpha \setminus \delta_\alpha$ is incomparable in $T_H$ with every element 
	of $T_\beta \setminus \delta_\beta$. 
	But $T_\alpha \setminus \delta_\alpha = T_\alpha \setminus T_\beta$ 
	and $T_\beta \setminus \delta_\beta = T_\beta \setminus T_\alpha$, 
	and we have a contradiction to the definition of the sequence.

	So indeed, there exists some $(\bar q,\bar N) \in M$ such that 
	$\bar q \in \mathcal D \cap D_\theta$, $w_\theta(\bar q,\bar N) = \vec z$, 
	and for all $x \in T_q \setminus T_{\bar q}$ and 
	for all $y \in T_{\bar q} \setminus T_q$, $x$ and $y$ are 
	incomparable in $T_H$. 
	Then $(\bar q,\bar N) \in M \cap H(\kappa)^V = \sk(N)$. 
	By Lemma 8.6, $\bar q \oplus q$ a condition in $\p$ which extends $\bar q$ and $q$. 
	By Lemma 11.6, $\bar q \oplus q$ is in $\p / H$.
\end{proof}

\begin{definition}
	Let $\vec z$ be in the range of $w_\theta$. 
	A set $R \subseteq D_\theta \cap (\p / H)$ is said to be \emph{$\vec z$-robust} 
	if the set 
	$$
	\{ N \in \mathcal X(H) : \exists q \in R \ (w_\theta(q, N) = \vec z) \}
	$$
	is stationary in $[\kappa]^\omega$.
\end{definition}

\begin{proposition}
	For any $\vec z$ in the range of $w_\theta$, the collection 
	$$
	\left\{ \ \sum R : 
	R \subseteq D_\theta \cap (\p / H) \ \text{is $\vec z$-robust} \ \right\}
	$$
	is a centered subset of $\mathcal B(\p / H)$.
\end{proposition}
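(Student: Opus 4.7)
The plan is to mirror Proposition 8.9 in the quotient setting: given $\vec z$-robust sets $R_0,\ldots,R_{d-1}$ with $1 < d < \omega$, I will inductively construct pairs $(p_i, N_i)$ for $i < d$ satisfying
\begin{enumerate}
\item $p_i \in R_i$, $N_i \in \mathcal X(H)$, and $w_\theta(p_i, N_i) = \vec z$;
\item $p_j \in \sk(N_i)$ for all $j < i$;
\item for each $j < i$, every $x \in T_{p_j} \setminus T_{p_i}$ is incomparable in $T_H$ with every $y \in T_{p_i} \setminus T_{p_j}$.
\end{enumerate}
Granting (1)--(3), Lemma 8.6 will yield $r := p_0 \oplus \cdots \oplus p_{d-1} \in \p$ extending each $p_i$, and Lemma 11.6 will promote $r$ to a condition in $\p/H$ using (3). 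Since $r \le p_i \le \sum R_i$ for every $i < d$, the condition $r$ is a common lower bound in $\mathcal{B}(\p/H)$, establishing the desired centeredness.

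For the inductive step at stage $i$, the set $Z_i := \{N \in \mathcal X(H) : \exists q \in R_i,\ w_\theta(q, N) = \vec z\}$ is stationary in $W$ by $\vec z$-robustness of $R_i$ together with the hypothesis that $\mathcal X(H)$ is stationary, and intersecting with the club $\{N : p_j \in \sk(N) \text{ for all } j < i\}$ preserves stationarity. I claim that some $N$ in this stationary set admits a witness $q \in R_i$ also satisfying (3). Suppose otherwise: by pigeonhole on $j < i$, there is a fixed $j$ such that for stationarily many $N$, any chosen witness $q_N$ exhibits a bad pair $(x_N, y_N) \in (T_{p_j} \setminus T_{q_N}) \times (T_{q_N} \setminus T_{p_j})$ comparable in $T_H$. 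Using that $p_j$ and $q_N$ share the type $\vec z$, one computes $T_{p_j} \cap T_{q_N} = t$, where $t$ is the tree part of $\vec z$, whence $x_N$ lies in the fixed finite set $T_{p_j} \setminus \delta_{N_j}$ and $y_N$ in the new part $T_{q_N} \setminus \delta_N$. Pigeonholing $x_N$ to a fixed $x$ and extracting by stationarity an $\omega_1$-sequence $\langle (q_\alpha, N_\alpha) : \alpha < \omega_1 \rangle$ with $q_\beta \in \sk(N_\alpha)$ for $\beta < \alpha$ makes $\{T_{p_j} \setminus \delta_{N_j}\} \cup \{T_{q_\alpha} \setminus \delta_\alpha : \alpha < \omega_1\}$ an $\omega_1$-sequence of pairwise disjoint finite subsets of the Aronszajn tree $T_H$, to which Baumgartner-Malitz-Reinhardt applies in $W$. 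Organized in the spirit of the $\omega_1$-sequence argument in Theorem 12.1, the resulting pairwise fully-incomparable subfamily is used to contradict the uniform presence of the comparable bad pairs, establishing the existence claim.

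The main obstacle is the tree-incomparability condition (3), absent from Proposition 8.9 (which handles $\p$, not the quotient) and precisely what Lemma 11.6 demands in order to promote the amalgamation from $\p$ into $\p/H$. The delicate point of the Baumgartner-Malitz-Reinhardt step is arranging that the fixed slot $T_{p_j} \setminus \delta_{N_j}$ is genuinely implicated in the pairwise incomparability of the resulting subfamily, rather than only the varying slots $T_{q_\alpha} \setminus \delta_\alpha$; this echoes the mechanism of Theorem 12.1, where a fixed condition is shown to be incompatible with a constructed $\omega_1$-sequence of would-be witnesses. In the present setting the role of the fixed condition is played by $p_j$, and robustness of $R_i$ supplies the needed $\omega_1$-sequence of candidates, with the sequence-internal comparability needed for the B-M-R contradiction being derived by successive pigeonhole from the pigeonholed failure against $p_j$.
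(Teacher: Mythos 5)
Your overall plan is right: amalgamate via Lemma 8.6 and promote into $\p/H$ via Lemma 11.6, which requires the pairwise tree-incomparability condition. But the inductive step has a genuine gap, and it is exactly at the point you yourself flag as delicate.

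You construct $(p_i,N_i)$ one at a time, and at stage $i$ you need $p_i$ to be incomparable (in the sense of (3)) with the already-fixed $p_0,\ldots,p_{i-1}$. Your contradiction argument pigeonholes a fixed $j<i$ and a fixed node $x\in T_{p_j}\setminus\delta_{N_j}$, then builds an $\omega_1$-sequence $\langle(q_\alpha,N_\alpha)\rangle$ with $q_\beta\in\sk(N_\alpha)$ for $\beta<\alpha$, so that for each $\alpha$ some $y_\alpha\in T_{q_\alpha}\setminus\delta_\alpha$ is comparable with $x$ in $T_H$. At this point Baumgartner--Malitz--Reinhardt applied to $\{T_{p_j}\setminus\delta_{N_j}\}\cup\{T_{q_\alpha}\setminus\delta_\alpha : \alpha<\omega_1\}$ only yields an infinite $Y$ on which the \emph{varying} sets are pairwise incomparable; nothing forces the \emph{fixed} index to be in $Y$, and there is no ``sequence-internal comparability'' among the $q_\alpha$'s to contradict --- you never arranged that consecutive $q_\alpha,q_\beta$ exhibit comparable pairs (that was the mechanism in Theorem 12.4, and it relied on elementarity of a model $M$ that is not present here). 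Unwinding what you actually have: for uncountably many $\alpha$ with $\delta_\alpha$ increasing, $h(y_\alpha)>h(x)$ forces $x<_{T_H} y_\alpha$, so you obtain uncountably many nodes above a single node $x$ of an Aronszajn tree. That is not a contradiction. So the inductive step cannot be closed this way.

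The paper's proof is structurally different precisely to avoid this. Instead of freezing $p_0,\ldots,p_{i-1}$ and then searching for $p_i$, it first builds a full $\omega_1\times d$ \emph{matrix} $\langle\langle (p^i_\alpha,N^i_\alpha): i<d\rangle : \alpha<\omega_1\rangle$, with row $\alpha$ drawing $p^i_\alpha$ from $R_i$, all sharing type $\vec z$, and with every entry of every earlier row inside $\sk(N^j_\beta)$ for each $j<d$. By Lemma 8.7 the union sets $J_\alpha=\bigcup_i(T^i_\alpha\setminus\delta^i_\alpha)$ are pairwise disjoint; B-M-R is applied once to $\langle J_\alpha\rangle$, and one then takes the diagonal $p^0_{\alpha_0},\ldots,p^{d-1}_{\alpha_{d-1}}$ from $d$ of the selected rows. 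Because all $d$ conditions are selected \emph{simultaneously} from the B-M-R output, none is ``fixed'' in advance, and the hypotheses of Lemmas 8.6 and 11.6 hold automatically. Your linear induction cannot be repaired without effectively re-choosing the earlier $p_j$'s at each stage, which is the matrix construction in disguise.
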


\begin{proof}
	Let $d < \omega$ and let $R_0,\ldots,R_{d-1}$ be $\vec z$-robust subsets of 
	$D_\theta \cap (\p / H)$. 
	We show that there exists some $r \in \p / H$ such that for all $i < d$, $r \le \sum R_i$.

	By induction we construct a sequence of finite sequences 
	$$
	\langle \ \langle \ (p^i_\alpha,N^i_\alpha) \ : \ i < d \rangle \ : \ \alpha < \omega_1 \ \rangle
	$$ 
	so that the following are satisfied:
	\begin{enumerate}
		\item for all $\alpha < \omega_1$ and $i < d$, 
		$p^i_\alpha \in R_i$ and $N^i_\alpha \in \mathcal X(H)$;
		\item for all $\alpha < \omega_1$ and $i < d$, 
		$w_\theta(p_\alpha^i, N_\alpha^i) = \vec z$;
		\item for all $\alpha < \beta < \omega_1$ and 
		for all $i, j < d$, 
		$p_\alpha^i$ is in $\sk(N^j_\beta)$.
	\end{enumerate}
	Suppose that $\beta < \omega_1$ and for each $\alpha < \beta$ and each $i < d$, 
	$p^i_\alpha$ and $N^i_\alpha$ are defined. 
	Let $j < d$. 
	Using the fact that $R_j$ is $\vec z$-robust 
	we can pick some $p^j_\beta \in R_j$ and some $N^j_\beta \in \mathcal X(H)$ 
	such that $w_\theta(p^j_\beta,N^j_\beta) = \vec z$ and for all 
	$\alpha < \beta$ and $i < d$, $p^i_\alpha$ is in $\sk(N^j_\beta)$. 
	This completes the induction.

	For each $\alpha < \omega_1$ and $i < d$, write 
	$p_\alpha^i = (T_\alpha^i,W_\alpha^i,D_\alpha^i,A_\alpha^i)$ and 
	$\delta_\alpha^i = N_\alpha^i \cap \omega_1$. 
	Consider $\alpha < \beta < \omega_1$ and $i, j < d$. 
	By Lemma 8.7 applied to $p^i_\alpha$, $N^i_\alpha$, $p^j_\beta$, and $N^j_\beta$, 
	we have that $T_\alpha^i \cap T_\beta^j = T_\alpha^i \cap \delta_\alpha^i = 
	T_\beta^j \cap \delta_\beta^j$. 
	In particular, $T_\alpha^i \setminus \delta_\alpha^i$ and 
	$T_\beta^j \setminus \delta_\beta^j$ are disjoint.

	For each $\alpha < \omega_1$, define 
	$J_\alpha = \bigcup \{ T_\alpha^i \setminus \delta_\alpha^i : i < d \}$. 
	By the previous paragraph, for all $\alpha < \beta < \omega_1$, 
	$J_\alpha$ and $J_\beta$ are disjoint. 
	By the theorem of Baumgartner-Malitz-Reinhardt stated at the end of the introduction 
	and the fact that $T_H$ is Aronszajn, 
	fix $\alpha_0 < \cdots < \alpha_{d-1} < \omega_1$ so that for all 
	$i < j < d$, for all $x \in J_{\alpha_i}$ and for all $y \in J_{\alpha_j}$, 
	$x$ and $y$ are incomparable in $T_H$. 
	By Lemma 8.6, $q = p_{\alpha_0}^0 \oplus \cdots \oplus p_{\alpha_{d-1}}^{d-1}$ 
	is a condition which extends each of 
	$p_{\alpha_0}^0,\ldots,p_{\alpha_{d-1}}^{d-1}$. 
	Note that $q$ is in $E_\theta$. 
	By Lemma 11.6, $q$ is in $\p / H$. 
	For all $i < d$, $p_{\alpha_i}^i \in R_i$, 
	and hence $q \le p_{\alpha_i}^i \le \sum R_i$.
\end{proof}

\begin{proof}[Proof of Theorem 12.1]
	Working in $W$, fix a regular cardinal $\chi > \kappa$. 
	Define $\mathcal S$ to be the set of all $M \in [H(\chi)]^\omega$ 
	such that $M$ is an elementary substructure of the structure 
	$\mathcal B = (H(\chi),\in,\psi,\p,\theta,H,\p / H,D_\theta,w_\theta)$ 
	and $M \cap \kappa \in \mathcal X(H)$. 
	Since $\mathcal X(H)$ is stationary in $[\kappa]^\omega$, the set $\mathcal S$ 
	is stationary in $[H(\chi)]^\omega$.

	Let $M \in \mathcal S$ and define $N = M \cap \kappa$. 
	Consider $u \in M \cap (\p / H)$. 
	By extending $u$ further, we may assume that $u \in D_\theta$ and $A_u$ is non-empty. 
	By Theorem 12.4, 
	$u + N$ is in $\p / H$, is an extension of $u$, and is $(M,\p / H)$-generic.  
	Now consider a condition $q \le u + N$. 
	We will find a filter $\mathcal F$ on $\mathcal B(\p / H)$ in $M$ such that 
	for every $s \in M \cap \mathcal B(\p / H)$, if $q \le s$ then $s \in \mathcal F$.

	Using Proposition 9.4 and Lemma 11.1(4), 
	extend $q$ to some $r$ in $D_\theta \cap (\p / H)$ 
	such that $A_r$ is $N$-closed. 
	Then $(r, N)$ is in the domain of $w_\theta$. 
	Let $\vec z = w_\theta(q, N)$. 
	Then $\vec z \in \sk(N) \subseteq M$. 
	By Proposition 12.6, 
	the collection 
	$\mathcal F_0 = 
	\{ \sum R : R \subseteq D_\theta \cap (\p / H) \ \text{is $\vec z$-robust}\}$ 
	is a centered subset of $\mathcal B(\p / H)$. 
	By elementarity, $\mathcal F_0 \in M$. 
	So $\mathcal F = \{ b \in \mathcal B(\p / H) : \exists c \in \mathcal F_0 \ (c \le b) \}$ 
	is a filter on $\mathcal B(\p / H)$ which is also in $M$.
	
	To complete the proof, 
	suppose that $q \le s$ and $s \in M \cap \mathcal B(\p / H)$, 
	and we show that $s \in \mathcal F$. 
	Define $R = \{ t \in D_\theta \cap (\p / H) : t \le s \}$. 
	Clearly, $s = \sum R$, $R \in M$, and $r \in R$. 
	We claim that $R$ is $\vec z$-robust, and hence 
	$s = \sum R \in \mathcal F_0 \subseteq \mathcal F$. 
	Let $C$ be a club subset of $[\kappa]^\omega$ in $M$. 
	Then $N \in C$. 
	So $N \in C$, $N \in \mathcal X(H)$, $r \in R$, and $w_\theta(r, N) = \vec z$. 
	By elementarity, it follows that the set of $K \in \mathcal X(H)$ 
	for which there exists some $t \in R$ such that $w_\theta(t, K) = \vec z$ 
	is stationary in $[\kappa]^\omega$.
\end{proof}

\section{The Main Theorems: Part 2}

We are now prepared to prove the remaining main theorems of the article. 
We start by answering the 
question which originally motivated this work.

\begin{thm}
	The forcing poset $\p$ forces that there exists a strongly non-saturated 
	normal infinitely splitting Aronszajn tree and there does not 
	exist a weak Kurepa tree.
\end{thm}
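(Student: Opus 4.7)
The first conclusion is Theorem 10.13, so I focus on the non-existence of a weak Kurepa tree. Suppose for contradiction that some $p \in \p$ forces $\dot T$ to be a weak Kurepa tree. Since $\p$ preserves $\omega_1$ and forces $\omega_2 = \kappa$ (Corollary 10.12), I may after coding assume that $\dot T$ is a nice $\p$-name for a subset of $\omega_1$ encoding a tree of size $\omega_1$ with at least $\kappa$-many distinct cofinal branches.

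The plan is a Silver-style factor analysis through an intermediate extension by $\p_\theta$ for some $\theta \in \Sigma$. Since $\p$ is $\kappa$-c.c.\ (Proposition 10.11) and $\kappa$ is inaccessible, each antichain appearing in the nice name $\dot T$ has size less than $\kappa$, so the transitive closure of $\dot T$ has cardinality less than $\kappa$ and hence $\dot T \in H(\kappa)$. A L\"owenheim--Skolem argument in the structure $(H(\kappa), \in, \psi, \p)$, using that $\Sigma$ is stationary in $\kappa$, produces some $\theta \in \Sigma$ with $p, \dot T \in \sk(\theta)$. Every condition appearing in $\dot T$ then lies in $\sk(\theta) \cap \p = \p_\theta$, so $\dot T$ may be viewed as a $\p_\theta$-name with the same interpretation. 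Fixing a generic filter $G \ni p$ on $\p$ and setting $H = G \cap \p_\theta$, the tree $T = \dot T^G$ lies in $V[H]$.

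Now I invoke Corollary 12.2: in $V[H]$, the quotient $\p/H$ is Y-proper on a stationary set, hence by Theorem 8.2 has the $\omega_1$-approximation property over $V[H]$, and in particular adds no new cofinal branches to $T$. So every cofinal branch of $T$ in $V[G]$ already lies in $V[H]$, and therefore $T$ has at least $\kappa$-many cofinal branches in $V[H]$.

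To derive a contradiction, I bound $\mathcal P(\omega_1)^{V[H]}$. Since $|\p_\theta| \le |\sk(\theta)| = |\theta| < \kappa$, antichains in $\p_\theta$ have size at most $|\theta|$, and so the number of nice $\p_\theta$-names for subsets of $\omega_1$ in $V$ is bounded by $(2^{|\theta|})^{\omega_1} < \kappa$ by inaccessibility of $\kappa$. Hence $V[H] \models |\mathcal P(\omega_1)| < \kappa$. But the $\kappa$-many cofinal branches of $T$ in $V[H]$ are distinct subsets of its $\omega_1$-sized underlying set, a contradiction. The main obstacle I expect is the bookkeeping in the absorption step---verifying that $\dot T$ can indeed be arranged to lie in $\sk(\theta)$ and that interpreting it as a $\p_\theta$-name preserves its value---but this is routine given the projection $\pi_\theta$ and the quotient analysis developed in Sections 10--12.
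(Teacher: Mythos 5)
Your proposal is correct and follows essentially the same approach as the paper: absorb the nice name $\dot T$ into $\sk(\theta)$ for some $\theta \in \Sigma$ using $\kappa$-c.c., apply Corollary 12.2 and Theorem 8.2 to show all cofinal branches of $T$ in $V[G]$ already lie in $V[H]$, and then use inaccessibility of $\kappa$ to bound the branches in $V[H]$ below $\kappa = \omega_2^{V[G]}$. The only cosmetic difference is that you unpack ``$\kappa$ is inaccessible in $V[H]$'' via an explicit nice-name count, whereas the paper cites this directly; the substance is identical.
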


\begin{proof}
	The first part was established in Theorem 10.13. 
	For the second part, suppose that $\dot T$ is a $\p$-name for 
	a tree with height and size $\omega_1$. 
	Without loss of generality, assume that $\dot T$ is forced to have 
	underlying set $\omega_1$ and $\dot T$ is a nice name. 
	Since $\p$ is $\kappa$-c.c., we can find some $\theta \in \Sigma$ 
	such that $\dot T$ is a nice $\p_\theta$-name. 
	By Lemma 11.2, $\p$ forces that $\dot T$ is in $V^{\p_\theta}$. 
	By Theorem 8.2 and Corollary 12.2, 
	$\p$ forces that every cofinal branch of $\dot T$ 
	in $V^\p$ lies in $V^{\p_\theta}$. 
	As $\kappa$ is inaccessible in $V^{\p_\theta}$ 
	and $\kappa$ equals $\omega_2$ in $V^\p$, $\p$ forces 
	that $\dot T$ has fewer than $\omega_2$-many cofinal branches.
\end{proof}

Since by a result of Solovay, the non-existence of a Kurepa tree implies that 
$\omega_2$ is inaccessible in $L$, 
we have the following corollary.

\begin{corollary}
	The following statements are equiconsistent.
	\begin{enumerate}
		\item There exists an inaccessible cardinal.
		\item There exists a strongly non-saturated Aronszajn tree and there does not 
		exist a weak Kurepa tree.
	\end{enumerate}
\end{corollary}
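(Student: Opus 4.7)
The plan is to prove this equiconsistency via the two directions separately. For the forward direction, assume the consistency of an inaccessible cardinal, and I will invoke Theorem 13.1. Starting from a model with an inaccessible cardinal $\kappa$, we use this $\kappa$ as the parameter fixed at the start of Section 5 and define the forcing poset $\p$ constructed in Section 6. By Corollary 10.12, $\p$ forces $\kappa = \omega_2$ while preserving $\omega_1$, and by Theorem 13.1, $\p$ forces the conjunction of the existence of a strongly non-saturated Aronszajn tree with the non-existence of a weak Kurepa tree. Thus statement (2) holds in the generic extension.

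For the reverse direction, I appeal to the classical theorem of Solovay that if there is no Kurepa tree, then $\omega_2$ is inaccessible in $L$. Since the non-existence of a weak Kurepa tree implies the non-existence of a Kurepa tree (every Kurepa tree is in particular a weak Kurepa tree, as its set of levels of size at most $\omega_1$ can be thinned or, more directly, any Kurepa tree of height $\omega_1$ restricted to a cofinal subset of size $\omega_1$ is a weak Kurepa tree with the same branches), any model of statement (2) satisfies that there is no Kurepa tree. Hence in such a model $\omega_2^V$ is an inaccessible cardinal in $L$, which gives a model of \textsf{ZFC} with an inaccessible cardinal.

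Combining the two directions, the two statements are equiconsistent. The main subtlety in the forward direction is just citing the correct earlier results, since all the heavy lifting (properness, $\kappa$-chain condition, collapse of $\kappa$ to $\omega_2$, addition of a strongly non-saturated Aronszajn tree, and forcing the negation of the weak Kurepa hypothesis via the $\omega_1$-approximation property of the quotients) has been completed in Theorem 13.1. In the reverse direction, the only nontrivial ingredient is Solovay's classical lower bound, together with the elementary observation that weak Kurepa trees absorb Kurepa trees.
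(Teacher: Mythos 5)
Your argument is correct and is essentially the paper's own proof: the forward direction is immediate from Theorem 13.1, and the reverse direction is Solovay's lower bound combined with the observation that the non-existence of a weak Kurepa tree implies the non-existence of a Kurepa tree. One small remark: your parenthetical about "thinning" a Kurepa tree is unnecessary, since a Kurepa tree by definition has countable levels and height $\omega_1$, hence already has size $\omega_1$ and is outright a weak Kurepa tree.
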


The next proposition follows from Theorem 8.2 and Corollary 12.2 by standard 
methods for constructing models satisfying the tree property (\cite{mitchell}).

\begin{proposition}
	If $\kappa$ is a Mahlo cardinal, then $\p$ forces that 
	there does not exist a special $\omega_2$-Aronszajn tree. 
	If $\kappa$ is a weakly compact cardinal, then $\p$ forces that there does not 
	exist an $\omega_2$-Aronszajn tree.
\end{proposition}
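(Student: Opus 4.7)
The plan is to follow Mitchell's classical argument for obtaining tree-property-like conclusions at $\omega_2$ after collapsing a Mahlo or weakly compact cardinal, using the projection--quotient analysis of Sections~9--12 as a substitute for the usual Mitchell factorization. I will describe the weakly compact case in detail; the Mahlo case follows the same outline with an additional specializing function carried along.

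Assume $\kappa$ is weakly compact and suppose for contradiction that some $p \in \p$ forces $\dot T$ to be a normal $\omega_2$-Aronszajn tree on $\kappa$. Since $\p$ is $\kappa$-c.c.\ (Proposition~10.11) and $\Sigma$ is a club in $\kappa$, the set
\[
C = \{ \theta \in \Sigma : \dot T \res \theta \text{ is a nice } \p_\theta\text{-name} \}
\]
is a club subset of $\kappa$. Using weak compactness of $\kappa$, for instance via $\Pi^1_1$-indescribability applied to the assertion that $\p$ forces $\dot T$ to be Aronszajn, I would reflect down to an inaccessible $\theta \in C$ such that $\p_\theta$ already forces $\dot T \res \theta$ to be Aronszajn in $V[\dot H]$, where $\dot H$ is the canonical $\p_\theta$-name for a generic. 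For any $\p$-generic $G$ with $p \in G$, the tree $\dot T^G$ has height $\kappa > \theta$, so for any node $x$ of $\dot T^G$ at level $\theta$ the set $b_x = \{ y : y <_T x \}$ is a cofinal branch of $\dot T \res \theta$.

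By Corollary~12.2, the quotient $\p / H$ (with $H = G \cap \p_\theta$) is Y-proper on a stationary set in $V[H]$ and hence has the $\omega_1$-approximation property by Theorem~8.2. A standard Mitchell-style induction on $\alpha \le \theta$, using normality of $\dot T$ at limit stages and the $\omega_1$-approximation property at stages of cofinality $\omega_1$, shows that every initial segment $b_x \res \alpha$ for $\alpha < \theta$ lies in $V[H]$. Since $\theta = \omega_2^{V[H]}$ is regular in $V[H]$, any countable subset of $\dot T \res \theta$ from $V[H]$ is bounded below some $\gamma < \theta$, so $b_x$ intersected with such a countable set is computable from $b_x \res \gamma \in V[H]$; hence $b_x$ satisfies the $\omega_1$-approximation hypothesis and $b_x \in V[H]$, contradicting Aronszajnness of $\dot T \res \theta$ in $V[H]$. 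For the Mahlo case, I track an additional name $\dot f$ for a specializing function, use Mahloness of $\kappa$ to find an inaccessible $\theta \in \Sigma$ that simultaneously captures $\dot T \res \theta$ and $\dot f \res \theta$ as $\p_\theta$-names, note that $V[H]$ sees $\dot T \res \theta$ as a special $\omega_2$-tree (hence Aronszajn), and finish by the same quotient branch-preservation argument.

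The main obstacle is running the $\omega_1$-approximation argument for branches of $\omega_2$-trees rather than the standard $\omega_1$-tree case: the inductive step at limit-of-limit ordinals requires combining normality of $\dot T$ to reconstruct the node at level $\alpha$ from $b_x \res \alpha$ with approximation-based arguments bounding countable data below $\theta$, and one must verify that these steps compose using only the Y-properness of $\p/H$ provided by Corollary~12.2.
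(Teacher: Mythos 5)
Your proposal takes essentially the same route the paper indicates: reflect the Aronszajnness of $\dot T$ (using $\Pi^1_1$-indescribability in the weakly compact case, or the specializing function together with Mahloness in the special case) to a suitable inaccessible $\theta \in \Sigma$ where $\dot T \res \theta$ becomes a $\p_\theta$-name, factor $V[G] = V[H][G]$ over $H = G \cap \p_\theta$, and invoke Corollary~12.2 and Theorem~8.2 to see that the quotient has the $\omega_1$-approximation property, whence no new branches of $\dot T^H \res \theta$ appear. This is precisely the Mitchell-style factorization with Y-properness standing in for $\sigma$-closedness times c.c.c.

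One remark on the ``obstacle'' flagged at the end: it is not in fact an obstacle, and the ``Mitchell-style induction'' on $\alpha \le \theta$ is unnecessary. The paper notes, immediately after Definition~8.1, that the $\omega_1$-approximation property of a forcing already implies that it adds no new cofinal branches to \emph{any} tree of regular uncountable height $\mu$, not just $\omega_1$. The reason is short: if $b$ is a cofinal branch of the tree $T \in V$ and $a \subseteq T$ is countable and in $V$, then since $\mu$ has uncountable cofinality the levels of elements of $a$ are bounded by some $\gamma < \mu$, the branch $b$ has a unique node $n$ at level $\gamma$, and $b \cap a$ equals the intersection of $a$ with the set of $T$-predecessors of $n$, which lies in $V$ because $T \in V$. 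So $b$ satisfies the approximation hypothesis and hence $b \in V$. No normality of $\dot T$, no induction at limit-of-limit stages, and no level-by-level reconstruction are needed; the approximation property handles arbitrary regular uncountable height in one step, and this is exactly what the reference to Theorem~8.2 is supplying.
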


Recall the principle \textsf{ISP} of Weiss, which asserts the 
existence of an ineffable branch for every slender $P_{\omega_2}(\lambda)$-list, 
for any regular cardinal $\lambda \ge \omega_2$ (\cite{weiss}). 
Viale-Weiss \cite{vialeweiss} proved that this principle is equivalent to the 
statement that for any regular cardinal $\lambda \ge \omega_2$, 
there exist stationarily many guessing models in $[H(\lambda)]^{\omega_1}$. 
In \cite{coxkrueger2} we denote this last statement by \textsf{GMP}, 
which stands for the \emph{guessing model principle}.

For the next two results, we assume that the reader is familiar with constructing 
models of \textsf{GMP} (see, for example, \cite[Section 7]{coxkrueger}). 
In particular, if $\kappa$ is a supercompact cardinal, then by standard arguments 
the Y-properness of the quotient together with Theorem 8.2 imply 
the existence of stationarily many guessing models.

\begin{thm}
	Assuming that $\kappa$ is a supercompact cardinal, 
	$\p$ forces that \textsf{GMP} holds. 
	So the existence of a strongly non-saturated Aronszajn tree 
	is consistent with \textsf{GMP}.
\end{thm}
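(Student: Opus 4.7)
The plan is to adapt the standard Viale--Weiss style construction of guessing models from a supercompact cardinal, leveraging the fact, established in Corollary 12.2 and Theorem 8.2, that quotients of $\p$ in intermediate extensions have the $\omega_1$-approximation property.

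Let $G$ be $V$-generic on $\p$, and fix a regular $\lambda \ge \kappa = \omega_2^{V[G]}$ in $V[G]$. Using the supercompactness of $\kappa$ in $V$, fix a sufficiently large $\chi$ and an elementary embedding $j : V \to M$ with $\crit(j) = \kappa$, $j(\kappa) > \chi$, and $M^\chi \subseteq M$. I would first verify that in $M$, the forcing $j(\p)$ satisfies $\kappa \in j(\Sigma)^M$ and $j(\p) \cap \sk^M(\kappa) = \p$, so that by Proposition 10.10 the map $\pi_\kappa \res E_\kappa$ is a projection mapping from a dense subset of $j(\p)$ onto $\p$, making $\p$ a complete suborder of $j(\p)$ in $M$. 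Since $\crit(j) = \kappa$ and $\kappa$ is inaccessible in $V$, this essentially reduces to checking that the defining parameters of $\p$ (namely $\psi$, $\mathcal X$, $\Lambda$, $\Sigma$) coincide with the restrictions of $j(\psi), j(\mathcal X), j(\Lambda), j(\Sigma)$ below $\kappa$, which follows from elementarity together with $H(\kappa)^M = H(\kappa)^V$.

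With this projection in hand, Corollary 12.2 applied in $M$ gives that $j(\p)/G$ is Y-proper on a stationary set in $M[G]$, and by Theorem 8.2 this forcing has the $\omega_1$-approximation property there; both properties descend to $V[G]$ using $M^\chi \subseteq M$. Forcing with $j(\p)/G$ over $V[G]$ then yields an $M$-generic $G^* \supseteq G$ and allows the standard lift $j^* : V[G] \to M[G^*]$. Following the template of \cite[Section 7]{coxkrueger}, I would then show that in $V[G]$ there are stationarily many $X \in [H(\lambda)^{V[G]}]^{\omega_1}$ which are guessing models, obtained as preimages under $j^*$ of suitably elementary substructures of $H(j^*(\lambda))^{M[G^*]}$ of size $\omega_1^{V[G]}$; the guessing property of each such $X$ is verified by appealing to the $\omega_1$-approximation property of $j(\p)/G$ to force any $X$-approximated subset to lie in $V[G]$, and then using elementarity of $j^*$ to find its guessing element.

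The main obstacle will be the verification that $\p$ is precisely the critical-point restriction $j(\p) \cap \sk^M(\kappa)$ of $j(\p)$ as a complete suborder in $M$, together with ensuring that the stationarity of $\mathcal X(G)$ provided by Lemma 11.8 survives in the further extension $V[G^*]$ (needed for applying Theorem 12.1 over $V[G]$ rather than only within $M[G]$). Once the projection analysis and the transfer of the approximation property are settled, the construction of the guessing models follows the standard template for producing models of \textsf{GMP} from a supercompact cardinal, with the only new ingredient being the identification of $\p$ as the restriction of $j(\p)$ through the Section 10 framework.
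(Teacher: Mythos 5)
The proposal follows exactly the route the paper has in mind: use a supercompactness embedding $j : V \to M$, identify $\p = j(\p)_\kappa$ with $\kappa \in j(\Sigma)$, apply Corollary 12.2 and Theorem 8.2 to obtain the $\omega_1$-approximation property of $j(\p)/G$ over $V[G]$, lift the embedding after forcing with the quotient, and then run the standard Cox--Krueger/Viale--Weiss construction of guessing models. The paper itself deliberately leaves this to standard arguments (citing \cite[Section 7]{coxkrueger} and later indicating the ingredients $j(\p)$, $j(\p)/G$, $\theta = \kappa \in j(\Sigma)$), and your proposal fleshes out precisely those details, so it is correct and essentially the same proof.
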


In \cite{coxkrueger2}, the idea of an indestructible guessing model is  
introduced together with the principle \textsf{IGMP}, which stands for the 
\emph{indestructible guessing model principle}. 
An \emph{indestructible guessing model} is 
a guessing model which remains guessing in any 
$\omega_1$-preserving generic extension. 
And \textsf{IGMP} states that 
for any regular cardinal $\lambda \ge \omega_2$, there exist stationarily many indestructible 
guessing models in $[H(\lambda)]^{\omega_1}$. 
By \cite[Corollary 4.5]{coxkrueger} and \cite[Theorem 1.4]{krueger}, 
\textsf{IGMP} follows from the conjunction of \textsf{GMP} 
and the statement that every tree of height and size $\omega_1$ which has no 
cofinal branches is special. 

We provide a sketch of a proof for how to use the indestructibility of the Y-properness 
of the quotient to obtain a model of \textsf{IGMP} together with a strongly 
non-saturated Aronszajn tree. 
For any tree $T$ with no uncountable branches, the standard forcing for specializing $T$ 
with finite conditions is Y-c.c.\ (\cite[Corollary 3.3]{CZ}). 
And any finite support forcing iteration of Y-c.c.\ forcings is Y-c.c.\ 
(\cite[Theorem 6.2]{CZ}). 
Consequently, there exists a Y-c.c.\ 
finite support forcing iteration of length 
$(2^{\omega_1})^+$ which forces that every tree with underlying set $\omega_1$ 
which has no cofinal branches is special.

Consider a generic filter $G$ on $\p$. 
In $V[G]$, we have that $2^{\omega_1} = \omega_2 = \kappa$, so we can 
fix a finite support forcing iteration 
$\langle \q_i : i \le \kappa \rangle$ as described in the previous paragraph. 
Let $K$ be a $V[G]$-generic filter on $\q_{\kappa}$. 
Consider any $\theta \in \Sigma$ such that $\langle \q_i : i \le \theta \rangle$ 
is in $V[G \cap \p_\theta]$. 
Let $G_\theta = G \cap \p_\theta$ and $K_\theta = K \cap \q_\theta$. 
By the product lemma, $V[G][K] = V[G_\theta][K_\theta][G][K]$. 
Now in $V[G_\theta]$, $\q_\theta$ is a finite support forcing iteration of Y-c.c.\ forcings, 
and hence is Y-c.c.\ and therefore Y-proper. 
So by Corollary 12.3, $\p / G_\theta$ is Y-proper on 
a stationary set in $V[G_\theta][K_\theta]$, and hence has the 
$\omega_1$-approximation property in $V[G_\theta][K_\theta]$ by Theorem 8.2. 
In $V[G_\theta][K_\theta][G] = V[G][K_\theta]$, 
$\q_\kappa / K_\theta$ is forcing equivalent to a finite support forcing iteration 
of Y-c.c.\ forcings, and hence has the $\omega_1$-approximation property 
by Theorem 8.2. 
It follows that $V[G][K]$ is a generic extension of $V[G_\theta][K_\theta]$ 
by the forcing $(\p / G_\theta) * (\q_\kappa / K_\theta)$ which has 
the $\omega_1$-approximation property.

The above analysis combined with standard methods 
shows that if $\kappa$ is a supercompact cardinal, then in $V[G][K]$ we have 
that \textsf{GMP} holds and every tree of height and size $\omega_1$ 
which has no cofinal branches is special.  
(More specifically, we apply the arguments of the previous paragraph to 
$j(\p)$, $j(\p) / G$, and $\theta = \kappa \in j(\Sigma)$, 
where $j : V \to M$ is an elementary embedding witnessing the supercompactness of $\kappa$ 
and $G$ is a generic 
filter on $\p = j(\p)_{\kappa}$.) 
So \textsf{IGMP} holds in $V[G][K]$.

\begin{thm}
	Suppose that $\kappa$ is a supercompact cardinal. 
	Then there exists a $\p$-name $\dot \q$ for a finite support forcing iteration 
	of Y-c.c.\ forcings of length $\kappa$ such that 
	$\p * \dot \q$ forces that there exists a strongly non-saturated Aronszajn tree 
	and \textsf{IGMP} holds.
\end{thm}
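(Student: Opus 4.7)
The plan is to let $\dot{\q}$ be a $\p$-name for a finite support iteration $\langle \q_i : i \le \kappa \rangle$ in which each $\q_i$, chosen by a suitable bookkeeping of length $(2^{\omega_1})^+ = \kappa$ in $V^\p$, is the standard finite-condition specializing forcing for some tree of height and size $\omega_1$ with no cofinal branch, arranged so that every such tree in the final extension is specialized at some stage. Each iterand is Y-c.c.\ by \cite[Corollary 3.3]{CZ}, and finite support iterations of Y-c.c.\ forcings remain Y-c.c.\ by \cite[Theorem 6.2]{CZ}, so $\dot\q$ is Y-c.c.\ and hence Y-proper. By Theorem 8.2 it has the $\omega_1$-approximation property, so it preserves $\omega_1$ and adds no cofinal branch to $T_{\dot G}$; thus $T_{\dot G}$ remains a strongly non-saturated Aronszajn tree in $V^{\p * \dot\q}$, witnessed by the family $\{W_{\dot G}(\eta) : \eta < \kappa\}$ from Proposition 7.6, whose pairwise intersections, being finitely generated in $V^\p$, remain finitely generated.

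For \textsf{IGMP} I would apply \cite[Corollary 4.5]{coxkrueger} and \cite[Theorem 1.4]{krueger}, which reduce \textsf{IGMP} to \textsf{GMP} together with the specialization of every $\omega_1$-tree without cofinal branch. The latter is arranged by the bookkeeping, so the remaining task is to force \textsf{GMP}. I would use a $\lambda$-supercompactness embedding $j : V \to M$ with $\crit(j) = \kappa$ for $\lambda$ sufficiently large, noting that $\kappa \in j(\Sigma)$ and that $\p$ is canonically isomorphic to $j(\p)_\kappa$ in $M$, so that $j(\p)$ factors as $\p * (j(\p) / \dot G)$. The aim is to lift $j$ to $V[G][K] \to M[H][L]$, where $H \supseteq G$ is $j(\p)$-generic and $L \supseteq K$ is $j(\dot\q)^H$-generic; the master condition then reflects stationary guessing models down into $V[G][K]$ to yield \textsf{GMP}.

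The core factor analysis underpinning the lift proceeds as in the sketch preceding this theorem. Let $G$ be $\p$-generic, let $K$ be $\q_\kappa$-generic over $V[G]$, and fix $\theta \in \Sigma$ with $\langle \q_i : i \le \theta \rangle \in V[G_\theta]$, where $G_\theta = G \cap \p_\theta$ and $K_\theta = K \cap \q_\theta$. In $V[G_\theta]$ the iteration $\q_\theta$ is Y-proper, so Theorem 12.1 applied with $W = V[G_\theta][K_\theta]$ yields that $\p / G_\theta$ is Y-proper on a stationary set there, hence has the $\omega_1$-approximation property by Theorem 8.2. In $V[G][K_\theta]$ the tail $\q_\kappa / K_\theta$ is again a finite support iteration of Y-c.c.\ forcings and thus also has the $\omega_1$-approximation property. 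Consequently the two-step extension $(\p / G_\theta) * (\q_\kappa / K_\theta)$ from $V[G_\theta][K_\theta]$ to $V[G][K]$ has the $\omega_1$-approximation property. Running the same argument in $M$ with $\theta = \kappa$ shows that the extension from $V[G][K]$ to $M[H][L]$ has the $\omega_1$-approximation property, which is exactly what the standard supercompact-lifting argument for \textsf{GMP} requires.

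The main obstacle is verifying the hypotheses of Theorem 12.1 for the intermediate model $W = V[G_\theta][K_\theta]$ and for its counterpart in the $j$-image: one must check that $\omega_1$ is preserved, $\kappa$ remains regular, $T_{G_\theta}$ remains Aronszajn, and $\mathcal{X}(G_\theta)$ remains stationary in $[\kappa]^\omega$, each under the Y-proper forcing $\q_\theta$. These are standard preservation consequences of Y-properness and the $\omega_1$-approximation property, but coordinating the bookkeeping of $\dot\q$ with the elementarity of $j$ — so that $j(\dot\q)^H$ genuinely extends $\dot\q^G$ in the manner required to lift $j$ — together with the iterated application of Theorem 12.1 across two stages of the composite forcing, is the technical heart of the argument.
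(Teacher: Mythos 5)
Your proposal is correct and follows essentially the same route as the paper's sketch: a Y-c.c.\ finite-support specializing iteration $\dot\q$, the reduction of \textsf{IGMP} to \textsf{GMP} plus specialization of all branchless trees of height and size $\omega_1$ via \cite[Corollary 4.5]{coxkrueger} and \cite[Theorem 1.4]{krueger}, and the factor analysis $(\p / G_\theta) * (\q_\kappa / K_\theta)$ over $V[G_\theta][K_\theta]$ using the indestructible Y-properness of the quotient (your direct appeal to Theorem 12.1 with $W = V[G_\theta][K_\theta]$ is exactly what Corollary 12.3 packages) to establish the $\omega_1$-approximation property, then the lift through $j$ with $\theta = \kappa \in j(\Sigma)$. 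The paper states this at the same level of informality, so there is no gap to flag beyond the routine verifications you already identify.
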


\providecommand{\bysame}{\leavevmode\hbox to3em{\hrulefill}\thinspace}
\providecommand{\MR}{\relax\ifhmode\unskip\space\fi MR }
% \MRhref is called by the amsart/book/proc definition of \MR.
\providecommand{\MRhref}[2]{%
  \href{http://www.ams.org/mathscinet-getitem?mr=#1}{#2}
}
\providecommand{\href}[2]{#2}

%\bibliographystyle{amsalpha}
%\bibliography{paper46_revised}

\end{document}